\DeclareFontFamily{U}{mathx}{\hyphenchar\font45}
\DeclareFontShape{U}{mathx}{m}{n}{
      <5> <6> <7> <8> <9> <10>
      <10.95> <12> <14.4> <17.28> <20.74> <24.88>
      mathx10
      }{}
\DeclareSymbolFont{mathx}{U}{mathx}{m}{n}
\DeclareMathAccent{\widecheck}{0}{mathx}{"71}
\DeclareMathAccent{\wideparen}{0}{mathx}{"75}
\title{Reconstruction of the magnetic field for a 
Schr$\ddot{\textrm{o}}$dinger operator in a cylindrical setting}
\author{Daniel Campos}
\date{}
\newtheorem{theorem}{Theorem}[section]
\newtheorem{lemma}[theorem]{Lemma}
\newtheorem{proposition}[theorem]{Proposition}
\newtheorem{corollary}[theorem]{Corollary}
\newtheorem{definition}[theorem]{Definition}
\newtheorem*{remark}{Remark}
\newtheorem*{example}{Example}
\newcommand{\ov}{\overline} 			
\newcommand{\vep}{\varepsilon}  		
\newcommand{\bC}{\mathbbm{C}} 	 	
\newcommand{\bR}{\mathbbm{R}}  		
\newcommand{\bQ}{\mathbbm{Q}}  		
\newcommand{\bN}{\mathbbm{N}}  		
\newcommand{\bZ}{\mathbbm{Z}}  		
\newcommand{\bT}{\mathbbm{T}}  		
\newcommand{\cD}{\mathcal{D}}		
\newcommand{\cS}{\mathcal{S}}		
\newcommand{\ra}{\rightarrow} 		
\newcommand{\sse}{\subseteq}			
\newcommand{\pa}{\partial}			
\newcommand{\sm}{\setminus}			
\newcommand{\tr}{\textrm{tr}}			
\newcommand{\what}{\widehat}			
\renewcommand{\Re}{\textrm{Re}}		
\renewcommand{\Im}{\textrm{Im}}		
\newcommand{\wilde}{\widetilde}		
\newcommand{\lan}{\langle}			
\newcommand{\ran}{\rangle}			
\newcommand{\spec}{\textrm{Spec}}		
\newcommand{\wra}{\rightharpoonup}		
\newcommand{\supp}{\textrm{supp}}		
\newcommand{\cgeq}{\gtrsim}			
\newcommand{\cleq}{\lesssim}			
\newcommand{\Op}{\textrm{Op}}		
\newcommand{\sgn}{\textrm{sgn}}		
\newcommand{\curl}{\textrm{curl }}		
\renewcommand{\div}{\textrm{div}}		
\newcommand{\hb}{\hbar}			
\begin{document}

\maketitle

\let\thefootnote\relax\footnotetext
{\textsc{Department of Mathematics, University of Chicago, Chicago, IL, 60637, USA} \par
\ \ \textsc{Escuela de Matem\'atica, Universidad de Costa Rica, 2060 San Jos\'e, Costa Rica} \par  
\ \ \textit{E-mail address:} \texttt{campos@math.uchicago.edu}}


\begin{abstract} 
\noindent
In this thesis we consider a magnetic Schr$\ddot{\textrm{o}}$dinger 
inverse problem over a compact domain contained in an infinite 
cylindrical manifold. We show that, under certain conditions on the 
electromagnetic potentials, we can recover the magnetic field 
from boundary measurements in a constructive way. A fundamental 
tool for this procedure is a global Carleman estimate for the magnetic 
Schr$\ddot{\textrm{o}}$dinger operator. We prove this
by conjugating the magnetic operator essentially 
into the Laplacian, and using the 
Carleman estimates for it proven by Kenig--Salo--Uhlmann in the 
anisotropic setting, see \cite{KSaU1}. The conjugation is 
achieved through pseudodifferential operators over the cylinder, for 
which we develop the necessary results.

\bigskip
The main motivations to attempt this 
question are the following results concerning the magnetic
Schr$\ddot{\textrm{o}}$dinger operator: first, the solution to 
the uniqueness problem in the cylindrical setting in \cite{DKSaU}, 
and, second, the reconstruction algorithm in the 
Euclidean setting from \cite{Sa1}. We will also borrow ideas 
from the reconstruction of the electric potential in the cylindrical 
setting from \cite{KSaU2}. These two new results 
answer partially the Carleman estimate problem (Question 4.3.) 
proposed in \cite{Sa2} and the reconstruction for the 
magnetic Schr$\ddot{\textrm{o}}$dinger operator mentioned
in the introduction of \cite{KSaU2}. To our
knowledge, these are the first global Carleman estimates
and reconstruction procedure for the magnetic 
Schr$\ddot{\textrm{o}}$dinger operator available
in the cylindrical setting.
\end{abstract}


\tableofcontents


\section{Introduction}


Let us present the notion of an inverse problem through the 
following contrasting settings. A direct problem aims to determine, 
from the knowledge of the internal properties of a system, the 
reaction of it to certain stimuli. For example, 
knowing the conductivity of a medium and the voltage 
potential at the boundary we can determine the voltage induced 
in the interior of the domain and, therefore, the current flowing 
through the boundary. In contrast, an inverse problem looks to 
deduce properties of the system from the knowledge of the reactions 
to the stimuli. For instance, in his seminal paper \cite{C}, 
Calder\'on proposes to study the uniqueness and the subsequent 
reconstruction of the conductivity of a medium from the 
voltage--to--current measurements at the boundary. This problem
came to be known as the Calder\'on inverse conductivity problem. 
Since then, this and other related problems have attracted a great 
deal of attention; see the survey \cite{U1}. Various
examples of inverse problems are also presented in 
\cite{U2} and \cite{I}. 

\bigskip
For a domain $M \sse \bR^{d}$, the isotropic conductivity equation 
can be expressed as the boundary value problem 
\begin{equation*}
\biggl\{
\begin{array}{rll}
\div(\gamma \nabla u) \hspace{-2mm} & = 0 & \ \textrm{in} \ \ M, \\
u \hspace{-2mm} & = f & \ \textrm{on} \ \ \pa M,
\end{array}
\end{equation*}

where the unknown conductivity $\gamma$ is a function in $M$. 
The known data is the boundary measurement 
$\Lambda_{\gamma}: f \mapsto \gamma \pa_{\nu}u|_{\pa M}$, 
which maps the voltage 
potential at the boundary to the current flowing through the 
boundary due to the induced voltage in the interior. As mentioned 
before, the Calder\'on inverse problem consists in recovering the function 
$\gamma$ from the map $\Lambda_{\gamma}$. 

\bigskip
After a change of variables the conductivity equation can be expressed 
in the form $H_{0,W}v := (D^{2} + W)v = 0$, where $D = -i\nabla$ is the 
gradient, $D^{2} := D\cdot D = -\div\cdot \nabla$ is the (negative)
Laplacian, and $W$ is a function; we refer to $H_{0,W}$ as a 
(\textit{electric}) Schr$\ddot{\textrm{o}}$dinger operator. In greater 
generality, we can consider a \textit{magnetic} 
Schr$\ddot{\textrm{o}}$dinger operator, which has a structure similar
to the previous operator but contains first order terms in the form 
$H_{V,W} := (D + V)^{2} + W$. In any of these cases, the inverse 
problem consists in recovering information about either (or both) of 
the electromagnetic potentials $V$ and $W$, in the interior of the domain, 
from boundary measurements. We elaborate this with more detail in 
the following section. One of the reasons why this problem
is interesting and relevant is its relation to the 
inverse scattering problem at fixed energy from quantum mechanics;
see the introduction of the Ph.D. thesis by Haberman \cite{H} 
for a detailed presentation on this.

\bigskip
As mentioned before, there is a significant body of work surrounding 
these problems. In the Euclidean setting, the uniqueness (or identifiability) 
problem for the \textit{electric} Schr$\ddot{\textrm{o}}$dinger operator
was explicitly addressed by Nachman--Sylvester--Uhlmann in \cite{NSyU}, but
it was implicitly used in the proof of the uniqueness for the conductivity 
problem by Sylvester--Uhlmann in \cite{SyU}. Their proof uses the construction 
of many special solutions inspired by the complex exponential solutions 
introduced by Calder\'on in \cite{C}; this method of construction
relies on a global Carleman estimate for the Laplacian. The Carleman
estimates are a kind of parameter--dependent weighted inequalities, 
originally introduced in the setting of unique continuation problems. 
The reconstruction of the electric potential is due to Nachman, see 
\cite{N}, and uses the uniqueness for the global Carleman 
estimate from \cite{SyU} in two ways. First, it is shown that the 
uniqueness ``at infinity'' implies a uniqueness property at the 
boundary, and this allows to determine the boundary values of the 
special solutions. Second, the smallness that is established in the 
estimate makes it possible to disregard certain correction terms. Later we 
will elaborate more carefully on this. For the \textit{magnetic} operator, 
the uniqueness has been established in a series of papers under 
different assumptions. This was started with the work of Sun, in 
\cite{Sun}, under smallness conditions on the magnetic field; then 
the smallness  condition was replaced by a smoothness condition by 
Nakamura--Sun--Uhlmann in \cite{NkSU}. Further improvements of 
these include the results by Salo in \cite{Sa1} and Krupchyk--Uhlmann 
in \cite{KrU}. For a more detailed account of the available results, see 
\cite{H}. Moreover, in \cite{Sa1}, Salo carries out a 
constructive procedure to recover the electromagnetic parameters.
As before, the reconstruction uses the existence of many special 
solutions which are constructed through a Carleman estimate for
the magnetic Schr$\ddot{\textrm{o}}$dinger operator. We will 
follow closely the arguments from this paper.

\bigskip
Moving away from the Euclidean setting, the Calder\'on problem, 
or its corresponding problem for the Schr$\ddot{\textrm{o}}$dinger 
operator, can also be formulated in the context of Riemannian manifolds.
This problem arises as a model for electrical imaging in anisotropic 
media, and it is one of the most basic inverse problems in a geometric 
setting; for the basic results in this context we refer to 
\cite{Sa2}. Motivated by the results in the Euclidean setting, we 
are interested in proving analogous Carleman estimates on manifolds. 
Looking to deduce  such an estimate, in \cite{DKSaU} it is proven 
that the existence of a limiting Carleman weight implies some kind 
of product structure on the manifold. Since then, it has been usual 
to consider a cylindrical manifold, as we will do with 
$T = \bR \times \bT^{d}$, and the Carleman weight $x_{1}$; for 
instance, see \cite{KSaU1} or \cite{KSaU2}. 
Our setting will be slightly different from the so--called \textit{admissible 
Riemannian manifolds} from \cite{DKSaU}. The solution to the 
uniqueness problem for the magnetic operator was established by 
Dos Santos Ferreira--Kenig--Salo--Uhlmann in \cite{DKSaU}, and
the reconstruction problem for the electric 
Schr$\ddot{\textrm{o}}$dinger operator is elaborated 
in \cite{KSaU2}. For a more complete exposition
of the results either in the Euclidean or Riemannian setting we
refer to the surveys \cite{U1} and \cite{U2}.

\bigskip
In this thesis we prove a global
Carleman estimate for the magnetic Schr$\ddot{\textrm{o}}$dinger
operator and propose a reconstruction procedure for the magnetic
field. The main motivations to attempt this 
question are the following results concerning the magnetic
Schr$\ddot{\textrm{o}}$dinger operator: first, the solution to 
the uniqueness problem in the cylindrical setting in \cite{DKSaU}, 
and, second, the reconstruction algorithm in the 
Euclidean setting from \cite{Sa1}. We will also borrow ideas 
from the reconstruction of the electric potential in the cylindrical 
setting from \cite{KSaU2}. These two new results 
answer partially the Carleman estimate problem (Question 4.3.) 
proposed in \cite{Sa2} and the reconstruction for the 
magnetic Schr$\ddot{\textrm{o}}$dinger operator mentioned
in the introduction of \cite{KSaU2}. To our
knowledge, these are the first global Carleman estimates
and reconstruction algorithms for the magnetic 
Schr$\ddot{\textrm{o}}$dinger operator available
in the cylindrical setting.


\subsection{Setting and main results}


Let $\bT^{d} = \bR^{d}/\bZ^{d}$ be the $d$-dimensional torus with standard metric 
$g_{0}$ and let $e$ be the Euclidean metric on $\bR$. Consider the cylinder 
$T = \bR \times \bT^{d}$ with the standard product metric $g = e \oplus g_{0}$. 
We denote the points in the cylinder $T$ by $(x_{1},x')$, meaning that $x_{1} \in \bR$ 
and $x' \in \bT^{d}$. Let $(M,g) \sse T$ be a smooth connected compact 
$(d+1)$-submanifold. Let $\pa M$ denote its smooth $d$-dimensional boundary,
let $M_{-} := M \sm \pa M$ and $M_{+} = T\sm M$. We call $M_{-}$ 
and $M_{+}$ the interior and exterior of $M$, respectively.

\bigskip
Let $D_{x_{1}} = \pa_{x_{1}}/(2\pi i)$ and $D_{x'} = \nabla_{x'}/(2\pi i)$, and 
define the gradient $D := (D_{x_{1}},D_{x'})$ and Laplacian 
$-\Delta_{g} := D^{2} = D_{x_{1}}^{2} + D_{x'}^{2}$. We denote 
$-\Delta_{g_{0}} := D_{x'}^{2}$, so that its eigenvalues on $\bT^{d}$
consist of the set $\spec(-\Delta_{g_{0}}) := \{|k|^{2} : k \in \bZ^{d}\}$.

\bigskip
Let $F$, $G_{1}, \ldots, G_{d}$, $W$ be functions in $M$, and consider the vector 
field $V := (F,G) := (F,G_{1}, \ldots, G_{d})$. We call $V$ and $W$ the magnetic 
and electric potentials, respectively. 
We consider the magnetic Schr$\ddot{\textrm{o}}$dinger operator 
\[
H_{V,W} := (D + V)^{2} + W = D^{2} + 2V\cdot D + (V^{2} + D\cdot V + W),
\]

and its associated Dirichlet problem
\begin{equation*}
\label{defn: Dirichlet}
\tag{$\ast$}
\biggl\{
\begin{array}{rll}
H_{V,W}u \hspace{-2mm} & = 0 & \ \textrm{in} \ \ M_{-}, \\
u \hspace{-2mm} & = f & \ \textrm{on} \ \ \pa M.
\end{array}
\end{equation*}

In \textit{Chapter 2. Prelimaries} we will introduce the necessary notation
and motivate the following definitions. For $f \in H^{1/2}(\pa M)$, we say 
that $u \in H^{1}(M)$ is a weak solution 
to the Dirichlet problem \eqref{defn: Dirichlet} if $\tr^{-}(u) = f$ and	
\begin{equation}
\label{defn: magnetic_weak}
\int_{M}-Du\cdot D\varphi + V\cdot (\varphi Du - u D\varphi) 
+ (V^{2} + W)u\varphi = 0,
\end{equation}

for all test functions $\varphi \in H^{1}_{0}(M)$. 
Under certain conditions on the potentials, which we later elaborate,
there exists a unique weak solution to the Dirichlet problem 
\eqref{defn: Dirichlet}. We define the Dirichlet-to-Neumann (DN) map 
$\Lambda_{V,W}$ as follows: if $f, g \in H^{1/2}(\pa M)$ and 
$v \in H^{1}(M)$ is any function extending $g$, 
i.e. $\tr^{-}(v) = g$, then
\begin{equation}
\label{defn: DN_magnetic}
\lan \Lambda_{V,W}f,g \ran := \int_{M}-Du\cdot Dv + V\cdot (vDu  - uDv)
+ (V^{2} + W)uv,
\end{equation}

where $u \in H^{1}(M)$ is the weak solution of 
\eqref{defn: Dirichlet}. Formally, the DN map corresponds
to the boundary measurement 
\[
\Lambda_{V,W}f = \frac{i}{2\pi}\nu \cdot (D + V)u\bigl|_{\pa M}.
\]

The reconstruction problem then consists in using measurements
at the boundary of the domain, such as the DN map $\Lambda_{V,W}$, 
to recover information about the potentials in the interior of it.

\bigskip
Before we proceed to formulate the results,
let us recall the gauge invariance of the DN map observed in \cite{Sun}.
The conjugation identity $e^{-2\pi i\varphi}De^{2\pi i\varphi} = D + \nabla \varphi$ 
gives that $H_{V + \nabla \varphi,W} = e^{-2\pi i\varphi}H_{V,W}e^{2\pi i\varphi}$, 
which implies that if $0$ is not an eigenvalue of $H_{V,W}$ on $M$ and 
$\varphi \in C^{\infty}(T)$, then $0$ is also not an eigenvalue of the operator 
$H_{V + \nabla \varphi, W}$. Indeed, $\wilde{u}$ is a solution of the Dirichlet 
problem 
\[
\biggl\{
\begin{array}{rll}
H_{V + \nabla \varphi,W}\wilde{u} \hspace{-2mm} & = 0 & \ \textrm{in} \ \ M_{-}, \\
\wilde{u} \hspace{-2mm} & = g & \ \textrm{on} \ \ \pa M,
\end{array}
\]

if and only if $u = e^{2\pi i\varphi}\wilde{u}$ solves
\[
\biggl\{
\begin{array}{rll}
H_{V,W}u \hspace{-2mm} & = 0  & \ \textrm{in} \ \ M_{-}, \\
u \hspace{-2mm} & = e^{2\pi i\varphi}g & \ \textrm{on} \ \ \pa M.
\end{array}
\]

A routine computation yields that 
$\Lambda_{V + \nabla \varphi,W} 
= e^{-2\pi i\varphi}|_{\pa M}\Lambda_{V,W}e^{2\pi i\varphi}$
which gives the gauge invariance 
$\Lambda_{V + \nabla \varphi,W} = \Lambda_{V,W}$ if 
$\varphi|_{\pa M} = 0$. Therefore, it is not possible to determine 
the magnetic potential $V$ from the knowledge of $\Lambda_{V,W}$. 
Let us note, however, that the magnetic fields are the same, i.e. 
$\curl V = \curl(V + \nabla \varphi)$. The main result from the thesis 
is that it is possible to reconstruct the magnetic field $\curl V$ under the 
following smoothness, support, and vanishing moment conditions:
\begin{equation*}
\label{eqn: conditions_reconstruction}
\tag{$\dagger$}
V \in C^{\infty}_{c}(M_{-}), \ \ W \in L^{\infty}(M), \ \ \supp(W) \sse M,
\ \ \int_{\bR}V(x_{1},x')dx_{1} = 0
\ \textrm{for all} \ x' \in \bT^{d}.
\end{equation*}


\begin{theorem}
\label{thm: curl_reconstruction}
Let $M \sse T$ be as before, with $d \geq 3$. Assume that the potentials $V,W$ 
satisfy \eqref{eqn: conditions_reconstruction} and $0$ is not an eigenvalue of $H_{V,W}$
in $M$. Then the magnetic field $\curl V$ can be reconstructed from the knowledge 
of the Dirichlet-to-Neumann map $\Lambda_{V,W}$.
\end{theorem}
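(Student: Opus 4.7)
The plan is to adapt the Nachman--Salo reconstruction scheme from the Euclidean setting \cite{Sa1} to the cylinder $T$, using as technical inputs the global Carleman estimate for $H_{V,W}$ announced in the abstract together with the boundary-reconstruction strategy developed in \cite{KSaU2} for the electric problem. The Carleman estimate itself is obtained by conjugating $H_{V,W}$ into an operator of the form $-\Delta_g + \textrm{l.o.t.}$ via a pseudodifferential gauge on $T$ and invoking the Laplacian Carleman estimate of \cite{KSaU1} with weight $\varphi = x_1$.

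First I would construct two families of complex geometric optics solutions of $H_{V,W}u = 0$ of the form
\[
u_\tau(x_1,x') = e^{-\tau x_1}e^{-i\lambda x_1}e^{-\Phi_V(x)}\bigl(v_\lambda(x') + r_\tau(x)\bigr),
\]
with $\tau$ a large Carleman parameter, $v_\lambda$ a Fourier eigenmode of $-\Delta_{g_0}$ on $\bT^d$, $\Phi_V$ a scalar amplitude chosen so that a Cauchy-type transport equation along $x_1$ cancels the leading first-order magnetic contribution, and a remainder $r_\tau$ which is $O(\tau^{-1})$ by the Carleman estimate. A mirror family $\tilde u_\tau$ carrying the weight $e^{+\tau x_1}$ is built in the same way. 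The vanishing-moment hypothesis on $V$ is what allows $\Phi_V$ to be globally defined on the cylinder rather than only on $\bR^{d+1}$. Following Nachman \cite{N} and its cylindrical adaptation in \cite{KSaU2}, the trace of $u_\tau$ on $\pa M$ is then recovered from $\Lambda_{V,W}$ alone by solving a boundary integral equation of the form $(\tfrac{1}{2}I + \mathcal{K}_{V,W})\,\tr^-(u_\tau) = h_\tau$, where $h_\tau$ is the trace of the explicit background part and $\mathcal{K}_{V,W}$ is built from $\Lambda_{V,W}$ and a Faddeev-type Green function on $T$ defined via eigenfunction expansions on $\bT^d$.

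With the traces in hand, I would plug the CGO solutions into the standard integral identity
\[
\lan (\Lambda_{V,W} - \Lambda_{0,W_0})\tr^-(u_\tau),\,\tr^-(\tilde u_\tau)\ran = \int_M \bigl(2V\cdot Du_\tau + (V^2 + D\cdot V)u_\tau\bigr)\tilde u_\tau\, dx + \textrm{remainder},
\]
for a suitable reference $W_0$, and let $\tau \to \infty$. The exponential weights cancel and the leading term produces the oscillatory integral
\[
\int_M V(x)\cdot \xi_\lambda\, e^{-i\lambda x_1}\,\overline{v_\mu(x')}\,v_\lambda(x')\, dx,
\]
for a direction $\xi_\lambda$ fixed by the amplitude choice. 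Varying the eigenmodes $v_\lambda, v_\mu$ of $-\Delta_{g_0}$ and the longitudinal frequency $\lambda \in \bR$ recovers enough Fourier data to determine every component of $V$ modulo a gradient; since a gradient is exactly the gauge ambiguity of $\Lambda_{V,W}$, the gauge-invariant combination $\curl V$ is pinned down, and the vanishing-moment hypothesis normalizes the recovery so that it is explicit.

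The hardest step is the CGO construction: the global Carleman estimate must absorb the first-order magnetic term $2V\cdot D$, which forces the conjugation of $H_{V,W}$ into the Laplacian by a pseudodifferential operator on the infinite cylinder $T$, with symbol calculus tailored to mixed $\bR \times \bT^d$ frequencies, and then a careful correction by the amplitude $e^{-\Phi_V}$ to remove the remaining first-order error. A secondary difficulty is that the Faddeev Green function in the cylindrical geometry is well defined only away from the discrete spectrum of $-\Delta_{g_0}$, so the boundary integral equation in Step 2 must avoid resonance with torus eigenvalues as $\tau$ and $\lambda$ vary.
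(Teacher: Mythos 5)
Your overall architecture (Carleman estimate via pseudodifferential conjugation, boundary integral equation \`a la Nachman to recover traces, and an asymptotic integral identity for the DN map) does match the paper's strategy through Chapter~5. The critical gap is in the final reconstruction step, and it is precisely the point the paper flags as the genuine new difficulty of the cylindrical setting.

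Your CGO ansatz carries a factor $e^{-i\lambda x_1}$ with a \emph{free} longitudinal frequency $\lambda\in\bR$, and at the end you claim that varying $\lambda$ and the eigenmodes ``recovers enough Fourier data'' on $V$. This cannot be done on the cylinder. A complex exponential $e^{2\pi i\zeta\cdot x}$ is a function on $T=\bR\times\bT^{d}$ only if $\zeta'\in\bZ^{d}$, and then harmonicity ($\zeta\cdot\zeta=0$) forces $\zeta_{1}=\pm i|\zeta'|$, i.e.\ the only harmonic exponential solutions are $e^{\pm 2\pi|m|x_1}e_m(x')$ with $m\in\bZ^{d}$ --- a \emph{real} exponential in $x_1$ and no room for an extra oscillation $e^{-i\lambda x_1}$. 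When you pair two such CGOs, the surviving longitudinal factor is $e^{2\pi(|m_N+n|-|m_N|)x_1}$, another real exponential whose rate $\mu_{m,n}=m\cdot n/|m|$ takes values only in a constrained countable subset of $\bR$. So instead of the Fourier transform of $V$ in $x_1$, what one actually gets is a \emph{Laplace} transform sampled at a sequence of real rates. The paper treats this by (i) a linear algebra lemma over $\bQ$ (which is where $d\geq 3$ enters, a hypothesis your proposal never uses) producing, for fixed toroidal frequency $n$ and fixed $\mu_{m,n}$, enough directions $(i|m|,m)$ to span the ``curl vectors,'' and (ii) reconstructing an entire function from its values along a sequence $\mu_{m,n}\to 0$ via Newton divided differences. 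Your proposal skips both of these ingredients; without them the passage from the asymptotic integral identity to $\curl V$ does not close.

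A secondary issue: you assert that the amplitudes $e^{\mp\Phi_V}$ on the two CGO families cancel in the pairing, which would make the final integral linear in $V$. The paper explicitly remarks that this Eskin--Ralston cancellation does not come for free in the cylindrical geometry --- only one CGO is corrected with an amplitude $\widetilde{a}_{m}=e^{v_m}$, the resulting transform $I(m,n)$ depends nonlinearly on $V$ through $\widetilde{a}_m$, and a separate recursive Fourier-series argument (Theorem~\ref{thm: relation_J_I}) is needed to pass from $I(m,n)$ to the linear transform $J(m,n)$, with $I=J$ only in special cases ($m\cdot n=\pm1$). If you want the cancellation to work as in \cite{Sa1}, you would need to prove that the transport equations along the two (nearly opposite but genuinely distinct) directions $(i|m_N|,m_N)$ and $(-i|m_N+n|,-(m_N+n))$ produce reciprocal amplitudes in the limit $N\to\infty$, which is not obvious on the cylinder and is not how the paper proceeds.
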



A fundamental step in the reconstruction of $\curl V$ from the DN map 
$\Lambda_{V,W}$ is the construction of many special solutions to the 
equation $H_{V,W}u = 0$. Following Sylvester--Uhlmann's method of 
complex geometric optics (CGOs), see \cite{SyU}, the solutions consist 
in appropriate corrections, depending on a large parameter, of harmonic 
functions. The standard technique to perform these constructions has been 
the use of Carleman estimates. Following \cite{N} and 
\cite{KSaU2}, we use a uniqueness result for these kind 
of estimates, as mentioned in the previous section, for a twofold purpose: 
first, to characterize the boundary values of the CGOs from the DN map;
second, to ``disregard'' the correction terms  as the parameter grows.
The other main result of the thesis is the following Carleman estimate, which 
holds under the following conditions
on the potentials:
\begin{equation*}
\label{eqn: conditions}
\tag{$\star$}
V \in C^{\infty}_{c}(T), \ \ \supp(V) \sse [-R,R]\times \bT^{d}, 
\ \ \lan x_{1}\ran^{2\delta}W \in L^{\infty}(T), 
\ \ \int_{\bR}V(x_{1},x')dx_{1} = 0
\ \textrm{for all} \ x' \in \bT^{d}.
\end{equation*}


\begin{theorem}
\label{thm: carleman_full}
Let $1/2 < \delta < 1$ and let $V,W$ satisfy \eqref{eqn: conditions}. 
There exists $\tau_{0} \geq 1$, such that if $|\tau|\geq \tau_{0}$ and
$\tau^{2} \notin \spec(-\Delta_{g_{0}})$, then for any $f \in L^{2}_{\delta}(T)$
there exists a unique $u \in H^{2}_{-\delta}(T)$ which solves 
\[
e^{2\pi\tau x_{1}}H_{V,W}e^{-2\pi\tau x_{1}}u = f.
\]
 
Moreover, this solution satisfies the estimates 
\[
\|u\|_{H^{s}_{-\delta}(T)} \cleq |\tau|^{s - 1}\|f\|_{L^{2}_{\delta}(T)},
\]

for $s = 0,1,2$. The constant of the inequality is independent of $\tau$.
\end{theorem}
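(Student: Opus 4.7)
The plan is to conjugate the weighted magnetic operator $L_\tau := e^{2\pi\tau x_1}H_{V,W}e^{-2\pi\tau x_1}$ to the weighted Laplacian $L^0_\tau := e^{2\pi\tau x_1}D^2 e^{-2\pi\tau x_1}$ modulo a remainder whose norm decays in $|\tau|$, and then to invoke the Carleman estimate of Kenig--Salo--Uhlmann \cite{KSaU1} for $L^0_\tau$. That estimate produces a solution operator $T^0_\tau\colon L^2_\delta(T)\to H^2_{-\delta}(T)$ satisfying $\|T^0_\tau f\|_{H^s_{-\delta}}\cleq |\tau|^{s-1}\|f\|_{L^2_\delta}$ for $s=0,1,2$ whenever $\tau^{2}\notin\spec(-\Delta_{g_0})$. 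Solvability of $L_\tau u=f$ and the stated estimates will then follow from a Neumann-series inversion of the reduced problem.

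\textbf{Step 1: algebraic expansion and gauge reduction.} A direct computation using $e^{2\pi\tau x_1}De^{-2\pi\tau x_1}=D+i\tau e_1$ gives
\[
L_\tau = L^0_\tau + 2V\cdot D + 2i\tau F + V^{2}+D\cdot V + W,
\]
so only the term $2i\tau F$ actually grows with $\tau$. The vanishing moment hypothesis $\int_{\bR}F(x_1,x')\,dx_1=0$, together with $\supp(V)\sse[-R,R]\times\bT^{d}$, lets me define the smooth compactly supported primitive $\Phi(x_1,x'):=-\int_{-\infty}^{x_1}F(s,x')\,ds$. Writing $u=e^{2\pi i\Phi}\wilde u$ is the gauge transformation pointed out in the introduction: it replaces $V$ by $V':=V+\nabla\Phi$, whose first component vanishes, so that $\wilde L_\tau := e^{-2\pi i\Phi}L_\tau e^{2\pi i\Phi} = L^0_\tau + 2G'\cdot D_{x'} + V'^{2}+D\cdot V' + W$ with $V'=(0,G')$. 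No $O(\tau)$ perturbation of $L^0_\tau$ remains.

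\textbf{Step 2: pseudodifferential conjugation.} The transversal first-order term $2G'\cdot D_{x'}$ is of the same differential order as the Laplacian and cannot in general be absorbed by smallness alone. I would eliminate it to leading order by a further conjugation $A_\tau=\Op(a_\tau)$ built inside the PDO calculus on the cylinder developed in the Preliminaries chapter. Matching the principal symbol of $[L^0_\tau,A_\tau]$ against $2A_\tau (G'\cdot D_{x'})$ suggests the choice $a_\tau(x,\xi')\sim\exp\!\bigl(-\tfrac{2\pi}{\tau}\int_{-\infty}^{x_1}G'(s,x')\cdot\xi'\,ds\bigr)$, which is well defined because $G'$ has compact support in $x_1$ and which tends to $1$ as $|\tau|\to\infty$. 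Symbolic calculus should then yield
\[
A_\tau^{-1}\wilde L_\tau A_\tau = L^0_\tau + R_\tau,
\]
with $R_\tau\colon H^{1}_{-\delta}(T)\to L^{2}_{\delta}(T)$ of norm $O(|\tau|^{-1})$. The cylindrical product structure is essential: Fourier decomposition in the torus variable reduces the symbol construction to a family of transport equations in $x_1$ indexed by $k\in\bZ^{d}$, and the condition $\tau^{2}\notin\spec(-\Delta_{g_0})$ is precisely what keeps the denominators bounded away from zero.

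\textbf{Step 3: Neumann inversion, and the main obstacle.} Through Steps 1--2, the equation $L_\tau u=f$ becomes $(L^0_\tau+R_\tau)v=h$, where $h:=A_\tau^{-1}e^{-2\pi i\Phi}f$ and $u=e^{2\pi i\Phi}A_\tau v$. Applying $T^0_\tau$ turns it into $(I+T^0_\tau R_\tau)v=T^0_\tau h$; since $T^0_\tau R_\tau$ is a contraction on $H^{1}_{-\delta}(T)$ for $|\tau|\geq\tau_0$ large (norm $\cleq|\tau|^{-1}$), a Neumann series produces a unique $v\in H^{1}_{-\delta}$, and the identity $L^0_\tau v=h-R_\tau v\in L^{2}_{\delta}$ upgrades $v$ to $H^{2}_{-\delta}$ with the full chain of bounds $\|v\|_{H^{s}_{-\delta}}\cleq|\tau|^{s-1}\|h\|_{L^{2}_{\delta}}$. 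Uniform boundedness of $A_\tau^{\pm 1}$ and $e^{\pm 2\pi i\Phi}$ on weighted Sobolev spaces transfers the bound to $u$. The hardest part will be Step 2: standard symbolic calculus produces a remainder that is one differential order lower but is not automatically decaying in $\tau$. Manufacturing the $|\tau|^{-1}$ gain forces the PDO calculus to be set up very carefully, with symbol classes adapted to the large parameter, weighted $L^{2}_{\pm\delta}$ mapping bounds verified at each stage, and the exponential symbol $a_\tau$ controlled simultaneously for transversal frequencies $|k|$ both below and above the scale of $|\tau|$.
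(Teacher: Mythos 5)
Your overall strategy has the same flavor as the paper's---conjugate the weighted magnetic operator to the weighted Laplacian via an exponential-type symbol and then invoke the free Carleman estimate of \cite{KSaU1}---but your particular decomposition (Step 1's gauge transformation to kill the $O(\tau)$ term, followed by a PDO conjugation in Step 2) introduces a genuine gap that does not appear in the paper, which treats $F$ and $G$ simultaneously inside a single symbol equation.

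The gauge transformation $u = e^{2\pi i\Phi}\wilde{u}$ replaces $V=(F,G)$ by $V'=(0,G')$ with $G' = G + \nabla_{x'}\Phi$, and this \emph{destroys the vanishing-moment hypothesis} \eqref{eqn: conditions} that the entire symbol construction of Chapter 4 depends on. One computes $\int_{\bR}\Phi(x_{1},x')\,dx_{1} = \int_{\bR}sF(s,x')\,ds$, a first moment of $F$ which is not assumed to vanish, so $\int_{\bR}G'\,dx_{1} = \nabla_{x'}\int_{\bR}sF\,ds \neq 0$ in general. After expanding in a Fourier series in $x'$, the symbol ODE you would need to solve reads
\[
(\xi+i)D_{x_{1}}u_{m} + \hbar t\cdot m\,u_{m} = \hbar t\cdot G'_{m},
\]
and Lemma \ref{lemma: ODE_vanishing} (the $L^{1}$ estimate \emph{uniform in the coefficient} $\hbar t\cdot m$, which can be arbitrarily small near the characteristic set $\{\xi=0,\,|\hbar t|=1\}$) requires the right-hand side to be of the form $D_{x_{1}}H$ with $H\in\cS(\bR)$; that is exactly the vanishing moment. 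Without it you fall back on Lemma \ref{lemma: ODE_original}, whose bound carries the factor $\langle\xi\rangle/|\hbar t\cdot m|$, which blows up. Worse, the solution $u_{m}$ then no longer decays in $x_{1}$, and it is precisely this decay (estimate \eqref{eqn: delta_bar_decay}, discussed at length in the remarks after Theorem \ref{thm: delta_bar_Fourier_ODE}) that is used in Section 4.4 to give the remainder $R$ the extra $\langle x_{1}\rangle^{2\delta}$ gain mapping $L^{2}_{-\delta}(T)\to L^{2}_{\delta}(T)$. Concretely, the contribution $r_{2}$ coming from the cutoff $\psi'(\hbar^{\theta}x_{1})$ would then only satisfy $|\langle x_{1}\rangle^{2\delta}r_{2}|\cleq\hbar^{1+\theta-2\delta\theta}$, whose exponent is $\leq 1$ once $\delta>1/2$; your Step 3, which needs $\|R_{\tau}\|_{H^{1}_{-\delta}\to L^{2}_{\delta}}=O(|\tau|^{-1})$, therefore does not close.

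Two further issues, smaller but not cosmetic. First, the ansatz $a_{\tau}\sim\exp\bigl(-\tfrac{2\pi}{\tau}\int_{-\infty}^{x_{1}}G'\cdot\xi'\,ds\bigr)$ is not a zero-order symbol: the exponent is unbounded in the dual toroidal variable, so before anything else you must localize near the characteristic set and to the region $|x_{1}|\leq 2\hbar^{-\theta}$ (the paper's $\phi_{0}$ and $\psi(\hbar^{\theta}x_{1})$ do exactly this, and the second cutoff is what produces the $r_{2}$ error above). Second, your choice of exponent only cancels the $(\xi+i)D_{x_{1}}$-part of the transport equation; the term $\hbar t\cdot D_{x'}u$ is left over, and on the characteristic set it is of size $O(1)$, not $o(1)$, so it cannot be consigned to the remainder---the paper's symbol equation \eqref{eqn: delta_bar_equation} and the ellipticity split (which is why there is a \emph{different} operator $B$ on the right of the conjugation identity, not a similarity transform by a single $A$) are both designed to handle this.
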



In the next chapter we introduce the weighted Sobolev spaces 
$L^{2}_{\delta}(T)$ and $H^{s}_{-\delta}(T)$.
The solution to this equation is based on a reduction to the case of the
Laplacian, i.e. when there are no electromagnetic potentials. The gain of 
one derivative in the estimate, meaning the constant $\tau^{-1}$, allows 
to deduce the estimate of Theorem \ref{thm: carleman_full}
in the presence of an electric potential alone through perturbative methods. The 
reconstruction procedure of the electric potential has been given in \cite{N}
for the Euclidean case and in \cite{KSaU2} for the cylindrical case. 
However, the gain of one derivative is not enough to deal with the magnetic potential 
beyond the perturbative regime, i.e. when the norm of the magnetic potential may
not be small. Following the ideas in \cite{NkU}, \cite{NkSU}, and especially \cite{Sa1},
we prove this by conjugating the equation through pseudodifferential operators in 
order to ``essentially eliminate'' the magnetic potential. To do this we consider the 
small parameter $\hb = \tau^{-1}$ and use the results for semiclassical analysis on $\bR$.


\subsection{Structure of the thesis}


In \textit{Chapter 2. Preliminaries} we recall some definitions and results on Fourier analysis, 
introduce the function spaces that will appear through the problem, and present the basic facts
necessary to formulate the magnetic Schr$\ddot{\textrm{o}}$dinger inverse problem.

\bigskip
\noindent
In \textit{Chapter 3. Semiclassical pseudodifferential operators over $\bR \times \bT^{d}$} we 
define these operators over $T$ and prove the usual results specific to our cylindrical setting. 
These results do not seem to be explicitly stated in the standard 
references, see \cite{T2} or \cite{Z}, so, we elaborate the necessary theory for it. For zero 
order pseudodifferential operators we prove an analog of the Calder\'on--Vaillancourt 
$L^{2}$--boundedness theorem, as well as a norm estimate for the first order expansion 
of the composition of two such operators. 

\bigskip
In \textit{Chapter 4. Conjugation and Carleman estimate} we carry out the construction 
of the conjugation as well as the proof of 
Theorem \ref{thm: carleman_full}. The conjugation requires the solution of a first order
differential equation, together with the appropriate estimates. In our cylindrical setting,
through the expansion in Fourier series, this equation can be reduced to the solution of 
multiple ODEs. The ideas follow closely the results from \cite{Sa1}.

\bigskip
In \textit{Chapter 5. Equivalent formulations and boundary characterization} we use 
Theorem \ref{thm: carleman_full} to construct many solutions of the equation $H_{V,W}u = 0$.
Starting from a harmonic solution, we construct a unique solution (CGO) to the equation that  
``behaves like'' it at infinity. We show that the uniqueness at infinity implies a uniqueness property
at the boundary, and so the boundary values of the CGOs can be characterized as solutions 
to boundary integral equations involving only the knowledge of the DN map $\Lambda_{V,W}$ 
and not the unknown electromagnetic potentials. We follow the presentation from 
\cite{KSaU2}.

\bigskip
In \textit{Chapter 6. Reconstruction of the magnetic field} we restrict the attention to
CGOs that result from correcting the harmonic functions $e^{\pm 2\pi|m|x_{1}}e_{m}(x')$. 
We prove that such CGOs can also be written in the form 
$e^{\pm 2\pi|m|x_{1}}e_{m}(x')a_{m} + e^{-2\pi\tau x_{1}}r_{m,\tau}$, for an 
appropriate amplitude $a_{m}$ making the correction term have better estimates. 
Then we define an analog of the scattering transform from \cite{N} and \cite{Sa1},
and use it together with the correction estimates to obtain integrals that are basically
a mixed (Laplace--Fourier) transform of terms involving the magnetic potential. Finally, 
we show that it is possible to recover the magnetic field $\curl V$ from these integrals. 
These steps require some linear algebra lemmas over $\bQ$ and the reconstruction 
formula for an entire function, which we prove in the appendix of the chapter. This is
perhaps the most interesting chapter: not only the methods require playful ideas, 
but the results obtained are somewhat different from analogous previous ones.


\section{Preliminaries}


Consider the cylinder $T = \bR \times \bT^{d}$ with 
standard product metric $g = e \oplus g_{0}$. The points
in $T$ are denoted by $x = (x_{1},x')$, meaning that 
$x_{1} \in \bR$ and $x' \in \bT^{d}$. 
Let $(M,g) \sse T$ is a smooth connected compact 
$(d+1)$-submanifold with boundary $\pa M$. We denote 
the volume element in $T$ and $M$ by $dx = dx_{1}dx'$ and 
the surface measure in $\pa M$ by $d\sigma$.

\bigskip
Let $D_{x_{1}} = \pa_{x_{1}}/(2\pi i)$ and $D_{x'} = \nabla_{x'}/(2\pi i)$, 
and define the gradient $D := (D_{x_{1}},D_{x'})$ and Laplacian 
$-\Delta_{g} := D^{2} = D_{x_{1}}^{2} + D_{x'}^{2}$.
For a multiindex $\alpha = (\alpha_{1},\alpha') 
= (\alpha_{1},\alpha'_{1},\ldots,\alpha'_{d})$,
we denote $|\alpha| = \alpha_{1} + \alpha'_{1} + \ldots + \alpha'_{d}$
and $D^{\alpha} = D_{x_{1}}^{\alpha_{1}}
D_{x'_{1}}^{\alpha'_{1}}\ldots D_{x'_{d}}^{\alpha'_{d}}$.

\bigskip
In what follows we define several functions spaces over 
$T$, and we mention when the 
definitions allow for analogous spaces over $\bR$, $\bT^{d}$, 
or $M$. Most of the definitions and results from this chapter can 
be found in \cite{Sa1}, \cite{T1}, \cite{T2}, 
\cite{Z}.


\subsection{Fourier analysis and distributions}


\subsubsection{Distributions and Sobolev spaces}

We consider the space of smooth compactly supported functions
$\cD(T) := C^{\infty}_{c}(T)$ with the family of seminorms 
\[
\|f\|_{k,l} = \sup\{|D^{\alpha}f(x_{1},x')| : |x_{1}| \leq k, \
x' \in \bT^{d}, \ |\alpha| \leq l\},
\]

with $k,l \in \bN$. We say a linear functional $\varphi: \cD(T) \ra \bC$
is continuous, if for all $k \in \bN$ there exist
$l \in \bN$ and $C > 0$, both possibly depending of $k$, such
that $|\lan \varphi, f\ran| \leq C\|f\|_{k,l}$ for all $f \in \cD(T)$. 
We call distributions to these functionals and denote
its space by $\cD'(T)$.

\bigskip
In addition, we define the space of Schwartz functions $\cS(T)$ 
as the space of rapidly decaying smooth functions with the
family of seminorms 
\[
\|f\|_{k} = \sup\{\lan x_{1}\ran^{k}
|D^{\alpha}f(x_{1},x')| : (x_{1},x') \in T, \ 0 \leq |\alpha| \leq k \},
\]

with $k \in \bN$. We say that $f_{j} \ra f$ 
in $\cS(T)$ if $\|f_{j} - f\|_{k} \ra 0$ for all $k$. We say a linear 
functional $\varphi: \cS(T) \ra \bC$ is continuous if 
$\lan \varphi, f_{j}\ran \ra \lan \varphi,f\ran$ whenever
$f_{j} \ra f$ in $\cS(T)$. We call tempered distributions 
to these functionals and denote its space by $\cS'(T)$.
We define that $\varphi_{j} \wra \varphi$ in $\cS'(T)$ if 
$\lan \varphi_{j}, f\ran \ra \lan \varphi, f\ran$ for all $f \in \cS(T)$. 
A well--known result in functional analysis says that if 
$\varphi \in \cS'(T)$, then there exist $k \in \bN$ and $C > 0$ such 
that $|\lan \varphi, f\ran| \leq C\|f\|_{k}$ for all $f \in \cS(T)$.
The space of tempered distributions $\cS'(T)$ is a subspace of
the distributions $\cD'(T)$. The definitions of the spaces $\cS(\bR)$ 
and $\cS'(\bR)$ are analogous. 

\bigskip
For $1 \leq p \leq \infty$, let $L^{p}(T) = L^{p}(T,dx_{1}dx')$ denote 
the standard $L^{p}$ space in $T$. For a nonnegative integer $s$, we 
consider the $L^{p}$ Sobolev spaces $W^{s,p}(T)$  with norm given by
$\|f\|_{W^{s,p}(T)} 
:= \sum_{|\alpha| \leq s}\|D^{\alpha}f\|_{L^{p}(T)}$.
Similarly, we also consider the spaces $W^{s,p}(\bR)$ and 
$W^{s,p}(\bT^{d})$.


\subsubsection{Fourier analysis on smooth functions}


For a function $f \in L^{1}(\bR)$ we define its Fourier 
transform by 
$\what{f}(\xi) := \int_{\bR}e^{-2\pi ix_{1}\xi}f(x_{1})dx$.


\begin{proposition}[\cite{T1}, \cite{Z}]
\label{propn: Fourier_R}
If $f \in \cS(\bR)$, then its Fourier transform $\what{f}$
satisfies the following:
\begin{enumerate}[label=\alph*).]
\item the transform and its derivatives have polynomial decay bounds 
\[
|D_{\xi}^{\alpha}\what{f}(\xi)| 
\cleq \lan \xi\ran^{-2m}\|x_{1}^{\alpha}f\|_{W^{2m,1}(\bR)},
\]
where $\lan \xi\ran := (1 + \xi^{2})^{1/2}$ and the constant 
of the inequality may depend on $m$,
\item $\what{f} \in \cS(\bR)$ and we have the
inversion formula 
$f(x_{1}) = \int_{\bR}e^{2\pi ix_{1}\xi}\what{f}(\xi)d\xi$,
with pointwise absolute uniform convergence, as well as for 
its derivatives,
\item Plancherel's identity holds,
$\|f\|_{L^{2}(\bR)} = \|\what{f}\|_{L^{2}(\bR)}$.
\end{enumerate}
\end{proposition}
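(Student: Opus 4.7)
The plan is to derive everything from two commutation identities: $\what{D_{x_{1}}f}(\xi) = \xi\what{f}(\xi)$, valid by integration by parts since the boundary terms vanish by Schwartz decay, and $D_{\xi}\what{f}(\xi) = \what{(-x_{1})f}(\xi)$, valid by differentiation under the integral via dominated convergence. Iterating these yields
\[
\xi^{\beta}D_{\xi}^{\alpha}\what{f}(\xi) = \what{D_{x_{1}}^{\beta}\bigl((-x_{1})^{\alpha}f\bigr)}(\xi),
\]
and then the trivial bound $|\what{g}(\xi)| \leq \|g\|_{L^{1}(\bR)}$ produces $|\xi^{\beta}D_{\xi}^{\alpha}\what{f}(\xi)| \leq \|D^{\beta}(x_{1}^{\alpha}f)\|_{L^{1}(\bR)}$. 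Expanding $\lan\xi\ran^{2m} = (1+\xi^{2})^{m}$ and summing the resulting terms bounds $\lan\xi\ran^{2m}|D_{\xi}^{\alpha}\what{f}(\xi)|$ by a constant (depending on $m$) times $\|x_{1}^{\alpha}f\|_{W^{2m,1}(\bR)}$, which is exactly part (a). Note $x_{1}^{\alpha}f \in \cS(\bR) \sse W^{2m,1}(\bR)$, so the right-hand side is finite for all $\alpha, m$.

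Schwartzness in part (b) is immediate from (a): varying $\alpha$ and $m$ shows that $D^{\alpha}\what{f}$ decays faster than any polynomial, so $\what{f} \in \cS(\bR)$. For the inversion formula, I would introduce the Gaussian regulator $g_{\epsilon}(\xi) := e^{-\pi\epsilon^{2}\xi^{2}}$, which satisfies $\what{g_{\epsilon}}(x_{1}) = \epsilon^{-1}e^{-\pi x_{1}^{2}/\epsilon^{2}}$ by a standard direct computation, and apply Fubini to write
\[
\int_{\bR}e^{2\pi ix_{1}\xi}\what{f}(\xi)g_{\epsilon}(\xi)\,d\xi
= \int_{\bR}f(y)\,\epsilon^{-1}e^{-\pi(x_{1}-y)^{2}/\epsilon^{2}}\,dy.
\]
As $\epsilon \ra 0^{+}$ the right-hand side is convolution of $f$ with an approximate identity and converges uniformly in $x_{1}$ to $f(x_{1})$ by the Schwartz regularity of $f$, while the left-hand side converges to $\int_{\bR}e^{2\pi ix_{1}\xi}\what{f}(\xi)\,d\xi$ by dominated convergence, with dominating function $|\what{f}|$, integrable thanks to (a) with $m \geq 1$. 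The analogous pointwise-uniform statement for derivatives follows by applying the inversion formula just proven to $D_{x_{1}}^{\alpha}f \in \cS(\bR)$ and recognizing $\what{D_{x_{1}}^{\alpha}f}(\xi) = \xi^{\alpha}\what{f}(\xi)$.

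Part (c) then follows from the duality identity $\int_{\bR}\what{f}g\,d\xi = \int_{\bR}f\what{g}\,dx_{1}$, which for Schwartz $f,g$ is a one-line application of Fubini. Applying this with $g(\xi) := \overline{\what{f}(\xi)}$ and using the inversion formula of (b) together with $\overline{\what{h}(\xi)} = \what{\bar{h}}(-\xi)$ to deduce $\what{g}(x_{1}) = \overline{f(x_{1})}$, one obtains
\[
\int_{\bR}|\what{f}|^{2}\,d\xi
= \int_{\bR}\what{f}\,\overline{\what{f}}\,d\xi
= \int_{\bR}f\,\overline{f}\,dx_{1}
= \|f\|_{L^{2}(\bR)}^{2},
\]
which is Plancherel's identity.

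The only genuine obstacle is the inversion formula: the integral defining the inverse transform does not make sense in the raw $L^{1}$ framework, so one must regularize (the Gaussian mollifier above, or an equivalent device). For Schwartz $f$, however, part (a) already makes $\what{f}$ absolutely integrable, so the mollifier step really only serves to pin down the correct normalization constant rather than to make sense of divergent integrals — making this the tamest of the three pieces in practice.
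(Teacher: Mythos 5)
Your proof is correct. Note that the paper does not supply its own proof of this proposition — it is stated with citations to the textbooks \cite{T1} and \cite{Z} — so there is no internal argument to compare against. That said, the technique you use for part (a), namely iterating the commutation identities $\what{D_{x_1}f}(\xi)=\xi\what{f}(\xi)$ and $D_\xi\what{f}=\what{(-x_1)f}$ to get $\xi^\beta D_\xi^\alpha\what{f}=\what{D_{x_1}^\beta((-x_1)^\alpha f)}$ and then applying the trivial $L^1\to L^\infty$ bound, is exactly the mechanism the paper deploys in its own proof of the cylindrical analogue, Proposition~\ref{propn: Fourier_cylinder}, where the identities $D_\xi^\alpha e^{-2\pi i x_1\xi}=(-x_1)^\alpha e^{-2\pi ix_1\xi}$ and $\lan\xi,k\ran^{2m}(e^{-2\pi ix_1\xi}e_{-k})=\lan D\ran^{2m}(e^{-2\pi ix_1\xi}e_{-k})$ are followed by integration by parts. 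For (b) and (c) the paper's proof of Proposition~\ref{propn: Fourier_cylinder} simply invokes Propositions~\ref{propn: Fourier_R} and~\ref{propn: Fourier_torus}, so your Gaussian-regulator derivation of inversion and the duality-plus-inversion derivation of Plancherel are the standard textbook fill-ins the paper is implicitly relying on. Your closing remark is also accurate: for $f\in\cS(\bR)$ part (a) already gives absolute convergence of $\int e^{2\pi ix_1\xi}\what{f}(\xi)\,d\xi$, so the mollifier's role is to identify the limit as $f(x_1)$ (and fix the normalization), not to regularize a divergent integral.
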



For $k \in \bZ^{d}$, let $e_{k}(x') := e^{2\pi ik\cdot x'}$. 
For a function $f \in L^{1}(\bT^{d})$ we define its $k$-th Fourier 
coefficient by $f_{k} := \int_{\bT^{d}}e_{-k}(x')f(x')dx'$.


\begin{proposition}[\cite{T1}]
\label{propn: Fourier_torus}
If $f \in C^{\infty}(\bT^{d})$, then its Fourier coefficients and 
series satisfy the following:
\begin{enumerate}[label=\alph*).]
\item the coefficients have polynomial decay bound 
\[
|f_{k}| \cleq \lan k\ran^{-2m}\|f\|_{W^{2m,1}(\bT^{d})},
\]
where $\lan k\ran := (1 + |k|^{2})^{1/2}$ and the constant of the
inequality may depend on $m$ and $d$,
\item there is pointwise absolute uniform convergence of the 
Fourier series $f(x') = \sum_{k \in \bZ^{d}}f_{k}e_{k}(x')$,
as well as for of its derivatives,
\item Plancherel's identity holds, 
$\|f\|_{L^{2}(\bT^{d})}^{2} = \sum_{k \in \bZ^{d}}|f_{k}|^{2}$.
\end{enumerate}
\end{proposition}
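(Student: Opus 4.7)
All three parts rest on the eigenvalue identity $D_{x'}^{2}e_{-k}=|k|^{2}e_{-k}$, which gives $(1+D_{x'}^{2})^{m}e_{-k}=\lan k\ran^{2m}e_{-k}$, together with the completeness of the trigonometric system in $L^{2}(\bT^{d})$. The structure mirrors the proof of Proposition \ref{propn: Fourier_R} for $\bR$, with integration by parts replacing the factors $(1+\xi^{2})$.

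For (a) I would write
\[
\lan k\ran^{2m}f_{k}=\int_{\bT^{d}}\bigl[(1+D_{x'}^{2})^{m}e_{-k}(x')\bigr]\,f(x')\,dx'
\]
and integrate by parts $2m$ times. Since $\bT^{d}$ has no boundary, no boundary terms appear and all derivatives move onto $f$; the result is $\int_{\bT^{d}}e_{-k}(x')\,(1+D_{x'}^{2})^{m}f(x')\,dx'$, which is bounded in absolute value by $\|(1+D_{x'}^{2})^{m}f\|_{L^{1}(\bT^{d})}\cleq\|f\|_{W^{2m,1}(\bT^{d})}$, with constant depending only on $m$ and $d$.

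For (b) I would apply (a) with $m$ large enough that $2m>d$. Then $\sum_{k\in\bZ^{d}}\lan k\ran^{-2m}<\infty$, so the Weierstrass M-test gives that $\sum_{k}f_{k}e_{k}(x')$ converges absolutely and uniformly to some continuous function $F$. To identify $F$ with $f$, I would invoke the completeness of the trigonometric system in $L^{2}(\bT^{d})$: the symmetric partial sums converge to $f$ in $L^{2}$, while by construction they also converge to $F$ uniformly, hence in $L^{2}$; so $f=F$ a.e., and then everywhere by continuity. The same argument applied to $D^{\alpha}f\in C^{\infty}(\bT^{d})$, combined with the identity $(D^{\alpha}f)_{k}=k^{\alpha}f_{k}$ (another integration by parts), yields uniform convergence of the termwise differentiated series.

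For (c), from (b) we may substitute the uniformly convergent series for $f$ inside $\|f\|_{L^{2}(\bT^{d})}^{2}=\int_{\bT^{d}}\overline{f}f\,dx'$ and interchange sum and integral. Orthonormality $\int_{\bT^{d}}e_{-k}e_{j}\,dx'=\delta_{kj}$ then collapses the double sum to $\sum_{k}|f_{k}|^{2}$.

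The only non-elementary ingredient is the $L^{2}$-completeness of $\{e_{k}\}_{k\in\bZ^{d}}$ used in (b); everything else reduces to bookkeeping with integration by parts and the Weierstrass M-test. Completeness is classical and can be obtained from Stone--Weierstrass or a Fej\'er kernel argument on $\bT^{d}$, and can be cited from the references given in the chapter.
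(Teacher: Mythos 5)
Your argument is correct, and the paper itself does not prove this proposition (it is stated with the citation to Taylor's book and the proof is left to the reference). That said, the technique you use for part (a) — hitting $e_{-k}$ with $(1+D_{x'}^{2})^{m}=\lan D_{x'}\ran^{2m}$ and integrating by parts without boundary terms — is precisely the device the paper deploys one result later, in the proof of Proposition \ref{propn: Fourier_cylinder}, to estimate the Fourier coefficients $f_{k}(x_{1})$ on the cylinder. Parts (b) and (c) via the Weierstrass M-test plus $L^{2}$-completeness and orthonormality are the standard route, so your proof is sound and consistent with the paper's style.
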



Similarly, for a function $f \in \cS(T)$ we define its $k$-th Fourier coefficient 
by $f_{k}(x_{1}) := \int_{\bT^{d}}e_{-k}(x')f(x_{1},x')dx'$. 
The previous results can be combined as follows.


\begin{proposition}
\label{propn: Fourier_cylinder}
If $f \in \cS(T)$, then its Fourier coefficients $f_{k}$ are in $\cS(\bR)$. 
Moreover, these satisfy the following:
\begin{enumerate}[label=\alph*).]
\item the coefficients have polynomial decay bounds 
\[
\|f_{k}\|_{L^{1}(\bR)} \cleq \lan k\ran^{-2m}\|f\|_{W^{2m,1}(T)},
\]
where the constant of the
inequality may depend on $m$ and $d$,
\item the transform of the coefficients and its derivatives have polynomial 
decay bounds 
\[
|D_{\xi}^{\alpha}\what{f_{k}}(\xi)| 
\cleq \lan \xi, k \ran^{-2m}\|x_{1}^{\alpha}f\|_{W^{2m,1}(T)},
\]
where $\lan \xi, k\ran := (1 + \xi^{2} + |k|^{2})^{1/2}$ and the 
constant of the inequality may depend on $m$ and $d$,
\item the inversion formula holds,
\[
f(x_{1},x') = \sum_{k \in \bZ^{d}}f_{k}(x_{1})e_{k}(x')
= \sum_{k \in \bZ^{d}}\int_{\bR}
e^{2\pi ix_{1}\xi}e_{k}(x')\what{f_{k}}(\xi)d\xi,
\]
with pointwise absolute uniform convergence, as well as for its derivatives,
\item Plancherel's identity holds, 
$\|f\|_{L^{2}(T)}^{2} = \sum_{k \in \bZ^{d}}\|f_{k}\|_{L^{2}(\bR)}^{2} 
= \sum_{k \in \bZ^{d}}\|\what{f_{k}}\|_{L^{2}(\bR)}^{2}$. 
\item for any $k \in \bZ^{d}$, the function $f_{k}(x_{1})e_{k}(x')$ 
is in $\cS(T)$, and we have that 
\[
\|f_{k}e_{k}\|_{l} \cleq \lan k\ran^{-(2m - l)}\|f\|_{l + 2m},
\]
where the constant may depend on $l$, $m$, and $d$.
Moreover, the partial sums of the Fourier series,
$S_{N}f(x_{1},x') := \sum_{|k| \leq N}f_{k}(x_{1})e_{k}(x')$,
converge to $f$ in $\cS(T)$,
\end{enumerate}
\end{proposition}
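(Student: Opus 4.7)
The plan is to reduce all five parts to iterated applications of the one-variable Propositions~\ref{propn: Fourier_R} and~\ref{propn: Fourier_torus}, treating the $x_{1}$ and $x'$ directions separately. For (a), I apply Proposition~\ref{propn: Fourier_torus}(a) to $f(x_{1},\cdot) \in C^{\infty}(\bT^{d})$ for each fixed $x_{1}$, obtaining $|f_{k}(x_{1})| \cleq \lan k\ran^{-2m}\|f(x_{1},\cdot)\|_{W^{2m,1}(\bT^{d})}$, and then integrate over $x_{1}\in\bR$, exchanging with the finite sum defining the Sobolev norm to conclude. For (b), I begin with the identity $D_{\xi}^{\alpha}\what{f_{k}}(\xi) = \what{(-x_{1})^{\alpha}f_{k}}(\xi)$ and apply Proposition~\ref{propn: Fourier_R}(a) to get a $\lan\xi\ran^{-2m}$ decay bound in terms of $\|x_{1}^{\alpha}f_{k}\|_{W^{2m,1}(\bR)}$. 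Independently, inside the defining integral $f_{k}(x_{1})=\int_{\bT^{d}}e_{-k}(x')f(x_{1},x')\,dx'$, integrating by parts $2m$ times in $x'$ against $e_{-k}$ (using $D_{x'_{j}}e_{-k} = -k_{j}e_{-k}$) produces an extra factor of $\lan k\ran^{-2m}$. Combining the two yields a bound of order $\lan\xi\ran^{-2m}\lan k\ran^{-2m}$, and the claim follows from the elementary inequality $\lan\xi\ran^{2m}\lan k\ran^{2m} \geq (1+\xi^{2}+|k|^{2})^{m} = \lan\xi,k\ran^{2m}$.

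For (c) and (d), the identity follows by applying the torus inversion (Proposition~\ref{propn: Fourier_torus}(b)) pointwise in $x_{1}$, and then the line inversion (Proposition~\ref{propn: Fourier_R}(b)) to each $f_{k}$. Absolute uniform convergence of both expansions follows from (a) and (b): once $m$ is chosen so that $2m > d+1$, the sums $\sum_{k}\lan k\ran^{-2m}$ and $\sum_{k}\int_{\bR}\lan\xi,k\ran^{-2m}\,d\xi$ are finite and serve as majorants. Termwise differentiation of the representations is handled by applying the same estimates to $D^{\alpha}f\in\cS(T)$, which is still Schwartz. Part (d) follows from Fubini (justified by non-negativity of the integrand) combined with the line and torus Plancherel identities.

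For (e), a direct computation gives $D^{\alpha}(f_{k}e_{k}) = k^{\alpha'}(D_{x_{1}}^{\alpha_{1}}f_{k})e_{k}$ whenever $|\alpha|\leq l$. I commute $D_{x_{1}}^{\alpha_{1}}$ past the torus integral defining $f_{k}$ and integrate by parts $2m$ times in $x'$ against $e_{-k}$, which extracts the factor $\lan k\ran^{-2m}$ at the cost of producing an integrand of the form $D^{\beta}f(x_{1},x')$ with $|\beta|\leq 2m+\alpha_{1}\leq 2m+l$. Bounding this integrand by $\lan x_{1}\ran^{-(2m+l)}\|f\|_{l+2m}$, the weight $\lan x_{1}\ran^{l}$ is absorbed into the excess decay, and using $|k^{\alpha'}| \leq \lan k\ran^{l}$ yields the claimed seminorm estimate. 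Convergence $S_{N}f \to f$ in $\cS(T)$ then follows, because for any fixed $l$, choosing $m$ with $2m-l > d$ makes $\|f-S_{N}f\|_{l} \leq \sum_{|k|>N}\|f_{k}e_{k}\|_{l}$ the tail of a convergent series. The only substantive step in the entire proof is the two-direction combination in (b); once the inequality $\lan\xi\ran\lan k\ran \geq \lan\xi,k\ran$ is observed, the rest is careful bookkeeping on top of Propositions~\ref{propn: Fourier_R} and~\ref{propn: Fourier_torus}.
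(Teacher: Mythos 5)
Your proof is correct and takes essentially the same approach as the paper: establish (a) and (b) by integration by parts against $e_{-k}$ and against the exponential kernel, deduce (c) and (d) from absolute convergence plus the one-variable inversion and Plancherel results, and prove (e) by tracking the seminorms.

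The one genuine deviation is in (b). The paper applies a \emph{single} integration by parts using $\lan \xi, k \ran^{2m}(e^{-2\pi ix_1\xi}e_{-k}) = \lan D\ran^{2m}(e^{-2\pi ix_1\xi}e_{-k})$, which passes the full elliptic operator $\lan D\ran^{2m}$ onto $x_1^\alpha f$ in one step and yields the stated bound with $\|x_1^\alpha f\|_{W^{2m,1}(T)}$. You instead perform two separate integrations by parts, one extracting $\lan \xi\ran^{-2m}$ (via Proposition \ref{propn: Fourier_R} applied to $f_k$) and one extracting $\lan k\ran^{-2m}$ (via Proposition \ref{propn: Fourier_torus}), then combine with $\lan\xi\ran\lan k\ran \geq \lan \xi,k\ran$. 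This is a clean idea, but note that the two steps \emph{stack}: the $\bR$--Sobolev norm $\|x_1^\alpha f_k\|_{W^{2m,1}(\bR)}$ produced by the first step must itself be bounded by $\lan k\ran^{-2m}$ times an $x'$--Sobolev norm of $2m$ more derivatives, so your final bound has $\|x_1^\alpha f\|_{W^{4m,1}(T)}$ on the right rather than $\|x_1^\alpha f\|_{W^{2m,1}(T)}$. This is a strictly weaker estimate than the one stated in the proposition. For a Schwartz $f$ the distinction is immaterial to every later application, but if you want the estimate exactly as written you should do the integration by parts with $\lan D\ran^{2m}$ acting simultaneously in both variables, as the paper does.
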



\begin{proof}
For $\alpha \leq m$ we have that
\[
\lan x_{1}\ran^{m}|D_{x_{1}}^{\alpha}f_{k}|
\leq \|\lan x_{1}\ran^{m}D_{x_{1}}^{\alpha}
f(x_{1},\cdot)\|_{L^{1}(\bT^{d})}
\leq \|f\|_{m},
\]

where $\|f\|_{m}$ is the seminorm defined above 
for functions in $\cS(T)$. This proves 
that the Fourier coefficients $f_{k}$ are in $\cS(\bR)$. 
Moreover, using the identity 
$\lan k\ran^{2m}e_{-k}(x') = \lan D_{x'}\ran^{2m}e_{-k}(x')$ 
and integrating by parts yields that
\begin{align*}
\lan k\ran^{2m}|f_{k}(x_{1})| 
&= \biggl|\int_{\bT^{d}} \lan D_{x'}\ran^{2m}(e_{-k}(x'))f(x_{1},x')dx'\biggr| \\
& = \biggl|\int_{\bT^{d}} e_{-k}(x')\lan D_{x'}\ran^{2m}f(x_{1},x')dx'\biggr|
\cleq \int_{\bT^{d}}|\lan D_{x'}\ran^{2m}f(x_{1},x')|dx'.
\end{align*}

Integrating over $\bR$ gives the first result. Similarly,
using the identities
\[
D_{\xi}^{\alpha}e^{-2\pi ix_{1}\xi} 
= (-x_{1})^{\alpha}e^{-2\pi ix_{1}\xi}, \ \  
\lan \xi, k\ran^{2m}(e^{-2\pi ix_{1}\xi}e_{-k}(x'))
= \lan D\ran^{2m}(e^{-2\pi ix_{1}\xi}e_{-k}(x')),
\]

and integrating by parts we conclude that
\begin{align*}
\lan \xi, k\ran^{2m}|D_{\xi}^{\alpha}\what{f_{k}}(\xi)|
& = \biggl|\int_{T}\lan D \ran^{2m}(e^{-2\pi ix_{1}\xi}e_{-k}(x'))
x_{1}^{\alpha}fdx_{1}dx'\biggr| \\
& = \biggl|\int_{T}e^{-2\pi ix_{1}\xi}e_{-k}(x')
\lan D \ran^{2m}(x_{1}^{\alpha}f)dx_{1}dx'\biggr|
\cleq \|x_{1}^{\alpha}f\|_{W^{2m,1}(T)}. 
\end{align*}

Moreover, this result proves that the Fourier decomposition 
converges absolutely, and so the inversion and Plancherel 
formulas follow from Proposition \ref{propn: Fourier_R} and 
Proposition \ref{propn: Fourier_torus}. Finally, we can bound 
\[
\|f_{k}e_{k}\|_{l} 
= \sup\{\lan x_{1}\ran^{l}|D^{\alpha}(f_{k}e_{k})| 
: |\alpha| \leq l\}
\cleq \lan k\ran^{\l}
\sup\{\lan x_{1}\ran^{l}|D_{x_{1}}^{\alpha_{1}}f_{k}| 
: \alpha_{1} \leq l\} \cleq \lan k\ran^{-(2m - l)}\|f\|_{l + 2m},
\]

where we have used Proposition \ref{propn: Fourier_torus} for the 
last step. The convergence of the partial sums in $\cS(T)$ follows 
from this.
\end{proof}


\subsubsection{Fourier analysis on tempered distributions}


From Fubini's theorem we have that 
$\int_{\bR}f\what{g} = \int_{\bR}\what{f}g$,
for $f,g \in \cS(\bR)$. This suggests to define the Fourier
transform of $\varphi \in \cS'(\bR)$ by 
\[
\lan \what{\varphi}, f\ran := \lan \varphi, \what{f}\ran.
\] 

To see indeed that $\what{\varphi} \in \cS'(\bR)$ we use
Proposition \ref{propn: Fourier_R} to get that $\what{f}_{n} \ra \what{f}$
in $\cS(\bR)$ if $f_{n} \ra f$ in $\cS(\bR)$. It is clear that
this definition extends the above definition of Fourier transform
in $\cS(\bR)$. 

\bigskip
Finally, we proceed to define the Fourier coefficients of a tempered 
distribution. Let us consider the operators
$\pi_{k} : \cS(T) \ra \cS(\bR)$ and $\psi_{k} : \cS(\bR) \ra \cS(T)$
given by $\pi_{k}f := f_{k}$ and
$\psi_{k}g := g(x_{1})e_{k}(x')$.
The Fourier inversion formula on $\cS(T)$ 
can be written formally as $I = \sum_{k \in \bZ^{d}}\psi_{k}\pi_{k}$.
Moreover, proceeding as in the proof of Proposition \ref{propn: Fourier_cylinder} 
we have that
\[
\|\pi_{k}f\|_{l} \cleq \lan k\ran^{-2m}\|f\|_{l + 2m}, \ \ 
\|\psi_{k}g\|_{l} \cleq \lan k\ran^{l}\|g\|_{l},
\]

for all $l, m \geq 0$. 
By duality, this gives rise to the adjoint operators 
$\pi_{k}^{*} : \cS'(\bR) \ra \cS'(T)$ and
$\psi_{k}^{*}: \cS'(\bT) \ra \cS'(\bR)$, 
defined by 
\[
\lan \pi_{k}^{*}\phi, f\ran := \lan \phi, \pi_{k}f\ran, \ \
\lan \psi_{k}^{*}\varphi,g \ran := \lan \varphi, \psi_{k}g\ran.
\]

For a distribution $\varphi \in \cS'(T)$, we define its $k$-th Fourier
coefficient by $\varphi_{k} := \psi_{-k}^{*}\varphi \in \cS'(\bR)$.
This definition extends that of Fourier coefficients for functions in 
$\cS(T)$. Below, we prove the formal dual of the inversion formula 
above, which reads 
$I = \sum_{k \in \bZ^{d}}\pi_{k}^{*}\psi_{k}^{*}
= \sum_{k \in \bZ^{d}}\pi_{-k}^{*}\psi_{-k}^{*}$.


\begin{proposition}
\label{propn: Fourier_distributions}
Let $f \in \cS(T)$ and $\varphi \in \cS'(T)$. The Fourier 
coefficients satisfy the following:
\begin{enumerate}[label=\alph*).]
\item the usual differentiation properties hold, i.e. 
$(D_{x'}^{\alpha}\varphi)_{k} = k^{\alpha}\varphi_{k}$,
\item Parseval's identity holds, $\lan \varphi, \ov{f}\ran 
= \sum_{k \in \bZ^{d}}\lan \varphi_{k}, \ov{f_{k}}\ran$, 
with absolute convergence.
\item the partial sums of the Fourier series, 
$S_{N}\varphi = \sum_{|k| \leq N}\pi_{-k}^{*}\varphi_{k}$,
converge to $\varphi$ in $\cS'(T)$.
\end{enumerate}
\end{proposition}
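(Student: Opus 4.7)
The plan is to reduce all three assertions to the convergence $S_N f \to f$ in $\cS(T)$ from Proposition \ref{propn: Fourier_cylinder}(e), together with routine manipulation of the adjoint operators $\pi_k^*, \psi_k^*$ and the continuity of the tempered distribution $\varphi$ in the $\cS(T)$ seminorms $\|\cdot\|_m$. In particular, I would use the existence of constants $C, m_0$ such that $|\lan \varphi, h\ran| \leq C\|h\|_{m_0}$ for all $h \in \cS(T)$.

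For (a), I would just unfold the definition. For $g \in \cS(\bR)$, the distributional identity $\lan D_{x'}^{\alpha}\varphi, h\ran = (-1)^{|\alpha|}\lan \varphi, D_{x'}^{\alpha}h\ran$ combined with the eigenfunction relation $D_{x'}^{\alpha}e_{-k}(x') = (-k)^{\alpha}e_{-k}(x')$ produces two compensating sign factors, leaving $\lan (D_{x'}^{\alpha}\varphi)_{k}, g\ran = k^{\alpha}\lan \varphi_{k}, g\ran$, which is the claimed identity in $\cS'(\bR)$.

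For (b) and (c), the main manipulation is the pair of identities $\sum_{|k|\leq N}\lan \varphi_{k}, \ov{f_{k}}\ran = \lan \varphi, \ov{S_N f}\ran$ and $\lan S_N \varphi, f\ran = \lan \varphi, S_N f\ran$, each obtained by unfolding the definitions of $\psi_{-k}^{*}$ and $\pi_{-k}^{*}$, using the pointwise observation $\psi_{-k}(\ov{f_{k}}) = \ov{\psi_{k}f_{k}}$ for the first and a relabeling $k \mapsto -k$ in the sum for the second. Once these are in place, Proposition \ref{propn: Fourier_cylinder}(e) gives $S_N f \to f$ in $\cS(T)$, and the continuity of $\varphi$ passes the limit under the pairing, yielding both Parseval's identity in (b) and the distributional convergence $S_N \varphi \wra \varphi$ in $\cS'(T)$ of (c).

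The only non-formal point, and therefore the main obstacle, is the absolute convergence asserted in (b). Here I would combine the continuity bound above with the quantitative estimate $\|\psi_{k}f_{k}\|_{m_0} = \|f_{k}e_{k}\|_{m_0} \cleq \lan k\ran^{-(2m - m_0)}\|f\|_{m_0 + 2m}$ from Proposition \ref{propn: Fourier_cylinder}(e). Choosing $m$ large enough (say $2m - m_0 > d + 1$) makes the tail summable over $k \in \bZ^{d}$, which both delivers the absolute convergence in (b) and gives the a priori control on the partial sums needed to justify the passage to the limit in (c).
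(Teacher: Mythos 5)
Your proof is correct and follows essentially the same path as the paper's: unfold the definitions for (a), use the quantitative seminorm bound $\|f_{k}e_{k}\|_{l}\cleq \lan k\ran^{-(2m-l)}\|f\|_{l+2m}$ from Proposition \ref{propn: Fourier_cylinder}(e) together with the continuity estimate $|\lan\varphi,h\ran|\cleq\|h\|_{m_0}$ to get absolute convergence, and then pass to the limit via $S_Nf\to f$ in $\cS(T)$. The only cosmetic difference is in (c): the paper deduces it from Parseval's identity together with the relation $\ov{f_k}=(\ov{f})_{-k}$, whereas you prove the transposition identity $\lan S_N\varphi,f\ran = \lan \varphi, S_Nf\ran$ directly by relabeling $k\mapsto -k$; this is slightly more direct and avoids the detour through complex conjugation, but the substance is identical.
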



\begin{proof}
If $g \in \cS(\bR)$, then we have that 
\[
\lan (D_{x'}^{\alpha}\varphi)_{k}, g\ran
= \lan  D_{x'}^{\alpha}\varphi, ge_{-k} \ran
= (-1)^{\alpha} \lan \varphi, D_{x'}^{\alpha}(ge_{-k})\ran
= (-1)^{\alpha} \lan \varphi, (-k)^{\alpha}ge_{-k}\ran
= k^{\alpha} \lan \varphi_{k}, g\ran,
\]

i.e. $(D_{x'}^{\alpha}\varphi)_{k} = k^{\alpha}\varphi$. 
To prove Parseval's identity we first prove that the series 
converges absolutely. We know that there exists $l \in \bN$ 
such that $|\lan \varphi, g\ran| \cleq \|g\|_{l}$ for all 
$g \in \cS(T)$. From a remark above we have that
\[
|\lan \varphi_{k},\ov{f_{k}}\ran|
= |\lan \varphi, \ov{f_{k}e_{k}}\ran|
\cleq \|f_{k}e_{k}\|_{l} 
\cleq \lan k\ran^{-(2m- l)}\|f\|_{l + 2m},
\]

so it follows that the series converges absolutely 
by choosing $m$ large. Recalling from Proposition \ref{propn: Fourier_cylinder} 
that $S_{N}f \ra f$ in $\cS(T)$, we conclude that, 
\[
\lan \varphi, \ov{f}\ran 
= \lim_{N \ra +\infty} \sum_{|k| \leq N}
\lan \varphi, \ov{f_{k}e_{k}}\ran
= \lim_{N \ra +\infty} \sum_{|k| \leq N}
\lan \varphi_{k}, \ov{f_{k}}\ran
= \sum_{k \in \bZ^{d}} \lan \varphi_{k}, \ov{f_{k}}\ran.
\]

The convergence $S_{N}\varphi \wra \varphi$ in $\cS'(T)$ 
follows from Plancherel's identity and the fact that 
$\ov{f_{k}} = (\ov{f})_{-k}$.
\end{proof}


\subsection{Function spaces}


Recall that for a nonnegative integer $s$ we 
considered the Sobolev spaces $W^{s,p}(T)$  
with norm $\|f\|_{W^{s,p}(T)} 
:= \sum_{|\alpha| \leq s}\|D^{\alpha}f\|_{L^{p}(T)}$.
For $p = 2$ we denote $H^{s}(T) := W^{s,2}(T)$. 
The definitions of these spaces over $\bR$, $\bT^{d}$, 
and $M$ are analogous. In the case of $T$ and $\bR$,
these spaces are also the completions of the corresponding
space of Schwartz functions under the respective norm, 
while for $M$ these spaces are the completions of 
restrictions to $M$ of the Schwartz functions $\cS(T)$ 
under the $W^{s,p}(M)$ norm.

\bigskip
Since $T$ has no boundary we can 
define the dual space $H^{-1}(T) := (H^{1}(T))^{*}$; 
we leave the definition of $H^{-1}(M)$ to
the next section. By Plancherel's theorem, from
Proposition \ref{propn: Fourier_cylinder},
we see that if $s$ is a nonnegative integer and 
$f \in \cS(T)$, then
\[
\|f\|_{H^{s}(T)}^{2} 
\simeq \sum_{|\alpha| \leq s}\|D^{\alpha}f\|_{L^{2}(T)}^{2}
\simeq \sum_{k \in \bZ^{d}}\int_{\bR}
\lan \xi,k\ran^{2s}|\what{f}_{k}(\xi)|^{2}d\xi.
\]

This allows to extend the definition of the spaces $H^{s}(T)$
to any $s \in \bR$. Moreover, observe that this extension coincides
also with the previous definition of $H^{-1}(T)$. Analogous 
extensions can also be defined for $\bR$ and $\bT^{d}$.

\bigskip
On the boundary $\pa M$, we consider the usual $L^{2}$ space 
$L^{2}(\pa M, d\sigma)$ and its corresponding Sobolev
spaces $H^{s}(\pa M)$; we elaborate more on the Sobolev
spaces in the next section. We also define the Sobolev subspaces
\[
H^{s}_{loc}(T) := \{f : f \in H^{s}([-R,R]\times \bT^{d}) \ 
\mbox{for any} \ R > 0\},
\] 
\[
H^{s}_{c}(T) := \{f \in H^{s}(T) : f(x_{1},x') = 0 \ 
\mbox{when} \ |x_{1}|\geq R \ \mbox{for some} \ R > 0\},
\]

and its analogs over $\bR$.
For $\delta \in \bR$ we define the $L^{2}$ 
weighted spaces $L^{2}_{\delta}(T) 
:= \{f : \lan x_{1}\ran^{\delta}f \in L^{2}(T)\}$,
with the norm $\|f\|_{L^{2}_{\delta}(T)} 
:= \|\lan x_{1}\ran^{\delta}f\|_{L^{2}(T)}$. Similarly, 
we also define $L^{2}_{\delta}(\bR)$. It follows from
Proposition \ref{propn: Fourier_torus} that we also have 
Plancherel's identity for weighted spaces,
\begin{equation}
\label{eqn: weighted_Plancherel}
\|f\|_{L^{2}_{\delta}(T)}
= \int_{T}\lan x_{1}\ran^{2\delta}|f(x_{1},x')|^{2}dx_{1}dx'
= \int_{\bR}\lan x_{1}\ran^{2\delta}
\sum_{k \in \bZ^{d}}|f_{k}(x_{1})|^{2}dx_{1}
= \sum_{k \in \bZ^{d}}\|f_{k}\|_{L^{2}_{\delta}(\bR)}^{2}.
\end{equation}

For a nonnegative integer $s$ the weighted 
Sobolev spaces have two equivalent definitions,
\[
H^{s}_{\delta}(T) 
:= \{f \in L^{2}_{\delta}(T) : D^{\alpha}f \in L^{2}_{\delta}(T) \ 
\mbox{for} \ |\alpha| \leq s\}
= \{f \in L^{2}_{\delta}(T) : \lan x_{1}\ran^{\delta}f \in H^{s}(T)\}.
\]

We consider the norm $\|f\|_{H^{s}_{\delta}(T)}$ as any 
of the two equivalent norms: 
$\sum_{|\alpha| \leq s}\|D^{\alpha}f\|_{L^{2}_{\delta}(T)}$
or $\|\lan x_{1}\ran^{\delta}f\|_{H^{s}(T)}$. We also consider
the analogs of these spaces over $\bR$. By 
\eqref{eqn: weighted_Plancherel} we get that 
\[
\|f\|_{H^{s}_{\delta}(T)}^{2}
\simeq \sum_{|\alpha| \leq s}
\|D^{\alpha}f\|_{L^{2}_{\delta}(T)}^{2}
\simeq \sum_{m = 0}^{s}\sum_{k \in \bZ^{d}}
\lan k\ran^{2s - 2m}
\|D_{x_{1}}^{m}f_{k}\|_{L^{2}_{\delta}(T)}^{2}.
\]

We can endow the space $H^{s}_{\delta}(T)$ with another 
norm by considering a (small) real parameter $\hb$ and 
definining 
\[
\|f\|_{H^{s}_{\delta,\hb}(T)} 
:= \sum_{|\alpha| \leq s}\|(\hb D)^{\alpha}f\|_{L^{2}_{\delta}(T)}
\simeq \biggl(\sum_{m = 0}^{s}\sum_{k \in \bZ^{d}}
\lan \hb k\ran^{2s - 2m}
\|(\hb D_{x_{1}})^{m}f_{k}\|_{L^{2}_{\delta}(T)}^{2}\biggr)^{1/2}.
\]

We call this the \textit{semiclassical} weighted Sobolev space. 
Analogously, we define the semiclassical Sobolev spaces 
$W^{s,p}_{\hb}(T)$ and their norm. 


\subsection{Dirichlet problem: definitions and basic facts}


In this section we introduce the necessary definitions
give a precise formulation of the Dirichlet problem 
\begin{equation*}
\tag{\ref{defn: Dirichlet}}
\biggl\{
\begin{array}{rll}
H_{V,W}u \hspace{-2mm} & = 0 & \ \textrm{in} \ \ M_{-}, \\
u \hspace{-2mm} & = f & \ \textrm{on} \ \ \pa M,
\end{array}
\end{equation*}

where $H_{V,W} = (D + V)^{2} + W = D^{2} + 2V\cdot D
+ (V^{2} + D\cdot V + W)$.


\subsubsection{Trace operators and Sobolev spaces}


We call the trace operator that which restricts functions in the 
cylinder $T$ to its boundary values in $\pa M$, and we denote 
it by $\tr$. For $s > 1/2$, the operator 
$\tr: H^{s}(T) \ra H^{s - 1/2}(\pa M)$ 
is continuous. For functions defined only either in the interior or 
exterior of $M$, denoted by $M_{-}$ and $M_{+}$ respectively, 
there are also the operators 
$\tr^{\pm}: H^{s}(M_{\pm}) \ra H^{s - 1/2}(\pa M)$ 
for $s > 1/2$. 

\bigskip
On the boundary we define the dual space 
$H^{-1/2}(\pa M) = (H^{1/2}(\pa M))^{*}$.
The continuity of the trace operator $\tr : H^{1}(T) \ra H^{1/2}(\pa M)$
gives the existence
of the adjoint operator $\tr^{*}: H^{-1/2}(\pa M) \ra H^{-1}(T)$. 
If $\varphi \in H^{1}(T)$ is supported away from $\pa M$, then  
$\tr(\varphi) = 0$; this implies that the adjoint $\tr^{*}$ 
actually maps $H^{-1/2}(\pa M)$ into $H^{-1}_{c}(T)$. The 
adjoint is supported on $\pa M$ and, formally, we have 
$\tr^{*}\varphi = \varphi d\sigma$.

\bigskip
We also consider the space
$H^{1}_{0}(M) := \{u \in H^{1}(M) : \tr^{-}(u) = 0\}$
and its dual $H^{-1}(M) := (H^{1}_{0}(M))^{*}$. The 
space $H^{1}_{0}(M)$ is also the closure of 
$C^{\infty}_{c}(M_{-})$ under the $H^{1}(M)$ norm.


\subsubsection{Weak solutions}


The definitions below of weak solution and Dirichlet-to-Neumann
map are natural after we formally integrate by parts, 
\begin{align*}
\int_{M}(H_{V,W}u)v 
& = \int_{M}D\cdot[(D + V)u]v + V\cdot[(D + V)u]v + Wuv \\
& = \int_{M}-[(D + V)u]\cdot Dv + V\cdot [(D + V)u]v
+ Wuv + \frac{1}{2\pi i}\int_{\pa M}\nu \cdot[(D + V)u]v \\
& = \int_{M} -Du\cdot Dv + V\cdot(vDu - uDv) + (V^{2} + W)uv
- \frac{i}{2\pi}\int_{\pa M}\nu \cdot[(D + V)u]v.
\end{align*}

For $f \in H^{1/2}(\pa M)$, we say that $u \in H^{1}(M)$ is a weak solution 
to the Dirichlet problem \eqref{defn: Dirichlet} if $\tr^{-}(u) = f$ and	
\begin{equation*}
\tag{\ref{defn: magnetic_weak}}
\int_{M}-Du\cdot D\varphi + V\cdot (\varphi Du - u D\varphi) 
+ (V^{2} + W)u\varphi = 0,
\end{equation*}

for all test functions $\varphi \in H^{1}_{0}(M)$. 


\subsubsection{Inhomogeneous problem and extension operator}


The first step towards solving the Dirichlet problem \eqref{defn: Dirichlet} is the 
solution to the inhomogeneous boundary value problem
\begin{equation*}
\label{defn: inhomogeneous}
\tag{$\ast\ast$}
D^{2}u = f \in H^{-1}(M), \ \ u \in H^{1}_{0}(M).
\end{equation*}
 
We say that $u$ is a solution to \eqref{defn: inhomogeneous} 
if for any $\varphi \in H^{1}_{0}(M)$ we have
\[
\lan f, \varphi \ran = \int_{M}-Du\cdot D\varphi.
\] 


\begin{proposition}[\cite{T1}]
\label{propn: Laplacian}
For any $f \in H^{-1}(M)$ there exists a unique solution 
$u \in H^{1}_{0}(M)$ to the boundary value problem 
$D^{2}u = f$. If $Tf := u$ denotes the solution operator, then 
$T: H^{s}(M) \ra H^{s + 2}(M)\cap H^{1}_{0}(M)$ is bounded 
for any $s \geq -1$.
\end{proposition}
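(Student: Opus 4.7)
The plan is to prove the statement in two stages: first establish existence, uniqueness, and the bound at the base regularity level $s = -1$ via a Hilbert space argument, and then bootstrap to higher $s$ via standard elliptic regularity on a smooth bounded domain.

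For the base case $s = -1$, I would apply the Lax--Milgram theorem (or since the form is Hermitian symmetric and positive definite, the Riesz representation theorem) to the sesquilinear form
\[
a(u,v) := -\int_{M} Du\cdot D\overline{v}
= \frac{1}{4\pi^{2}}\int_{M} \nabla u \cdot \nabla \overline{v}
\]
on $H^{1}_{0}(M)$. Boundedness is immediate, and coercivity $a(u,u) \cgeq \|u\|_{H^{1}(M)}^{2}$ follows from the Poincar\'e inequality on $H^{1}_{0}(M)$, which holds because $M$ is compact and elements of $H^{1}_{0}(M)$ vanish on $\pa M$ by definition of the trace. For any $f \in H^{-1}(M) = (H^{1}_{0}(M))^{*}$ the antilinear functional $\varphi \mapsto \overline{\lan f,\varphi\ran}$ is continuous on $H^{1}_{0}(M)$, so there is a unique $u \in H^{1}_{0}(M)$ with $a(u,\varphi) = \overline{\lan f,\varphi \ran}$, i.e., the weak formulation holds; we get $\|u\|_{H^{1}(M)} \cleq \|f\|_{H^{-1}(M)}$, which is the $s = -1$ estimate.

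For $s \geq 0$ I would invoke the classical elliptic regularity theory for the Dirichlet Laplacian on the smooth bounded domain $M$. The interior part proceeds via a localization with a cutoff $\chi \in C^{\infty}_{c}(M_{-})$ and the Nirenberg difference-quotient method applied to $\chi u$: testing against finite differences of $\chi u$ and using the equation recovers uniform bounds on tangential derivatives, and the normal (here, any) direction is recovered algebraically from $D^{2}u = f$, yielding $\|u\|_{H^{s + 2}(K)} \cleq \|f\|_{H^{s}(M)} + \|u\|_{L^{2}(M)}$ on any $K \Subset M_{-}$. For the boundary, since $\pa M$ is smooth one picks a finite atlas of boundary charts flattening $\pa M$ to a hyperplane; in each chart the problem reduces to a second-order elliptic equation on a half-ball with zero Dirichlet data on the flat portion, where the same difference-quotient argument applied only in the tangential directions controls all tangential derivatives, and then the equation recovers the normal derivatives.

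Patching the local estimates with a partition of unity gives $\|u\|_{H^{s + 2}(M)} \cleq \|f\|_{H^{s}(M)} + \|u\|_{L^{2}(M)}$, and absorbing the $L^{2}$ term by the $s = -1$ estimate $\|u\|_{L^{2}(M)} \leq \|u\|_{H^{1}(M)} \cleq \|f\|_{H^{-1}(M)} \cleq \|f\|_{H^{s}(M)}$ produces the claimed bound $T\colon H^{s}(M) \to H^{s+2}(M) \cap H^{1}_{0}(M)$. The main technical obstacle is just the bookkeeping of the boundary flattening and patching, but none of this is particular to the cylindrical ambient space $T$: it depends only on the smoothness of $\pa M$ and the constant-coefficient ellipticity of $D^{2}$. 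Since the argument is entirely classical I would simply cite \cite{T1} (Taylor's PDE text) for the full details rather than reproducing them.
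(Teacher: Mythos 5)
The paper gives no proof of this proposition, stating it with a citation to Taylor \cite{T1}; your sketch --- Lax--Milgram with Poincar\'e on $H^{1}_{0}(M)$ for the $s=-1$ case, then Nirenberg difference quotients with boundary flattening and a partition of unity for $s\geq 0$, absorbing the lower-order term via the base estimate --- is exactly the standard argument found there and is correct. Deferring the bookkeeping (including interpolation for non-integer $s$) to the reference is reasonable, since nothing in the cylindrical ambient geometry changes the classical constant-coefficient elliptic regularity theory on a compact smooth domain.
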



In \cite{T1} it is shown an explicit construction
of a bounded extension operator $E : H^{s - 1/2}(\pa M) \ra H^{s}(M)$
for all $s \geq 1$, such that $\tr^{-}\circ E = I$. Moreover, for any 
$N \in \bN$ and $s \leq N$ there is an extension 
$H^{s}(M) \ra H^{s}(T)$ (that may depend on $N$), so that 
we have an extension $E : H^{s - 1/2}(\pa M) \ra H^{s}(T)$ with 
$\tr\circ E = I$; in particular, the trace operator 
$\tr: H^{s}(T) \ra H^{s - 1/2}(\pa M)$ is surjective for $s \geq 1$.
Moreover, by cutting off the extension with an appropriate fixed 
smooth function we can assume that $Ef$ is supported on some 
fixed compact set of $T$, containing $M$, for any $f \in H^{s}(\pa M)$. 


\begin{remark}
We will be concerned with values of $s$ in a fixed range, so we will 
avoid to refer constantly to the integer associated to the extension.
\end{remark}


\subsubsection{Solution to the Dirichlet problem}


The existence of the extension allows to turn the Dirichlet problem
\eqref{defn: Dirichlet} into the boundary value problem 
\eqref{defn: inhomogeneous} for which we know the existence, 
uniqueness, and regularity properties.


\begin{proposition}[\cite{T1}]
\label{propn: existence_uniqueness}
Assume that the potentials 
satisfy $V, W \in L^{\infty}(M)$ and $D\cdot V \in L^{\infty}(M)$.
If $0$ is not a Dirichlet eigenvalue of $H_{V,W}$ in $M$, then 
for any $f \in H^{1/2}(\pa M)$ there exists a unique weak 
$u \in H^{1}(M)$ solution to the Dirichlet problem \eqref{defn: Dirichlet}.
If we denote $D_{V,W}f := u$, then 
$D_{V,W}: H^{s}(\pa M) \ra H^{s + 1/2}(M)$ is bounded for 
$1/2 \leq s \leq 3/2$.
\end{proposition}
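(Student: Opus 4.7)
The plan is to reduce the Dirichlet problem \eqref{defn: Dirichlet} to the inhomogeneous problem \eqref{defn: inhomogeneous} by means of the extension operator $E$, then invoke Fredholm theory plus the hypothesis that $0$ is not a Dirichlet eigenvalue of $H_{V,W}$.

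First, given $f \in H^{1/2}(\pa M)$, I would set $F := Ef \in H^{1}(M)$ so that $\tr^{-}(F) = f$, and look for a solution in the form $u = F + w$ with $w \in H^{1}_{0}(M)$. Writing $H_{V,W} = D^{2} + K$ with $K := 2V\cdot D + (V^{2} + D\cdot V + W)$, the assumptions $V,W,D\cdot V \in L^{\infty}(M)$ guarantee that $K : H^{1}(M) \to L^{2}(M)$ is bounded, so $H_{V,W} : H^{1}(M) \to H^{-1}(M)$ is bounded. Testing \eqref{defn: magnetic_weak} against $\varphi \in H^{1}_{0}(M)$, being a weak solution translates exactly to the equation
\[
D^{2}w = -H_{V,W}F - Kw \quad \text{in} \ H^{-1}(M), \quad w \in H^{1}_{0}(M).
\]
Applying the solution operator $T$ from Proposition \ref{propn: Laplacian} yields the equivalent fixed--point equation
\[
(I + TK)w = -T H_{V,W}F \quad \text{in} \ H^{1}_{0}(M).
\]

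Next I would show that $TK$ is a compact operator on $H^{1}_{0}(M)$. Since $M$ is compact, the inclusion $L^{2}(M) \hookrightarrow H^{-1}(M)$ is compact by Rellich's theorem (dualizing $H^{1}(M) \hookrightarrow L^{2}(M)$). Then $K : H^{1}_{0}(M) \to L^{2}(M)$ is bounded and $T : H^{-1}(M) \to H^{1}_{0}(M)$ is bounded, so the composition is compact. By the Fredholm alternative, $I + TK$ is invertible on $H^{1}_{0}(M)$ provided its kernel is trivial; but any $v \in \ker(I + TK)$ satisfies $H_{V,W}v = 0$ weakly with $\tr^{-}(v) = 0$, i.e.\ $v$ is a Dirichlet eigenfunction at $0$, which is excluded by hypothesis. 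This gives existence and uniqueness of $w$, hence of $u = F + w$, and $D_{V,W} : H^{1/2}(\pa M) \to H^{1}(M)$ is bounded.

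For the boundedness $D_{V,W}: H^{s}(\pa M) \to H^{s + 1/2}(M)$ for $s \in [1/2,3/2]$ I would bootstrap. By the properties of the extension operator, $F = Ef \in H^{s+1/2}(M)$ and, since $V, D\cdot V, W \in L^{\infty}(M)$, one checks that $H_{V,W}F \in H^{s - 3/2}(M)$ (the dominant contribution being $D^{2}F$). Since $s - 3/2 \geq -1$, Proposition \ref{propn: Laplacian} gives $TH_{V,W}F \in H^{s+1/2}(M) \cap H^{1}_{0}(M)$. Also, from $w \in H^{1}_{0}(M)$ already obtained, $Kw \in L^{2}(M)$ and so $TKw \in H^{2}(M) \cap H^{1}_{0}(M)$, which embeds into $H^{s+1/2}(M)$ since $s+1/2 \leq 2$. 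Combining in $(I+TK)w = -TH_{V,W}F$ and using that $(I+TK)^{-1}$ is bounded on the relevant spaces (by the same Fredholm argument, now in $H^{s+1/2} \cap H^{1}_{0}$), we conclude $w \in H^{s+1/2}(M)$ with the desired estimate, and so $u = F + w \in H^{s+1/2}(M)$.

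The main obstacle I foresee is checking that the Fredholm inverse extends continuously to the higher-regularity spaces in the bootstrap (step 3), i.e.\ that the same Fredholm alternative applies to $(I+TK)$ acting on $H^{s+1/2} \cap H^{1}_{0}$; this requires that $TK$ remain compact there and that the kernel still be trivial. Triviality is immediate from $H^{s+1/2} \cap H^{1}_{0} \subseteq H^{1}_{0}$, and compactness follows by factoring through the compact embedding $H^{s+1/2}(M) \hookrightarrow H^{(s+1/2)-\vep}(M)$ for small $\vep > 0$. The endpoint $s = 3/2$ is the delicate one because there the solution just barely lies in $H^{2}(M)$, and the argument requires the full $L^{2} \to H^{2}$ gain from Proposition \ref{propn: Laplacian}.
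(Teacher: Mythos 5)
Your proof follows essentially the same route as the paper: reduce to the inhomogeneous problem via the extension operator $E$, express the equation as a fixed-point problem for a compact perturbation of the identity, and invoke the Fredholm alternative together with Rellich's theorem, with triviality of the kernel coming from the hypothesis that $0$ is not a Dirichlet eigenvalue. The one cosmetic difference is where the fixed-point equation lives: you keep $w \in H^{1}_{0}(M)$ and study $(I + TK)$ on $H^{1}_{0}(M)$, whereas the paper substitutes $v = Tw$ and studies $(I + XT)$ on $H^{-1}(M)$ (and then on $L^{2}(M)$ for the higher-regularity endpoint); these are the same argument applied to mutually adjoint formulations, and each gives the same conclusion.

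Your worry in the final paragraph is a non-issue, and in fact you should drop the reinvocation of the Fredholm inverse at higher regularity altogether. Once the $s = 1/2$ case gives a unique $w \in H^{1}_{0}(M)$ with $\|w\|_{H^{1}} \cleq \|f\|_{H^{1/2}}$, simply rewrite the equation as $w = -TKw - TH_{V,W}F$ and observe that the right-hand side is already in $H^{s + 1/2}(M) \cap H^{1}_{0}(M)$: indeed $Kw \in L^{2}(M) \sse H^{s - 3/2}(M)$ so $TKw \in H^{2}(M) \cap H^{1}_{0}(M) \sse H^{s + 1/2}(M)$ with $\|TKw\|_{H^{s + 1/2}} \cleq \|w\|_{H^{1}} \cleq \|f\|_{H^{s}}$, and $H_{V,W}F \in H^{s - 3/2}(M)$ so $TH_{V,W}F \in H^{s + 1/2}(M) \cap H^{1}_{0}(M)$ with $\|TH_{V,W}F\|_{H^{s + 1/2}} \cleq \|f\|_{H^{s}}$. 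This yields the full range $1/2 \leq s \leq 3/2$ in one stroke, whereas the paper explicitly handles only the two endpoints (with the intermediate values left to interpolation); your direct bootstrap, stated this way, is actually the cleaner statement of the higher-regularity step.
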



\begin{proof}
Under the conditions on the potentials we have that the first order
differential operator
$X := H_{V,W} - D^{2} = 2V\cdot D + (V^{2} + D\cdot V + W)$ 
maps $H^{1}(M)$ into $L^{2}(M)$. We first consider the case 
$f \in H^{1/2}(\pa M)$, so that $Ef \in H^{1}(M)$. Then, 
$u \in H^{1}(M)$ solves the Dirichlet problem \eqref{defn: Dirichlet}
if and only if $v := u - Ef \in H^{1}_{0}(M)$ solves the boundary 
value problem $H_{V,W}v = -H_{V,W}Ef$.
From Proposition \ref{propn: Laplacian} we can look
for a solution of the form $v = Tw$, with $w \in H^{-1}(M)$, 
leaving us to solve the equation
$(I + XT)w = -H_{V,W}Ef \in H^{-1}(M)$. From 
Proposition \ref{propn: Laplacian} and the conditions on the 
potentials we know that the operator 
$XT : H^{-1}(M) \ra L^{2}(M)$ is continuous, and by 
Rellich's theorem we have that $XT$ is a compact
operator on $H^{-1}(M)$. If $0$ is not a Dirichlet eigenvalue
of $H_{V,W}$ in $M$, then the Dirichlet problem 
\eqref{defn: Dirichlet} has at most one solution and therefore
$I + XT$ is injective. It follows then from Fredholm's alternative
that $I + XT$ is bijective, and by the Open Mapping theorem that
its inverse is continuous. Then,
\[
\|v\|_{H^{1}_{0}(M)} \cleq \|w\|_{H^{-1}(M)} 
\cleq \|H_{V,W}Ef\|_{H^{-1}(M)} 
\cleq \|Ef\|_{H^{1}(M)} \cleq \|f\|_{H^{1/2}(\pa M)}.
\]

Therefore, $u = v + Ef \in H^{1}(M)$ and 
$\|u\|_{H^{1}(M)} \cleq \|v\|_{H^{1}(M)} + \|Ef\|_{H^{1}(M)}
\cleq \|f\|_{H^{1/2}(\pa M)}$, as desired.
To prove the higher--order regularity of the solutions, all we 
need to modify in the proof is the fact that for $f \in H^{3/2}(\pa M)$
we have $Ef \in H^{2}(M)$, and therefore $H_{V,W}Ef \in L^{2}(M)$.
The higher--order regularity properties of $T$ from 
Proposition \ref{propn: Laplacian} imply that $T: L^{2}(M) \ra H^{1}(M)$
is compact, and so $XT$ is compact on $L^{2}(M)$. After this the 
proof carries out exactly as before.
\end{proof}


\subsubsection{Dirichlet-to-Neumann map and normal derivatives}


We define the Dirichlet-to-Neumann (DN) map $\Lambda_{V,W}$ as follows: if
$f, g \in H^{1/2}(\pa M)$ and $v \in H^{1}(M)$ is any function extending $g$, 
i.e. $\tr^{-}(v) = g$, then
\begin{equation*}
\tag{\ref{defn: DN_magnetic}}
\lan \Lambda_{V,W}f,g \ran 
:= \int_{M}-Du\cdot Dv + V\cdot (vDu  - uDv) + (V^{2} + W)uv,
\end{equation*}

where $u = D_{V,W}f \in H^{1}(M)$ is the weak solution of 
\eqref{defn: Dirichlet}. The definition of the weak solution implies 
that the DN map is well-defined, i.e. it depends only on $g$ an not
on the choice of extension. Formally we have that 
\[
\Lambda_{V,W}f = \frac{i}{2\pi}\nu \cdot (D + V)u\bigl|_{\pa M}.
\]

Before proving the boundedness properties of the DN map 
we record a Green identity that will be useful now and 
in Chapter 5. This is just slightly more general than saying
that the divergence theorem holds for vector fields in 
$W^{1,1}(M)$.


\begin{proposition}
\label{propn: Green}
If $\supp(V) \sse M_{-}$, 
$V \in L^{\infty}(M)$, $D\cdot V \in L^{\infty}(M)$,
$w \in W^{1,1}(M)$, then 
\[
\int_{M}V\cdot Dw + (D\cdot V)w = 0.
\]
\end{proposition}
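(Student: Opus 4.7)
The plan is to read the identity as the product rule $\div(wV) = (\div V) w + V\cdot \nabla w$ integrated against $1$, combined with the fact that $V$ vanishes near $\pa M$ so no boundary term appears. Since $D = \nabla/(2\pi i)$, the claim is equivalent to
\[
\int_{M} V\cdot \nabla w + (\div V)\, w\, dx = 0,
\]
and I will prove this latter form.

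First I would reduce to the case of a function compactly supported in $M_{-}$. By the support hypothesis there is an open neighborhood $U$ of $\pa M$ in $M$ on which $V = 0$ a.e., and by the distributional definition $\div V = 0$ a.e.\ on $U$ as well. Pick $\chi \in C^{\infty}_{c}(M_{-})$ with $\chi \equiv 1$ on a neighborhood of the (compact) set $\supp(V)\cup \supp(\div V)$. Then a.e.\ one has $\chi V = V$, $\chi(\div V) = \div V$, and $V\cdot \nabla \chi = 0$ (since $\nabla \chi = 0$ on the set where $\chi\equiv 1$ and $V=0$ off that set). Expanding $\nabla(\chi w)= w\nabla\chi + \chi \nabla w$ therefore yields
\[
\int_{M} V\cdot \nabla w + (\div V) w = \int_{M} V\cdot \nabla(\chi w) + (\div V)(\chi w),
\]
so it suffices to prove the identity with $\chi w$ in place of $w$; note $\chi w \in W^{1,1}(M)$ is compactly supported in $M_{-}$.

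Next I would approximate $\chi w$ in $W^{1,1}(M)$ by a sequence $\varphi_{n} \in C^{\infty}_{c}(M_{-})$. This is standard: extend $\chi w$ by zero outside $M$ and mollify by $\rho_{\vep_{n}}\ast (\cdot)$ with $\vep_{n}\to 0$; for $\vep_{n}$ small enough the support stays inside $M_{-}$, and mollification converges in $W^{1,1}$. For each smooth compactly supported $\varphi_{n}$, the very meaning of $\div V \in L^{\infty}(M)$ (viewed as the distributional divergence of the $L^{\infty}$ field $V$) gives
\[
\int_{M} V\cdot \nabla \varphi_{n} + (\div V)\,\varphi_{n}\, dx = 0.
\]

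Finally I would pass to the limit. Because $V \in L^{\infty}$ and $\nabla \varphi_{n} \to \nabla(\chi w)$ in $L^{1}$,
\[
\Bigl|\int_{M} V \cdot \nabla(\chi w - \varphi_{n})\Bigr|
\leq \|V\|_{L^{\infty}}\,\|\nabla(\chi w - \varphi_{n})\|_{L^{1}} \to 0,
\]
and similarly $\int_{M}(\div V)(\chi w - \varphi_{n}) \to 0$ using $\div V \in L^{\infty}$ and $\varphi_{n}\to \chi w$ in $L^{1}$. Combining this with the vanishing integral for each $\varphi_{n}$ gives the desired identity. The only delicate point is the regularity mismatch between the $L^{\infty}$ coefficients and the $W^{1,1}$ function $w$; this is exactly what makes the $L^{1}$--$L^{\infty}$ duality argument in the last step work, and what compels approximation in $W^{1,1}$ rather than any stronger norm.
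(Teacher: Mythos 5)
Your proof is correct, and it takes a genuinely different route from the one in the paper. The paper's argument first replaces the $L^\infty$ hypotheses on $V$ and $D\cdot V$ with Lebesgue exponents $L^{N}$ and $L^{N/2}$ (with $N=d+1$), and takes $w \in W^{1,N/(N-1)}$ so that every term in the integral converges by Sobolev embedding; it then mollifies the \emph{vector field} $V$ to a compactly supported smooth $V_k$, applies the divergence theorem to $V_k w$, and passes to the limit; a second approximation (now of $w$, in $L^1$ near $\supp V$) is used at the end to drop back down to $w\in W^{1,1}$. You instead cut off the \emph{scalar} $w$ by a bump function $\chi$ adapted to $\supp V$, mollify $\chi w$ to $\varphi_n \in C^\infty_c(M_-)$, and observe that for each such $\varphi_n$ the identity is literally the definition of the distributional divergence $D\cdot V$; the $L^1$--$L^\infty$ pairing then lets you pass to the limit. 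Your approach is more elementary and avoids both the Sobolev exponent bookkeeping and the approximation of $V$ in $L^\infty$, which the paper itself flags as awkward (``$L^\infty(M)$ does not have good approximation properties''). The paper's organization, on the other hand, is more easily adapted to the weaker hypothesis mentioned in its remark ($\supp(V)\subseteq M$ rather than $M_-$), since your argument explicitly exploits that $V$ vanishes on a full neighborhood of $\pa M$ to build $\chi$. One small point worth being explicit about in your write-up: $\supp(D\cdot V)\subseteq\supp(V)$ holds for the distributional divergence, so the compact set on which you ask $\chi\equiv 1$ is just $\supp(V)$; this is what makes the identities $\chi V=V$, $\chi(D\cdot V)=D\cdot V$, and $V\cdot D\chi=0$ hold simultaneously.
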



\begin{proof}
This proof is taken from \cite{Sa1}, Lemma 5.2. Let $N = d + 1$. Since 
$L^{\infty}(M)$ does not have good approximation properties, we start 
proving it for $V \in L^{N}(M)$, $D\cdot V \in L^{N/2}(M)$,
$w = W^{1,N/(N - 1)}(M)$. From the Sobolev embedding we have that 
$w \in W^{N/(N - 2)}(M)$, so that the integral is in fact convergent. 

\bigskip
Given that $\supp(V) \sse M_{-}$ we can find a compact set 
$K \sse M_{-}$
and smooth functions $\{V_{k}\}$ such that $\supp(V_{k}) \sse K$, 
$V_{k} \ra V$ in $L^{N}(M)$ and $D\cdot V_{k} \ra D\cdot V$ in 
$L^{N/2}(M)$. Moreover, $V_{k}w \in W^{1,N/(N - 1)}(M)$
and $\supp(V_{k}w) \sse K$. The divergence theorem holds for
vector fields in $W^{1,1}(M)$, and so we get
\begin{align*}
\int_{M}V\cdot Dw + (D\cdot V)w
& = \lim_{k \ra +\infty}\int_{M}V_{k}\cdot Dw + (D\cdot V_{k})w \\
& = \lim_{k \ra +\infty}\int_{M}D\cdot (V_{k}w)
= \lim_{k \ra +\infty}\frac{1}{2\pi i}\int_{\pa M}\nu\cdot(V_{k}w) 
= 0.
\end{align*}

The conditions $V \in L^{N}(M)$ and $D\cdot V \in L^{N/2}(M)$ are
satisfied if we assume 
$V \in L^{\infty}(M)$ and $D\cdot V \in L^{\infty}(M)$. Finally, the
integral only takes place in $\supp(V) \sse M_{-}$. We know that
there exist smooth functions $\{w_{k}\}$ such that 
$w_{k} \ra w$ in $L^{1}(M)$ and $Dw_{k} \ra Dw$ in
$L^{1}(\supp(V))$, and thus the conclusion follows.
\end{proof}


\begin{remark}
The condition $\supp(V) \sse M_{-}$ is not necessary;
in \cite{Sa1} this is proven under weaker conditions whose 
analogs would be $\supp(V) \sse M$ and 
$D\cdot V \in L^{\infty}(T)$.
\end{remark}


Before we continue, we need to define the interior and exterior
normal derivative of a function. This represents no problem 
if the function $u$ is in $H^{2}(M)$ or $H^{2}_{loc}(M_{+})$,
as the gradient $Du$ is in $H^{1}(M)$ or $H^{1}_{loc}(M_{+})$
and so its trace is in $H^{1/2}(\pa M)$. Moreover, for 
$\varphi \in C^{\infty}_{c}(T)$ it satisfies either 
\[
\int_{\pa M}(\pa_{\nu}^{\pm}u)\varphi 
= \mp 4\pi^{2}\int_{M_{\pm}}(D^{2}u)\varphi 
+ Du\cdot D\varphi.
\]

These identities suggest that we can define the normal derivatives 
for harmonic functions in $H^{1}(M)$ or $H^{1}_{loc}(M_{+})$. 
We say $u$, in $H^{1}(M)$ or $H^{1}_{loc}(M_{+})$, is harmonic 
if for any $\varphi \in C^{\infty}_{c}(M_{\pm})$ we have 
\begin{equation}
\label{defn: harmonic}
\int_{M_{\pm}}Du\cdot D\varphi = 0,
\end{equation}

as it corresponds. By continuity these definitions extend to 
all test functions $\varphi \in H^{1}(M_{\pm})$ with 
$\tr^{\pm}(\varphi) = 0$.
If $f \in H^{1/2}(\pa M)$ and $v \in H^{1}(M_{\pm})$ is any 
function extending $f$, i.e. $\tr^{\pm}(v) = f$, then we define 
the normal derivatives as the functionals
\begin{equation}
\label{defn: normal_derivatives}
\lan \pa_{\nu}^{\pm}u,f \ran
:= \mp 4\pi^{2}\int_{M_{\pm}}Du\cdot Dv.
\end{equation}

The condition \eqref{defn: harmonic} ensures that 
this is well-defined, i.e. it depends only on $f$ and not 
on the choice of the extension.
In particular, taking $v = Ef \in H^{1}_{c}(T)$ and using the 
boundedness and support properties of $Ef$ we can conclude
that $\pa_{\nu}^{\pm}u \in H^{-1/2}(\pa M)$.


\begin{proposition}
\label{propn: DN_map}
Assume that the potentials 
satisfy $V, W \in L^{\infty}(M)$ and $D\cdot V \in L^{\infty}(M)$.
Suppose in addition that $\supp(V) \sse M_{-}$.
If $0$ is not a Dirichlet eigenvalue of $H_{V,W}$ in $M$, then the
Dirichlet-to-Neumann map \\ 
$\Lambda_{V,W}: H^{s}(\pa M) \ra H^{s - 1}(\pa M)$ is bounded
for $1/2 \leq s \leq 3/2$. Moreover, if $f \in H^{3/2}(\pa M)$ and 
$u = D_{V,W}f \in H^{2}(M)$, then we have 
\[
\Lambda_{V,W}f = \frac{1}{4\pi^{2}}\pa_{\nu}^{-}u\bigl|_{\pa M}.
\]
\end{proposition}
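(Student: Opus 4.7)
The plan is to establish boundedness at the two endpoints $s=1/2$ and $s=3/2$, identify the DN map with the normal derivative in the high-regularity case, and fill in the intermediate range by interpolation.

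First I would verify that the bilinear form $\langle\Lambda_{V,W}f,g\rangle$ is independent of the choice of extension $v\in H^{1}(M)$ of $g$: if $v_{1},v_{2}$ are two such extensions, then $v_{1}-v_{2}\in H^{1}_{0}(M)$, so the two evaluations differ by a quantity that vanishes by the weak formulation \eqref{defn: magnetic_weak} applied with test function $v_{1}-v_{2}$. For $f\in H^{1/2}(\pa M)$, Proposition \ref{propn: existence_uniqueness} gives $u=D_{V,W}f\in H^{1}(M)$ with $\|u\|_{H^{1}(M)}\cleq\|f\|_{H^{1/2}(\pa M)}$. Choosing $v=Eg\in H^{1}(M)$ as the canonical extension and estimating each term of \eqref{defn: DN_magnetic} by Cauchy--Schwarz, using $V,W\in L^{\infty}(M)$, yields
\[
|\langle\Lambda_{V,W}f,g\rangle|\cleq\|u\|_{H^{1}(M)}\|v\|_{H^{1}(M)}\cleq\|f\|_{H^{1/2}(\pa M)}\|g\|_{H^{1/2}(\pa M)},
\]
which gives boundedness $H^{1/2}(\pa M)\ra H^{-1/2}(\pa M)$.

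For $f\in H^{3/2}(\pa M)$, Proposition \ref{propn: existence_uniqueness} gives $u=D_{V,W}f\in H^{2}(M)$. Here I would justify rigorously the integration-by-parts identity sketched at the start of Section 2.3.2, applied to $u\in H^{2}(M)$ and $v=Eg\in H^{1}(M)$; the $V$-boundary contribution produced in that calculation is taken care of by Proposition \ref{propn: Green} together with $\supp(V)\sse M_{-}$, which forces $V|_{\pa M}=0$. Using $H_{V,W}u=0$ in $M_{-}$, one obtains
\[
\langle\Lambda_{V,W}f,g\rangle=\frac{i}{2\pi}\int_{\pa M}(\nu\cdot Du)\,g\,d\sigma=\frac{1}{4\pi^{2}}\int_{\pa M}(\pa_{\nu}^{-}u)\,g\,d\sigma,
\]
where in the second equality I use $\pa_{\nu}u=2\pi i\,\nu\cdot Du$. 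This both identifies $\Lambda_{V,W}f=\frac{1}{4\pi^{2}}\pa_{\nu}^{-}u|_{\pa M}$ and, via the trace bound $\|\nu\cdot Du\|_{H^{1/2}(\pa M)}\cleq\|Du\|_{H^{1}(M)}\cleq\|u\|_{H^{2}(M)}\cleq\|f\|_{H^{3/2}(\pa M)}$, gives boundedness $H^{3/2}(\pa M)\ra H^{1/2}(\pa M)$.

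The intermediate range $1/2<s<3/2$ then follows by complex interpolation on the Sobolev scale $\{H^{s}(\pa M)\}$, since the two endpoint definitions of $\Lambda_{V,W}$ coincide on $H^{3/2}(\pa M)$ by the identity just proved (compatibility for interpolation). The main technical obstacle I expect is the careful justification of the integration-by-parts step with an $H^{2}$ solution and merely $L^{\infty}$ magnetic/electric coefficients: approximating $u$ by smooth functions in $H^{2}(M)$, together with Proposition \ref{propn: Green} to absorb the $V$-terms (using the support condition), handles this but requires attention. Everything else is a routine application of the regularity result of Proposition \ref{propn: existence_uniqueness} and the trace theorem.
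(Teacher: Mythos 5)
Your proposal is correct and follows essentially the same route as the paper: endpoint boundedness at $s=1/2$ via Cauchy--Schwarz on the weak form with $u=D_{V,W}f$, $v=Eg$, then the integration-by-parts identity at $s=3/2$ using Proposition \ref{propn: Green} (with $\supp(V)\sse M_{-}$) to handle the $V$-terms, yielding $\Lambda_{V,W}f=\tfrac{1}{4\pi^2}\pa_{\nu}^{-}u$. The only difference is that you spell out the complex-interpolation step for $1/2<s<3/2$, which the paper leaves implicit.
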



\begin{proof}
We first prove that 
$\Lambda_{V,W}: H^{1/2}(\pa M) \ra H^{-1/2}(\pa M)$
is bounded. If $f,g \in H^{1/2}(\pa M)$, then we have to 
show that 
$|\lan \Lambda_{V,W}f,g\ran| 
\cleq \|f\|_{H^{1/2}(\pa M)}\|g\|_{H^{1/2}(\pa M)}$.
For $u,v \in H^{1}(M)$ we have that
\[
\biggl|\int_{M}-Du\cdot Dv 
+ V\cdot (vDu - uDv) + (V^{2} + W)uv\biggr|
\cleq \|u\|_{H^{1}(M)}\|v\|_{H^{1}(M)}.
\]

In particular, taking $u = D_{V,W}f \in H^{1}(M)$ and 
$v = Eg \in H^{1}(M)$, we conclude from 
\eqref{defn: DN_magnetic}
that
\[
|\lan \Lambda_{V,W}f, g\ran|  
\cleq \|u\|_{H^{1}(M)}\|v\|_{H^{1}(M)}
\cleq \|f\|_{H^{1/2}(\pa M)}\|g\|_{H^{1/2}(\pa M)},
\]

where we used in the last inequality the boundedness of 
$D_{V,W}$ and $E$.

\bigskip
Now we prove the result when $f \in H^{3/2}(\pa M)$. 
Let $g, v$ be as before. From 
Proposition \ref{propn: existence_uniqueness}
we have that 
$u = D_{V,W}f \in H^{2}(M)$, and so 
$\pa_{\nu}^{-}u \in H^{1/2}(\pa M)$. Moreover, 
we can integrate by parts to obtain 
\[
\int_{M}(D^{2}u)v = \int_{M}-Du\cdot Dv 
- \frac{1}{4\pi^{2}}\int_{\pa M}(\pa_{\nu}^{-}u)g 
\]

In addition, for $u \in H^{2}(M)$, $v \in H^{1}(M)$ we have that 
$uv \in W^{1,1}(M)$, so that we obtain 
$\int_{M}D\cdot (Vu)v = - \int_{M} V\cdot(uDv)$.
from Proposition \ref{propn: Green}.
From the previous identities and $H_{V,W}u = 0$ we get that
\begin{align*}
0 & = \int_{M}D\cdot [(D + V)u]v + V\cdot [(D + V)u]v + Wuv \\
& = \int_{M}-Du\cdot Dv + V\cdot(vDu - uDv) + (V^{2}+W)uv
- \frac{1}{4\pi^{2}}\int_{\pa M}(\pa_{\nu}^{-}u)g,
\end{align*}

i.e. $\Lambda_{V,W}f = \pa_{\nu}^{-}u/4\pi^{2}$, and 
$\|\Lambda_{V,W}f\|_{H^{1/2}(\pa M)}
\cleq \|Du\|_{H^{1}(M)}
\cleq \|u\|_{H^{2}(M)}
\cleq \|f\|_{H^{3/2}(\pa M)}$,
as we wanted to prove.
\end{proof}


An important application of the previous theorem is the case of the 
Laplacian $H_{0,0} = D^{2}$. We know that $0$ is not a Dirichlet 
eigenvalue of the Laplacian in $M$, and so we have the DN map
$\Lambda_{0,0}$ defined by
\begin{equation}
\label{eqn: DN_free}
\lan \Lambda_{0,0}f,g \ran := \int_{M}-Du\cdot Dv,
\end{equation}

where $u = D_{0,0}f \in H^{1}(M)$ and $v \in H^{1}(M)$ is any 
function extending $g \in H^{1/2}(\pa M)$. We will 
not use the result for $s > 3/2$, but it can be shown that for 
$s \geq 1/2$, the map $\Lambda_{0,0}: H^{s}(\pa M) \ra H^{s - 1}(\pa M)$ 
is bounded. Moreover, the symmetry in \eqref{eqn: DN_free} implies 
the symmetry of the DN map, i.e. we have 
$\lan \Lambda_{0,0}f,g \ran = \lan \Lambda_{0,0}g,f \ran$ for 
$f,g \in H^{1/2}(\pa M)$. 


\section{Semiclassical pseudodifferential operators over $\bR \times \bT^{\lowercase{d}}$}


We denote the points in the cylinder $T = \bR \times \bT^{d}$ by $(x_{1},x')$, 
meaning that $x_{1} \in \bR$ and $x' \in \bT^{d}$. As it has been usual in the 
inverse problem literature, instead of the large parameter $\tau$ (appearing
in the Carleman estimate) we consider a small parameter $\hb = 1/\tau > 0$, 
and use the standard notation and results from semiclassical analysis. We use
the notation $\hb$ instead of $h$ to prevent confusion with the later use of 
$h$ for a harmonic function.

\bigskip
In this section, we define and prove the necessary results for pseudodifferential operators 
on the cylinder $T = \bR \times \bT^{d}$. We will use the definition and basic properties of 
these operators on $\bR$ and $\bT^{d}$, for which we refer to \cite{St}, 
\cite{Z}, \cite{Sa1}, \cite{So}, \cite{RT}.

\bigskip
Some of the results below may be valid in greater generality than that 
we consider here. We will restrict to prove the results that we will need.


\subsection{Definitions and elementary properties}


\subsubsection{Semiclassical Fourier transform}


We will use the ideas from semiclassical analysis  only for the real 
variable $x_{1}$, as the term $\tau x_{1} = x_{1}/\hb$ appears 
in the limiting Carleman weight, and expressions of the form 
\[
e^{2\pi \tau x_{1}}D_{x_{1}}e^{-2\pi \tau x_{1}} 
= D_{x_{1}} + i\tau = \tau(\hb D_{x_{1}} + i)
\]

will continue to appear through the problem. For this reason, 
throughout the present chapter, we define the semiclassical 
Fourier transform, for functions in $L^{1}(\bR)$, by
\[
\what{f^{\hb}}(\xi) := \int_{\bR}e^{-2\pi ix_{1}\xi/\hb}f(x_{1})dx_{1},
\]

i.e. $\what{f^{\hb}}(\hb \xi) = \what{f}(\xi)$. We can rewrite 
the results from Proposition \ref{propn: Fourier_cylinder} as follows.


\begin{proposition}
\label{propn: Fourier_semiclassical}
If $f \in \cS(T)$, then its Fourier coefficients $f_{k}$ are in $\cS(\bR)$. 
Moreover, these satisfy the following:
\begin{enumerate}[label=\alph*).]
\item the transform of the coefficients and its derivatives have polynomial 
decay bounds 
\[
|(\hb D_{\xi})^{\alpha}\what{f_{k}^{\hb}}(\xi)| 
\cleq \frac{\|x_{1}^{\alpha}f\|_{W_{\hb}^{2m,1}(T)}}{\lan \xi, \hb k \ran^{2m}},
\]
where $\lan \xi, \hb k\ran := (1 + \xi^{2} + |\hb k|^{2})^{1/2}$ and the 
constant of the inequality may depend on $m$ and $d$,
\item the inversion formula holds,
\[
f(x_{1},x') = \sum_{k \in \bZ^{d}}f_{k}(x_{1})e_{k}(x')
= \frac{1}{\hb}\sum_{k \in \bZ^{d}}\int_{\bR}
e^{2\pi ix_{1}\xi/\hb}e_{k}(x')\what{f_{k}^{\hb}}(\xi)d\xi,
\]
with pointwise absolute uniform convergence, as well as for its derivatives,
\item Plancherel's identity holds 
$\|f\|_{L^{2}(T)}^{2} = \sum_{k \in \bZ^{d}}\|f_{k}\|_{L^{2}(\bR)}^{2} 
= \hb^{-1}\sum_{k \in \bZ^{d}}\|\what{f_{k}^{\hb}}\|_{L^{2}(\bR)}^{2}$. 
\end{enumerate}
\end{proposition}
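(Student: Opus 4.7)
The plan is to deduce each part from the corresponding assertion of Proposition \ref{propn: Fourier_cylinder} via the scaling relation $\what{f_k^\hb}(\xi) = \what{f_k}(\xi/\hb)$, which is immediate from the definitions. Since $f \in \cS(T)$, we already know from Proposition \ref{propn: Fourier_cylinder} that $f_k \in \cS(\bR)$, so all manipulations below are justified by rapid decay in $x_1$ and smoothness in $x'$.

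For part (a), I would not simply rescale Proposition \ref{propn: Fourier_cylinder}(b), as that substitution $\xi \mapsto \xi/\hb$ would produce the desired semiclassical symbol $\lan \xi, \hb k\ran$ but would leave the wrong (non-semiclassical) norm on the right-hand side. Instead, I would rerun the integration-by-parts argument directly in the semiclassical setup using the operator $\lan \hb D\ran^{2m} := (1 + (\hb D_{x_1})^2 + |\hb D_{x'}|^2)^m$. The identities $\hb D_{x_1}(e^{-2\pi i x_1 \xi/\hb}) = -\xi\, e^{-2\pi i x_1\xi/\hb}$ and $\hb D_{x'_j}(e_{-k}(x')) = -\hb k_j\, e_{-k}(x')$ yield the pointwise identity
\[
\lan \hb D\ran^{2m}\bigl(e^{-2\pi i x_1\xi/\hb}\, e_{-k}(x')\bigr) = \lan \xi, \hb k\ran^{2m}\, e^{-2\pi i x_1\xi/\hb}\, e_{-k}(x').
\]
Starting from the representation $(\hb D_\xi)^\alpha \what{f_k^\hb}(\xi) = (-1)^\alpha \int_T e^{-2\pi i x_1 \xi/\hb} e_{-k}(x')\, x_1^\alpha f\, dx_1\, dx'$, multiplying by $\lan \xi, \hb k\ran^{2m}$, and integrating by parts in $(x_1, x')$ to push $\lan \hb D\ran^{2m}$ onto $x_1^\alpha f$ (boundary terms vanish by the Schwartz decay), one obtains the desired estimate, since for $\hb \leq 1$ each monomial $(\hb D)^\beta$ with $|\beta| \leq 2m$ arising in the expansion of the operator contributes at most $\|x_1^\alpha f\|_{W^{2m,1}_\hb(T)}$.

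Parts (b) and (c) are then essentially bookkeeping under the substitution $\eta = \xi/\hb$, $d\xi = \hb\, d\eta$. For (b), applying the one-dimensional inversion formula from Proposition \ref{propn: Fourier_R}(b) to $f_k$ and performing this substitution produces the factor $\hb^{-1}$ in front of the $\xi$-integral; combining with the Fourier series expansion from Proposition \ref{propn: Fourier_cylinder}(c) and invoking the decay estimate of part (a) with $m$ chosen large gives absolute uniform convergence of the full series and of all its $(x_1,x')$-derivatives. For (c), the one-dimensional Plancherel identity combined with the same substitution yields $\|\what{f_k^\hb}\|_{L^2(\bR)}^2 = \hb\,\|f_k\|_{L^2(\bR)}^2$, and summing in $k$ via Proposition \ref{propn: Fourier_cylinder}(d) completes the proof. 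I do not anticipate any serious obstacle; the only mildly delicate point is the pairing in part (a) of the semiclassical symbol $\lan \xi, \hb k\ran$ with the semiclassical Sobolev norm $W^{2m,1}_\hb(T)$, for which the direct integration-by-parts argument is cleaner than any rescaling shortcut.
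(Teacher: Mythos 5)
Your proposal is correct. The paper gives no explicit proof here (it simply states the result as a ``rewrite'' of Proposition~\ref{propn: Fourier_cylinder}), and you have correctly supplied the one point that a bare rescaling does not settle: substituting $\xi \mapsto \xi/\hb$ into Proposition~\ref{propn: Fourier_cylinder}(b) gives the bound with the ordinary $\|x_1^\alpha f\|_{W^{2m,1}(T)}$ on the right, which for $\hb \le 1$ dominates but does not reproduce the stated semiclassical norm $\|x_1^\alpha f\|_{W^{2m,1}_\hb(T)}$. Rerunning the integration by parts with $\lan \hb D\ran^{2m}$ in place of $\lan D\ran^{2m}$, exactly as you describe, is the right way to get the sharper estimate, and parts (b) and (c) are indeed just the change of variables $\eta = \xi/\hb$ in the one-dimensional inversion and Plancherel formulas.
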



\subsubsection{Semiclassical pseudodifferential operators}


For the differential operator 
$a_{\alpha,\beta}(x_{1},x')(\hb D_{x_{1}})^{\alpha}(\hb D_{x'})^{\beta}$ 
on $T$ and $f \in \cS(T)$ we have the Fourier inversion relation								
\[
[a_{\alpha,\beta}(x_{1},x')(\hb D_{x_{1}})^{\alpha}(\hb D_{x'})^{\beta}]f(x_{1},x')
= \frac{1}{\hb}\sum_{k \in \bZ^{d}}\int_{\bR}e^{2\pi ix_{1}\xi/\hb}e_{k}(x')
[a_{\alpha,\beta}(x_{1},x')\xi^{\alpha}(\hb k)^{\beta}]\what{f_{k}^{\hb}}(\xi)d\xi.
\]

We refer to the function 
$a(x_{1},x',\xi,k) 
= a_{\alpha,\beta}(x_{1},x')\xi^{\alpha}(\hb k)^{\beta}$
as the symbol of the differential operator. In what follows we show 
that we can admit symbols more general than polynomials (in the dual 
variables $\xi$ and $k$). Finally, although we only need to define the 
symbol over $\bR \times \bT^{d} \times \bR \times \bZ^{d}$, it may 
be convenient also to allow for symbols over 
$\bR \times \bT^{d} \times \bR \times \bR^{d}$. We denote the points 
in $\bR \times \bT^{d} \times \bR \times \bR^{d}$ by $(x_{1},x',\xi,t)$, 
and we call $\xi$ and $t$ the dual real and toroidal variables, respectively. 


\begin{definition}
\label{defn: symbol_R}
We say that $a = a(x_{1}, \xi; \hb)$ is a (semiclassical) $m$-th order 
symbol over $\bR \times \bR$ if there exists $\hb_{0}$ such that if 
$0 < \hb  \leq \hb_{0}$, then for any
$M \geq 0$ there exists a constant $A_{M}$ such that
\[
|D_{x_{1}}^{\alpha}D_{\xi}^{\beta}a(x_{1},\xi; \hb)| 
\leq A_{M}\lan \xi \ran^{m},
\]

whenever $\alpha + |\beta| \leq M$. The associated pseudodifferential 
operator is defined by
\[
Af(x_{1}) := \Op_{\hb}(a)f(x_{1}) 
= \frac{1}{\hb}\int_{\bR}e^{2\pi ix_{1}\xi/\hb}
a(x_{1},\xi; \hb)\what{g^{\hb}}(\xi)d\xi.
\]
\end{definition}


\begin{definition}
\label{defn: symbol}
We say that $a = a(x_{1}, x', \xi, t; \hb)$ is a (semiclassical) $m$-th order 
symbol over $\bR \times \bT^{d} \times \bR \times \bR^{d}$ if there exists 
$\hb_{0}$ such that if $0 < \hb \leq \hb_{0}$, then for any
$M \geq 0$ there exists a constant $A_{M}$ such that
\[
|D_{x_{1}}^{\alpha}D_{x'}^{\beta}D_{\xi}^{\gamma}a(x_{1},x',\xi,t; \hb)| 
\leq A_{M}\lan \xi, \hb t\ran^{m},
\]

whenever $\alpha + |\beta|+ \gamma \leq M$. The associated pseudodifferential 
operator is defined by
\[
Af(x_{1},x') := \Op_{\hb}(a)f(x_{1},x') 
:= \frac{1}{\hb}\sum_{k \in \bZ^{d}}\int_{\bR}e^{2\pi ix_{1}\xi/\hb}e_{k}(x')
a(x_{1},x', \xi, k; \hb)\what{f_{k}^{\hb}}(\xi)d\xi.
\]
\end{definition}


\begin{remark}
Observe that we do not require the order of the factor $\lan \xi\ran$ or 
$\lan \xi, \hb t\ran$ to decrease whenever we differentiate with respect 
to $\xi$. This would be the case if the symbol were a polynomial or a 
rational function, but we will be considering more general symbols.
In the notation of \cite{St}, these would correspond to symbols
in $S^{m}_{0,0}$.
\end{remark}


\begin{remark}
Note that we do not require any condition on the differences (or derivatives)
with respect to the dual toroidal variables. In a later section, \textit{Composition}, 
we will need these symbols and refer to them as special.
\end{remark}


\begin{remark}
To avoid unnecessary notation, we may occasionally drop the dependance
of the symbol on the semiclassical parameter and just write $a(x_{1},x',\xi,t)$.
\end{remark}


\begin{example}
With this definition, the functions $\xi$ and $\hb t_{j}$ are 
symbols of order $1$. Moreover, we have that 
$\hb D_{x_{1}} = \Op_{\hb}(\xi)$ and 
$\hb D_{x'_{j}} = \Op_{\hb}(\hb t_{j})$ 
as
\[
(\hb D_{x_{1}})f(x_{1},x') = \frac{1}{\hb}\sum_{k \in \bZ^{d}}\int_{\bR}
e^{2\pi ix_{1}\xi/\hb}e_{k}(x')(\xi)\what{f_{k}^{\hb}}(\xi)d\xi,
\]
\[
(\hb D_{x'_{j}})f(x_{1},x') = \frac{1}{\hb}\sum_{k \in \bZ^{d}}\int_{\bR}
e^{2\pi ix_{1}\xi/\hb}e_{k}(x')(\hb k_{j})\what{f_{k}^{\hb}}(\xi)d\xi.
\]
\end{example}


\begin{example}
The function $\lan \xi,\hb t\ran^{-2} := 1/(\xi^{2} + |\hb t|^{2} + 1)$ 
is a symbol of order $-2$. 
\end{example}


\begin{proposition}
\label{propn: operations_symbols}
If $a$, $b$ are symbols of order $m$ and $n$, then 
$D_{x_{1}}^{\alpha}D_{x'}^{\beta}D_{\xi}^{\gamma}a$, $a + b$, and 
$ab$ are symbols of order $m$, $\max\{m,n\}$, and $m + n$, respectively. 
The seminorms of each of these symbols are bounded by those of $a$, 
the maximum of those of $a$ and $b$, and products of those of $a$ and  $b$,
respectively.
\end{proposition}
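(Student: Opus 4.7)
The plan is to verify each of the three claims directly from Definition \ref{defn: symbol}, using only the triangle inequality, the elementary inequality $\lan \xi, \hb t\ran \geq 1$, and the Leibniz rule. No nontrivial obstacle is expected; the point is simply to check that the symbol class is closed under these operations and to track how the seminorms transform.

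For the derivative claim, observe that for any multi-indices $(\alpha',\beta',\gamma')$ with $\alpha' + |\beta'| + \gamma' \leq M$,
\[
D_{x_{1}}^{\alpha'}D_{x'}^{\beta'}D_{\xi}^{\gamma'}
\bigl(D_{x_{1}}^{\alpha}D_{x'}^{\beta}D_{\xi}^{\gamma}a\bigr)
= D_{x_{1}}^{\alpha + \alpha'}D_{x'}^{\beta + \beta'}
D_{\xi}^{\gamma + \gamma'}a,
\]
so the bound on $D^\mu a$ with $|\mu| \leq M + \alpha + |\beta| + \gamma$ supplied by the symbol estimate for $a$ at level $M' = M + \alpha + |\beta| + \gamma$ gives exactly the desired estimate, with constant $A_{M'}$ depending on the seminorms of $a$.

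For the sum, the triangle inequality yields
\[
|D_{x_{1}}^{\alpha}D_{x'}^{\beta}D_{\xi}^{\gamma}(a + b)|
\leq A_{M}\lan \xi,\hb t\ran^{m} + B_{M}\lan \xi, \hb t\ran^{n}
\leq (A_{M} + B_{M})\lan \xi,\hb t\ran^{\max\{m,n\}},
\]
where the last step uses $\lan \xi,\hb t\ran \geq 1$. Hence $a+b$ is a symbol of order $\max\{m,n\}$, with $M$-th seminorm controlled by the sum (in particular, the maximum up to a factor of $2$) of the $M$-th seminorms of $a$ and $b$.

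For the product, apply the Leibniz rule to $D_{x_{1}}^{\alpha}D_{x'}^{\beta}D_{\xi}^{\gamma}(ab)$ to write it as a finite linear combination of terms
\[
\bigl(D_{x_{1}}^{\alpha_{1}}D_{x'}^{\beta_{1}}D_{\xi}^{\gamma_{1}}a\bigr)
\bigl(D_{x_{1}}^{\alpha_{2}}D_{x'}^{\beta_{2}}D_{\xi}^{\gamma_{2}}b\bigr),
\]
with $\alpha_1+\alpha_2 = \alpha$, $\beta_1+\beta_2 = \beta$, $\gamma_1+\gamma_2 = \gamma$, and total order bounded by $M$. Each factor is bounded via the symbol estimates for $a$ and $b$ at level $M$, producing $A_{M}B_{M}\lan \xi,\hb t\ran^{m+n}$; multiplying by the combinatorial constant of the Leibniz rule (which depends only on $M$ and $d$) gives a constant of the form $C_{M,d}A_{M}B_{M}$, so $ab$ is a symbol of order $m+n$ whose $M$-th seminorm is controlled by the product of the $M$-th seminorms of $a$ and $b$, as claimed.
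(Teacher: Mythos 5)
Your proof is correct and is exactly the routine verification the paper declines to write out (the paper's entire proof is ``This is a routine argument''). You track the seminorm dependence precisely, which matches the statement's claim.
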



\begin{proof}
This is a routine argument.
\end{proof}


\begin{proposition}
\label{propn: operator_Schwartz}
If $A = \Op_{\hb}(a)$ is a pseudodifferential operator over $T$, then $A$ maps 
the space of Schwartz functions $\cS(T)$ into itself.
\end{proposition}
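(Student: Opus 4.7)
The plan is to verify directly the Schwartz seminorm bounds $\|Af\|_{N} < \infty$ for every $N \in \bN$, i.e.\ to show that $\lan x_{1}\ran^{N}|D^{\alpha}Af(x_{1},x')|$ is uniformly bounded for all multiindices $\alpha$ with $|\alpha| \leq N$. The scheme is (1) justify that the defining sum-integral converges absolutely and can be differentiated term-by-term, (2) compute $D^{\alpha}Af$, (3) convert powers of $x_{1}$ into $\xi$-derivatives via the exponential and integrate by parts, and (4) close the estimate using the symbol bounds and the Schwartz decay of $\what{f_{k}^{\hb}}$.

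For step (1), by Proposition~\ref{propn: Fourier_semiclassical}(a) applied to $f \in \cS(T)$, for every $m'$ one has $|\what{f_{k}^{\hb}}(\xi)| \cleq_{\hb} \lan \xi,\hb k\ran^{-2m'}\|f\|_{M}$ for some $M=M(m')$; combined with $|a(x_{1},x',\xi,k)| \leq A_{0}\lan \xi,\hb k\ran^{m}$ from Definition~\ref{defn: symbol}, choosing $2m'-m > d+1$ gives summability in $k$ and integrability in $\xi$. Differentiating under the sum/integral in $(x_{1},x')$ is justified identically after noting that Leibniz gives, for $D^{\alpha} = D_{x_{1}}^{\alpha_{1}}D_{x'}^{\alpha'}$ hitting $e^{2\pi i x_{1}\xi/\hb}e_{k}(x')a$, a sum of terms
\[
e^{2\pi i x_{1}\xi/\hb}e_{k}(x')\,\binom{\alpha_{1}}{j}\binom{\alpha'}{\beta}(\xi/\hb)^{j}k^{\beta}\,D_{x_{1}}^{\alpha_{1}-j}D_{x'}^{\alpha'-\beta}a(x_{1},x',\xi,k),
\]
with $j\leq \alpha_{1}$, $\beta\leq \alpha'$; by Proposition~\ref{propn: operations_symbols}, each $D_{x_{1}}^{\alpha_{1}-j}D_{x'}^{\alpha'-\beta}a$ is again a symbol of order $m$, and $|(\xi/\hb)^{j}k^{\beta}|\cleq_{\hb}\lan \xi,\hb k\ran^{j+|\beta|}$, so the integrand is majorized by $C_{\hb,\alpha}\lan \xi,\hb k\ran^{m+|\alpha|-2m'}\|f\|_{M}$, which is summable/integrable for $m'$ large.

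For steps (3)--(4), to gain decay in $x_{1}$, use the identity $\lan x_{1}\ran^{2N'}e^{2\pi i x_{1}\xi/\hb} = \lan \hb D_{\xi}\ran^{2N'}e^{2\pi i x_{1}\xi/\hb}$, where $\lan \hb D_{\xi}\ran^{2N'} = (1 + (\hb D_{\xi})^{2})^{N'}$ is a polynomial in $\hb D_{\xi}$. Integrating by parts $2N'$ times in $\xi$ (boundary terms vanish as both the symbol factors are polynomially bounded and $\what{f_{k}^{\hb}}$ together with all its derivatives decays rapidly), we get
\[
\lan x_{1}\ran^{2N'}D^{\alpha}Af(x_{1},x') = \frac{1}{\hb}\sum_{k\in \bZ^{d}}\int_{\bR}e^{2\pi i x_{1}\xi/\hb}e_{k}(x')\,\lan \hb D_{\xi}\ran^{2N'}\!\Bigl[b(x_{1},x',\xi,k)\what{f_{k}^{\hb}}(\xi)\Bigr]\,d\xi,
\]
where $b$ is the sum of the symbol-times-polynomial terms produced in step (2). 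Expanding $\lan \hb D_{\xi}\ran^{2N'}$ by Leibniz distributes the $\xi$-derivatives between $b$ (which, by Proposition~\ref{propn: operations_symbols} and the discussion above, is still polynomially bounded by $\lan \xi,\hb k\ran^{m+|\alpha|}$ together with all its $\xi$-derivatives, up to an $\hb$-dependent constant) and $\what{f_{k}^{\hb}}$ (for which Proposition~\ref{propn: Fourier_semiclassical}(a) supplies $|(\hb D_{\xi})^{\gamma}\what{f_{k}^{\hb}}(\xi)|\cleq_{\hb}\lan \xi,\hb k\ran^{-2m'}\|x_{1}^{\gamma}f\|_{W_{\hb}^{2m',1}(T)}$ for any $m'$). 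Choosing $m'$ so that $2m' - m - |\alpha| > d+1$ makes the integrand summable in $k$ and integrable in $\xi$, yielding
\[
\lan x_{1}\ran^{2N'}|D^{\alpha}Af(x_{1},x')| \cleq_{\hb,N',\alpha}\|f\|_{M}
\]
for some Schwartz seminorm $\|\cdot\|_{M}$ of $f$. Taking $2N'\geq N$ gives $\|Af\|_{N}<\infty$ for every $N$, completing the proof.

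There is no real obstacle here beyond bookkeeping: the argument is the standard semiclassical symbolic calculus combined with the rapid decay of Schwartz Fourier data; the only mild point is keeping track that differentiating in $(x_{1},x')$ introduces harmless powers of $(\xi/\hb)^{j}k^{\beta}$ which, at fixed $\hb$, are absorbed into the symbol bounds, and that the integration-by-parts in $\xi$ has no boundary contribution thanks to the Schwartz decay of $\what{f_{k}^{\hb}}$.
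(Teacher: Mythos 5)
Your proposal is correct and follows essentially the same route as the paper: integrate by parts in $\xi$ via $\lan x_{1}\ran^{2N'}e^{2\pi ix_{1}\xi/\hb} = \lan \hb D_{\xi}\ran^{2N'}e^{2\pi ix_{1}\xi/\hb}$, then close using the polynomial symbol bounds and the rapid decay of $\what{f_{k}^{\hb}}$ and its $\xi$-derivatives. The only cosmetic difference is that you expand $D^{\alpha}Af$ explicitly via Leibniz, whereas the paper packages the same computation by observing that $\hb D_{x}A$ is again a pseudodifferential operator with symbol $(\xi,\hb t)a + \hb D_{x}a$ and thereby reduces to the weighted estimate for the undifferentiated operator; both amount to the same bookkeeping.
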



\begin{proof}
For this proof we will use the notation $D_{x} = (D_{x_{1}},D_{x'})$.
Let $f \in \cS(T)$. The polynomial control of the symbol $a$ and its derivatives,
together with the rapid decay of $\what{f_{k}^{\hb}}(\xi)$ from 
Proposition \ref{propn: Fourier_semiclassical} give that $Af \in C^{\infty}(T)$, and it
is bounded together with its derivatives. Moreover,
differentiating the expression we see that (the vector) $(\hb D_{x})Af$ equals 
\begin{align*}
\hb D_{x}Af(x_{1},x') & = \frac{1}{\hb}\sum_{k \in \bZ^{d}}\int_{\bR}
\hb D_{x}(e^{2\pi ix_{1}\xi/\hb}e_{k}(x')a)\what{f_{k}^{\hb}}(\xi)d\xi \\
& = \frac{1}{\hb}\sum_{k \in \bZ^{d}}\int_{\bR}e^{2\pi ix_{1}\xi/\hb}e_{k}(x')
[(\xi,\hb k) a + \hb D_{x}a]\what{f_{k}^{\hb}}(\xi)d\xi,
\end{align*}

and so it is a pseudodifferential operator corresponding to the symbol
$(\xi,\hb t)a + \hb D_{x}a$. By induction the same is true for higher order 
derivatives. Therefore, in order to show that $Af \in \cS(T)$, it suffices to 
show that $\lan x_{1}\ran^{2m}|Af| \leq C_{m}$ for all $m \geq 0$. 
Integrating by parts we obtain that
\begin{align*}
\lan x_{1} \ran^{2m}Af(x_{1},x') &= \frac{1}{\hb}\sum_{k \in \bZ^{d}}\int_{\bR}
\lan \hb D_{\xi} \ran^{2m}(e^{2\pi ix_{1}\xi/\hb})e_{k}(x')a\what{f_{k}^{\hb}}(\xi)d\xi \\
& = \frac{1}{\hb}\sum_{k \in \bZ^{d}}\int_{\bR}
e^{2\pi ix_{1}\xi/\hb}e_{k}(x')\lan \hb D_{\xi}\ran^{2m}[a\what{f_{k}^{\hb}}(\xi)]d\xi.
\end{align*}

Again, the polynomial control of the symbol $a$ and its derivatives,
together with the rapid decay of the derivatives of $\what{f_{k}^{\hb}}(\xi)$ 
from Proposition \ref{propn: Fourier_semiclassical} give that this is bounded, from
where the conclusion follows.
\end{proof}


\begin{proposition}
\label{propn: symbol_computations}
Let $A = \Op_{\hb}(a)$ be a pseudodifferential operator over $T$. Then,
it satisfies the following identities,
\[
\hb D_{x_{1}}A = \Op_{\hb}(\xi a + \hb D_{x_{1}}a), \ \
\hb^{2}D_{x_{1}}^{2}A = \Op_{\hb}(\xi^{2}a 
+ 2\hb\xi D_{x_{1}}a + \hb^{2}D_{x_{1}}^{2}a),
\]
\[
\hb D_{x'_{j}}A = \Op_{\hb}(\hb t_{j}a + \hb D_{x'_{j}}a), \ \
\hb^{2}D_{x'}^{2}A = \Op_{\hb}(|\hb t|^{2}a 
+ 2\hb(\hb t\cdot D_{x'}a) + \hb^{2}D_{x'}^{2}a),
\]
\[
A \circ \hb D_{x_{1}} = \Op_{\hb}(\xi a), \ \
A \circ \hb^{2}D_{x_{1}}^{2} = \Op_{\hb}(\xi^{2}a), \ \ 
A \circ \hb^{2}D_{x'}^{2} = \Op_{\hb}(|\hb t|^{2}a).
\]
\end{proposition}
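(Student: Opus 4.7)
The identities are essentially a direct computation starting from the Fourier representation
\[
Af(x_1,x') = \frac{1}{\hb}\sum_{k \in \bZ^{d}}\int_{\bR} e^{2\pi ix_{1}\xi/\hb}e_{k}(x') a(x_{1},x', \xi, k;\hb)\what{f_{k}^{\hb}}(\xi)\,d\xi
\]
for a test function $f \in \cS(T)$; by density (or by reading off both sides on $\cS(T)$) this is enough. The plan is to treat the two families separately. For the \emph{left} identities ($\hb D_{x}\circ A$) I would differentiate under the sum/integral, and for the \emph{right} identities ($A\circ \hb D_{x}$) I would first compute the semiclassical Fourier data of $\hb D_{x}f$ and substitute into the definition.

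For the first batch, the key observations are $\hb D_{x_1}(e^{2\pi ix_1\xi/\hb}) = \xi\, e^{2\pi ix_1\xi/\hb}$ and $\hb D_{x'_j}(e_k(x')) = \hb k_j\, e_k(x')$. Applying $\hb D_{x_1}$ to the Fourier representation and using the Leibniz rule gives the symbol $\xi a + \hb D_{x_1}a$; similarly $\hb D_{x'_j}$ produces $\hb t_j\, a + \hb D_{x'_j}a$ once we recognize that the factor $\hb k_j$ is exactly the symbol value $\hb t_j$ sampled at $t=k$. The differentiation under the sum/integral is justified by combining the symbol bounds in Definition~\ref{defn: symbol} with the rapid decay of $\what{f_{k}^{\hb}}$ from Proposition~\ref{propn: Fourier_semiclassical} (this is exactly the argument already used in Proposition~\ref{propn: operator_Schwartz}). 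The second-order identities then follow by iteration: $\hb^2 D_{x_1}^2 A = \hb D_{x_1}\circ \Op_\hb(\xi a + \hb D_{x_1}a)$, and applying the first-order formula once more yields $\xi^2 a + 2\hb\xi D_{x_1}a + \hb^2 D_{x_1}^2 a$; analogously for $\hb^2 D_{x'}^2$, where the cross term $2\hb(\hb t\cdot D_{x'}a)$ appears from expanding $\sum_j \hb D_{x'_j}(\hb t_j a + \hb D_{x'_j}a)$. Proposition~\ref{propn: operations_symbols} ensures at every stage that the resulting functions are legitimate symbols (of the indicated orders).

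For the second batch, the point is that for $f \in \cS(T)$ the $k$-th Fourier coefficient of $\hb D_{x_1}f$ is $\hb D_{x_1} f_k$, and its semiclassical Fourier transform is $\xi\, \what{f_k^\hb}(\xi)$. Plugging this into the definition of $\Op_\hb(a)$ applied to $\hb D_{x_1}f$ gives precisely $\Op_\hb(\xi a)f$; iterating produces $\Op_\hb(\xi^2 a)$ for $A\circ \hb^2 D_{x_1}^2$. For $A\circ \hb^2 D_{x'}^2$, the $k$-th Fourier coefficient of $\hb^2 D_{x'}^2 f$ is $|\hb k|^2 f_k$, and substitution gives the symbol $|\hb t|^2 a$ (sampled again at $t=k$).

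The computation presents no genuine obstacle; the only care needed is in justifying that one may differentiate under the integral over $\bR$ and under the sum over $\bZ^d$. This is a standard dominated-convergence argument using the polynomial bound on $|D_{x_1}^\alpha D_{x'}^\beta D_\xi^\gamma a| \leq A_M \lan \xi,\hb t\ran^m$ together with the rapid-decay estimate $|(\hb D_\xi)^\alpha \what{f_k^\hb}(\xi)| \cleq \lan \xi,\hb k\ran^{-2M}\|x_1^\alpha f\|_{W^{2M,1}_\hb(T)}$ from Proposition~\ref{propn: Fourier_semiclassical}, taking $M$ large enough relative to $m$ and the number of derivatives applied. Once this interchange is granted, each identity is a one-line manipulation, so I would simply write out the $\hb D_{x_1}A$ case in full and indicate that the other six follow by the same argument.
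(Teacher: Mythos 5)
Your proof is correct and is exactly the computation the paper has in mind: the paper's proof reads only ``These results follow directly from the definition,'' i.e.\ it asserts precisely the differentiation-under-the-integral and Fourier-coefficient substitutions you carry out. You supply the details the paper omits (justification of the interchange via the symbol bounds and rapid decay, and iteration for the second-order cases), but the underlying route is identical.
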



\begin{proof}
These results follow directly from the definition.
\end{proof}


The simplest case when dealing with pseudodifferential operators in $\bR^{d}$, is
when the symbol has spatial compact support, see Chapter 6, Section 2.1 in
\cite{St}. This is always the case for symbols on the torus, so in analogy
to \cite{St}, we decompose the symbol in its Fourier series. With uniform 
convergence (in $x_{1}$, $x'$, $\xi$, and $l$), we have that
$a(x_{1},x',\xi,l; \hb) 
= \sum_{k \in \bZ^{d}} a_{k}(x_{1},\xi,l; \hb)e_{k}(x')$,
so we can rewrite the operator $A = \Op_{\hb}(a)$ as
\begin{align}
\label{eqn: pseudodifferential_decomposition}
Af(x_{1},x') & = \frac{1}{\hb}\sum_{l \in \bZ^{d}}\int_{\bR}e^{2\pi ix_{1}\xi/\hb}e_{l}(x')
a(x_{1},x', \xi, l)\what{f_{l}^{\hb}}(\xi)d\xi \nonumber \\
& = \frac{1}{\hb}\sum_{k,l \in \bZ^{d}}\int_{\bR}e^{2\pi ix_{1}\xi/\hb}e_{k + l}(x')
a_{k}(x_{1}, \xi, l)\what{f_{l}^{\hb}}(\xi)d\xi \nonumber \\
& = \sum_{k,l \in \bZ^{d}}\biggl(\frac{1}{\hb}\int_{\bR}e^{2\pi ix_{1}\xi/\hb}
a_{k - l}(x_{1}, \xi, l)\what{f_{l}^{\hb}}(\xi)d\xi\biggr)e_{k}(x').
\end{align}

If $a$ is a symbol over $\bR \times \bT^{d} \times \bR \times \bR^{d}$, 
then for fixed $k, l \in \bZ^{d}$ we can define a symbol over $\bR \times \bR$ by 
$a_{k,l}(x_{1},\xi; \hb) := a_{k - l}(x_{1},\xi,l; \hb)$. We will elaborate
below on the properties of this symbol. 
Let us define $A^{kl} := \Op_{\hb}(a_{k,l})$ on $\cS(\bR)$. For $f \in \cS(T)$, 
we define $A_{kl}f(x_{1},x') := A^{kl}f_{l}(x_{1})e_{k}(x')$, so that 
\eqref{eqn: pseudodifferential_decomposition} can be expressed as the
decomposition 
\begin{equation}
\label{eqn: pseudodifferential_decomposition_II}
Af(x_{1},x') 
= \sum_{k,l \in \bZ^{d}}A^{kl}f_{l}(x_{1})e_{k}(x')
= \sum_{k,l \in \bZ^{d}} A_{kl}f(x_{1},x').
\end{equation} 


\subsection{Boundedness}


In this section we prove a weighted version of the 
Calder\'on--Vaillancourt theorem for pseudodifferential 
operators over $T$. It is interesting to observe 
that we do not need to control the differences over the dual 
toroidal variables; this had already been noted in \cite{So}, 
\cite{RT}.

\bigskip
Recall from before that for the symbol $a(x_{1},x',\xi,l; \hb)$ over 
$\bR \times \bT^{d} \times \bR \times \bZ^{d}$, we defined the symbol 
$a_{k,l}(x_{1},\xi; \hb) := a_{k - l}(x_{1},\xi,l; \hb)$ over $\bR \times \bR$. 


\begin{proposition}
\label{propn: symbol_decomposition}
If $a(x_{1},x',\xi,l; \hb)$ is a semiclassical zero order symbol over
$\bR \times \bT^{d} \times \bR \times \bZ^{d}$, then
$a_{k,l}(x_{1},\xi; \hb)$ is a semiclassical zero order symbol 
over $\bR \times \bR$ with seminorm bounds
\[
|D_{x_{1}}^{\alpha}D_{\xi}^{\beta}a_{k,l}(x_{1},\xi; \hb)|							
\cleq A_{M + 2N}\lan k - l\ran^{-2N},
\]

whenever $\alpha + \beta \leq M$ and any $N \geq 0$.
\end{proposition}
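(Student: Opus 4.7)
The plan is to recognize $a_{k,l}(x_1,\xi;\hbar) = a_{k-l}(x_1,\xi,l;\hbar)$ as a Fourier coefficient in the toroidal variable, and then extract decay in $\langle k-l\rangle$ via the usual integration-by-parts trick on the torus (cf.\ the proof of Proposition \ref{propn: Fourier_cylinder}). Zero order as a symbol over $\bR\times\bR$ will then be essentially automatic, since the controlling constant will turn out to be independent of $\xi$.

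First I would write out the definition explicitly,
\[
a_{k,l}(x_{1},\xi;\hb)
= a_{k-l}(x_{1},\xi,l;\hb)
= \int_{\bT^{d}} e_{-(k-l)}(x')\,a(x_{1},x',\xi,l;\hb)\,dx'.
\]
Differentiating under the integral sign gives, for any $\alpha,\beta\geq 0$,
\[
D_{x_{1}}^{\alpha}D_{\xi}^{\beta} a_{k,l}(x_{1},\xi;\hb)
= \int_{\bT^{d}} e_{-(k-l)}(x')\,
D_{x_{1}}^{\alpha}D_{\xi}^{\beta}a(x_{1},x',\xi,l;\hb)\,dx'.
\]

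Next I would use the identity $\lan k-l\ran^{2N}e_{-(k-l)}(x') = \lan D_{x'}\ran^{2N}e_{-(k-l)}(x')$ and integrate by parts $2N$ times in $x'$ (there are no boundary terms on $\bT^{d}$) to obtain
\[
\lan k-l\ran^{2N} D_{x_{1}}^{\alpha}D_{\xi}^{\beta} a_{k,l}(x_{1},\xi;\hb)
= \int_{\bT^{d}} e_{-(k-l)}(x')\,\lan D_{x'}\ran^{2N}
D_{x_{1}}^{\alpha}D_{\xi}^{\beta}a(x_{1},x',\xi,l;\hb)\,dx'.
\]
Since $\lan D_{x'}\ran^{2N}$ is a differential operator of order $2N$ in $x'$, expanding it yields a finite linear combination of $D_{x_{1}}^{\alpha}D_{x'}^{\gamma}D_{\xi}^{\beta}a$ with $|\gamma|\leq 2N$. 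The symbol hypothesis (Definition \ref{defn: symbol}) with $M$ replaced by $M+2N$ applied to $a$, noting $m=0$, bounds each such term by $A_{M+2N}\lan\xi,\hb l\ran^{0}=A_{M+2N}$, provided $\alpha+|\gamma|+\beta\leq M+2N$, which holds whenever $\alpha+\beta\leq M$. Taking absolute value inside the integral and using $|\bT^{d}|=1$ gives
\[
\lan k-l\ran^{2N}|D_{x_{1}}^{\alpha}D_{\xi}^{\beta} a_{k,l}(x_{1},\xi;\hb)|
\cleq A_{M+2N},
\]
which is exactly the asserted bound.

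Finally, since the right-hand side is bounded uniformly in $\xi$ (and in $k,l$ once divided by $\lan k-l\ran^{2N}$), the bound can be rewritten as $\cleq A_{M+2N}\lan k-l\ran^{-2N}\lan\xi\ran^{0}$, certifying that $a_{k,l}(\,\cdot\,;\hb)$ is a zero order symbol over $\bR\times\bR$ in the sense of Definition \ref{defn: symbol_R}, uniformly for $\hb\in(0,\hb_{0}]$. There is no real obstacle here: the only thing to watch is the bookkeeping that a differentiation in $x'$ of total order $2N$ is allowed by the symbol hypothesis at the cost of enlarging $M$ to $M+2N$, which is precisely why the constant $A_{M+2N}$ (rather than $A_{M}$) appears in the conclusion.
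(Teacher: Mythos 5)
Your argument is correct and coincides with the paper's: the paper simply cites Proposition \ref{propn: Fourier_torus} for the $\lan k-l\ran^{-2N}$ decay and then bounds $\|D_{x_{1}}^{\alpha}D_{\xi}^{\beta}a(x_{1},\cdot,\xi,l)\|_{W^{2N,1}(\bT^{d})}\cleq A_{M+2N}$ using the symbol estimates, which is exactly the integration-by-parts computation you unfold explicitly. The two proofs are the same in substance.
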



\begin{proof}
From Proposition \ref{propn: Fourier_torus} we have that
\[
|D_{x_{1}}^{\alpha}D_{\xi}^{\beta}a_{k,l}(x_{1},\xi)|	
= |D_{x_{1}}^{\alpha}D_{\xi}^{\beta}a_{k - l}(x_{1},\xi,l)|	
\cleq \lan k - l\ran^{-2N}\|D_{x_{1}}^{\alpha}D_{\xi}^{\beta}
a(x_{1},\cdot,\xi,l)\|_{W^{2N,1}(\bT^{d})}.
\]

Given that $a$ is a zero order symbol, then for any $N \geq 0$
we can bound 
$\|D_{x_{1}}^{\alpha}D_{\xi}^{\beta}
a(x_{1},\cdot,\xi,l)\|_{W^{2N,1}(\bT^{d})}
\cleq A_{M + 2N}$, whenever $\alpha + \beta \leq M$, as 
we wanted to prove.
\end{proof}


We use this to show that the decomposition from 
\eqref{eqn: pseudodifferential_decomposition_II} actually converges. 
The first step is to recall the standard boundedness properties 
of pseudodifferential operators on weighted spaces over $\bR$. We will 
elaborate a little more on the quantitative aspect of the bound in the 
appendix at the end of the chapter. 


\begin{proposition}[\cite{Sa1}]
\label{propn: zero_boundedness_R}
Let $0 < \hb \leq 1$. Let $a(x_{1},\xi; \hb)$ be a semiclassical zero order 
symbol over $\bR \times \bR$. For any $\delta \in \bR$ the operator 
$\Op_{\hb}(a)$ is bounded in $L^{2}_{\delta}(\bR)$. 
Moreover, if $|\delta| \leq \delta_{0}$, then 
the operator norms 
$\|\Op_{\hb}(a)\|_{L^{2}_{\delta}(\bR) \ra L^{2}_{\delta}(\bR)}$ 
are uniformly bounded (in $\delta$ and $\hb$) by a multiple (depending 
on $\delta_{0}$) of some seminorm of $a$.
\end{proposition}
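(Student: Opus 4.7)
The plan is to reduce the weighted estimate to an unweighted one by conjugating with the weight. Writing $M_{s}$ for multiplication by $\lan x_{1}\ran^{s}$, the identity
\[
\|\Op_{\hb}(a)\|_{L^{2}_{\delta}(\bR) \ra L^{2}_{\delta}(\bR)}
= \|M_{\delta}\, \Op_{\hb}(a)\, M_{-\delta}\|_{L^{2}(\bR) \ra L^{2}(\bR)}
\]
reduces matters to showing that $T_{\delta} := M_{\delta}\,\Op_{\hb}(a)\,M_{-\delta}$ is itself a semiclassical pseudodifferential operator $\Op_{\hb}(b_{\delta})$, where $b_{\delta}$ is a semiclassical zero order symbol whose relevant seminorms are bounded, uniformly in $0 < \hb \leq \hb_{0}$ and $|\delta|\leq \delta_{0}$, by finitely many seminorms of $a$ (with constants depending only on $\delta_{0}$). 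Once this is established, the unweighted Calder\'on--Vaillancourt theorem on $\bR$ (the case $\delta = 0$, available in \cite{Z}) applied to $\Op_{\hb}(b_{\delta})$ yields the claim.

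To identify the symbol $b_{\delta}$, I will use the kernel representation. The operator $\Op_{\hb}(a)$ has distributional kernel $K_{a}(x_{1},y_{1}) = \hb^{-1}\int e^{2\pi i(x_{1}-y_{1})\xi/\hb}\,a(x_{1},\xi)\,d\xi$, so the kernel of $T_{\delta}$ equals $w_{\delta}(x_{1},y_{1})K_{a}(x_{1},y_{1})$ with $w_{\delta}(x_{1},y_{1}) := \lan x_{1}\ran^{\delta}\lan y_{1}\ran^{-\delta}$. Taylor expanding $w_{\delta}$ in $y_{1}$ about $y_{1} = x_{1}$ up to order $N$,
\[
w_{\delta}(x_{1},y_{1})
= \sum_{\alpha = 0}^{N-1}\frac{(y_{1}-x_{1})^{\alpha}}{\alpha!}\,\phi_{\delta,\alpha}(x_{1}) + R_{\delta,N}(x_{1},y_{1}),
\qquad
\phi_{\delta,\alpha}(x_{1}) := \lan x_{1}\ran^{\delta}\,\partial_{y_{1}}^{\alpha}\lan y_{1}\ran^{-\delta}\bigl|_{y_{1} = x_{1}}.
\]
Leibniz and the elementary bound $|\partial^{\beta}\lan x_{1}\ran^{s}|\cleq \lan x_{1}\ran^{s - \beta}$ show that each $\phi_{\delta,\alpha}$ is smooth and bounded together with all its derivatives, with bounds uniform in $|\delta|\leq \delta_{0}$. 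Integration by parts against the oscillatory factor $e^{2\pi i(x_{1}-y_{1})\xi/\hb}$ converts each power $(y_{1}-x_{1})^{\alpha}$ into a factor $(-\hb/(2\pi i))^{\alpha}$ together with $\partial_{\xi}^{\alpha}$ acting on $a$. This produces the finite expansion
\[
b_{\delta}(x_{1},\xi;\hb)
= \sum_{\alpha = 0}^{N-1}\frac{1}{\alpha!}\biggl(\frac{-\hb}{2\pi i}\biggr)^{\alpha}\phi_{\delta,\alpha}(x_{1})\,\partial_{\xi}^{\alpha}a(x_{1},\xi;\hb) + r_{\delta,N}(x_{1},\xi;\hb),
\]
each term in the sum being, by construction, a semiclassical zero order symbol with seminorms controlled by those of $a$ and by $\delta_{0}$.

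The main technical obstacle I anticipate is the remainder $r_{\delta,N}$. Since $a$ has no decay in $\xi$ (it lies in $S^{0}_{0,0}$), the oscillatory integrals require regularization: I would introduce an auxiliary cutoff $\chi(\vep \xi)$, carry out the manipulations, and pass to the limit $\vep \ra 0^{+}$, combining the integral form of the Taylor remainder for $R_{\delta,N}$ with integration by parts in $\xi$. Taking $N$ sufficiently large absorbs any polynomial growth of $R_{\delta,N}$ in $y_{1}-x_{1}$, while the extra powers of $\hb$ produced by the integration by parts render $r_{\delta,N}$ of order $\hb^{N}$ and of zero order with the required uniform seminorm control in $|\delta|\leq \delta_{0}$. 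Once $b_{\delta}$ is verified to be a zero order symbol with the stated bounds, the unweighted Calder\'on--Vaillancourt theorem concludes the argument.
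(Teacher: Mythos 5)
Your route is genuinely different from the paper's. The paper first rescales to $\hb = 1$ (observing that $a(x,\hb\xi)$ obeys the same seminorm bounds for $\hb \leq 1$), then treats the even-integer case $\delta = 2n$ by the \emph{exact} identity
\[
\lan x\ran^{2n}\Op(a)f = \int e^{2\pi ix\xi}\lan D_{\xi}\ran^{2n}\bigl(a(x,\xi)\what{f}(\xi)\bigr)d\xi = \sum_{k = 0}^{2n}\Op(a_{k})(x^{k}f),
\]
which comes from $\lan x\ran^{2n}e^{2\pi ix\xi} = \lan D_{\xi}\ran^{2n}e^{2\pi ix\xi}$ and Leibniz applied to the genuine differential operator $\lan D_{\xi}\ran^{2n}$; it then feeds each $\Op(a_{k})$ into Calder\'on--Vaillancourt and uses complex interpolation to fill in the non-integer $\delta$, with a symmetric identity for $\delta = -2n$. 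The decisive feature there is that the Leibniz expansion \emph{terminates}: there is no remainder to estimate, at the cost of needing interpolation afterwards. Your approach---conjugate by the weight, Taylor expand $w_{\delta}(x_{1},y_{1}) = \lan x_{1}\ran^{\delta}\lan y_{1}\ran^{-\delta}$ about the diagonal, and integrate by parts in $\xi$---treats every $\delta$ in $[-\delta_{0},\delta_{0}]$ at once and avoids interpolation, but the expansion does not terminate, so you inherit a remainder. Both proofs lean on the same unweighted Calder\'on--Vaillancourt base case.

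The remainder step is where your sketch is still a sketch, and your framing of it is not quite the right one. You do not need, and should not try, to show that $r_{\delta,N}$ is itself a zero-order symbol so that $T_{\delta} = \Op_{\hb}(b_{\delta})$ exactly; all that is required is that the remainder \emph{operator} be uniformly $L^{2}$-bounded, since the finitely many Taylor-polynomial pieces are already pseudodifferential with controlled seminorms. For that, a Schur test on the kernel is the clean route: the integral form of the Taylor remainder together with Peetre's inequality gives $|R_{\delta,N}(x_{1},y_{1})| \cleq \lan y_{1} - x_{1}\ran^{N + |\delta|}$ uniformly in $|\delta| \leq \delta_{0}$, while $M$ integrations by parts in $\xi$ give the kernel bound $|K_{a}(x_{1},y_{1})| \cleq A_{M}\,\hb^{-1}\lan (x_{1}-y_{1})/\hb\ran^{-M}$. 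Choosing $M > N + |\delta_{0}| + 1$ makes the product integrable in $y_{1}$ uniformly in $x_{1}$ (and vice versa), which closes the remainder estimate; extracting $\hb^{N}$ from $|y_{1}-x_{1}|^{N} = \hb^{N}|(y_{1}-x_{1})/\hb|^{N}$ then recovers the smallness you asserted. You would also do well to adopt the paper's initial rescaling to $\hb = 1$, which commutes with multiplication by $\lan x_{1}\ran^{\delta}$ and removes $\hb$ from the bookkeeping entirely.
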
 


\begin{remark}
From Proposition \ref{propn: symbol_decomposition} and 
Proposition \ref{propn: zero_boundedness_R} we obtain 
that if $|\delta| \leq \delta_{0}$, then we can uniformly bound 
$\|A^{kl}\|_{L^{2}_{\delta}(\bR) \ra L^{2}_{\delta}(\bR)} 
\cleq A_{M + 2N}\lan k - l\ran^{-2N}$, for some value of $M$ and
any $N \geq 0$.
\end{remark}


Let us consider the elliptic differential operator 
$\lan \hb D\ran^{2} = (\hb D)^{2} + 1 = \Op_{\hb}(\lan \xi, \hb t\ran^{2})$,
and the multiplier operator $\lan \hb D\ran^{-2} := \Op_{\hb}(\lan \xi, \hb t\ran^{-2})$.
These are pseudodifferential operators, so by 
Proposition \ref{propn: operator_Schwartz} they map $\cS(T)$ to itself.
Moreover, these are inverses to each other on $\cS(T)$. The proof the 
following result is presented in the appendix.


\begin{proposition}
\label{propn: isomorphisms}
Let $|\delta| \leq \delta_{0}$ and let $0 < \hb \leq 1$. The differential 
operator $\lan \hb D\ran^{2} : H^{2}_{\delta,\hb}(T) \ra L^{2}_{\delta}(T)$ 
and the multiplier operator 
$\lan \hb D\ran^{-2} : L^{2}_{\delta}(T) \ra H^{2}_{\delta,\hb}(T)$ are 
uniformly bounded (in $\delta$ and $\hb$) operators, and inverses to each 
other. The bounds of the operators may depend in $\delta_{0}$.
\end{proposition}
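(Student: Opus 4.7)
The plan is to split the statement into three claims addressed in sequence: (i) the boundedness of $\lan \hb D\ran^{2}$, (ii) the boundedness of $\lan \hb D\ran^{-2}$, and (iii) the mutual inverse property. Claim (i) is immediate: writing $\lan \hb D\ran^{2}f = f + (\hb D_{x_{1}})^{2}f + (\hb D_{x'})^{2}f$ and using the triangle inequality, one has $\|\lan \hb D\ran^{2}f\|_{L^{2}_{\delta}(T)} \cleq \|f\|_{H^{2}_{\delta,\hb}(T)}$ directly from the definition of the semiclassical weighted norm, with constant independent of $\hb$ and $\delta$.

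\bigskip
The substantial part is (ii), which I would attack by reducing to the one-dimensional result of Proposition \ref{propn: zero_boundedness_R}. Since the symbol $\lan \xi, \hb t\ran^{-2}$ does not depend on $(x_{1}, x')$, Proposition \ref{propn: symbol_computations} gives the exact identity $(\hb D)^{\alpha}\lan \hb D\ran^{-2} = \Op_{\hb}\bigl((\xi, \hb t)^{\alpha}\lan \xi, \hb t\ran^{-2}\bigr)$ for any $|\alpha| \leq 2$. For $f \in \cS(T)$, the semiclassical Fourier decomposition from Proposition \ref{propn: Fourier_semiclassical} shows that the $k$-th Fourier coefficient of $(\hb D)^{\alpha}\lan \hb D\ran^{-2}f$ is $\Op_{\hb}(b_{k}^{\alpha})f_{k}$, where $b_{k}^{\alpha}(x_{1}, \xi) := \xi^{\alpha_{1}}(\hb k)^{\alpha'}\lan \xi, \hb k\ran^{-2}$ is a symbol over $\bR \times \bR$ independent of $x_{1}$. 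The key technical step is to verify that, for $|\alpha| \leq 2$, the seminorms of $b_{k}^{\alpha}$ in the sense of Definition \ref{defn: symbol_R} are uniformly bounded in $k \in \bZ^{d}$ and $\hb \in (0, 1]$; this follows from $|\xi^{\alpha_{1}}(\hb k)^{\alpha'}| \leq \lan \xi, \hb k\ran^{|\alpha|}$ together with the elementary bound $|\pa_{\xi}^{j}\lan \xi, \hb k\ran^{-2}| \cleq \lan \xi, \hb k\ran^{-2}$ for every $j \geq 0$. Applying the quantitative form of Proposition \ref{propn: zero_boundedness_R} then yields $\|\Op_{\hb}(b_{k}^{\alpha})f_{k}\|_{L^{2}_{\delta}(\bR)} \cleq \|f_{k}\|_{L^{2}_{\delta}(\bR)}$ uniformly in $k$, $\hb$, and $|\delta| \leq \delta_{0}$. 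Squaring and summing in $k$ via the weighted Plancherel identity \eqref{eqn: weighted_Plancherel}, and then summing in $|\alpha| \leq 2$, gives the desired bound on $\cS(T)$, which extends to $L^{2}_{\delta}(T)$ by density.

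\bigskip
For claim (iii), on $\cS(T)$ the two compositions $\lan \hb D\ran^{2}\lan \hb D\ran^{-2}$ and $\lan \hb D\ran^{-2}\lan \hb D\ran^{2}$ act as Fourier multiplication by $\lan \xi, \hb t\ran^{2}\cdot \lan \xi, \hb t\ran^{-2} = 1$, hence as the identity; combining this with the boundedness from (i) and (ii) and the density of $\cS(T)$ in $L^{2}_{\delta}(T)$ and $H^{2}_{\delta,\hb}(T)$ extends the identity to the full spaces. The main obstacle I anticipate is the uniformity in $k$ and $\hb$ of the zero order seminorms of $b_{k}^{\alpha}$, together with the need that Proposition \ref{propn: zero_boundedness_R} provide a bound controlled by the symbol seminorms (rather than merely asserting qualitative boundedness). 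That quantitative form is precisely how the author has set it up, so the remaining work is the careful but essentially mechanical verification of the uniform seminorm estimates for $b_{k}^{\alpha}$.
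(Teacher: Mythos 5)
Your proof is correct, but it takes a genuinely different route from the paper's. The paper computes the convolution kernel of $\lan\hb D\ran^{-2}$ explicitly from \eqref{eqn: Fourier_convolution} as $\tfrac{\pi}{\hb}\sum_{k}\lan\hb k\ran^{-1}e^{-2\pi\lan\hb k\ran|x_{1}|/\hb}e_{k}(x')$, and then uses the elementary weighted Young and decaying-exponential estimates of Propositions \ref{propn: weighted_Young}--\ref{propn: convolution_operator} to bound convolution against each exponential and its first two $\hb D_{x_{1}}$-derivatives, before summing in $k$. You instead treat $(\hb D)^{\alpha}\lan\hb D\ran^{-2}$ abstractly as a position-independent zero-order pseudodifferential operator, restrict to each Fourier mode in $x'$ to obtain a family of one-dimensional multipliers $\Op_{\hb}(b_{k}^{\alpha})$, verify that the seminorms of $b_{k}^{\alpha}$ are uniformly bounded in $k$ and $\hb$ via $|\xi^{\alpha_{1}}(\hb k)^{\alpha'}|\leq\lan\xi,\hb k\ran^{|\alpha|}$ and $|\partial_{\xi}^{j}\lan\xi,\hb k\ran^{-2}|\cleq\lan\xi,\hb k\ran^{-2}$, and then invoke the quantitative one-dimensional Calder\'on--Vaillancourt theorem of Proposition \ref{propn: zero_boundedness_R}. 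Both are valid and of comparable length; the paper's approach is more self-contained (it proves the weighted estimates it needs directly from the explicit kernel), while yours leans on the general 1D weighted boundedness theorem, which is somewhat heavier machinery for a position-independent symbol but streamlines the argument and sidesteps the kernel computation entirely. There is no circularity in doing so: Proposition \ref{propn: zero_boundedness_R} is proven in the appendix without any appeal to Proposition \ref{propn: isomorphisms}. Your identification of the uniform seminorm bound for $b_{k}^{\alpha}$ as the hinge of the argument is exactly right, and the proof goes through.
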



\begin{proposition}
\label{propn: conjugation}
Let $0 < \hb \leq 1$. If $A = \Op_{\hb}(a)$ is an $m$-th order 								
pseudodifferential operator, then $\lan \hb D\ran^{2}A\lan \hb D\ran^{-2}$ 
is also an $m$-th order pseudodifferential operator with symbol 							
\[
\wilde{a} := a + \frac{2\hb(\xi, \hb t)\cdot (D_{x_{1}}a,D_{x'}a) 
+ \hb^{2}(D_{x_{1}}^{2}a + D_{x'}^{2}a)}{\lan \xi, \hb t\ran^{2}}.
\]

Moreover, the seminorms of $\wilde{a}$ are bounded by seminorms 
of $a$.
\end{proposition}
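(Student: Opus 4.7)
The plan is a direct symbol computation, performed in two steps: compute the symbol of $\lan \hb D\ran^{2} A$ first, then handle the composition with $\lan \hb D\ran^{-2}$ on the right.

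For the left composition, I would decompose $\lan \hb D\ran^{2} = \hb^{2}D_{x_{1}}^{2} + \hb^{2}D_{x'}^{2} + 1$ and apply Proposition \ref{propn: symbol_computations} to each summand. Adding the three resulting symbols yields
\[
\lan \hb D\ran^{2} A = \Op_{\hb}\bigl(\lan \xi, \hb t\ran^{2}\, a + 2\hb(\xi, \hb t)\cdot (D_{x_{1}}a, D_{x'}a) + \hb^{2}(D_{x_{1}}^{2}a + D_{x'}^{2}a)\bigr).
\]

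For the right composition, the key observation is that $\lan \hb D\ran^{-2}$ is a pure Fourier multiplier: its symbol $\lan \xi, \hb t\ran^{-2}$ depends only on the dual variables. So, rather than invoking a full composition calculus, I would verify directly from the definition that for any symbol $b$ and any multiplier $m = m(\xi, t)$, one has $\Op_{\hb}(b)\circ \Op_{\hb}(m) = \Op_{\hb}(b\, m)$; this is a one-line check using $\widehat{(\Op_{\hb}(m)f)_{k}^{\hb}}(\xi) = m(\xi, k)\what{f_{k}^{\hb}}(\xi)$ and the integral definition of $\Op_{\hb}(b)$. Applying this with $m = \lan \xi, \hb t\ran^{-2}$ and $b$ the symbol from the previous step gives exactly the claimed formula
\[
\wilde{a} = a + \frac{2\hb(\xi, \hb t)\cdot (D_{x_{1}}a, D_{x'}a) + \hb^{2}(D_{x_{1}}^{2}a + D_{x'}^{2}a)}{\lan \xi, \hb t\ran^{2}}.
\]

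It remains to check that $\wilde{a}$ is indeed an $m$-th order symbol with seminorms controlled by those of $a$. By Proposition \ref{propn: operations_symbols}, the derivatives $D_{x_{1}}a$, $D_{x'}a$, $D_{x_{1}}^{2}a$, $D_{x'}^{2}a$ are order-$m$ symbols whose seminorms are bounded by seminorms of $a$. The factors $\xi$ and $\hb t_{j}$ are order-$1$ symbols, and $\lan \xi, \hb t\ran^{-2}$ is an order-$(-2)$ symbol (as noted in the example in the text). Using closure of the symbol classes under sums and products from Proposition \ref{propn: operations_symbols}, the first correction term has order $1 + m - 2 = m - 1$ and the second has order $m - 2$, both with seminorm bounds that are products of seminorms of $a$ and of the fixed symbols $\xi/\lan \xi, \hb t\ran^{2}$, $\hb t_{j}/\lan \xi, \hb t\ran^{2}$, $\lan \xi, \hb t\ran^{-2}$; the explicit $\hb$ and $\hb^{2}$ factors are harmless for $\hb \leq 1$. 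Hence $\wilde{a}$ is of order $\max\{m, m - 1\} = m$ with the stated seminorm control.

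There is no real obstacle here beyond bookkeeping; the only point that requires care is the composition with $\lan \hb D\ran^{-2}$ on the right, which is immediate because this operator is a Fourier multiplier and does not interact with the spatial part of the symbol of $A$.
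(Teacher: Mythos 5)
Your proposal is correct and follows essentially the same route as the paper: a direct symbol computation exploiting that $\lan \hb D\ran^{-2}$ is a Fourier multiplier, followed by an appeal to Proposition \ref{propn: operations_symbols} for the seminorm bounds. The paper writes the computation as a single chain of equalities (applying $\lan \hb D\ran^{-2}$ to $f$, then differentiating inside the integral), whereas you organize it as two applications of Proposition \ref{propn: symbol_computations}, but the content is identical.
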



\begin{proof}
The first part is a direct computation, 
\begin{align*}
\lan \hb D\ran^{2}A& \lan \hb D\ran^{-2}f(x_{1},x') \\
& = \lan \hb D\ran^{2}\biggl(\frac{1}{\hb}\sum_{k \in \bZ^{d}}\int_{\bR}
e^{2\pi ix_{1}\xi/\hb}e_{k}(x')a
\frac{\what{f_{k}^{\hb}}(\xi)}{\lan \xi, \hb k\ran^{2}}d\xi\biggr) \\
& = \frac{1}{\hb}\sum_{k \in \bZ^{d}}\int_{\bR}\lan \hb D\ran^{2}
(e^{2\pi ix_{1}\xi/\hb}e_{k}(x')a)
\frac{\what{f_{k}^{\hb}}(\xi)}{\lan \xi, \hb k\ran^{2}}d\xi \\
& = \frac{1}{\hb}\sum_{k \in \bZ^{d}}\int_{\bR}
e^{2\pi ix_{1}\xi/\hb}e_{k}(x')\biggl(a 
+ \frac{2\hb(\xi, \hb k)\cdot (D_{x_{1}}a,D_{x'}a) 
+ \hb^{2}(D_{x_{1}}^{2}a + D_{x'}^{2}a)}{\lan \xi, \hb k\ran^{2}}\biggr)
\what{f_{k}^{\hb}}(\xi)d\xi.
\end{align*}

The symbols $(\xi,\hb t)/(\xi^{2} + |\hb t|^{2} + 1)$ and
$1/(\xi^{2} + |\hb t|^{2} + 1)$ have order $-1$ and $-2$, respectively,
with uniformly bounded (in $\hb$) seminorms. The conclusion 
follows then from Proposition \ref{propn: operations_symbols}.
\end{proof}


\begin{theorem}
\label{thm: zero_boundedness_cylinder}
Let $0 < \hb \leq 1$.
Let $a$ be a zero order symbol over 
$\bR \times \bT^{d} \times \bR \times \bR^{d}$. For 
$s = 0,2$ and $|\delta|\leq \delta_{0}$, the operator 
$\Op_{\hb}(a)$ is uniformly bounded (in $\delta$ and $\hb$) on 
$H^{s}_{\delta,\hb}(T)$. The bounds depend on $d$, 
$\delta_{0}$, and (linearly) in some seminorm of the 
symbol, but are independent of $\hb$.
\end{theorem}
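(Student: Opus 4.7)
The plan is to reduce the theorem to the already-established Euclidean case (Proposition \ref{propn: zero_boundedness_R}) using the Fourier decomposition \eqref{eqn: pseudodifferential_decomposition_II} of $A$, a Schur-type argument on the doubly indexed family $\{A^{kl}\}_{k,l \in \bZ^{d}}$, and then bootstrap from $s=0$ to $s=2$ via the conjugation trick provided by Proposition \ref{propn: conjugation} and the isomorphism Proposition \ref{propn: isomorphisms}.

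First, I would handle the case $s=0$. From \eqref{eqn: pseudodifferential_decomposition_II} and the weighted Plancherel identity \eqref{eqn: weighted_Plancherel},
\[
\|Af\|_{L^{2}_{\delta}(T)}^{2} = \sum_{k \in \bZ^{d}}\Bigl\|\sum_{l \in \bZ^{d}} A^{kl}f_{l}\Bigr\|_{L^{2}_{\delta}(\bR)}^{2}.
\]
By Proposition \ref{propn: symbol_decomposition}, the symbol $a_{k,l}$ has seminorms bounded by $A_{M + 2N}\lan k - l\ran^{-2N}$ for any $N \geq 0$, so Proposition \ref{propn: zero_boundedness_R} gives, uniformly in $\hb$ and $|\delta| \leq \delta_{0}$,
\[
\|A^{kl}\|_{L^{2}_{\delta}(\bR) \ra L^{2}_{\delta}(\bR)} \cleq \lan k - l\ran^{-2N}.
\]
Choosing $2N > d$ so that $\sum_{k \in \bZ^{d}}\lan k - l\ran^{-2N}$ is finite uniformly in $l$, I would apply the triangle inequality inside the $L^{2}_{\delta}(\bR)$ norm and then Cauchy--Schwarz in $l$:
\[
\Bigl\|\sum_{l} A^{kl}f_{l}\Bigr\|_{L^{2}_{\delta}(\bR)}^{2} \cleq \Bigl(\sum_{l}\lan k-l\ran^{-2N}\Bigr)\sum_{l}\lan k-l\ran^{-2N}\|f_{l}\|_{L^{2}_{\delta}(\bR)}^{2}.
\]
Summing in $k$ and switching the order of summation yields $\|Af\|_{L^{2}_{\delta}(T)}^{2} \cleq \|f\|_{L^{2}_{\delta}(T)}^{2}$, where the constant depends linearly on the single seminorm $A_{M + 2N}$ of $a$. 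This is the Schur test tailored to our decomposition.

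For $s=2$, I would use that Proposition \ref{propn: isomorphisms} provides equivalences $\|u\|_{H^{2}_{\delta,\hb}(T)} \simeq \|\lan \hb D\ran^{2}u\|_{L^{2}_{\delta}(T)}$ uniformly in $|\delta|\leq \delta_{0}$ and $0 < \hb \leq 1$. Writing $\lan \hb D\ran^{2} A = \wilde{A}\lan \hb D\ran^{2}$, where $\wilde{A} := \lan \hb D\ran^{2} A \lan \hb D\ran^{-2}$, Proposition \ref{propn: conjugation} identifies $\wilde{A}$ as a zero order pseudodifferential operator whose seminorms are bounded by those of $a$. Applying the $s=0$ case to $\wilde{A}$ gives
\[
\|Af\|_{H^{2}_{\delta,\hb}(T)} \simeq \|\wilde{A}\lan \hb D\ran^{2} f\|_{L^{2}_{\delta}(T)} \cleq \|\lan \hb D\ran^{2} f\|_{L^{2}_{\delta}(T)} \simeq \|f\|_{H^{2}_{\delta,\hb}(T)},
\]
with constants depending only on $d$, $\delta_{0}$, and a seminorm of $a$.

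The main obstacle is the $s=0$ case, specifically the doubly-indexed Schur-type estimate: one must exploit the fact (Proposition \ref{propn: symbol_decomposition}) that the Fourier coefficients $a_{k-l}(x_{1},\xi,l)$ in the toroidal variable decay arbitrarily fast in $k-l$ while the seminorm constants remain uniform in $l$, which is what allows the absence of conditions on differences in the toroidal dual variable in the hypothesis. A minor technical point is that, strictly speaking, I first carry out the argument for $f \in \cS(T)$ where the sum $\sum_{k,l} A^{kl}f_{l}(x_{1})e_{k}(x')$ converges in the strong sense by Proposition \ref{propn: Fourier_semiclassical} and Proposition \ref{propn: operator_Schwartz}, then extend to $L^{2}_{\delta}(T)$ by density.
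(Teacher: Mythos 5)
Your proposal follows essentially the same approach as the paper's proof: both cases $s=0$ and $s=2$ are handled identically, with the $s=0$ case resting on the decomposition \eqref{eqn: pseudodifferential_decomposition_II}, the bounds from Propositions \ref{propn: symbol_decomposition} and \ref{propn: zero_boundedness_R}, and Schur's test (which you spell out explicitly via Cauchy--Schwarz rather than citing by name), and the $s=2$ case obtained by conjugating with $\lan \hb D\ran^{2}$ exactly as the paper does. Your added remark about first working on $\cS(T)$ and extending by density is a sensible clarification that the paper leaves implicit.
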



\begin{proof}
We prove first the result on the weighted spaces $L^{2}_{\delta}(T)$, 
and then conjugate by $\lan \hb D\ran^{2}$ to show the result in 
$H^{2}_{\delta,\hb}(T)$. Recall the decomposition
$A = \sum_{k,l}A_{kl}$ from
\eqref{eqn: pseudodifferential_decomposition_II}, 
where $A_{kl}f(x_{1},x') := A^{kl}f_{l}(x_{1})e_{k}(x')$. 
Choosing $N = N(d)$ large enough and using the bounds 
for the operators $A^{kl}$, from the remark after 
Proposition \ref{propn: zero_boundedness_R}, we obtain that
\[
\sup_{k \in \bZ^{d}}\sum_{l \in \bZ^{d}}
\|A^{kl}\|_{L^{2}_{\delta}(\bR) \ra L^{2}_{\delta}(\bR)}, \ \
\sup_{l \in \bZ^{d}}\sum_{k \in \bZ^{d}}
\|A^{kl}\|_{L^{2}_{\delta}(\bR) \ra L^{2}_{\delta}(\bR)}
\cleq A_{M + 2N}.
\]

By Plancherel's theorem and Schur's criterion it follows that 
\begin{align*}
\|Af\|_{L^{2}_{\delta}(T)}^{2}
& = \sum_{k \in \bZ^{d}}\biggl\|\sum_{l \in \bZ^{d}}
A^{kl}f_{l}\biggr\|_{L^{2}_{\delta}(\bR)}^{2} \\ 
& \leq \sum_{k \in \bZ^{d}}\biggl(\sum_{l \in \bZ^{d}}
\|A^{kl}\|_{L^{2}_{\delta}(\bR) \ra L^{2}_{\delta}(\bR)} 
\|f_{l}\|_{L^{2}_{\delta}(\bR)}\biggr)^{2} 
\cleq A_{M + 2N}^{2}\sum_{l \in \bZ^{d}}
\|f_{l}\|_{L^{2}_{\delta}(\bR)}^{2}
= A_{M + 2N}^{2}\|f\|_{L^{2}_{\delta}(T)}^{2}.
\end{align*}

This gives that $A$ is a bounded operator on $L^{2}_{\delta}(T)$ for 
$|\delta| \leq \delta_{0}$. From Proposition \ref{propn: isomorphisms} we get 
that the boundedness of $A$ on $H^{2}_{\delta,\hb}(T)$ is equivalent to 
the boundedness of $\lan \hb D\ran^{2}A\lan \hb D\ran^{-2}$ on 
$L^{2}_{\delta}(T)$. We know from Proposition \ref{propn: conjugation} that 
this is a zero order pseudodifferential operator with seminorms bounded 
by those of $a$, and thus the conclusion follows.
\end{proof}


\begin{remark}
The result proven above will be sufficient for our purposes,
 but the result can be extended to $H^{s}_{\delta,\hb}(T)$ 
for any $0 \leq s \leq 2$ by complex interpolation.
\end{remark}


\subsection{Composition}


Let $a,b$ be zero order symbols, and let $A = \Op_{\hb}(a)$ and $B = \Op_{\hb}(b)$. 
We know from Proposition \ref{propn: operations_symbols} that $ab$ is also a zero order 
symbol. The following result provides a relation 
between the operator $\Op_{\hb}(ab)$ and the composition $AB$. This will be used to 
obtain the invertibility of pseudodifferential operators corresponding to 
certain zero order symbols. In contrast to Theorem \ref{thm: zero_boundedness_cylinder}, 
in this case we need our symbols to satisfy bounds for the differences in the 
dual toroidal variables. For $(u,v,w) \in \bR \times \bR^{d} \times \bR^{d}$ 
we denote $\lan u,v,w\ran := (1 + u^{2} + |v|^{2} + |w|^{2})^{1/2}$.


\begin{definition}
\label{defn: special_symbol}
We say a zero order symbol $a = a(x_{1},x',\xi,t; \hb)$ is special
if, in addition to the conditions from Definition \ref{defn: symbol}, for any 
$M \geq 0$ there exists a constant $A'_{M}$ such that
\[
|D_{x_{1}}^{\alpha}D_{x'}^{\beta}D_{\xi}^{\gamma}
(a(\cdot,t_{1}) - a(\cdot,t_{2}))| 
\leq \hb A'_{M}|t_{1} - t_{2}|,
\]

whenever $\alpha + |\beta|+ \gamma \leq M$.
\end{definition} 


\begin{remark}
If the symbol is differentiable with respect to $t$ and satisfies
\[
|D_{x_{1}}^{\alpha}D_{x'}^{\beta}D_{\xi}^{\gamma}D_{t}a| 
\leq \hb A'_{M},
\]

whenever $\alpha + |\beta|+ \gamma \leq M$, then the symbol is
be special.
\end{remark}


\begin{theorem}
\label{thm: composition}
Let $0 < \hb \leq 1$ and $\delta_{0} \geq 0$.
Let $a,b$ be zero order symbols over 
$\bR \times \bT^{d} \times \bR \times \bR^{d}$. 
There exists a zero order symbol $c$ such that
$\Op_{\hb}(a)\Op_{\hb}(b) = \Op_{\hb}(c)$. If the 
symbols are special, then, for $s = 0,2$ 
and $|\delta|\leq \delta_{0}$, we have 
\[
\|\Op_{\hb}(a)\Op_{\hb}(b) - \Op_{\hb}(ab)\|_{H^{s}_{\delta,\hb}(T) \ra H^{s}_{\delta,\hb}(T)}
= \|\Op_{\hb}(c - ab)\|_{H^{s}_{\delta,\hb}(T) \ra H^{s}_{\delta,\hb}(T)}
\cleq \hb,
\]

where the constant of the inequality is a multiple 
(depending on $d$, $\delta_{0}$) of the product 
of some seminorm of the symbols, but is independent 
of $\hb$.
\end{theorem}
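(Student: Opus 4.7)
The plan is to reduce composition on the cylinder to composition on $\bR$ via the block decomposition from \eqref{eqn: pseudodifferential_decomposition_II}. Writing $A = \Op_{\hb}(a) = \sum_{k,l} A_{kl}$ and $B = \Op_{\hb}(b) = \sum_{l,n} B_{ln}$ with $A_{kl}f = (A^{kl}f_{l})e_{k}$ and similarly for $B_{ln}$, orthogonality of the toroidal modes forces $A_{kl}B_{mn} = 0$ unless $l = m$, so the ``$(k,n)$-block'' of $AB$ acts on $\cS(\bR)$ as $\sum_{l} A^{kl}B^{ln}$. Each $A^{kl}$ and $B^{ln}$ is a semiclassical pseudodifferential operator on $\bR$ with symbols $a_{k,l}(x_{1},\xi;\hb) = a_{k-l}(x_{1},\xi,l;\hb)$ and $b_{l,n}$, to which the standard composition calculus on $\bR$ applies.

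For the first assertion, the calculus on $\bR$ provides, for each triple $(k,l,n)$, a zero order symbol $c_{k,l,n}(x_{1},\xi;\hb)$ with $A^{kl}B^{ln} = \Op_{\hb}(c_{k,l,n})$ and an expansion $c_{k,l,n} = a_{k,l}\,b_{l,n} + \hb\,r_{k,l,n}$ whose seminorms are controlled by products of seminorms of $a_{k,l}$ and $b_{l,n}$. Proposition \ref{propn: symbol_decomposition} gives the crucial rapid decay $\|a_{k,l}\|, \|b_{l,n}\| \cleq \lan k-l\ran^{-2N}\lan l-n\ran^{-2N}$ (with $N$ as large as desired), so $c_{k-n}(x_{1},\xi,n;\hb) := \sum_{l} c_{k,l,n}(x_{1},\xi;\hb)$ converges and provides the Fourier coefficients (in $x'$) of a zero order symbol $c(x_{1},x',\xi,t;\hb)$ with $\Op_{\hb}(a)\Op_{\hb}(b) = \Op_{\hb}(c)$. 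A routine check, using Proposition \ref{propn: operations_symbols}, shows that all required seminorm bounds on $c$ are inherited from those of $a$ and $b$.

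For the quantitative estimate, I compare the Fourier modes of $c$ and of $ab$. Writing $(ab)_{k-n}(x_{1},\xi,n) = \sum_{l} a_{k-l}(x_{1},\xi,n)\,b_{l-n}(x_{1},\xi,n)$, the difference with the leading part $\sum_{l} a_{k-l}(\cdot,l)\,b_{l-n}(\cdot,n)$ of $c_{k-n}(\cdot,n)$ is
\[
\sum_{l}\bigl[a_{k-l}(x_{1},\xi,l) - a_{k-l}(x_{1},\xi,n)\bigr]\,b_{l-n}(x_{1},\xi,n),
\]
and by the \emph{special symbol} condition in Definition \ref{defn: special_symbol}, together with Proposition \ref{propn: symbol_decomposition}, each bracket is bounded by $\hb |l-n|\,\lan k-l\ran^{-2N}$. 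The remaining contribution $\hb r_{k,l,n}$ from the calculus on $\bR$ is already of size $\hb$ with the same decay. Thus $c - ab = \hb\,\wilde{c}$ for a zero order symbol $\wilde{c}$ whose seminorms are controlled by those of $a, b$ (including the special ones), and Theorem \ref{thm: zero_boundedness_cylinder} gives $\|\Op_{\hb}(c-ab)\|_{L^{2}_{\delta}(T) \ra L^{2}_{\delta}(T)} \cleq \hb$.

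To pass from $L^{2}_{\delta}(T)$ to $H^{2}_{\delta,\hb}(T)$, I invoke Proposition \ref{propn: isomorphisms}, which reduces the estimate to bounding $\lan \hb D\ran^{2}\Op_{\hb}(c-ab)\lan \hb D\ran^{-2}$ on $L^{2}_{\delta}(T)$; Proposition \ref{propn: conjugation} shows this conjugated operator is again of the form $\hb \cdot \Op_{\hb}(\textrm{zero order})$ with seminorms controlled by those of $c - ab$, so the same argument applies. The main obstacle I anticipate is bookkeeping the Schur-type estimate for the matrix of operators $\{A^{kl}B^{ln}\}$ so that the convergence and the $O(\hb)$ bound are uniform in $\hb$; this is precisely where the Lipschitz control in the dual toroidal variable from Definition \ref{defn: special_symbol} is indispensable, as without it the difference $a_{k-l}(\cdot,l) - a_{k-l}(\cdot,n)$ would only be bounded and the summation over $l$ would diverge.
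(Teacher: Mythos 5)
Your proposal follows the same plan as the paper's proof: block-decompose both operators, apply the composition calculus on $\bR$ to each $A^{kl}B^{ln}$ (whose quantitative form is Proposition \ref{propn: bounds_symbol_composition} in the appendix), split $c - ab$ into the $\bR$-remainder plus the difference $a_{k-l}(\cdot,l) - a_{k-l}(\cdot,n)$ controlled by the special-symbol Lipschitz condition, obtain summability over $l$ via Proposition \ref{propn: symbol_decomposition}, and conclude via Theorem \ref{thm: zero_boundedness_cylinder}. The only cosmetic difference is that you separately re-derive the $s = 2$ case via $\lan\hb D\ran^{2}$-conjugation, which Theorem \ref{thm: zero_boundedness_cylinder} already covers for $s = 0,2$.
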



\begin{proof}
Let $A = \Op_{\hb}(a)$ and $B = \Op_{\hb}(b)$. Let us decompose 
$A = \sum A_{jk}$ and $B = \sum B_{lm}$ as in \eqref{eqn: pseudodifferential_decomposition_II}. 
We have that
$A_{jk}B_{lm} = 0$ if $k \neq l$, so that 
\[
ABf(x_{1},x') = \sum_{j,k,l \in \bZ^{d}}A_{jk}B_{kl}f(x_{1},x'),
\]
\vspace{-2mm}

and $A_{jk}B_{kl}f(x_{1},x') = A^{jk}B^{kl}f_{l}(x_{1})e_{j}(x')$.
We know that there exists a zero order symbol $c_{j,k,l}$ over 
$\bR \times \bR$ such that $A^{jk}B^{kl} = \Op_{\hb}(c_{j,k,l})$, 
see \cite{St}, \cite{Z}. Thus, 
\[
ABf(x_{1},x') = \sum_{j,k,l \in \bZ^{d}}A_{jk}B_{kl}f(x_{1},x') 
= \sum_{j,k,l \in \bZ^{d}}A^{jk}B^{kl}f_{l}(x_{1})e_{j}(x') 
= \sum_{j,k,l \in \bZ^{d}}\Op_{\hb}(c_{j,k,l})f_{l}(x_{1})e_{j}(x').
\]

From Proposition \ref{propn: symbol_decomposition} and
Proposition \ref{propn: bounds_symbol_composition}, which we prove in
the appendix, there exists some $K$ such that for
any $N \geq 0$ we have that
\begin{equation}
\label{eqn: symbol_composition}
|D_{x_{1}}^{\alpha}D_{\xi}^{\beta}c_{j,k,l}| 
\cleq A_{K + M + 2N}B_{K + M + 2N}\lan j - k\ran^{-2N}\lan k - l\ran^{-2N},
\end{equation}
\begin{equation}
\label{eqn: difference_composition}
|D_{x_{1}}^{\alpha}D_{\xi}^{\beta}(c_{j,k,l} - a_{j,k}b_{k,l})| 
\cleq \hb A_{K + M + 2N}B_{K + M + 2N}\lan j - k\ran^{-2N}\lan k - l\ran^{-2N},
\end{equation}
 
whenever $\alpha + \beta \leq M$. Recall that for a symbol 
$s$ we denote $s_{k,l}(x_{1},\xi) = s_{k - l}(x_{1},\xi,l)$.
Using this notation and the decomposition from 
\eqref{eqn: pseudodifferential_decomposition_II}, we see 
that if $c$ were a symbol such that $AB = \Op_{\hb}(c)$, then 
we must have $c_{j,l}(x_{1},\xi) = \sum_{k \in \bZ^{d}} c_{j,k,l}(x_{1},\xi)$.
Thus we define
\begin{align*}
c(x_{1},x',\xi,l) & = \sum_{j \in \bZ^{d}}c_{j}(x_{1},\xi,l)e_{j}(x') \\
& = \sum_{j \in \bZ^{d}}c_{j+l,l}(x_{1},\xi)e_{j}(x')
= \sum_{j \in \bZ^{d}}c_{j,l}(x_{1},\xi)e_{j - l}(x')
:=  \sum_{j,k \in \bZ^{d}}c_{j,k,l}(x_{1},\xi)e_{j - l}(x').
\end{align*}

It follows from \eqref{eqn: symbol_composition} that $c$ is a zero
order symbol. Moreover,
\begin{align*}
& c(x_{1}, x',\xi,l) - a(x_{1},x',\xi,l)b(x_{1},x',\xi,l) \\
& = \sum_{j,k \in \bZ^{d}}(c_{j,k,l}(x_{1},\xi)
- a_{j - k}(x_{1},\xi,l)b_{k - l}(x_{1},\xi,l))e_{j - l}(x') \\
& = \sum_{j,k\in \bZ^{d}}(c_{j,k,l}(x_{1},\xi) 
- a_{j,k}(x_{1},\xi)b_{k,l}(x_{1},\xi))e_{j - l}(x') 
+ (a_{j - k}(x_{1},\xi,k) - a_{j - k}(x_{1},\xi,l))b_{k,l}(x_{1},\xi)e_{j - l}(x').
\end{align*}

From \eqref{eqn: difference_composition} and the inequality 
$\lan x + y\ran \cleq \lan x\ran\lan y\ran$, we obtain that
the first difference is a zero order symbol with seminorms 
bounded by (appropriate) multiples of $\hb$. For the second 
difference we use that the symbol is special and 
Proposition \ref{propn: Fourier_torus} (as in the proof of 
Proposition \ref{propn: symbol_decomposition}) to obtain
\[
|D_{x_{1}}^{\alpha}D_{\xi}^{\beta}
(a_{j - k}(x_{1},\xi,k) - a_{j - k}(x_{1},\xi,l))|
\cleq \hb|k - l|A'_{M + 2N}\lan j - k\ran^{-2N},
\]

any $N \geq 0$, whenever $\alpha + \beta \leq M$. Therefore, 
\[
|D_{x_{1}}^{\alpha}D_{\xi}^{\beta}
[(a_{j - k}(x_{1},\xi,k) - a_{j - k}(x_{1},\xi,l))b_{k,l}(x_{1},\xi)]|
\cleq \hb A'_{M + 2N}B_{M + 2N}\lan j -k\ran^{-2N}\lan k - l\ran^{-2N + 1},
\]

for any $N \geq 0$, whenever $\alpha + \beta \leq M$. As 
before, we conclude that the second difference is a zero order 
symbol with seminorms bounded by (appropriate) multiples of 
$\hb$, and the result follows from 
Theorem \ref{thm: zero_boundedness_cylinder}.
\end{proof}


\begin{proposition}
\label{propn: operations_special_symbols}
Let $a$ and $b$ be special zero order symbols over 
$\bR \times \bT^{d} \times \bR \times \bR^{d}$. Then,
\begin{enumerate}[label=\textrm{\alph*).}]
\item $D_{x_{1}}^{\alpha}D_{x'}^{\beta}D_{\xi}^{\gamma}a$, 
$a + b$, and $ab$ are also special zero order symbols.
Moreover, their seminorms are controlled by the products of
the seminorms of $a$ and $b$.
\item the function $e^{a}$ is also a special zero order symbol, 
\item for small enough $\hb$, depending on $a$, the function 
$\log(1 + \hb a)$ is also a special zero order symbol.
\end{enumerate}
\end{proposition}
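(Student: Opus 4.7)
The plan is to verify each of the three claims by reducing them to the definitions, using Proposition \ref{propn: operations_symbols} for the underlying zero order bounds and then checking the additional ``special'' condition on the $t$-variable by elementary product/chain rule manipulations. Throughout, the key identity is that differentiation in $(x_1,x',\xi)$ commutes with the difference operator $\Delta_{t_1,t_2}f := f(\cdot,t_1) - f(\cdot,t_2)$, so the factor $\hb|t_1-t_2|$ from Definition \ref{defn: special_symbol} propagates cleanly through any algebraic manipulation.

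For (a), the zero order seminorm bounds on $D_{x_1}^{\alpha}D_{x'}^{\beta}D_{\xi}^{\gamma}a$, $a+b$, and $ab$ are direct from Proposition \ref{propn: operations_symbols}. Specialness of $D_{x_1}^{\alpha}D_{x'}^{\beta}D_{\xi}^{\gamma}a$ is automatic by the commutation above, and sums are trivial by linearity. For products I would use the telescoping identity
\[
\Delta_{t_1,t_2}(ab) = (\Delta_{t_1,t_2}a)\, b(\cdot,t_1) + a(\cdot,t_2)\,(\Delta_{t_1,t_2}b),
\]
apply Leibniz in $(x_1,x',\xi)$, and estimate each resulting summand by combining the uniform bound on derivatives of one factor (Definition \ref{defn: symbol}) with the $\hb|t_1-t_2|$ bound on the difference factor (Definition \ref{defn: special_symbol}).

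For (b), Fa\`a di Bruno's formula writes $D_{x_1}^{\alpha}D_{x'}^{\beta}D_{\xi}^{\gamma}(e^a)$ as $e^a$ times a polynomial in derivatives of $a$. Since $|a|$ and all its derivatives are uniformly bounded, so are $e^a$ and its derivatives, giving the zero order bound with seminorms polynomial in those of $a$. For specialness I would use
\[
e^{a(t_1)} - e^{a(t_2)} = (a(t_1) - a(t_2))\int_0^1 e^{sa(t_1)+(1-s)a(t_2)}\,ds,
\]
differentiate under the integral via Leibniz, and observe that every term carries either the factor $a(t_1)-a(t_2)$ or some derivative $D^{\mu}(a(t_1)-a(t_2))$, each of size $O(\hb|t_1-t_2|)$ by specialness of $a$, while the remaining factors (derivatives of $a$ and of the exponential) remain uniformly bounded. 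For (c), I would first fix $\hb$ small enough (depending on $\sup|a|$) that $|1+\hb a| \geq 1/2$; then $\log(1+\hb a)$ is smooth, and its chain rule derivatives are finite sums of products of powers $(1+\hb a)^{-k}$ with derivatives of $\hb a$, all uniformly bounded, so it is a zero order symbol. The analogous integral representation
\[
\log(1+\hb a(t_1)) - \log(1+\hb a(t_2)) = \hb(a(t_1)-a(t_2))\int_0^1 \frac{ds}{1+\hb(sa(t_1)+(1-s)a(t_2))}
\]
combined with the same Leibniz argument as in (b) yields an even better $O(\hb^2|t_1-t_2|)$ bound.

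The main obstacle is purely combinatorial bookkeeping: tracking exactly which seminorm order of $a$ (and $b$) controls each derived seminorm once Fa\`a di Bruno's formula is unrolled, and likewise for the Leibniz-style differentiation of the integral representations in (b) and (c). No new analytic ideas enter beyond the splitting identities displayed above; the whole proof is an application of the product and chain rules wrapped around Proposition \ref{propn: operations_symbols}.
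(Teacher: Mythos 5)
The paper offers no proof here beyond the phrase ``This is a routine argument,'' so there is nothing in the text to compare against. Your proposal is a correct and appropriately detailed filling-in of that gap. The key moves are all sound: differentiation in $(x_{1},x',\xi)$ commutes with the difference operator in $t$, so specialness of $D^{\alpha}_{x_{1}}D^{\beta}_{x'}D^{\gamma}_{\xi}a$ is immediate; the telescoping identity $\Delta(ab) = (\Delta a)\,b(t_{1}) + a(t_{2})\,(\Delta b)$ combined with Leibniz handles products; Fa\`a di Bruno gives the zero order bounds for $e^{a}$ and $\log(1+\hb a)$; and the integral representations
\[
e^{a(t_{1})}-e^{a(t_{2})} = (a(t_{1})-a(t_{2}))\int_{0}^{1}e^{sa(t_{1})+(1-s)a(t_{2})}\,ds,
\qquad
\log(1+\hb a(t_{1}))-\log(1+\hb a(t_{2})) = \hb(a(t_{1})-a(t_{2}))\int_{0}^{1}\frac{ds}{1+\hb(sa(t_{1})+(1-s)a(t_{2}))},
\]
propagate the $\hb|t_{1}-t_{2}|$ factor through Leibniz because every summand carries a derivative of the difference $a(t_{1})-a(t_{2})$, which is $O(\hb|t_{1}-t_{2}|)$ by specialness of $a$, while the remaining factors are uniformly bounded (in (c), after fixing $\hb$ small enough that $|1+\hb a|\geq 1/2$). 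Your observation that (c) actually yields an $O(\hb^{2}|t_{1}-t_{2}|)$ bound is a correct strengthening, harmless since the definition only requires $O(\hb|t_{1}-t_{2}|)$ with $\hb$-independent constant.
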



\begin{proof}
This is a routine argument.
\end{proof}


\begin{corollary}
\label{cor: invertibility}
Let $a$ be a special zero order symbol over 
$\bR \times \bT^{d} \times \bR \times \bR^{d}$ and 
let $\delta_{0} > 0$. Then there exists $\hb_{0} > 0$, 
such that if $0 < \hb \leq \hb_{0}$, then 
$\Op_{\hb}(e^{a})$ is an invertible operator in 
$H^{s}_{\delta,\hb}(T)$ for $|\delta| \leq \delta_{0}$ 
and $s = 0,2$. Moreover, the norms in 
$H^{s}_{\delta,\hb}(T)$ of the operator and its inverse 
are uniformly bounded (in $\delta$ and $\hb$).
\end{corollary}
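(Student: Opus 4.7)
The plan is to view $\Op_{\hb}(e^{-a})$ as an almost-inverse of $\Op_{\hb}(e^{a})$ and then correct via a Neumann series. By Proposition \ref{propn: operations_special_symbols}, since $a$ is a special zero order symbol, so are both $e^{a}$ and $e^{-a}$, with seminorms controlled by seminorms of $a$ (in particular, independent of $\hb$). Applying Theorem \ref{thm: composition} with the pair $(e^{a},e^{-a})$ and noting that $e^{a}\cdot e^{-a}=1$, so that $\Op_{\hb}(e^{a}e^{-a})=I$, I obtain
\[
\Op_{\hb}(e^{a})\Op_{\hb}(e^{-a}) = I + R_{1}, \qquad
\Op_{\hb}(e^{-a})\Op_{\hb}(e^{a}) = I + R_{2},
\]
where, for $s=0,2$ and $|\delta|\leq \delta_{0}$, the operator norms satisfy
$\|R_{j}\|_{H^{s}_{\delta,\hb}(T)\to H^{s}_{\delta,\hb}(T)} \leq C\hb$, with a constant $C=C(d,\delta_{0},a)$ independent of $\hb$.

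Next I choose $\hb_{0}>0$ so small that $C\hb_{0}\leq 1/2$. Then for $0<\hb\leq\hb_{0}$ the operators $I+R_{j}$ are invertible on $H^{s}_{\delta,\hb}(T)$ by the standard Neumann series, with $\|(I+R_{j})^{-1}\|\leq 2$ uniformly in $\hb$ and $\delta$. Setting
\[
B := \Op_{\hb}(e^{-a})(I+R_{1})^{-1} = (I+R_{2})^{-1}\Op_{\hb}(e^{-a}),
\]
the first identity gives $\Op_{\hb}(e^{a})B=I$ and the second (after a short calculation using $(I+R_{2})\Op_{\hb}(e^{-a})=\Op_{\hb}(e^{-a})(I+R_{1})$, which follows from associating the triple product $\Op_{\hb}(e^{-a})\Op_{\hb}(e^{a})\Op_{\hb}(e^{-a})$ in two ways) gives $B\Op_{\hb}(e^{a})=I$. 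Hence $\Op_{\hb}(e^{a})$ is invertible with $\Op_{\hb}(e^{a})^{-1}=B$.

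For the uniform bounds, Theorem \ref{thm: zero_boundedness_cylinder} gives $\|\Op_{\hb}(e^{a})\|$ and $\|\Op_{\hb}(e^{-a})\|$ uniformly bounded in $H^{s}_{\delta,\hb}(T)$ (in $\hb$ and $\delta$) by seminorms of $e^{\pm a}$, which are finite and $\hb$-independent by Proposition \ref{propn: operations_special_symbols}. Combined with the uniform bound $\|(I+R_{1})^{-1}\|\leq 2$, this yields a uniform bound on $\|B\|$, completing the claim. The only mildly delicate point is verifying the two-sided nature of $B$ cleanly, which is handled by the triple-product identity above; once the composition theorem is in hand, everything else is a Neumann-series argument.
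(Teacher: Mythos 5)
Your proof is correct and takes essentially the same approach as the paper: both apply Theorem \ref{thm: composition} to the pair $(e^{a},e^{-a})$, use $e^{a}e^{-a}=1$ to get $\Op_{\hb}(e^{a})\Op_{\hb}(e^{-a}) = I + O(\hb)$ and $\Op_{\hb}(e^{-a})\Op_{\hb}(e^{a}) = I + O(\hb)$, and then deduce two-sided invertibility for small $\hb$. The only difference is that the paper compresses the Neumann-series/left-right-inverse bookkeeping into ``the conclusion follows,'' whereas you spell it out (and you are also slightly more careful in noting that $e^{\pm a}$ are \emph{special} zero order symbols, which is exactly the hypothesis Theorem \ref{thm: composition} needs for the $O(\hb)$ bound).
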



\begin{proof}
We know from Proposition \ref{propn: operations_special_symbols} that 
$e^{\pm a}$ are zero order symbols. From Theorem \ref{thm: composition} 
we have that
\[
\|\Op_{\hb}(e^{a})\Op_{\hb}(e^{-a}) - I\|_{H^{s}_{\delta,\hb}(T) 
\ra H^{s}_{\delta,\hb}(T)},  
\|\Op_{\hb}(e^{-a})\Op_{\hb}(e^{a}) - I\|_{H^{s}_{\delta,\hb}(T) 
\ra H^{s}_{\delta,\hb}(T)} \cleq \hb.
\]

This implies that $\Op_{\hb}(e^{a})$ has left and right inverses and 
the conclusion follows.
\end{proof}


\subsection{Appendices} 


\subsubsection{Some facts about weighted spaces}


Let us recall the multiplier operator 
$\lan \hb D\ran^{-2} := \Op_{\hb}(\lan \xi, \hb t\ran^{-2})$, i.e.
\begin{align*}
\lan \hb D\ran^{-2}f(x_{1},x') 
& = \frac{1}{\hb}\sum_{k \in \bZ^{d}}\int_{\bR}
e^{2\pi ix_{1}\xi/\hb}e_{k}(x')\frac{1}{\xi^{2} + |\hb k|^{2} + 1}
\what{f_{k}^{\hb}}(\xi)d\xi \\
& = \sum_{k \in \bZ^{d}}\biggl(\frac{1}{\hb}\int_{\bR}
\frac{e^{2\pi ix_{1}\xi/\hb}}{\xi^{2} + |\hb k|^{2} + 1}
\what{f_{k}^{\hb}}(\xi)d\xi\biggr)e_{k}(x')
\end{align*}
 
If $\lambda > 0$, then we have the classical Fourier transform
\[
\int_{\bR}\frac{e^{2\pi ix_{1}\xi}}{\xi^{2} + \lambda^{2}}d\xi
= \frac{\pi}{\lambda}e^{-2\pi\lambda|x_{1}|},
\]
 
so that the multiplier operator is also given by a 
convolution with the convergent Fourier series 
\begin{equation}
\label{eqn: Fourier_convolution}
\frac{\pi}{\hb}\sum_{k \in \bZ^{d}}\frac{1}{\lan \hb k\ran}
e^{-2\pi\lan \hb k\ran|x_{1}|/\hb}e_{k}(x').
\end{equation}

In the following results we study the properties of convolutions with
functions of the form $e^{-\lambda|x_{1}|}$ to give a proof to
Proposition \ref{propn: isomorphisms}.


\begin{proposition}
\label{propn: weighted_Young}
Let $|\delta| \leq \delta_{0}$ and $g \in L^{1}_{\delta_{0}}(\bR)$. 
For any $f \in L^{2}_{\delta}(\bR)$ we have that
$\|f \ast g\|_{L^{2}_{\delta}} 
\cleq \|f\|_{L^{2}_{\delta}}\|g\|_{L^{1}_{\delta_{0}}}$,
where the constant of the inequality may depend on $\delta_{0}$.
\end{proposition}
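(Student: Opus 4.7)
The plan is to reduce the weighted estimate to the standard (unweighted) Young's inequality $\|F \ast G\|_{L^{2}(\bR)} \leq \|F\|_{L^{2}(\bR)}\|G\|_{L^{1}(\bR)}$ via a pointwise control of the weight. The main tool is Peetre's inequality, which in one dimension reads
\[
\lan x\ran^{\delta} \leq 2^{|\delta|/2}\lan x - y\ran^{\delta}\lan y\ran^{|\delta|},
\qquad x,y \in \bR, \ \delta \in \bR.
\]
This follows by splitting into the cases $\delta \geq 0$ (using $\lan x\ran \leq \sqrt{2}\lan x - y\ran\lan y\ran$) and $\delta < 0$ (using the dual inequality $\lan x - y\ran \leq \sqrt{2}\lan x\ran\lan y\ran$).

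With this in hand, I would estimate
\[
\lan x \ran^{\delta}|(f \ast g)(x)|
\leq 2^{|\delta|/2}\int_{\bR}\lan x - y \ran^{\delta}|f(x - y)|
\lan y\ran^{|\delta|}|g(y)|\, dy,
\]
and, using the hypothesis $|\delta| \leq \delta_{0}$ to bound $\lan y\ran^{|\delta|} \leq \lan y\ran^{\delta_{0}}$, recognize the right-hand side as the convolution of $\lan \cdot \ran^{\delta}|f|$ with $\lan \cdot \ran^{\delta_{0}}|g|$. Taking $L^{2}$ norms in $x$ and applying the classical Young's inequality then gives
\[
\|f \ast g\|_{L^{2}_{\delta}(\bR)}
\leq 2^{\delta_{0}/2}\bigl\|\lan \cdot\ran^{\delta}f\bigr\|_{L^{2}(\bR)}
\bigl\|\lan \cdot\ran^{\delta_{0}}g\bigr\|_{L^{1}(\bR)}
= 2^{\delta_{0}/2}\|f\|_{L^{2}_{\delta}(\bR)}\|g\|_{L^{1}_{\delta_{0}}(\bR)},
\]
which is the desired estimate, with the constant depending only on $\delta_{0}$.

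There is no real obstacle here beyond invoking Peetre's inequality in the correct direction; the only point requiring a little care is the case $\delta < 0$, where naively one might try to bound $\lan x\ran^{\delta}$ by $\lan x - y\ran^{\delta}\lan y\ran^{\delta}$, which is false. Using the $|\delta|$ exponent on the $\lan y \ran$ factor, together with the bound $|\delta| \leq \delta_{0}$, fixes this and is precisely why the hypothesis $g \in L^{1}_{\delta_{0}}(\bR)$ (rather than $L^{1}_{\delta}(\bR)$) appears in the statement.
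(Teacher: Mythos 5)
Your proof is correct and follows essentially the same route as the paper's: apply a Peetre-type inequality ($\lan a\ran\lan b\ran^{-1} \cleq \lan a - b\ran$ in the paper's phrasing, your $\lan x\ran^{\delta} \cleq \lan x-y\ran^{\delta}\lan y\ran^{|\delta|}$) together with $|\delta| \leq \delta_0$ to dominate $\lan x\ran^{\delta}|f\ast g|$ by the convolution $(\lan\cdot\ran^{\delta}|f|)\ast(\lan\cdot\ran^{\delta_0}|g|)$, then conclude with the classical Young's inequality. You spell out the case analysis on the sign of $\delta$ and the explicit constant $2^{\delta_0/2}$ a bit more than the paper does, but the argument is the same.
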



\begin{proof}
Using that $\lan a \ran \lan b\ran^{-1} \cleq \lan a - b\ran$ and 
$\lan a \ran \geq 1$ we obtain that
\begin{align*}
\lan x_{1}\ran^{\delta}|f \ast g|(x_{1}) 
& \leq \lan x_{1}\ran^{\delta}(|f|\ast |g|)(x_{1}) \\
& = \int_{\bR}\lan y_{1}\ran^{\delta}|f(y_{1})|
\lan x_{1} - y_{1}\ran^{\delta_{0}}|g(x_{1} - y_{1})|
[\lan x_{1}\ran^{\delta}\lan y_{1}\ran^{-\delta}
\lan x_{1} - y_{1}\ran^{-\delta_{0}}]dy_{1} \\
& \cleq (\lan \cdot \ran^{\delta}|f|)
\ast (\lan \cdot \ran^{\delta_{0}}|g|)(x_{1}).
\end{align*}
The conclusion then follows from Young's inequality.
\end{proof}


\begin{proposition}
\label{propn: weighted_exp}
Let $\lambda \geq 1$. Then $e^{-\lambda|x_{1}|} \in L^{1}_{\delta_{0}}(\bR)$ 
for all $\delta_{0} \geq 0$, and satisfies 
$\|e^{-\lambda|x_{1}|}\|_{L^{1}_{\delta_{0}}} \cleq \lambda^{-1}$,
with the constant depending on $\delta_{0}$.
\end{proposition}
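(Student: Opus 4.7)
The plan is to compute (or estimate) the weighted $L^1$ norm directly and then exploit the hypothesis $\lambda \geq 1$ to absorb any excess powers of $\lambda^{-1}$. First, I would note that $\langle x_1 \rangle^{\delta_0} = (1 + x_1^2)^{\delta_0/2} \cleq 1 + |x_1|^{\delta_0}$, with a constant depending on $\delta_0$. This reduces the estimate to bounding the two integrals
\[
I_0(\lambda) := \int_{\bR} e^{-\lambda|x_1|}dx_1,
\qquad I_{\delta_0}(\lambda) := \int_{\bR} |x_1|^{\delta_0} e^{-\lambda|x_1|}dx_1.
\]

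For $I_0(\lambda)$ the computation is immediate: $I_0(\lambda) = 2/\lambda$. For $I_{\delta_0}(\lambda)$, I would substitute $u = \lambda x_1$ to obtain
\[
I_{\delta_0}(\lambda) = \frac{2}{\lambda^{\delta_0 + 1}}\int_{0}^{\infty}u^{\delta_0}e^{-u}du
= \frac{2\,\Gamma(\delta_0 + 1)}{\lambda^{\delta_0 + 1}}.
\]

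Combining these estimates gives
\[
\|e^{-\lambda|x_1|}\|_{L^{1}_{\delta_0}(\bR)} \cleq \frac{1}{\lambda} + \frac{\Gamma(\delta_0 + 1)}{\lambda^{\delta_0 + 1}}.
\]

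Now I would use the hypothesis $\lambda \geq 1$: since $\delta_0 \geq 0$, we have $\lambda^{-\delta_0 - 1} \leq \lambda^{-1}$, so both terms are dominated by a multiple (depending on $\delta_0$) of $\lambda^{-1}$. This yields the claimed bound. There is no real obstacle here; the only care needed is to keep track that the constant depends on $\delta_0$ (through $\Gamma(\delta_0 + 1)$ and the elementary inequality relating $\langle x_1 \rangle^{\delta_0}$ to $1 + |x_1|^{\delta_0}$), and that the condition $\lambda \geq 1$ is exactly what ensures a uniform rate of decay $\lambda^{-1}$ independent of $\delta_0$.
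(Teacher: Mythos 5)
Your proof is correct and follows essentially the same route as the paper's: bound $\lan x_{1}\ran^{\delta_{0}}$ by $1 + |x_{1}|^{\delta_{0}}$, integrate the two resulting pieces, and use $\lambda \geq 1$ to absorb the excess decay $\lambda^{-\delta_{0}}$. The paper rounds $\delta_{0}$ up to an integer $n$ and evaluates $\int_{0}^{\infty}e^{-\lambda x_{1}}x_{1}^{n}\,dx_{1} = n!/\lambda^{n+1}$ by parts, whereas you keep $\delta_{0}$ real and evaluate via the substitution $u = \lambda x_{1}$ to get $\Gamma(\delta_{0}+1)/\lambda^{\delta_{0}+1}$; this is a cosmetic difference and both yield the same conclusion.
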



\begin{proof}
Integrating by parts we obtain that if $n \geq 0$ is an integer, then 
\[
\int_{0}^{\infty}e^{-\lambda x_{1}}x_{1}^{n}dx_{1} 
= \frac{n!}{\lambda^{n + 1}} \cleq \frac{1}{\lambda}.
\]

We have that $\lan x_{1}\ran^{n} \cleq 1 + |x_{1}|^{n}$, so that 
if $\delta_{0} \leq n$, then
\[
\int_{\bR}e^{-\lambda |x_{1}|}\lan x_{1}\ran^{\delta_{0}}dx_{1} 
\leq \int_{\bR}e^{-\lambda |x_{1}|}\lan x_{1}\ran^{n}dx_{1}
\cleq \int_{0}^{\infty}e^{-\lambda x_{1}}(1 + x_{1}^{n})dx_{1}  
\cleq \frac{1}{\lambda},
\]

as we wanted to prove.
\end{proof}


\begin{proposition}
\label{propn: convolution_operator}
Let $0 < \hb \leq 1$, $|\delta| \leq \delta_{0}$, and 
$\lambda \geq 1$. If $f \in L^{2}_{\delta}(\bR)$ and we define
$T_{\lambda}f := e^{-2\pi\lambda|x_{1}|/\hb}\ast f$,
then $T_{\lambda}f \in H^{2}_{\delta,\hb}(\bR)$ and 
$\|(\hb D_{x_{1}})^{m}T_{\lambda}f\|_{L^{2}_{\delta}} 
\cleq \hb\lambda^{m - 1}\|f\|_{L^{2}_{\delta}}$ 
for $0 \leq m \leq 2$.
\end{proposition}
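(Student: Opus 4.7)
The plan is to prove this by convolution estimates, combining Proposition \ref{propn: weighted_Young} (weighted Young's inequality) with Proposition \ref{propn: weighted_exp} (weighted $L^1$ bound for exponentials), applied to the kernel $g_\lambda(x_1) := e^{-2\pi\lambda|x_1|/\hb}$ and to its first two derivatives. The key observation is that, away from the origin, each derivative of $g_\lambda$ differs from $g_\lambda$ itself only by the factor $2\pi\lambda/\hb$, and that the rescaling $\mu := 2\pi\lambda/\hb \geq 2\pi$ falls exactly into the hypothesis of Proposition \ref{propn: weighted_exp}, which gives $\|g_\lambda\|_{L^1_{\delta_0}} \cleq \hb/\lambda$.

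First, for the case $m = 0$, I would simply apply Proposition \ref{propn: weighted_Young} with $g = g_\lambda$, using the bound above, to obtain $\|T_\lambda f\|_{L^2_\delta} \cleq (\hb/\lambda)\|f\|_{L^2_\delta}$, which is the required inequality. For $m = 1$, the kernel $g_\lambda$ is continuous at the origin, so one computes classically
\[
\hb D_{x_1} g_\lambda(x_1) = i\lambda\,\sgn(x_1)\,e^{-2\pi\lambda|x_1|/\hb},
\]
which is still in $L^1_{\delta_0}(\bR)$ with norm controlled by $\lambda \cdot \hb/\lambda = \hb$. Since $(\hb D_{x_1})(g_\lambda \ast f) = (\hb D_{x_1} g_\lambda)\ast f$, another application of Proposition \ref{propn: weighted_Young} yields the bound $\|\hb D_{x_1} T_\lambda f\|_{L^2_\delta} \cleq \hb \|f\|_{L^2_\delta}$.

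The only step requiring attention is $m = 2$, because $\hb D_{x_1} g_\lambda$ is no longer continuous at the origin, and so its distributional derivative picks up a Dirac mass. A direct computation gives
\[
(\hb D_{x_1})^{2} g_{\lambda} = \frac{\hb\lambda}{\pi}\delta_{0} - \lambda^{2} g_{\lambda},
\]
from which convolution with $f$ produces
\[
(\hb D_{x_1})^{2} T_{\lambda} f = \frac{\hb\lambda}{\pi} f - \lambda^{2} T_{\lambda} f.
\]
The first term is trivially bounded in $L^{2}_{\delta}(\bR)$ by $\hb\lambda\,\|f\|_{L^{2}_{\delta}}$, and the second term is controlled using the $m = 0$ estimate already proven: $\lambda^{2}\|T_{\lambda} f\|_{L^{2}_{\delta}} \cleq \lambda^{2}(\hb/\lambda)\|f\|_{L^{2}_{\delta}} = \hb\lambda\,\|f\|_{L^{2}_{\delta}}$. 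Adding the two contributions gives the desired inequality, and this completes the three cases. The main (and only) subtle point is keeping track of the delta-function contribution in the second derivative; the rest is a uniform application of weighted Young's inequality with a kernel whose $L^1_{\delta_0}$ norm has been precisely quantified in $\hb$ and $\lambda$.
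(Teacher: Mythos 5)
Your proof is correct and takes essentially the same route as the paper: compute the kernel's derivatives (noting the Dirac mass in the second derivative), then bound each term via Proposition \ref{propn: weighted_Young} combined with the $L^{1}_{\delta_{0}}$ estimate of Proposition \ref{propn: weighted_exp}. The observation that the rate $2\pi\lambda/\hb \geq 2\pi$ lands in the hypothesis of Proposition \ref{propn: weighted_exp} and that the $m=2$ case reuses the $m=0$ bound is exactly the paper's argument.
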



\begin{proof}
Differentiating we observe that 
\[
\hb D_{x_{1}}(e^{-2\pi\lambda|x_{1}|/\hb}) 
= \lambda i\sgn(x_{1})e^{-2\pi\lambda|x_{1}|/\hb}, \ \ 
(\hb D_{x_{1}})^{2}(e^{-2\pi\lambda|x_{1}|/\hb})
= \frac{\hb\lambda}{\pi}\delta_{0} 
- \lambda^{2} e^{-2\pi\lambda|x_{1}|/\hb},
\]

so that we have
\[
\hb D_{x_{1}}T_{\lambda}f 
= \lambda i(\sgn(x_{1})e^{-2\pi\lambda|x_{1}|/\hb})\ast f, \ \ 
(\hb D_{x_{1}})^{2}T_{\lambda}f 
= \frac{\hb\lambda}{\pi}f - \lambda^{2}T_{\lambda}f.
\]

From Proposition \ref{propn: weighted_Young} and 
Proposition \ref{propn: weighted_exp} we obtain that
\[
\|T_{\lambda}f\|_{L^{2}_{\delta}} 
\cleq \frac{\hb}{\lambda}\|f\|_{L^{2}_{\delta}}, \ \ 
\|\hb D_{x_{1}}T_{\lambda}f\|_{L^{2}_{\delta}} 
\cleq \hb\|f\|_{L^{2}_{\delta}}, \ \ 
\|(\hb D_{x_{1}})^{2}T_{\lambda}f\|_{L^{2}_{\delta}}
\cleq \hb\lambda\|f\|_{L^{2}_{\delta}}.
\] 
\end{proof}


\begingroup
\def\theproposition{\ref{propn: isomorphisms}}							
\begin{proposition}															
Let $|\delta| \leq \delta_{0}$ and let $0 < \hb \leq 1$. The differential 
operator $\lan \hb D\ran^{2} : H^{2}_{\delta,\hb}(T) \ra L^{2}_{\delta}(T)$ 
and the multiplier operator 
$\lan \hb D\ran^{-2} : L^{2}_{\delta}(T) \ra H^{2}_{\delta,\hb}(T)$ are 
uniformly bounded (in $\delta$ and $\hb$) operators, and inverses to each 
other. The bounds of the operators may depend in $\delta_{0}$.
\end{proposition}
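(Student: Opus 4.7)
The plan is to handle the two directions separately, then close with an inverse argument by density. Throughout, write $\lambda_k := \langle \hbar k\rangle \geq 1$, and use the equivalent Fourier-series norm
\[
\|g\|_{H^2_{\delta,\hb}(T)}^{2}
\simeq \sum_{m=0}^{2}\sum_{k\in\bZ^{d}} \lambda_k^{2(2-m)}\,
\|(\hb D_{x_{1}})^{m} g_{k}\|_{L^{2}_{\delta}(\bR)}^{2},
\]
which is stated just after the definition of $H^{s}_{\delta,\hb}(T)$.

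For the boundedness of $\langle\hb D\rangle^{2} : H^{2}_{\delta,\hb}(T)\to L^{2}_{\delta}(T)$, observe that $\langle \hb D\rangle^{2}$ acts on the $k$-th Fourier mode of $g$ simply as $(\hb D_{x_{1}})^{2}+\lambda_k^{2}$. By the weighted Plancherel identity \eqref{eqn: weighted_Plancherel} and the triangle inequality,
\[
\|\langle\hb D\rangle^{2} g\|_{L^{2}_{\delta}(T)}^{2}
= \sum_{k}\|((\hb D_{x_{1}})^{2}+\lambda_k^{2}) g_{k}\|_{L^{2}_{\delta}(\bR)}^{2}
\lesssim \sum_{k}\bigl(\|(\hb D_{x_{1}})^{2}g_{k}\|_{L^{2}_{\delta}}^{2}
+\lambda_k^{4}\|g_{k}\|_{L^{2}_{\delta}}^{2}\bigr)
\lesssim \|g\|_{H^{2}_{\delta,\hb}(T)}^{2},
\]
uniformly in $\hb$ and in $\delta$ with $|\delta|\le\delta_{0}$.

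The main work is the bound for $\langle\hb D\rangle^{-2}:L^{2}_{\delta}(T)\to H^{2}_{\delta,\hb}(T)$. Plugging the classical identity $\int e^{2\pi i x\xi}(\xi^{2}+\lambda^{2})^{-1}d\xi=(\pi/\lambda)e^{-2\pi\lambda|x|}$ into the definition of the multiplier (exactly the computation leading to \eqref{eqn: Fourier_convolution}) shows that, on the $k$-th Fourier coefficient,
\[
(\langle\hb D\rangle^{-2}f)_{k} \;=\; \frac{\pi}{\hb\lambda_k}\bigl(e^{-2\pi\lambda_k|x_{1}|/\hb}\ast f_{k}\bigr) \;=\; \frac{\pi}{\hb\lambda_k}\,T_{\lambda_k}f_{k}.
\]
Since $\lambda_k\ge 1$, Proposition \ref{propn: convolution_operator} applies and gives
\[
\|(\hb D_{x_{1}})^{m}(\langle\hb D\rangle^{-2}f)_{k}\|_{L^{2}_{\delta}(\bR)}
\;\lesssim\; \frac{1}{\hb\lambda_k}\cdot\hb\lambda_k^{m-1}\,\|f_{k}\|_{L^{2}_{\delta}(\bR)}
\;=\; \lambda_k^{m-2}\,\|f_{k}\|_{L^{2}_{\delta}(\bR)},\qquad m=0,1,2,
\]
uniformly in $\hb$ and $\delta$. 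The crucial cancellation is now purely algebraic: in the $H^{2}_{\delta,\hb}$ norm each term carries the weight $\lambda_k^{2(2-m)}$, which exactly compensates the factor $\lambda_k^{2(m-2)}$ from the squared bound above. Summing,
\[
\|\langle\hb D\rangle^{-2}f\|_{H^{2}_{\delta,\hb}(T)}^{2}
\simeq \sum_{m=0}^{2}\sum_{k} \lambda_k^{2(2-m)}\,
\|(\hb D_{x_{1}})^{m}(\langle\hb D\rangle^{-2}f)_{k}\|_{L^{2}_{\delta}}^{2}
\lesssim \sum_{k}\|f_{k}\|_{L^{2}_{\delta}}^{2} = \|f\|_{L^{2}_{\delta}(T)}^{2}.
\]

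Finally, $\langle\hb D\rangle^{\pm 2}$ are pseudodifferential operators whose symbols $\langle\xi,\hb t\rangle^{\pm 2}$ are reciprocals, so they are mutual inverses on $\cS(T)$ by Proposition \ref{propn: symbol_computations} (and Proposition \ref{propn: operator_Schwartz}). Since $\cS(T)$ is dense in both $L^{2}_{\delta}(T)$ and $H^{2}_{\delta,\hb}(T)$ (using the equivalent norm $\|\lan x_{1}\ran^{\delta}\cdot\|_{H^{s}(T)}$), the uniform bounds above let us extend the identities $\langle\hb D\rangle^{2}\langle\hb D\rangle^{-2}=I$ and $\langle\hb D\rangle^{-2}\langle\hb D\rangle^{2}=I$ to the full spaces, completing the proof. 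The only mildly delicate point is the weight bookkeeping in the second paragraph — everything else is routine once Proposition \ref{propn: convolution_operator} is in hand.
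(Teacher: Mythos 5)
Your proof is correct and follows essentially the same route as the paper's: both reduce $\langle\hb D\rangle^{-2}$ to the one-dimensional convolution operators $T_{\langle\hb k\rangle}$ via the Fourier series decomposition and the explicit kernel from \eqref{eqn: Fourier_convolution}, then invoke Proposition \ref{propn: convolution_operator} and close with the density of $\cS(T)$. The only difference is cosmetic: you spell out the $\lambda_k^{m-2}$ bookkeeping that cancels the $H^2_{\delta,\hb}$ weights, whereas the paper simply asserts that the bound ``follows from Proposition \ref{propn: convolution_operator}''.
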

\addtocounter{proposition}{-1}
\endgroup


\begin{proof}
It is clear that the differential operator 
$\lan \hb D \ran^{2}: H^{2}_{\delta,\hb}(T) \ra L^{2}_{\delta}(T)$
is a bounded operator. Using the notation of 
Proposition \ref{propn: convolution_operator}, 
we get from \eqref{eqn: Fourier_convolution} that 
\[
F(x_{1},x') := \lan \hb D\ran^{-2}f(x_{1},x') 
= \frac{\pi}{\hb}\sum_{k \in \bZ^{d}}\frac{1}{\lan \hb k\ran}
T_{\lan \hb k\ran}f_{k}(x_{1})e_{k}(x').
\] 

We have that $\|F\|_{H^{2}_{\delta,\hb}(T)}^{2} 
\simeq \sum_{k \in \bZ^{d}}
\lan \hb k\ran^{4}\|F_{k}\|_{L^{2}_{\delta}(\bR)}^{2}
+ \lan \hb k\ran^{2}\|\hb D_{x_{1}}F_{k}\|_{L^{2}_{\delta}(\bR)}^{2}
+ \|(\hb D_{x_{1}})^{2}F_{k}\|_{L^{2}_{\delta}(\bR)}^{2}$.
The bound 
$\|F\|_{H^{2}_{\delta,\hb}(T)}^{2} \cleq \|f\|_{L^{2}_{\delta}(T)}^{2}$
then follows from Proposition \ref{propn: convolution_operator}. 
We have proven
that both of these maps are uniformly bounded. The fact
that these maps are inverses to each other on $\cS(T)$, together
with the density of $\cS(T)$ in $L^{2}_{\delta}(T)$ and 
$H^{s}_{\delta, \hb}(T)$, implies the desired result.
\end{proof}


\subsubsection{Some facts about semiclassical pseudodifferential 
calculus on $\bR$}


In the results above we needed some quantitative results 
for the bounds of the symbols and operators over $\bR$. 
They are implicitly hinted in the literature, but, for the sake
of completeness, we state them explicitly. We start with 
the operator bounds for zero order pseudodifferential 
operators. To avoid unnecessary notation, we denote
$x_{1} \in \bR$ simply by $x$.


\begingroup
\def\theproposition{\ref{propn: zero_boundedness_R}}							
\begin{proposition}[\cite{Sa1}]										
Let $0 < \hb \leq 1$. Let $a(x_{1},\xi; \hb)$ be a semiclassical zero order 
symbol over $\bR \times \bR$. For any $\delta \in \bR$ the operator 
$\Op_{\hb}(a)$ is bounded in $L^{2}_{\delta}(\bR)$. 
Moreover, if $|\delta| \leq \delta_{0}$, then 
the operator norms 
$\|\Op_{\hb}(a)\|_{L^{2}_{\delta}(\bR) \ra L^{2}_{\delta}(\bR)}$ 
are uniformly bounded (in $\delta$ and $\hb$) by a multiple (depending 
on $\delta_{0}$) of some seminorm of $a$.
\end{proposition}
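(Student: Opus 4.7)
The plan is to reduce weighted $L^{2}_{\delta}$-boundedness to the unweighted case via conjugation by the weight, then invoke the standard (unweighted) semiclassical Calder\'on--Vaillancourt theorem on $\bR$. Writing $M_{\delta}$ for multiplication by $\lan x\ran^{\delta}$, the identity
\[
\|\Op_{\hb}(a)f\|_{L^{2}_{\delta}(\bR)}
= \|M_{\delta}\Op_{\hb}(a)M_{-\delta}\,(M_{\delta}f)\|_{L^{2}(\bR)}
\]
shows that the desired bound is equivalent to the $L^{2}$-boundedness, uniformly in $|\delta|\leq\delta_{0}$ and $\hb\in(0,1]$, of the conjugated operator $B_{\delta}:=M_{\delta}\Op_{\hb}(a)M_{-\delta}$, with norm controlled by seminorms of $a$.

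First, I would rewrite $B_{\delta}$ as an oscillatory integral with amplitude,
\[
B_{\delta}f(x)=\frac{1}{\hb}\int_{\bR}\int_{\bR}e^{2\pi i(x-y)\xi/\hb}\,\wilde{a}(x,y,\xi;\hb)\,f(y)\,dy\,d\xi,
\qquad \wilde{a}(x,y,\xi;\hb):=a(x,\xi)\biggl(\frac{\lan x\ran}{\lan y\ran}\biggr)^{\delta},
\]
and apply the standard amplitude-to-symbol reduction. Taylor-expanding $(\lan x\ran/\lan y\ran)^{\delta}$ in $y$ about $y=x$ to order $N$ and using $(y-x)^{j}e^{2\pi i(x-y)\xi/\hb}=(-\hb/2\pi i)^{j}\pa_{\xi}^{j}e^{2\pi i(x-y)\xi/\hb}$ followed by integration by parts in $\xi$, one obtains
\[
B_{\delta}=\sum_{j=0}^{N-1}\hb^{j}\Op_{\hb}(b_{j,\delta})+\hb^{N}R_{N,\delta},
\]
where $b_{0,\delta}=a$ and, for $j\geq 1$, the symbol $b_{j,\delta}(x,\xi)$ is a linear combination (with coefficients polynomial in $\delta$ of degree $j$) of products of $\pa_{\xi}^{j}a$ with derivatives $\pa_{y}^{j}(\lan x\ran/\lan y\ran)^{\delta}\bigl|_{y=x}$. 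The crucial pointwise estimate
\[
\bigl|\pa_{y}^{j}(\lan x\ran/\lan y\ran)^{\delta}\bigl|_{y=x}\bigr|\cleq_{\delta_{0},j}\lan x\ran^{-j}\leq 1
\]
together with Proposition \ref{propn: operations_symbols} and the semiclassical scaling then shows that each $b_{j,\delta}$ is a zero order symbol on $\bR\times\bR$ whose seminorms are bounded by finitely many seminorms of $a$, with constants depending only on $\delta_{0}$ and $j$; the remainder $R_{N,\delta}$ admits analogous control provided $N$ is chosen large enough.

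Finally, I would invoke the standard (unweighted) semiclassical Calder\'on--Vaillancourt theorem on $\bR$ (see \cite{Z}, \cite{St}), which states that for any zero order symbol $b(x,\xi;\hb)$ one has
\[
\|\Op_{\hb}(b)\|_{L^{2}(\bR)\to L^{2}(\bR)}\cleq\sup\{\|D_{x}^{\alpha}D_{\xi}^{\beta}b\|_{L^{\infty}}:\alpha+\beta\leq N_{0}\},
\]
for some universal $N_{0}$, with a constant independent of $\hb\in(0,1]$. Combining this with the seminorm bounds from the previous step and summing the (finite) expansion yields $\|B_{\delta}\|_{L^{2}\to L^{2}}$ bounded by a constant (depending on $\delta_{0}$) times a single seminorm of $a$, as claimed. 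The main obstacle is the uniform control of the Taylor coefficients in $\delta$: the polynomials $\delta(\delta-1)\cdots(\delta-j+1)$ appear, which are harmless for $|\delta|\leq\delta_{0}$ but require truncating the expansion at a depth compatible with the target seminorm order, and showing that the remainder (an oscillatory integral with amplitude depending on both $x$ and $y$) is itself controlled uniformly in $\hb$ and $\delta$ by a standard non-stationary phase / Schur-test argument applied to its kernel.
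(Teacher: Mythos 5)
Your approach is correct in spirit but takes a genuinely different route from the paper. You conjugate by the weight and reduce the resulting amplitude $\wilde{a}(x,y,\xi)=a(x,\xi)(\lan x\ran/\lan y\ran)^{\delta}$ to a left symbol by Taylor expansion, treating all real $\delta$ with $|\delta|\leq\delta_{0}$ in one pass; the price is that $(\lan x\ran/\lan y\ran)^{\delta}$ grows off-diagonally like $\lan x - y\ran^{\delta_{0}}$ (Peetre), so the remainder term must be tamed by additional integration by parts in $\xi$ before a Schur test can be applied, and verifying that this gives a bound uniform in $\hb\in(0,1]$ is genuine work that you defer but do flag honestly. The paper instead sidesteps the amplitude-to-symbol machinery entirely by restricting first to $\delta=\pm 2n$, $n\in\bN$: the identity $\lan x\ran^{2n}e^{2\pi ix\xi}=\lan D_{\xi}\ran^{2n}e^{2\pi ix\xi}$, together with $D_{\xi}^{k}\what{f}=(-1)^{k}\what{x^{k}f}$, moves the weight by a single integration by parts onto finitely many new zero-order symbols $a_{k}$ and onto the functions $x^{k}f$ (so that $\lan x\ran^{2n}\Op(a)f=\sum_{k\leq 2n}\Op(a_{k})(x^{k}f)$, and similarly $\lan x\ran^{-2n}\Op(a)=\Op(\wilde{a})\lan x\ran^{-2n}$); a single application of Calder\'on--Vaillancourt then gives the even-integer cases, and complex interpolation fills in the rest of $|\delta|\leq\delta_{0}$. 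Both routes reduce to the same unweighted Calder\'on--Vaillancourt theorem and yield seminorm bounds of the same flavor; yours is more self-contained and dispenses with interpolation, while the paper's is shorter and avoids the delicate remainder estimate.
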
 
\addtocounter{proposition}{-1}
\endgroup


\begin{proof}
Let us write 
\[
\Op_{\hb}(a)f(x) 
:= \frac{1}{\hb}\int_{\bR}e^{2\pi ix\xi/\hb}
a(x,\xi)\what{f^{\hb}}(\xi)d\xi
= \int_{\bR}e^{2\pi ix\xi}a(x, \hb\xi)\what{f}(\xi)d\xi.
\]

The symbols $a_{\hb}(x,\xi) := a(x,\hb \xi)$ satisfy the same seminorm 
estimates $|D_{x}^{\alpha}D_{\xi}^{\beta}a_{\hb}| \leq A_{M}$,
whenever $\alpha + \beta \leq M$. Therefore, it suffices to prove
this estimate for the case $\hb = 1$. The case $\delta = 0$, i.e. in 
$L^{2}(\bR)$, is the Calder\'on--Vaillancourt theorem, and the 
bound for it in terms of the seminorms is stated in \cite{St} 
at the end of Section 2.4, Chapter 6, or Section 4.5 in \cite{Z}
(in the semiclassical setting for the Weyl quantization, but the method
of proof is the same). We prove first the case $\delta > 0$ for 
$\delta = 2n$, with $n$ a positive integer. Using the identities   
\[
\lan x\ran^{2n}e^{2\pi ix\xi} = \lan D_{\xi}\ran^{2n}e^{2\pi ix\xi}, \ \ 
D_{\xi}^{k}\what{f}(\xi) = (-1)^{k}\what{(x^{k}f)}(\xi),
\]

and integrating by parts we obtain that if $f \in \cS(\bR)$, then
\begin{align*}
\lan x\ran^{2n}\Op(a)f 
& = \int_{\bR}\lan D_{\xi}\ran^{2n}(e^{2\pi ix\xi})a(x,\xi)\what{f}(\xi)d\xi \\
& = \int_{\bR}e^{2\pi ix\xi}\lan D_{\xi}\ran^{2n}(a(x,\xi)\what{f}(\xi))d\xi 
= \sum_{k = 0}^{2n}\int_{\bR}e^{2\pi ix\xi}a_{k}(x,\xi)\what{(x^{k}f)}(\xi)d\xi
= \sum_{k = 0}^{2n}\Op(a_{k})(x^{k}f),
\end{align*}

for some zero order symbols $a_{k}(x,\xi)$, with seminorms controlled
by those of $a$. From this and the Calder\'on--Vaillancourt theorem we get
that 
\[
\|\lan x\ran^{2n}\Op(a)f\|_{L^{2}(\bR)}
\leq \sum_{k = 0}^{2n}\|\Op(a_{k})(x^{k}f)\|_{L^{2}(\bR)}
\cleq \sum_{k = 0}^{2n}\|x^{k}f\|_{L^{2}(\bR)} \cleq \|\lan x\ran^{2n}f\|_{L^{2}(\bR)},
\]

with the constant of the inequality depending on $n$ and some seminorm of $a$.
We have shown that $\Op(a)$ is bounded on $L^{2}_{2n}(\bR)$. The intermediate
values $0 < \delta < 2n$ are obtained by complex interpolation.

\bigskip
Now, let us consider the case $\delta < 0$ for $\delta = -2n$, with
$n$ a positive integer. Integrating by parts we obtain that if $f \in \cS(\bR)$, 
then
\begin{align*}
\lan x\ran^{-2n}\Op(a)\lan x\ran^{2n}f
& = \lan x\ran^{-2n}\int_{\bR} e^{2\pi ix\xi}a(x,\xi)\lan D_{\xi}\ran^{2n}\what{f}(\xi)d\xi \\
& = \lan x\ran^{-2n}\int_{\bR}\lan D_{\xi}\ran^{2n}(e^{2\pi ix\xi}a(x,\xi))\what{f}(\xi)d\xi
= \Op(\wilde{a})f,
\end{align*}

for some zero order symbol $\wilde{a}(x,\xi)$, with seminorms controlled
by those of $a$. This identity can be rewritten as 
$\lan x\ran^{-2n}\Op(a) = \Op(\wilde{a})\lan x\ran^{-2n}$, and so the 
Calder\'on--Vaillancourt theorem gives the boundedness on 
$L^{2}_{-2n}(\bR)$. Again, the intermediate values $-2n < \delta < 0$
are obtained by complex interpolation.
\end{proof}


We also prove the following result for the symbol of 
the composition, which we used in the proof of 
Theorem \ref{thm: composition}.


\begin{proposition}
\label{propn: bounds_symbol_composition}
Let $a(x,\xi; \hb)$ and $b(x,\xi; \hb)$ be symbols 
over $\bR \times \bR$ satisfying 
\[
|D_{x}^{\alpha}D_{\xi}^{\beta}a| 
\leq A_{M}, \ \ 
|D_{x}^{\alpha}D_{\xi}^{\beta}b| \leq B_{M},
\]

whenever $\alpha + \beta \leq M$. If 
$c = c(x_{1},\xi; \hb)$ is the symbol such that 
$\Op_{\hb}(c) = \Op_{\hb}(a)\Op_{\hb}(b)$, then
there exists some $K$ such that for any 
$M \geq 0$ the symbol satisfies
\[
|D_{x}^{\alpha}D_{\xi}^{\beta}c| 
\cleq A_{K + M}B_{K + M}, \ \
|D_{x}^{\alpha}D_{\xi}^{\beta}(c - ab)| 
\cleq \hb A_{K + M}B_{K + M},
\]

whenever $\alpha + \beta \leq M$, where the
constants of the inequalities may depend on $M$ 
but are independent of $\hb$.
\end{proposition}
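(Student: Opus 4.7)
The plan is to start from the standard oscillatory integral representation of the composition symbol. Arguing as in the derivation of the composition formula on $\bR$ (which is the content of Theorem 2 in Chapter 6 of \cite{St}, or the analogous statement in \cite{Z}), one checks by a direct manipulation of the inversion formulas that
\[
c(x,\xi;\hb) = \int\!\!\int e^{-2\pi iz\eta} a(x,\xi + \hb\eta;\hb)\, b(x + z,\xi;\hb)\, dz\, d\eta,
\]
interpreted as an oscillatory integral. The first step is to regularize this: using the identities
\[
\langle 2\pi z\rangle^{-2N}(1 - \Delta_{\eta})^{N}e^{-2\pi iz\eta} = e^{-2\pi iz\eta}, \qquad
\langle 2\pi \eta\rangle^{-2N}(1 - \Delta_{z})^{N}e^{-2\pi iz\eta} = e^{-2\pi iz\eta},
\]
I would integrate by parts $N$ times in $\eta$ and then $N$ times in $z$, picking up factors of $\langle z\rangle^{-2N}\langle\eta\rangle^{-2N}$ and differentiations falling on $a$ and $b$. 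Each $\eta$-derivative hitting $a$ produces a factor of $\hb$ which is harmless since $\hb\leq 1$, and the derivatives in $z$ hit only $b$. Choosing $N$ large enough (depending only on a fixed $K$) makes the integrand absolutely integrable, with bound
\[
|c(x,\xi;\hb)| \cleq A_{K}B_{K}.
\]

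For the bound on $D_{x}^{\alpha}D_{\xi}^{\beta}c$, I would differentiate inside the integral; each such derivative falls on $a$ or $b$, leaving an integrand of exactly the same structure but with $a$ and $b$ replaced by some $D^{\alpha'}D^{\beta'}a$, $D^{\alpha''}D^{\beta''}b$ with $\alpha' + \beta' + \alpha'' + \beta'' \leq M$. Applying the same integration by parts argument yields $|D_{x}^{\alpha}D_{\xi}^{\beta}c| \cleq A_{K+M}B_{K+M}$, which is the first claimed bound.

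For the gain of $\hb$ in $c - ab$, the key observation is that the leading term $ab$ is exactly what one obtains by Taylor expanding $a(x,\xi + \hb\eta)$ at $\hb\eta = 0$: by Fourier inversion in $\eta$ followed by evaluation at $z = 0$,
\[
\int\!\!\int e^{-2\pi iz\eta}a(x,\xi)b(x + z,\xi)\,dz\,d\eta = a(x,\xi)b(x,\xi).
\]
Writing $a(x,\xi + \hb\eta) - a(x,\xi) = 2\pi i\hb\eta\int_{0}^{1}D_{\xi}a(x,\xi + s\hb\eta)\,ds$ and inserting in the formula for $c$ gives
\[
c - ab = 2\pi i\hb\int_{0}^{1}\!\!\int\!\!\int e^{-2\pi iz\eta}\,\eta\, D_{\xi}a(x,\xi + s\hb\eta)\,b(x+z,\xi)\,dz\,d\eta\,ds.
\]
The dangerous factor $\eta$ is absorbed by writing $\eta e^{-2\pi iz\eta} = -\frac{1}{2\pi i}\partial_{z}e^{-2\pi iz\eta}$ and integrating by parts in $z$, which moves a $z$-derivative onto $b$. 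Repeating the regularizing integration by parts from the first step yields an absolutely convergent integral whose integrand is uniformly controlled by $A_{K+1}B_{K+1}$; this gives the estimate for $c - ab$. Higher derivatives $D_{x}^{\alpha}D_{\xi}^{\beta}$ are treated in exactly the same way by first differentiating inside the integral.

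The main obstacle is book-keeping the number of derivatives needed to absolutely converge the oscillatory integral while simultaneously producing the derivatives of $a$ and $b$ demanded by the output seminorm order $M$; this is exactly what forces the loss $K$ in the inequality. A secondary technical point is checking that the Taylor remainder formula can be manipulated termwise under the oscillatory integral, which is justified by the absolute convergence obtained after the preliminary integration by parts. Once these are in place the two stated estimates follow from elementary bounds under the integral.
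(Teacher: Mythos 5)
Your proposal is correct, and it arrives at the same conclusion by a genuinely different (though closely related) route than the paper. Both start from the same oscillatory-integral representation of the composite symbol, but you extract the gain of $\hb$ by Taylor expanding $a$ in the \emph{dual} variable,
\[
a(x,\xi+\hb\eta)-a(x,\xi)=2\pi i\hb\,\eta\int_0^1 D_\xi a(x,\xi+s\hb\eta)\,ds,
\]
then absorb the dangerous $\eta$ factor by $\eta e^{-2\pi iz\eta}=-\tfrac{1}{2\pi i}\partial_z e^{-2\pi iz\eta}$ so that an $x$-derivative falls on $b$, and you secure absolute convergence with the standard two-sided $\lan z\ran^{-2N}(1-\Delta_\eta)^N$, $\lan\eta\ran^{-2N}(1-\Delta_z)^N$ regularization. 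The paper instead keeps $a$ intact and subtracts $ab$ to produce the \emph{spatial} finite difference $(b(x+y,\eta)-b(x,\eta))/y$ (which is bounded by $B_1$), pulls the needed factor of $y$ together with a $\lan y\ran^{-2}$ weight in a single $\xi$-integration by parts via the identity $D_\xi\lan\hb D_\xi\ran^2 e^{-2\pi iy\xi/\hb}=-\hb^{-1}y\lan y\ran^2 e^{-2\pi iy\xi/\hb}$, and then the factor of $\hb$ falls out transparently from the Jacobian of the rescaling $\xi=\hb\mu$, with the final bound coming from an $L^\infty\times L^1$ Fourier pairing rather than iterated integration by parts. The two arguments are dual (you difference in the phase variable of $a$, the paper in the spatial variable of $b$). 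The paper's variant is slightly more economical and makes the $\hb$-scaling visible in one step; your version is the more standard textbook derivation of a first-order composition expansion and works just as well for this estimate, producing the same loss $K$ in the seminorm index.
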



\begin{proof}
Proceeding as in \cite{St}, see Chapter 6, Section 3, 
it suffices to show the estimates for compactly supported 
symbols and prove that these are independent of the size
of the support. Let us recall the integral kernel representation
of a pseudodifferential operator,
\[
\Op_{\hb}(s)f(x) 
= \frac{1}{\hb}\int_{\bR}e^{2\pi ix\xi/\hb}s(x,\xi)
\what{f^{\hb}}(\xi)d\xi
= \frac{1}{\hb}\int_{\bR}\biggl(\int_{\bR}
e^{2\pi i(x - y)\xi/\hb}s(x,\xi)d\xi\biggr)f(y)dy.
\]

Then, the composition has an integral kernel representation 
given by
\begin{align*}
\Op_{\hb}(a)\Op_{\hb}(b)f(x) 
& = \frac{1}{\hb}\int_{\bR}\biggl(\int_{\bR}
e^{2\pi i(x - y)\xi/\hb}a(x,\xi)d\xi\biggr)\Op_{\hb}(b)f(y)dy \\
& = \frac{1}{\hb}\int_{\bR}\biggl(\frac{1}{\hb}\int_{\bR^{2}}
e^{2\pi i(x - y)\xi/\hb}e^{2\pi i(y - z)\eta/\hb}a(x,\xi)b(y,\eta)
d\xi d\eta dy\biggr)f(z)dz \\
& = \frac{1}{\hb}\int_{\bR}\biggl(\int_{\bR}
e^{2\pi i(x - z)\eta/\hb}c(x,\eta)d\eta\biggr)f(z)dz,
\end{align*}

and therefore the symbol of $\Op_{\hb}(a)\Op_{\hb}(b)$ is
equal to
\[
c(x,\eta) := \frac{1}{\hb}\int_{\bR^{2}}
e^{2\pi i(x - y)(\xi - \eta)/\hb}a(x,\xi)b(y,\eta)d\xi dy
= \frac{1}{\hb}\int_{\bR^{2}}
e^{-2\pi iy\xi/\hb}a(x,\eta + \xi)b(x + y,\eta)d\xi dy.
\]

From the inversion formula we have that
\[
a(x,\eta) 
= \frac{1}{\hb}\int_{\bR^{2}}e^{-2\pi iy\xi/\hb}
a(x,\eta + \xi)d\xi dy,
\]
which implies that
\[
c(x,\eta) - a(x,\eta)b(x,\eta)
= \frac{1}{\hb}\int_{\bR^{2}}
ye^{-2\pi iy\xi/\hb}a(x,\eta + \xi)\cdot \frac{b(x + y,\eta) 
- b(x,\eta)}{y}d\xi dy.
\]

Now we use the identity
\[
\frac{D_{\xi}\lan \hb D_{\xi}\ran^{2}}{\lan y\ran^{2}}e^{-2\pi iy\xi/\hb}
= -\frac{1}{\hb}ye^{-2\pi iy\xi/\hb}
\]

and integrate by parts to obtain that
\begin{align*}
c(x,\eta) - a(x,\eta)b(x,\eta)
& = \int_{\bR^{2}}
e^{-2\pi iy\xi/\hb}(D_{\xi}\lan \hb D_{\xi}\ran^{2}a(x,\eta + \xi))
\biggl(\frac{b(x + y,\eta) - b(x,\eta)}{y\lan y\ran^{2}}\biggr)
d\xi dy \\
& =: \int_{\bR^{2}}e^{-2\pi iy\xi/\hb}A(x,\eta,\xi)
B(x,\eta,y)d\xi dy.
\end{align*}

Let us observe that the integral above is absolutely convergent 
because $A$ has compact support in $\xi$ and $B$ is integrable 
in $y$. Therefore, we can exchange the order of integration
and obtain that
\begin{align*}
\biggl|\int_{\bR^{2}}& e^{-2\pi iy\xi/\hb}A(x,\eta,\xi)
B(x,\eta,y)d\xi dy\biggr| \\
& = \hb\biggl|\int_{\bR^{2}}e^{-2\pi iy\mu}A(x,\eta, \hb\mu)
B(x,\eta,y)dyd\mu\biggr| \\
& = \hb \biggl|\int_{\bR}A(x,\eta, \hb\mu)
B(x,\eta,\what{\mu})d\mu\biggr|
\cleq \hb\|A(x,\eta,\cdot)\|_{L^{\infty}(\bR)}
\|\lan D_{y}\ran^{2}B(x,\eta,\cdot)\|_{L^{1}(\bR)},
\end{align*}

where we used in the last inequality that 
\[
\int_{\bR}|\what{f}(\mu)|d\mu 
= \int_{\bR}\frac{|\lan \mu\ran^{2}\what{f}(\mu)|}
{\lan \mu\ran^{2}}d\mu
= \int_{\bR}\frac{|\what{\lan D\ran^{2} f}(\mu)|}
{\lan \mu\ran^{2}}d\mu
\cleq \|\what{\lan D\ran^{2}f}\|_{L^{\infty}(\bR)}
\cleq \|\lan D\ran^{2}f\|_{L^{1}(\bR)}.
\]

Similarly, for $\alpha + \beta \leq M$ we can bound
\begin{align*}
|D_{x}^{\alpha}D_{\eta}^{\beta}(c - ab)| 
& \cleq  \hb\biggl|\int_{\bR}D_{x}^{\alpha}
D_{\eta}^{\beta}(A(x,\eta,\hb\mu)
B(x,\eta,\what{\mu}))d\mu\biggr| \\
& \cleq \hb\sup_{\alpha_{0} + \beta_{0} \leq M}
\|D_{x}^{\alpha_{0}}D_{\eta}^{\beta_{0}}
A(x,\eta,\cdot)\|_{L^{\infty}(\bR)}
\sup_{\substack{\alpha_{1} + \beta_{1} \leq M \\ \gamma_{1} \leq 2}}
\|D_{x}^{\alpha_{1}}D_{\eta}^{\beta_{1}}
D_{y}^{\gamma_{1}}B(x,\eta,\cdot)\|_{L^{1}(\bR)}.
\end{align*}

To finish the estimate we use that
\[
|D_{x}^{\alpha}D_{\eta}^{\beta}A(x,\eta,\xi)| 
\cleq A_{M + 3}, \ \ 
|D_{x}^{\alpha}D_{\eta}^{\beta}D_{y}^{\gamma}
B(x,\eta,y)| 
\cleq \frac{B_{M + 3}}{\lan y\ran^{2}},
\]

whenever $\alpha + \beta \leq M$ and $\gamma \leq 2$.
The differential inequalities for the difference $c - ab$
imply the results for the symbol $c$, and this completes
the proof.
\end{proof}


\section{Conjugation and Carleman Estimate}


The results of this chapter are an adaptation of those from \cite{Sa1} in the case of 
$\bR^{d}$ to the case of the cylinder $T = \bR \times \bT^{d}$. As briefly mentioned in 
the setting, the proof of the magnetic Carleman estimate Theorem \ref{thm: carleman_full} is 
reduced to the case with no potentials. The proof of this Carleman estimate in \cite{KSaU1} 
for $\bR \times M_{0}$, where $M_{0}$ is a Riemannian manifold with boundary, is realized
by an eigenfunction expansion and the solution of first order linear constant coefficient ODEs. 
This can be carried out in the exact same way for the torus $\bT^{d}$, so we have the 
following result.


\begin{theorem}[\cite{KSaU1}]
\label{thm: carleman_free}
Let $\delta > 1/2$. There exists
$\tau_{0} \geq 1$ such that if $|\tau| \geq \tau_{0}$ and 
$\tau^{2} \notin \spec(-\Delta_{g_{0}})$, then for any 
$f \in L^{2}_{\delta}(T)$ there exists a unique 
$u \in H^{1}_{-\delta}(T)$ which solves 
\[
e^{2\pi \tau x_{1}}D^{2}e^{-2\pi \tau x_{1}}u = f.
\]
 
Moreover, this solution is in $H^{2}_{-\delta}(T)$ and satisfies the 
estimates 
\[
\|u\|_{H^{s}_{-\delta}(T)} \cleq |\tau|^{s - 1}\|f\|_{L^{2}_{\delta}(T)},
\]

for $0 \leq s \leq 2$, with the constant of the inequality independent of $\tau$. 
\end{theorem}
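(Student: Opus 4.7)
The plan is to conjugate the operator into a simpler form and then reduce to an infinite family of one-dimensional ODEs via the Fourier series on $\bT^d$. A direct computation using the identity $e^{2\pi \tau x_1} D_{x_1} e^{-2\pi \tau x_1} = D_{x_1} + i\tau$ gives
\[
P_\tau := e^{2\pi \tau x_1} D^2 e^{-2\pi \tau x_1} = (D_{x_1} + i\tau)^2 + D_{x'}^2.
\]
Expanding $u = \sum_{k \in \bZ^d} u_k(x_1) e_k(x')$ and $f = \sum_{k \in \bZ^d} f_k(x_1) e_k(x')$ as in Proposition \ref{propn: Fourier_cylinder}, and using the weighted Plancherel identity \eqref{eqn: weighted_Plancherel}, the equation $P_\tau u = f$ decouples into the family of ODEs
\[
[(D_{x_1} + i\tau)^2 + |k|^2]\, u_k = f_k, \qquad k \in \bZ^d.
\]

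Each ODE factors as $L_{\tau + |k|}\, L_{\tau - |k|}\, u_k = f_k$, where $L_\mu := D_{x_1} + i\mu$, and the hypothesis $\tau^2 \notin \spec(-\Delta_{g_0})$ guarantees $\tau \pm |k| \neq 0$ for every $k \in \bZ^d$. For $\mu \in \bR \setminus \{0\}$ I invert $L_\mu$ by the one-sided convolution
\[
(L_\mu^{-1} h)(x_1) = \mp 2\pi i \int_0^\infty e^{-2\pi |\mu| t}\, h(x_1 \pm t)\, dt,
\]
with the shift direction chosen opposite to $\sgn(\mu)$ so the exponential actually decays. I then establish two bounds, both with constants independent of $\mu$: first, the uniform weighted Agmon-type estimate $\|L_\mu^{-1} h\|_{L^2_{-\delta}(\bR)} \cleq \|h\|_{L^2_\delta(\bR)}$ for every $\delta > 1/2$, obtained by Cauchy--Schwarz against $\lan y \ran^{-2\delta}$ (whose integrability at infinity requires exactly $\delta > 1/2$); and second, the convolution bound $\|L_\mu^{-1} h\|_{L^2_\delta(\bR)} \cleq |\mu|^{-1}\|h\|_{L^2_\delta(\bR)}$ for $|\mu| \geq 1$, obtained by Minkowski's inequality together with $\lan s - t \ran^\delta \cleq \lan s \ran^\delta \lan t \ran^\delta$ and the estimate $\int_0^\infty \lan t \ran^\delta e^{-2\pi |\mu| t}\, dt \cleq |\mu|^{-1}$.

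Setting $u_k := L_{\tau - |k|}^{-1} L_{\tau + |k|}^{-1} f_k$, the second bound applied to the outer factor (valid because $|\tau + |k|| \geq |\tau| \geq \tau_0 \geq 1$) gives $\|L_{\tau + |k|}^{-1} f_k\|_{L^2_\delta} \cleq |\tau|^{-1}\|f_k\|_{L^2_\delta}$, and the first bound applied to the inner factor then yields $\|u_k\|_{L^2_{-\delta}(\bR)} \cleq |\tau|^{-1}\|f_k\|_{L^2_\delta(\bR)}$. Summing in $k$ via the weighted Plancherel identity produces the $s=0$ estimate. The $s = 1, 2$ estimates follow by combining the factorization identity $D_{x_1} u_k = L_{\tau + |k|}^{-1} f_k - i(\tau - |k|) u_k$ with the ODE rewritten as $D_{x_1}^2 u_k = f_k - 2i\tau D_{x_1} u_k + (\tau^2 - |k|^2) u_k$, splitting the Fourier sum according to the regimes $|k| \cleq |\tau|$ and $|k| \cgeq |\tau|$ to correctly extract the $|\tau|^s$ factor from the $D_{x'}$ derivatives. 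Uniqueness is automatic mode-by-mode: the homogeneous ODE has fundamental solutions $e^{-2\pi(\tau \pm |k|) x_1}$, which grow exponentially in one direction and hence are not in $L^2_{-\delta}(\bR)$ for any $\delta \in \bR$ whenever $\tau \pm |k|$ is real and non-zero.

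The principal technical obstacle I anticipate is the uniform-in-$\mu$ nature of the first bound: the naive $L^2 \to L^2$ operator norm of $L_\mu^{-1}$ equals $|\mu|^{-1}$, which blows up as $\mu \to 0$, so it is precisely the gain afforded by $\delta > 1/2$ (equivalently, the integrability of $\lan y\ran^{-2\delta}$ at infinity) that rescues uniformity. A secondary concern is the careful bookkeeping of the powers of $|\tau|$ in the $s = 1, 2$ estimates when controlling the $D_{x'}$ derivatives; this is handled exactly as in the Euclidean argument of \cite{KSaU1}, via the aforementioned dichotomy of Fourier modes.
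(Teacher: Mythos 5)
Your proposal is correct and follows exactly the approach the paper describes (and defers to \cite{KSaU1} for): expand in Fourier modes $e_k(x')$, factor each resulting second-order ODE as $L_{\tau+|k|}L_{\tau-|k|}u_k = f_k$, and combine the uniform Agmon-type estimate $\|L_\mu^{-1}h\|_{L^2_{-\delta}} \cleq \|h\|_{L^2_\delta}$ (which is where $\delta > 1/2$ is used) with the convolution bound $\|L_\mu^{-1}h\|_{L^2_\delta} \cleq |\mu|^{-1}\|h\|_{L^2_\delta}$ for $|\mu| \geq 1$, splitting into the regimes $|k| \lesssim |\tau|$ and $|k| \gtrsim |\tau|$ for the derivative estimates. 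The only detail you leave implicit is the extension from integer $s \in \{0,1,2\}$ to all $s \in [0,2]$ by interpolation, which is routine.
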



\begin{remark}
It is important to note that the constant in the inequality only
requires the condition that $\tau^{2}$ does not belong to
$\spec(-\Delta_{g_{0}})$; it is not necessary to ensure 
any distance condition to the spectrum. We can easily see 
that the uniqueness would fail if
$\tau^{2} \in \spec(-\Delta_{g_{0}})$ as 
$u = e_{m}(x') \in H^{2}_{-\delta}(T)$, with 
$\tau^{2} = |m|^{2}$, is a solution of the homogeneous
problem.
\end{remark}


The theorem above allows to define the operator
$G_{\tau}: L^{2}_{\delta}(T) \ra H^{2}_{-\delta}(T)$ by $G_{\tau}f := u$, 
so that $\Delta_{\tau}G_{\tau} = I$ on $L^{2}_{\delta}(T)$, where 
$\Delta_{\tau} = e^{2\pi\tau x_{1}}D^{2}e^{-2\pi\tau x_{1}}$.

\bigskip
The reduction from our problem to this one is accomplished through 
a conjugation by two invertible pseudodifferential operators, i.e. 
essentially through the construction of an integrating factor. The 
construction of these operators is the main content of this chapter.
 
\bigskip
Let us consider the relevant terms from the expression 
$e^{2\pi \tau x_{1}}H_{V,W}e^{-2\pi \tau x_{1}}$:
\[
\Delta_{\tau} := e^{2\pi\tau x_{1}}D^{2}e^{-2\pi\tau x_{1}}
= D_{x_{1}}^{2} + 2i\tau D_{x_{1}} - \tau^{2} + D_{x'}^{2} ,
\]
\[
V_{\tau} := e^{2\pi \tau x_{1}}(V\cdot D)e^{-2\pi \tau x_{1}}
= e^{2\pi \tau x_{1}}(FD_{x_{1}} + G\cdot D_{x'})e^{-2\pi \tau x_{1}}
= F(D_{x_{1}} + i\tau) + G\cdot D_{x'}.
\]

\begin{remark}
Observe that we have absorbed the negative sign of the Laplacian into the
definition of $\Delta_{\tau}$.
\end{remark}

If we use semiclassical notation, with $\hb = 1/\tau$ a small
parameter, we can denote
\[
\Delta_{\hb} := \tau^{-2}\Delta_{\tau}
= \hb^{2}D_{x_{1}}^{2} + 2i\hb D_{x_{1}} - 1 + \hb^{2}D_{x'}^{2}, 
\ \ \ \
V_{\hb} := \tau^{-1}V_{\tau}
= F(\hb D_{x_{1}} + i) + G\cdot \hb D_{x'}.
\]

Equivalently, we could have defined 
\[
\Delta_{\hb} := e^{2\pi x_{1}/\hb}(\hb D)^{2}e^{-2\pi x_{1}/\hb}, 
\ \ \ \ 
V_{\hb} := e^{2\pi x_{1}/\hb}[V\cdot (\hb D)]e^{-2\pi x_{1}/\hb}.
\]

Then we have that 
$\hb^{2}(\Delta_{\tau} + 2V_{\tau}) = \Delta_{\hb} + 2\hb V_{\hb}$.
A significant part of this chapter is devoted to prove that we can conjugate 
this operator into the Laplacian plus a suitable error, as we state next. This
construction follows closely the ideas from \cite{Sa1}.


\begin{theorem}
\label{thm: conjugation}
Let $1/2 < \delta < 1$. Let $V$ satisfy \eqref{eqn: conditions}.
There are $\vep > 0$ and $0 < \hb_{0} \leq 1$ such that for $0< |\hb| \leq \hb_{0}$ 
there exist zero order semiclassical pseudodifferential operators $A$, $B$, 
$R$ over the cylinder $T$, so that the following conjugation identity holds, 
\[
(\Delta_{\hb} + 2\hb V_{\hb})A = B\Delta_{\hb} + \hb^{1 + \vep}R.
\]

Moreover, the operators $A$ and $B$ are invertible, uniformly bounded (in $\hb$) 
together with its inverses in $H^{s}_{\pm \delta,\hb}(T)$, for $s = 0,2$, 
and $R: L^{2}_{-\delta}(T) \ra L^{2}_{\delta}(T)$ is uniformly bounded
(in $\hb$).
\end{theorem}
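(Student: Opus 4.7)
The plan is to take $B = A = \Op_{\hb}(a)$ with $a = e^{b}$, where $b$ is a special zero-order symbol chosen to kill the principal obstruction to the desired conjugation. Using Proposition \ref{propn: symbol_computations} together with the fact that left multiplication by a function of $x$ corresponds to multiplying the symbol (so that $V_{\hb}\Op_{\hb}(a) = \Op_{\hb}(va + \hb V\cdot Da)$ with $v = F(\xi+i)+G\cdot \hb t$), a direct computation yields
\begin{align*}
(\Delta_{\hb} + 2\hb V_{\hb})\,\Op_{\hb}(a) &= \Op_{\hb}(a)\,\Delta_{\hb} + 2\hb\,\Op_{\hb}\bigl((\xi+i)D_{x_{1}}a + \hb t\cdot D_{x'}a + v\,a\bigr) \\
&\quad + \hb^{2}\,\Op_{\hb}(E),
\end{align*}
where $E$ collects $(D_{x_{1}}^{2}+D_{x'}^{2})a$ and the term $2V\cdot Da$. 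Taking $B = A$ reduces the theorem to forcing the $O(\hb)$ bracket to vanish, which via $a = e^{b}$ becomes the linear first-order transport equation
\[
(\xi+i)\,\partial_{x_{1}}b + \hb t\cdot \partial_{x'}b = -2\pi i\bigl[F(\xi+i) + G\cdot \hb t\bigr],
\]
to be solved pointwise in the dual variables $(\xi, t)$.

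Following the outline of \cite{Sa1}, I would solve this by expanding in Fourier series in $x' \in \bT^{d}$, per Proposition \ref{propn: Fourier_cylinder}. Writing $b = \sum_{k}b_{k}(x_{1},\xi,t)e_{k}(x')$ and similarly expanding $F,G$, each mode satisfies the linear ODE
\[
\partial_{x_{1}}b_{k} + \mu_{k}b_{k} = -2\pi i\bigl(F_{k} + G_{k}\cdot \hb t/(\xi+i)\bigr), \qquad \mu_{k} := \frac{2\pi i\,\hb t\cdot k}{\xi+i}.
\]
Since $\xi + i$ is never real, one has $\Re(\mu_{k}) = 2\pi\hb t\cdot k/(\xi^{2}+1)$, whose sign is determined by $\hb t\cdot k$; I choose the integration limit ($-\infty$ when $\Re(\mu_{k}) \geq 0$, $+\infty$ otherwise) so that the resulting $b_{k}$ is bounded on $\bR$, vanishes on one side of $\supp V$, and decays exponentially on the other. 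The delicate case $\mu_{k} = 0$, which includes the zero mode $k = 0$, is exactly where the vanishing moment condition from \eqref{eqn: conditions} enters: both $\int_{\bR}F_{k} = 0$ and $\int_{\bR}G_{k} = 0$, so the antiderivative $b_{k}(x_{1}) = -2\pi i\int_{-\infty}^{x_{1}}(\ldots)$ is bounded and supported inside $\supp V$.

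The central technical verification is that $b$ is a \emph{special} zero-order symbol in the sense of Definition \ref{defn: special_symbol}, with seminorms uniform in $\hb$. This amounts to controlling $D_{x_{1}}^{\alpha}D_{x'}^{\beta}D_{\xi}^{\gamma}b_{k}$ and the $t$-differences uniformly in $(k,\xi,t)$, using the rapid decay of $F_{k}, G_{k}$ in $k$ (from Proposition \ref{propn: Fourier_torus}), compactness of $\supp V$ in $x_{1}$, and a Taylor expansion in $\mu_{k}$ to handle the apparent singularity of the integral representation of $b_{k}$ as $\mu_{k} \to 0$. Granted this, Proposition \ref{propn: operations_special_symbols} gives that $a = e^{b}$ is a special zero-order symbol, and Corollary \ref{cor: invertibility} provides the invertibility of $A = B = \Op_{\hb}(a)$ on $H^{s}_{\pm\delta,\hb}(T)$ for $|\delta|\leq \delta_{0} < 1$ and $\hb \leq \hb_{0}$, with uniform bounds on the operator and its inverse.

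For the remainder, setting $R := \hb^{1-\vep}\Op_{\hb}(E)$ with any $\vep \in (0,1)$ yields $\hb^{2}\Op_{\hb}(E) = \hb^{1+\vep}R$ and reduces matters to showing that $\Op_{\hb}(E): L^{2}_{-\delta}(T) \to L^{2}_{\delta}(T)$ is uniformly bounded in $\hb$. Since $b$ is supported in (or decays exponentially outside) $\supp V$, both $V\cdot Da$ and $(D_{x_{1}}^{2}+D_{x'}^{2})a$ are essentially $x_{1}$-localized; combining Theorem \ref{thm: zero_boundedness_cylinder} with cutoff arguments that absorb the weight $\lan x_{1}\ran^{\delta}$ on the compact set $\supp V$ then yields the required bound. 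The main obstacle throughout is the uniform symbol control of $b$ across the degenerate locus $\mu_{k} = 0$; this is precisely where the vanishing moment hypothesis in \eqref{eqn: conditions} is indispensable, and it explains the appearance of that condition in the statement.
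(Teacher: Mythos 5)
Your algebraic reduction to the transport equation and your identification of where the vanishing moment hypothesis enters are both correct, and the Fourier-mode decomposition is indeed the paper's method. But the plan of taking $B=A=\Op_{\hb}(e^{b})$ with $b$ solving the transport equation globally has a fatal flaw: the solution $b$ of
\[
(\xi+i)D_{x_{1}}b + \hb t\cdot D_{x'}b = -\bigl[(\xi+i)F + \hb t\cdot G\bigr]
\]
is a \emph{first}-order symbol, not zero order. The right-hand side grows like $\lan\xi,\hb t\ran$, and the resulting bound (which the paper proves in Theorem \ref{thm: delta_bar_equation}) is $|D^{\alpha}b|\cleq \lan\xi,\hb t\ran$. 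Consequently $a=e^{b}$ is not a symbol of any finite order, so $A$ is not a pseudodifferential operator in the calculus at all, and Corollary \ref{cor: invertibility} does not apply. The paper's cutoff $\phi_{0}(\xi,t;\hb)=\psi(\xi)\psi(4(|\hb t|-1))$ is precisely what makes $u\phi$ a bounded zero-order symbol, by restricting the transport solution to a neighborhood of the characteristic set $\{\xi=0,\,|\hb t|=1\}$ where $\lan\xi,\hb t\ran\simeq 1$. Once that cutoff is introduced, the $(1-\phi_{0})$ portion of the commutator survives, and it must be absorbed into $B\Delta_{\hb}$ using the ellipticity of $(\xi+i)^{2}+|\hb t|^{2}$ off the characteristic set; this is exactly why $B\neq A$ (the paper sets $b=a(1+2\hb r)$ with $r$ built from dividing by the elliptic symbol). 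Your choice $B=A$ cannot produce the theorem.

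There is a second, independent gap in the remainder estimate. You assert that $b$ is ``supported in (or decays exponentially outside) $\supp V$,'' but this is false after summing the Fourier modes: when $\hb t\cdot m\neq 0$ the individual mode $b_{m}$ decays like $e^{-c|\hb t\cdot m|\,|x_{1}|}$, and $\hb t\cdot m$ can be arbitrarily small, so the summed solution has only the linear decay $|D^{\alpha}b|\cleq \lan\xi,\hb t\ran/|x_{1}|$ of \eqref{eqn: solution_decay}. In particular $\lan x_{1}\ran^{2\delta}D^{2}a$ with $\delta>1/2$ is \emph{unbounded} without further truncation, so your claim that $\Op_{\hb}(E)$ maps $L^{2}_{-\delta}(T)$ to $L^{2}_{\delta}(T)$ uniformly does not follow. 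The paper's second cutoff $\psi(\hb^{\theta}x_{1})$, with the tuned choice $\theta=1/2$, is what produces a remainder supported in $|x_{1}|\leq 2\hb^{-\theta}$ with $\lan x_{1}\ran^{2\delta}$-weighted seminorms $O(\hb^{1+\vep})$; this is also the origin of the restriction $\delta<1$ (with $\vep=1-\delta$), which your argument does not explain. The conclusion is that the cutoff $\phi=\psi(\hb^{\theta}x_{1})\phi_{0}(\xi,t;\hb)$, the resulting necessity of $B\neq A$, and the linear (not exponential) decay of the transport solution are all essential ingredients missing from your proposal.
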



With the semiclassical notation we have that
$\Delta_{\hb} = \Op_{\hb}(\xi^{2} + 2i\xi - 1 + |\hb t|^{2}) 
= \Op_{\hb}((\xi + i)^{2} + |\hb t|^{2})$.
Moreover, from Proposition \ref{propn: symbol_computations}
we have that
\begin{align*}
\Delta_{\hb}A = & \ (\hb^{2}D_{x_{1}}^{2} + 2i\hb D_{x_{1}} 
- 1 + \hb^{2}D_{x'}^{2})A \\
= & \ \Op_{\hb}(\xi^{2}a 
+ 2\hb\xi D_{x_{1}}a + \hb^{2}D_{x_{1}}^{2}a) 
+ 2i\Op_{\hb}(\xi a + \hb D_{x_{1}}a) - \Op_{\hb}(a) \\ 
& + \Op_{\hb}(|\hb t|^{2}a 
+ 2\hb(\hb t\cdot D_{x'}a) + \hb^{2}D_{x'}^{2}a) \\
= & \ \Op_{\hb}([(\xi + i)^{2} + |\hb t|^{2}]a) 
+ 2\hb\Op_{\hb}((\xi + i)D_{x_{1}}a + \hb t\cdot D_{x'}a)
+ \hb^{2}\Op_{\hb}(D^{2}a) \\
= & \ A\Delta_{\hb} + 2\hb\Op_{\hb}((\xi + i)D_{x_{1}}a 
+ \hb t\cdot D_{x'}a) + \hb^{2}\Op_{\hb}(D^{2}a), \\ \\
V_{\hb}A = & \ (F(\hb D_{x_{1}} + i) + G\cdot \hb D_{x'})A \\
= & \ \Op_{\hb}((\xi+ i)Fa + \hb F \cdot D_{x_{1}}a) 
+ \Op_{\hb}(\hb t\cdot Ga + \hb G\cdot D_{x'}a) \\
= & \ \Op_{\hb}([(\xi + i)F + \hb t\cdot G]a) 
+ \hb\Op_{\hb}(V\cdot Da),
\end{align*}

so, we obtain
\begin{align*}
(\Delta_{\hb} & + 2\hb V_{\hb})A \\
& = A\Delta_{\hb} 
+ 2\hb\Op_{\hb}((\xi + i)D_{x_{1}}a + \hb t\cdot D_{x'}a 
+ (\xi + i)Fa + \hb t\cdot Ga)
+ \hb^{2}\Op_{\hb}(D^{2}a + 2V\cdot Da).
\end{align*}

If $a$ has nice properties, then the last operator already 
has the form we look for the remainder term in 
Theorem \ref{thm: conjugation}. 
Then, roughly speaking, we are left to make the operator
\begin{equation}
\label{eqn: symbol_commutator}
2\hb\Op_{\hb}((\xi + i)D_{x_{1}}a 
+ \hb t\cdot D_{x'}a + (\xi + i)Fa + \hb t\cdot Ga)
\end{equation}

suitably small. In order to do that, we split it in two parts:
we make one part of it vanish and the remainder will be supported 
on a set where the operator $\Delta_{\hb}$ is elliptic. The remainder 
will be subsumed by the expression $A\Delta_{\hb}$ becoming into 
$B\Delta_{\hb}$.

\bigskip
In order for the operator $A$ to be
invertible it is usual to look for the symbol to be
of the form $a = e^{-u}$, so that the symbol 
\eqref{eqn: symbol_commutator} becomes							
\[
(\xi + i)D_{x_{1}}a + \hb t\cdot D_{x'}a 
+ (\xi + i)Fa + \hb t\cdot Ga
= a[-(\xi + i)D_{x_{1}}u - \hb t\cdot D_{x'}u + (\xi + i)F + \hb t\cdot G],
\]

leaving us to solve the equation 
\begin{equation}
\label{eqn: delta_bar_equation}
(\xi + i)D_{x_{1}}u + \hb t\cdot D_{x'}u = (\xi + i)F + \hb t\cdot G,
\end{equation}

for $t \in \bZ^{d}$. In the following sections we deal with 
appropriate existence and uniqueness of solutions to these equations, 
as well as with its estimates. 
Recall that the symbol of $\Delta_{\hb}$ is $(\xi + i)^{2} + |\hb t|^{2}$,
and note that this vanishes if and only if $\xi = 0$ and $|\hb t| = 1$.
The symbol is elliptic away from this set. Therefore, for the construction 
of the solution to the equation we will be mostly interested in working in 
a neighborhood of this vanishing set. 

\bigskip
Finally, let us mention some difficulties of our problem which do not seem 
to be present in the Euclidean setting, like in \cite{Sun} or \cite{Sa1}. 
Observe that \eqref{eqn: delta_bar_equation} can be rewritten as 
\[
(\xi + i, \hb t)\cdot Du = (\xi + i, \hb t)\cdot V.
\]

Near the vanishing set of the symbol of $\Delta_{\hb}$, i.e. $\xi = 0$ and 
$|\hb t| = 1$, this equation resembles a higher dimensional version of the 
$\ov{\pa}$--equation. It has been usual to reduce all such equations to 
a $\ov{\pa}$--equation through a rotation, see for instance \cite{Sun} 
or \cite{Sa1}. In our setting 
this is not immediately possible, in part because $\bZ^{d}$ does not admit non--trivial 
rotations. One way to try to remedy this could be as follows. In \cite{W}, it is shown 
that for any $k \in \bZ^{d}\sm\{0\}$ there is a matrix $A \in SL_{n}(\bZ^{d})$, 
i.e. a linear automorphism of $\bT^{d}$, such that $Ak = \gcd(k)e_{1}$. This
allows to make a change of variables so that the directional derivative 
$\hb k\cdot D_{x'}$ becomes an ``exact'' partial  derivative $\hb \gcd(k)D_{y'_{1}}$, 
and so the equation reduces from $\bR \times \bT^{d}$ to 
$(\bR \times \bT^{1}) \times \bT^{d - 1}$, where the last $(d - 1)$ toroidal
variables do not intervene. In this case, the coefficient $\hb \gcd(k)$ plays a role 
in the estimates, and this seems difficult to handle. In addition to the previous 
inconvenient, we will to need to estimate of the differences of the solutions 
when $k$ varies; since it is not clear how the change of variables (i.e. the 
matrix) depends on $k$, we will refrain from using this idea. Instead, we will 
proceed using the decomposition in Fourier series.


\subsection{Equation}


We start recalling the assumptions on the magnetic potential $V = (F,G)$:
\[
\tag{\ref{eqn: conditions}}
V \in C^{\infty}_{c}(T), \ \ \supp(V) \sse [-R,R]\times \bT^{d}, \ \
\int_{\bR}V(x_{1},x')dx_{1} = 0
\ \textrm{for all} \ x' \in \bT^{d}.
\]
Under these conditions, we will see
that there is no difference in working over $\bR \times \bT^{d} \times \bR \times \bZ^{d}$ 
or $\bR \times \bT^{d} \times \bR \times \bR^{d}$, and so, to avoid suggesting 
that there is something special about the former, we will work over the latter. 
In this section we state the properties of the solution the equation 
\eqref{eqn: delta_bar_equation} and motivate the reasons for assuming 
\eqref{eqn: conditions}.

\bigskip
Let us recall that for $(u,v) \in \bR \times \bR^{d}$ 
and $(u,v,w) \in \bR \times \bR^{d} \times \bR^{d}$
we use the notation
$\lan u, v\ran := (1 + u^{2} + |v|^{2})^{1/2}$ and 
$\lan u,v,w\ran := (1 + u^{2} + |v|^{2} + |w|^{2})^{1/2}$.


\begin{theorem}
\label{thm: delta_bar_equation}
Let $\hb > 0$ and let $V = (F,G)$ satisfy \eqref{eqn: conditions}.
For fixed $(\xi,t) \in \bR \times \bR^{d}$, 
the equation
\[
(\xi + i)D_{x_{1}}u + \hb t\cdot D_{x'}u  = (\xi + i)F + \hb t\cdot G, 
\tag{\ref{eqn: delta_bar_equation}}
\]

has a unique solution $u(\cdot,\xi,t; \hb) \in C^{\infty}(T)$ with the decay 
condition $\|u(x_{1},\cdot,\xi,t)\|_{L^{\infty}(\bT^{d})} \ra 0$ as 
$x_{1} \ra \pm \infty$. Moreover, 
we have that $u(\cdot,t) \in C^{\infty}(\bR \times \bT^{d} \times \bR)$
and it satisfies the bounds
\begin{equation}
\label{eqn: solution_bounds}
|D_{x_{1}}^{\alpha}D_{x'}^{\beta}D_{\xi}^{\gamma}u| 
\cleq \lan \xi, \hb t\ran , \ \
|D_{x_{1}}^{\alpha}D_{x'}^{\beta}D_{\xi}^{\gamma}
(u(\cdot,\xi, t_{1}) - u(\cdot,\xi, t_{2}))|
\cleq \hb|t_{1} - t_{2}|\lan \xi, \hb t_{1}, \hb t_{2} \ran.
\end{equation}

For $|x_{1}| \geq 2R$ we have the linear decay bound
\begin{equation}
\label{eqn: solution_decay}
|D_{x_{1}}^{\alpha}D_{x'}^{\beta}D_{\xi}^{\gamma}u| 
\cleq \frac{\lan \xi, \hb t\ran}{|x_{1}|}.
\end{equation}

The constants of the inequalities may depend on $\alpha$, $\beta$, 
$\gamma$, $d$, $R$, $\|V\|_{W^{N,1}(T)}$ 
for some $N = N(\alpha,\beta, d)$, but are independent
of $\hb$, $\xi$, $t$.
\end{theorem}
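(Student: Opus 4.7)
The plan is to decompose the equation in Fourier series along the toroidal variable $x'$, reducing it to a one-parameter family of first-order linear ODEs in $x_1$, one for each Fourier mode. Writing $u = \sum_{k\in\bZ^d}u_k(x_1,\xi,t;\hb)\,e_k(x')$ and $V = (F,G) = \sum_k V_k(x_1)e_k(x')$, the $k$-th mode of \eqref{eqn: delta_bar_equation} becomes
\[
\pa_{x_1}u_k + \lambda_k u_k = 2\pi i\,h_k(x_1), \qquad \lambda_k := \frac{2\pi i\hb(t\cdot k)}{\xi+i}, \quad h_k := F_k + \frac{\hb(t\cdot G_k)}{\xi+i},
\]
where $h_k$ is supported in $[-R,R]$ and, by \eqref{eqn: conditions}, satisfies $\int_{\bR}h_k\,dy_1 = 0$.

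I would split into three cases. When $t\cdot k = 0$ the ODE is a pure antiderivative, and the vanishing moment makes its primitive the unique bounded solution, moreover supported in $[-R,R]$. When $t\cdot k\neq 0$ one has $\Re(\lambda_k) = 2\pi\hb(t\cdot k)/(\xi^2+1) \neq 0$; integrating against $e^{\lambda_k x_1}$ from the endpoint at which the exponential suppresses the source yields a solution that vanishes on the far side of $\supp(V) = [-R,R]\times\bT^d$ and, on the near side, decays exponentially as $u_k(\pm R)e^{-\lambda_k(x_1\mp R)}$. Uniqueness under the decay condition is immediate because the only homogeneous solutions $Ce^{-\lambda_k x_1}$ decay on at most one side, and smoothness in $(x',\xi,t)$ follows from the explicit integral formulas together with the smoothness of $V$.

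For the uniform bound I would estimate $|u_k(x_1)|\cleq\|h_k\|_{L^1(\bR)}\cleq\lan\xi,\hb t\ran\|V_k\|_{L^1(\bR)}$, using $|\xi+i|\geq 1$; the $x_1$-derivatives are read off from the ODE and the $(x',\xi)$-derivatives by differentiating the explicit formulas, noting that the differentiated equations preserve the vanishing-moment structure (e.g.\ $\int[F - \pa_{x_1}u]\,dx_1 = 0$ by the decay of $u$). To pass from these mode-wise estimates to pointwise bounds on $u = \sum_k u_k e_k$ and its derivatives, I would invoke Proposition \ref{propn: Fourier_cylinder} to get $\|V_k\|_{L^1}\cleq\lan k\ran^{-N}\|V\|_{W^{N,1}(T)}$ for arbitrary $N$, absorbing any polynomial factor in $|k|$ coming from $D_{x'}^\beta$. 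The difference estimate follows because $v := u(\cdot,\xi,t_1) - u(\cdot,\xi,t_2)$ solves the equation with parameter $t_1$ and source $\hb(t_1-t_2)\cdot[G - D_{x'}u(\cdot,\xi,t_2)]$, which directly produces the $\hb|t_1-t_2|$ factor (the loss of compact support in the source is compensated by the previously obtained decay estimates on $u(\cdot,t_2)$).

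The main technical obstacle will be the linear decay bound on $\{|x_1|\geq 2R\}$. Outside $\supp(V)$ only one exponential tail survives for each $k$, and the vanishing moment gives the refined bound $|u_k(\pm R)|\cleq R|\lambda_k|\|h_k\|_{L^1}$ via the identity $u_k(R) = 2\pi i\int[e^{\lambda_k(y_1-R)}-1]h_k\,dy_1$ together with $|e^{\lambda_k(y_1-R)}-1|\leq|\lambda_k||y_1-R|$ (valid since $\Re(\lambda_k)(y_1-R)\leq 0$ on the integration range). Combining this with the elementary inequality $x_1\, e^{-\Re(\lambda_k)(x_1-R)}\cleq 1/\Re(\lambda_k)$ for $x_1\geq 2R$ and the identity $|\lambda_k|/\Re(\lambda_k)=|\xi+i|$ gives $x_1|u_k(x_1)|\cleq |\xi+i|\|h_k\|_{L^1}$; the crucial cancellation $(\xi+i)h_k = (\xi+i)F_k + \hb(t\cdot G_k)$ then yields $|\xi+i|\|h_k\|_{L^1}\cleq\lan\xi,\hb t\ran\|V_k\|_{L^1}$. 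Summing in $k$ by Proposition \ref{propn: Fourier_cylinder} closes the estimate, and derivatives in all variables are handled analogously by the same chain of identities.
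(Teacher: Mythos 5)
Your plan reproduces the paper's proof: Fourier decomposition in $x'$, reduction to a family of first-order ODEs in $x_1$ indexed by the mode $k$, exploitation of the vanishing-moment condition to obtain $L^1\to L^\infty$ bounds uniform in the zeroth-order coefficient $\hb\, t\cdot k$ (the paper's Lemmas \ref{lemma: ODE_original}--\ref{lemma: ODE_vanishing}), explicit exponential decay for $|x_1|\geq 2R$, and summation via the rapid Fourier-coefficient decay of $V$; your decay computation, via $|e^{z}-1|\leq|z|$ for $\Re z\leq 0$ combined with $x_1 e^{-\Re(\lambda_k)(x_1-R)}\cleq 1/\Re(\lambda_k)$ and $|\lambda_k|/\Re(\lambda_k)=|\xi+i|$, is a compact variant of the paper's use of Lemma \ref{lemma: exponential}. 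The one imprecision is in the difference estimate: you attribute the needed control to ``decay estimates on $u(\cdot,t_2)$,'' but mere $L^1$ integrability of the source does not give a bound uniform in $\hb\, t_1\cdot k$. What closes the argument when $\hb\, t_1\cdot k=0$ is that the extra source term $D_{x'}u_k(\cdot,t_2)$ then has vanishing $x_1$-moment, either because $u_k(\cdot,t_2)=D_{x_1}w$ for some $w\in\cS(\bR)$ (when $\hb\, t_2\cdot k\neq 0$, by Lemma \ref{lemma: ODE_vanishing}) or because its coefficient $\hb(t_1-t_2)\cdot k$ vanishes outright (when $\hb\, t_2\cdot k=0$), so that Lemma \ref{lemma: ODE_vanishing} rather than Lemma \ref{lemma: ODE_original} applies; the paper avoids the case distinction by placing the parameter $t_2$ in the coefficient and $u_m^1$ (with $\hb\, t_1\cdot m\neq 0$ taken WLOG) in the source.
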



\begin{remark}
Under some conditions on $t$, like $t \in \bZ^{d}$ or other 
arithmetic properties, it may be possible to show that
$u(\cdot,\xi,t) \in \cS(T)$. We do not need such a strong 
result, so we do not intend to prove it.
\end{remark}


The equation \eqref{eqn: delta_bar_equation} has constant coefficients 
in $(x_{1},x')$, so we can decompose $u, F, G$ in its Fourier series
\[
u = \sum_{m \in \bZ^{d}}u_{m}(x_{1},\xi,t;\hb)e_{m}(x'), \ \ 
F(x_{1},x') = \sum_{m \in \bZ^{d}}F_{m}(x_{1})e_{m}(x'), \ \ 
G(x_{1},x') = \sum_{m \in \bZ^{d}}G_{m}(x_{1})e_{m}(x'),
\]

and look for $u_{m}$ to solve the equation
\begin{equation}
\label{eqn: Fourier_ODE}
(\xi + i)D_{x_{1}}u_{m} + \hb t\cdot mu_{m} 
= (\xi + i)F_{m} + \hb t\cdot G_{m}.
\end{equation}

For instance, in order to prove \eqref{eqn: solution_bounds}, 
it would suffice to show inequalities of the form 
\[
|D_{x_{1}}^{\alpha}D_{\xi}^{\beta}u_{m}| 
\cleq \lan m\ran^{-M}\lan \xi,\hb t\ran, \ \
|D_{x_{1}}^{\alpha}D_{\xi}^{\beta}(u_{m}(\cdot,\cdot,t_{1})
- u_{m}(\cdot,\cdot,t_{2}))|
\cleq \hb \lan m\ran^{-M}|t_{1} - t_{2}|\lan\xi, \hb t_{1}, \hb t_{2}\ran,
\]

for some sufficiently large $M$. We will prove these in a later section.

\bigskip
Before we proceed, let us motivate the conditions that we are requiring
for $V$. Considering the Fourier transform (no longer semiclassical)
in \eqref{eqn: Fourier_ODE} gives that
\[
\what{u_{m}}(\eta) = \frac{(\xi + i)\what{F_{m}}(\eta) 
+ \hb t\cdot \what{G_{m}}(\eta)}{(\xi + i)\eta + \hb t\cdot m}.
\]

The denominator vanishes only if $\eta = 0$ and $\hb t\cdot m = 0$. 
This suggests that the case $\hb t \cdot m \neq 0$ may be less problematic
than the case $\hb t \cdot m = 0$. Indeed, we already see from 
\eqref{eqn: Fourier_ODE} that there is not a unique 
solution, and even when defining such a solution it may not
decay. The simplest way to avoid the problem of the denominator 
vanishing is to require $\what{F_{m}}(0) = \what{G_{m}}(0) = 0$, 
which is the same as the vanishing moments from \eqref{eqn: conditions}.
Uniqueness and decay are not necessarily required, but we will
see that the decay estimates play a role in the construction of 
the conjugation (namely on the properties of $R$) and the
reconstruction procedure. 


\subsection{Lemmas: ODEs and calculus}


In this section we prove some boundedness estimates for the 
solution of an ODE of the form \eqref{eqn: Fourier_ODE}, as 
well as some other necessary calculus facts. To avoid unnecessary 
notation, in this section we will denote the variable $x_{1}$ 						
simply by $x$.

\bigskip
We start with the most elementary estimate for solutions of an
ODE, and then improve it under the hypothesis of a vanishing 
moment. The reason why
we will be dealing only with $L^{1}$ and $L^{\infty}$ estimates
is that these are useful to iterate and they relate through the 
Fundamental Theorem of Calculus (i.e. as a $1$-dimensional version 
of the Gagliardo--Nirenberg--Sobolev inequality).


\begin{lemma}
\label{lemma: ODE_original}
Let $a, \xi \in \bR$, $a \neq 0$, and let $H \in \cS(\bR)$. 
Consider the equation
\[
(\xi + i)D_{x}v + av = H.
\]

Then there exists a unique solution in the sense 
of tempered distributions. Moreover, the solution belongs 
to $\cS(\bR)$ and satisfies the estimates
\[
\|v\|_{L^{1}} \leq \frac{\lan \xi \ran}{|a|}\|H\|_{L^{1}}, \ \ 
\|v\|_{L^{\infty}} \cleq \|D_{x}v\|_{L^{1}} 
\cleq \|H\|_{L^{1}}.
\]

The constants in the inequality are independent of $a$, $\xi$, $H$.
\end{lemma}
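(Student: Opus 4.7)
The plan is to extract an explicit integrating-factor solution formula, derive the $L^{1}$ estimate from it, and then deduce the $L^{\infty}$ estimate using the ODE itself.

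First, existence and uniqueness in $\cS'(\bR)$ follow immediately from the Fourier transform: the equation becomes $((\xi + i)\eta + a)\what{v}(\eta) = \what{H}(\eta)$, and since $a \neq 0$ the symbol $(\xi + i)\eta + a$ (real part $a + \xi\eta$, imaginary part $\eta$) is nonzero on all of $\bR$, with reciprocal a smooth function bounded by $2\lan\xi\ran/|a|$ and decaying like $|\eta|^{-1}$. This reciprocal is a slowly increasing multiplier on $\cS'(\bR)$, hence defines a unique tempered solution, and since $\what{H} \in \cS(\bR)$ one also obtains $v \in \cS(\bR)$.

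For the $L^{1}$ bound, I would rewrite the ODE as $\partial_{x} v + \lambda v = g$ with
\[
\lambda = \frac{2\pi i a}{\xi + i} = \frac{2\pi a}{\lan\xi\ran^{2}} + i\frac{2\pi a \xi}{\lan\xi\ran^{2}}, \qquad g = \frac{2\pi i H}{\xi + i},
\]
and observe that $\Re(\lambda) = 2\pi a/\lan\xi\ran^{2}$ has the same sign as $a$. Assuming $a > 0$, the tempered solution is the one-sided convolution $v(x) = \int_{-\infty}^{x} e^{-\lambda(x-y)} g(y)\,dy$, and Young's inequality gives
\[
\|v\|_{L^{1}} \leq \frac{\|g\|_{L^{1}}}{\Re(\lambda)} = \frac{\lan\xi\ran^{2}}{2\pi a}\cdot \frac{2\pi}{\lan\xi\ran}\|H\|_{L^{1}} = \frac{\lan\xi\ran}{a}\|H\|_{L^{1}}.
\]
The case $a < 0$ is handled symmetrically, integrating from $+\infty$ down to $x$.

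Finally, since $v \in \cS(\bR)$ vanishes at $-\infty$, the fundamental theorem of calculus yields $\|v\|_{L^{\infty}} \leq \|\partial_{x} v\|_{L^{1}} = 2\pi\|D_{x} v\|_{L^{1}}$. To bound the latter I would solve the ODE algebraically for $D_{x} v = (H - av)/(\xi + i)$ and insert the $L^{1}$ estimate just obtained:
\[
\|D_{x} v\|_{L^{1}} \leq \frac{\|H\|_{L^{1}} + |a|\|v\|_{L^{1}}}{\lan\xi\ran} \leq \frac{\|H\|_{L^{1}}}{\lan\xi\ran} + \|H\|_{L^{1}} \cleq \|H\|_{L^{1}}.
\]
The only delicate point, and what I regard as the crux of the lemma, is the identification $\Re(\lambda) = 2\pi a/\lan\xi\ran^{2}$: the cancellation between this integrating-factor decay rate and the factor $|\xi + i|^{-1} = \lan\xi\ran^{-1}$ appearing in $g$ is precisely what yields the advertised constant $\lan\xi\ran/|a|$. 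Apart from this bookkeeping the argument is routine.
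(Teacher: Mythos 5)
Your proposal is correct and follows essentially the same route as the paper: Fourier inversion of the symbol $(\xi+i)\eta+a$ for existence/uniqueness, the explicit one-sided integrating-factor solution for $a>0$, the $L^1$ bound from the kernel's $L^1$ norm (Young's inequality rather than the paper's Fubini computation, but identical in substance), and then the ODE itself for $\|D_x v\|_{L^1}$ and FTC for $\|v\|_{L^\infty}$. The cancellation you flag — $\Re(\lambda) = 2\pi a/\lan\xi\ran^{2}$ versus $|\xi+i|=\lan\xi\ran$ — is exactly the mechanism the paper exploits.
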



\begin{proof}
Taking the Fourier transform yields that
\[
\what{v}(\eta) = \frac{\what{H}(\eta)}{(\xi + i)\eta + a}.
\]

We can bound the norm of the denominator by
\[
|(\xi + i)\eta + a|^{2} = \lan \xi \ran^{2}\eta^{2} + 2\xi \eta a + a^{2}
= \biggl(\lan \xi\ran \eta + \frac{\xi a}{\lan \xi\ran}\biggr)^{2} 
+ \frac{a^{2}}{\lan \xi\ran^{2}} \geq \frac{a^{2}}{\lan \xi \ran^{2}} > 0.
\] 

Therefore, the denominator is a non-vanishing smooth function, and we obtain 
the existence and uniqueness in the sense of distributions. Moreover,
the rapid decay of $\what{H}(\eta)$ and the bound for the denominator
give that $v \in \cS(\bR)$. Let 
$\mu := 2\pi ia/(\xi + i) = 2\pi a(1 + i\xi)/\lan \xi\ran^{2}$, 
so that $\Re(\mu) = 2\pi a/\lan \xi\ran^{2}$. The form
of the solution depends on the sign of $\Re(\mu)$.
The cases $a > 0$ and $a < 0$ are analogous, 
so we only consider one of these.
If we assume that $a > 0$, then the solution is given by 
\[
v(x) = \frac{2\pi i}{\xi + i}\int_{-\infty}^{x}e^{-\mu(x - s)}H(s)ds,
\]

as $\Re(\mu) = 2\pi a/\lan \xi\ran^{2} > 0$. This gives that 
\begin{align}
\label{eqn: ODE_L1_bound}
\|v\|_{L^{1}} & \leq \frac{2\pi}{\lan \xi\ran}
\int_{\bR}\int_{-\infty}^{x}e^{-2\pi a(x - s)/\lan \xi\ran^{2}}|H(s)|dsdx \nonumber \\
& = \frac{2\pi}{\lan \xi\ran}\int_{\bR}
\int_{s}^{\infty}e^{-2\pi a(x - s)/\lan \xi\ran^{2}}|H(s)|dxds
= \frac{\lan \xi \ran}{a}\|H\|_{L^{1}}.
\end{align}

Using this together with the equation gives 
\[
\|D_{x}v\|_{L^{1}} \leq \frac{1}{\lan \xi\ran}(a\|v\|_{L^{1}} + \|H\|_{L^{1}})
\leq \frac{\lan \xi \ran + 1}{\lan \xi \ran}\|H\|_{L^{1}}
\cleq \|H\|_{L^{1}},
\]

as we wanted. The bound $\|v\|_{L^{\infty}} \cleq \|D_{x}v\|_{L^{1}}$ follows
from the Fundamental Theorem of Calculus and the fact that $v \in \cS(\bR)$.
\end{proof}


\begin{remark}
It is also possible to show that 
\[
\|v\|_{L^{\infty}} \cleq \frac{\lan\xi \ran}{|a|}\|H\|_{L^{\infty}}.
\]
\end{remark}


The following result shows that a vanishing moment assumption allows 
to consider the missing case $a = 0$ and also gives an improvement of the 
$L^{1}$-estimates for the solution.


\begin{lemma}
\label{lemma: ODE_vanishing}
Let $a, \xi \in \bR$ and let $H \in \cS(\bR)$. The equation
\[
(\xi + i)D_{x}v + av = D_{x}H.
\]

has a unique solution $v \in \cS(\bR)$ and it satisfies 
\[
\|v\|_{L^{1}} \cleq \|H\|_{L^{1}}, \ \ 
\|v\|_{L^{\infty}} \cleq \|D_{x}v\|_{L^{1}} 
\cleq \|D_{x}H\|_{L^{1}}.
\]

The constants in the inequality are independent of $a$, $\xi$, $H$.
Moreover, if $a \neq 0$, then there exists $w \in \cS(\bR)$ such 
that $v = D_{x}w$. 
\end{lemma}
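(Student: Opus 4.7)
The plan is to split into two cases depending on whether $a = 0$ or $a \neq 0$. When $a = 0$, the equation reduces to $D_{x}((\xi + i)v - H) = 0$, so $(\xi + i)v - H$ is constant; since $v, H \in \cS(\bR)$ and $\xi + i$ is a nonzero complex scalar, that constant must be $0$ and so $v = H/(\xi + i)$. Because $|\xi + i| = \lan \xi\ran \geq 1$, the $L^{1}$ and $L^{\infty}$ bounds on $v$ follow immediately from the corresponding bounds on $H$ and $D_{x}H$, and the inequality $\|v\|_{L^{\infty}}\cleq \|D_{x}v\|_{L^{1}}$ is the usual consequence of the Fundamental Theorem of Calculus applied to a Schwartz function.

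For $a \neq 0$, the idea is to build $v$ as an exact derivative. I would apply Lemma \ref{lemma: ODE_original} to the auxiliary equation
\[
(\xi + i)D_{x}w + aw = H,
\]
obtaining a unique $w \in \cS(\bR)$ with $\|w\|_{L^{1}} \leq \lan \xi\ran \|H\|_{L^{1}}/|a|$ and, crucially, $\|D_{x}w\|_{L^{1}} \cleq \|H\|_{L^{1}}$. Differentiating the $w$-equation in $x$ shows that $v := D_{x}w$ satisfies $(\xi + i)D_{x}v + av = D_{x}H$ and lies in $\cS(\bR)$; this also proves the final assertion that $v$ is an exact $x$-derivative. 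The sharp estimate $\|v\|_{L^{1}} = \|D_{x}w\|_{L^{1}} \cleq \|H\|_{L^{1}}$ then comes for free, and applying Lemma \ref{lemma: ODE_original} directly to $v$ with right-hand side $D_{x}H$ yields $\|D_{x}v\|_{L^{1}} \cleq \|D_{x}H\|_{L^{1}}$. Once again, $\|v\|_{L^{\infty}} \cleq \|D_{x}v\|_{L^{1}}$ follows from the Fundamental Theorem of Calculus.

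Uniqueness is handled by taking Fourier transforms: if $v_{1}, v_{2} \in \cS(\bR)$ are two solutions, then their difference $u$ satisfies $((\xi + i)\eta + a)\what{u}(\eta) = 0$. For $a \neq 0$ the factor in parentheses is bounded below by $|a|/\lan \xi\ran$ (the same computation as in Lemma \ref{lemma: ODE_original}), forcing $\what{u} = 0$; for $a = 0$ the homogeneous solutions are constants, none of which, apart from $0$, belong to $\cS(\bR)$.

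The only genuinely delicate step is the bound $\|v\|_{L^{1}} \cleq \|H\|_{L^{1}}$: a naive application of Lemma \ref{lemma: ODE_original} to the $v$-equation with right-hand side $D_{x}H$ would give only $\|v\|_{L^{1}} \leq \lan \xi\ran \|D_{x}H\|_{L^{1}}/|a|$, which blows up as $|a| \to 0$ and, worse, depends on $\|D_{x}H\|_{L^{1}}$ rather than $\|H\|_{L^{1}}$. Routing the solution through the auxiliary potential $w$, which is itself supplied by Lemma \ref{lemma: ODE_original}, simultaneously removes the $|a|^{-1}$ blow-up and the loss of one derivative. This is the whole content of the "vanishing moment" improvement encoded in writing the source as $D_{x}H$, and it is what makes the result applicable in the subsequent Fourier-decomposition argument, where $a = \hb t \cdot m$ takes values arbitrarily close to $0$.
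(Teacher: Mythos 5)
Your proof is correct, and the route is essentially the paper's but presented more cleanly. The paper arrives at the identity $v = D_{x}w$, with $w$ the Schwartz solution of $(\xi + i)D_{x}w + aw = H$, by integrating by parts in the explicit exponential-kernel formula for $v$ and then re-deriving the $L^{1}$ bound from the estimate \eqref{eqn: ODE_L1_bound}; you obtain the same identity directly by differentiating the auxiliary $w$-equation, and the bound $\|v\|_{L^{1}} = \|D_{x}w\|_{L^{1}} \cleq \|H\|_{L^{1}}$ is then inherited from Lemma \ref{lemma: ODE_original} with no further kernel manipulation. (One can check that unwinding the paper's integrated-by-parts formula gives exactly $v = (H - aw)/(\xi + i) = D_{x}w$, so the two constructions produce the same $w$.) A small additional gain of your presentation is that the final claim $v = D_{x}w$, which the paper verifies post hoc by rearranging the differential equation, is automatic by construction.
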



\begin{proof}
If $a = 0$ then $v = H/(\xi + i) \in \cS(\bR)$, and the result follows 
immediately. For $a \neq 0$ the existence and uniqueness follow from 
Lemma \ref{lemma: ODE_original}.
The cases $a > 0$ and $a < 0$ are analogous, so we only consider one 
of these. Assume that $a > 0$ and let
$\mu := 2\pi a/(\xi + i)$ be as in the previous proof.
Integrating by parts yields that
\begin{equation}
\label{eqn: vanishing_solution}
v(x) = \frac{2\pi i}{\xi + i}\int_{-\infty}^{x}e^{-\mu(x - s)}D_{s}H(s)ds
= \frac{1}{\xi + i}\biggl(H(x) 
- \mu\int_{-\infty}^{x}e^{-\mu(x - s)}H(s)ds\biggr).
\end{equation}

We use the estimate \eqref{eqn: ODE_L1_bound} for the integral term, to
conclude that
\[
\|v\|_{L^{1}} \leq \frac{1}{\lan \xi \ran}\biggl(1
+ |\mu|\frac{\lan \xi\ran^{2}}{a}\biggr)\|H\|_{L^{1}}
\cleq \|H\|_{L^{1}}.
\]

The $L^{\infty}$ bound follows trivially if $a = 0$, and from 
Lemma \ref{lemma: ODE_original} if $a \neq 0$. Finally, if $a \neq 0$,
then 
\[
v = \frac{1}{a}D_{x}(H - (\xi + i)v) = D_{x}w,
\]

with $w = (H - (\xi + i)v)/a \in \cS(\bR)$. 
\end{proof}


\begin{remark}
The solutions in $C^{1}(\bR)$ to the equation 
$(\xi + i)D_{x}v + av = 0$ are multiples of 
$e^{-\mu x}$. Therefore, there is also 
uniqueness under the weaker conditions 
$v \in C^{1}(\bR)$ and $v(x) \ra 0$ as $x \ra \pm \infty$.
\end{remark}


\begin{remark}
From \eqref{eqn: vanishing_solution} it is also possible to show that 
\[
\|v\|_{L^{\infty}} 
\leq \frac{1}{\lan \xi\ran}\biggl( 1
+ \frac{|\mu|\lan \xi\ran^{2}}{a}\biggr)\|H\|_{L^{\infty}}
\cleq \|H\|_{L^{\infty}}.
\]

In the proof of the main result of the next section we will remark
why focusing only in the $L^{\infty}\ra L^{\infty}$ estimates may
not be so convenient.
\end{remark}


In the following proposition we show that the exponential function appearing 
from the integrating factor of the differential equations is bounded, together 
with all its derivatives. In proving the boundedness results from the following 
section we will use this result, as well as the idea of the proof.


\begin{lemma}
\label{lemma: exponential}
Let $\eta, \xi \in \bR$ with $\eta \geq 0$. Then, for any polynomial $p$ 
we have
\[
\biggl|e^{-i\eta/(\xi + i)}p\biggl(\frac{\eta}{\lan \xi\ran^{2}}\biggr)
\biggr| \leq C(p),
\]

for some constant $C(p)$ depending on the polynomial. Moreover,
$|D_{\xi}^{\beta}(e^{-i\eta/(\xi + i)})| \leq C_{\beta}$ for any 
$\beta \geq 0$. The constants $C(p)$ and $C_{\beta}$ are independent 
of $\eta$ and $\xi$.
\end{lemma}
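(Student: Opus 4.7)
The plan is to exploit the identity $-i/(\xi+i) = (-1-i\xi)/\langle\xi\rangle^{2}$, obtained by multiplying and dividing by $\xi-i$. This immediately yields
\[
\bigl|e^{-i\eta/(\xi+i)}\bigr| = e^{-\eta/\langle\xi\rangle^{2}}.
\]
Setting $s := \eta/\langle\xi\rangle^{2} \geq 0$ (using $\eta\geq 0$), the first inequality then reduces to
\[
\bigl|e^{-i\eta/(\xi+i)}\,p(\eta/\langle\xi\rangle^{2})\bigr| = e^{-s}|p(s)|,
\]
and since $e^{-s}|p(s)|$ is continuous on $[0,\infty)$ with $e^{-s}|p(s)|\to 0$ as $s\to\infty$, one can define $C(p) := \sup_{s\geq 0} e^{-s}|p(s)| < \infty$. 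This gives the first bound, uniformly in $\eta$ and $\xi$.

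For the derivative bound I would use the Faà di Bruno formula applied to $e^{f(\xi)}$ with $f(\xi) := -i\eta/(\xi+i)$. Direct differentiation gives
\[
f^{(k)}(\xi) = (-1)^{k+1}\, i\eta\, k!\,(\xi+i)^{-(k+1)}, \qquad k \geq 1,
\]
and, because $|\xi+i| = \langle\xi\rangle$, we have the pointwise estimate $|f^{(k)}(\xi)| \leq C_{k}\,\eta/\langle\xi\rangle^{k+1}$. Faà di Bruno's formula then expresses $D_{\xi}^{\beta}(e^{f})$ as $e^{f}$ times a finite sum of monomials of the form $c_{\mathbf{n}}\prod_{i\geq 1}\bigl(f^{(i)}\bigr)^{n_{i}}$ where the multi-index $\mathbf{n} = (n_{1},n_{2},\ldots)$ satisfies the constraint $\sum_{i\geq 1} i\, n_{i} = \beta$.

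Combining these bounds, a single monomial contributes at most
\[
e^{-s}\,\prod_{i\geq 1}\left(\frac{\eta}{\langle\xi\rangle^{i+1}}\right)^{n_{i}} = e^{-s}\, s^{N}\,\langle\xi\rangle^{N-\beta},
\]
where $N := \sum_{i\geq 1} n_{i}$ and I used $\eta = s\langle\xi\rangle^{2}$ together with $\sum_{i}n_{i}(i+1) = \beta + N$. The key observation is that the Faà di Bruno constraint forces $N \leq \sum_{i\geq 1} i\,n_{i} = \beta$ (since each $i\geq 1$), so $\langle\xi\rangle^{N-\beta}\leq 1$; and $\sup_{s\geq 0} e^{-s}s^{N}$ is finite. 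Summing over the finitely many monomials contributing to $D_{\xi}^{\beta}(e^{f})$ gives the bound $C_{\beta}$, independent of $\eta$ and $\xi$.

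The only genuinely nontrivial ingredient is the combinatorial bookkeeping in Faà di Bruno to verify $N \leq \beta$; the rest is routine algebra. An alternative that avoids Faà di Bruno is to prove by induction on $\beta$ that $D_{\xi}^{\beta}\bigl(e^{-i\eta/(\xi+i)}\bigr)$ equals $e^{-i\eta/(\xi+i)}$ times a polynomial (with $\xi$-dependent coefficients that are $O(1)$) in $\eta/\langle\xi\rangle^{2}$, which reduces the derivative estimate to the already-proven first bound; this is essentially why the first part of the lemma is stated with a general polynomial $p$ rather than a specific one.
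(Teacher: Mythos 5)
Your proof is correct. The first bound is handled identically to the paper: both of you write $-i\eta/(\xi+i) = -\eta(1+i\xi)/\langle\xi\rangle^{2}$, so that $|e^{-i\eta/(\xi+i)}| = e^{-\eta/\langle\xi\rangle^{2}}$, and conclude via $\sup_{s\geq 0}e^{-s}|p(s)|<\infty$.

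For the derivative bound the route differs. The paper proves by a bespoke induction that
\[
D_{\xi}^{\beta}(e^{-\mu}) = \frac{e^{-\mu}P_{\beta}(\mu)}{(\xi+i)^{\beta}},
\]
where $\mu = i\eta/(\xi+i)$ and $P_{\beta}$ is a polynomial of degree $\beta$ with coefficients depending only on $\beta$, and then observes that $|\mu^{k}/(\xi+i)^{\beta}| \leq (\eta/\langle\xi\rangle^{2})^{k}$ for $k\leq\beta$ (since $|\xi+i|=\langle\xi\rangle\geq 1$), which feeds into the first bound via a majorizing polynomial $\widetilde{P}_{\beta}$. You instead invoke Fa\`a di Bruno and carry out the combinatorial bookkeeping directly; the crucial inequality $N := \sum_{i}n_{i} \leq \sum_{i}in_{i} = \beta$ (because each $i\geq 1$) plays exactly the role that $\deg P_{\beta} = \beta$ plays in the paper, giving $\langle\xi\rangle^{N-\beta}\leq 1$. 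The two arguments are close in spirit --- both rest on the structural fact that $\partial_{\xi}^{\beta}e^{-\mu}$ is $e^{-\mu}$ times a bounded polynomial in $\eta/\langle\xi\rangle^{2}$ --- but Fa\`a di Bruno is off-the-shelf while the paper's induction is more self-contained and produces the explicit recursion $P_{\beta+1}(z)=(zP_{\beta}(z)-zP_{\beta}'(z)-\beta P_{\beta}(z))/(2\pi i)$. The alternative you sketch at the end is precisely the paper's approach, and you are right that the first part is stated for a general polynomial $p$ specifically to serve as the base estimate for that induction. One trivial note: you differentiate with $\partial_{\xi}$ rather than the paper's normalized $D_{\xi}=\partial_{\xi}/(2\pi i)$; this changes the constants by powers of $2\pi$ only and does not affect the conclusion.
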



\begin{proof}
Let $\mu = i\eta/(\xi + i) = \eta(1 + i\xi)/\lan \xi\ran^{2}$, so that 
$\Re(\mu) = \eta/\lan \xi\ran^{2} > 0$. The first inequality follows from the
triangle inequality and the bound $e^{-x}x^{n} \leq n!$ for $x \geq 0$. 
In order to differentiate with respect to $\xi$, we first 
observe that $D_{\xi}\mu = -\mu/(2\pi i(\xi + i))$. We show 
by induction that 
\[
D_{\xi}^{\beta}(e^{-\mu}) = \frac{e^{-\mu}P_{\beta}(\mu)}{(\xi + i)^{\beta}},
\]

where $P_{\beta}$ is a polynomial of degree $\beta$, whose coefficients
depend only on $\beta$. For $\beta = 0$ it is clear. Moreover, 
\[
D_{\xi}\biggl(\frac{e^{-\mu}P_{\beta}(\mu)}{(\xi + i)^{\beta}}\biggr)
= e^{-\mu}\biggl(\frac{\mu}{2\pi i(\xi + i)}\frac{P_{\beta}(\mu)}{(\xi + i)^{\beta}}
- \frac{\mu}{2\pi i(\xi + i)}\frac{P'_{\beta}(\mu)}{(\xi + i)^{\beta}}
- \frac{\beta P_{\beta}(\mu)}{2\pi i(\xi + i)^{\beta + 1}}\biggr). 
\]

Thus, by defining the polynomial 
$P_{\beta + 1}(z) := (zP_{\beta}(z) - zP'_{\beta}(z) - \beta P_{\beta})/(2\pi i)$
we complete the induction.
Since $|1/(\xi + i)| \leq 1$ and $P_{\beta}$
has degree $\beta$, we obtain that there exists some 
polynomial $\wilde{P}_{\beta}$ of degree $\beta$ such that
\[
\biggl|\frac{P_{\beta}(\mu)}{(\xi + i)^{\beta}}\biggr|
\leq \wilde{P}_{\beta}\biggl(\frac{\eta}{\lan \xi\ran^{2}}\biggr).
\]

With this we conclude that
\[
|D_{\xi}^{\beta}e^{-\mu}| 
= \biggl|\frac{e^{-\mu}P_{\beta}(\mu)}{(\xi + i)^{\beta}}\biggr|
\leq \biggl|e^{-\mu}\wilde{P}_{\beta}\biggl(\frac{\eta}{\lan \xi\ran^{2}}\biggr)\biggr|
\leq C_{\beta}.
\]
\end{proof}


\subsection{Estimates for the solutions of the equations}


The purpose of this section is to finally prove 
Theorem \ref{thm: delta_bar_equation}. We start by proving the 
estimates for the ODEs \eqref{eqn: Fourier_ODE} 
which result from expanding in Fourier series the equation 
\eqref{eqn: delta_bar_equation}. We start with the following
elementary result.


\begin{proposition}
\label{propn: vanishing_moment}
If $f \in \cS(\bR)$ is such that $\int_{\bR}f(x_{1})dx_{1} = 0$, 
then there exists a unique $g \in \cS(\bR)$ such that 
$D_{x_{1}}g = f$. Moreover, if $f$ is compactly supported, 
then so is $g$. 

\bigskip
Similarly, if $f \in \cS(T)$ is such that 
$\int_{\bR}f(x_{1},x')dx_{1} = 0$ for all $x' \in \bT^{d}$, then 
there exists $g \in \cS(T)$ such that $D_{x_{1}}g = f$. Moreover, 
its Fourier coefficients $f_{k} \in \cS(\bR)$ satisfy that 
$\int_{\bR}f_{k}(x_{1}) = 0$ and $D_{x_{1}}g_{k} = f_{k}$. 
\end{proposition}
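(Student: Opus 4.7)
The plan is to explicitly construct the primitive by integration from $-\infty$, using the vanishing moment to get matching decay at $+\infty$, and then to transfer this to the cylinder by working Fourier-coefficient by Fourier-coefficient.

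For Part 1, I would set
\[
g(x_{1}) := 2\pi i \int_{-\infty}^{x_{1}} f(s)\, ds,
\]
which satisfies $D_{x_{1}}g = f$ pointwise (recall the factor $1/(2\pi i)$ in $D_{x_{1}}$). Smoothness is immediate, and all derivatives $D_{x_{1}}^{\alpha}g = D_{x_{1}}^{\alpha - 1}f$ for $\alpha \geq 1$ inherit the rapid decay of $f$. Decay of $g$ itself at $-\infty$ is clear from the integral formula, while at $+\infty$ the vanishing-moment hypothesis lets me rewrite
\[
g(x_{1}) = -2\pi i \int_{x_{1}}^{\infty} f(s)\, ds,
\]
so that for $x_{1} \geq 0$ I obtain $\lan x_{1}\ran^{N}|g(x_{1})| \cleq \|\lan s\ran^{N} f\|_{L^{1}(\bR)}$ using $\lan x_{1}\ran \leq \lan s\ran$ on the region $|s| \geq |x_{1}|$; the case $x_{1} \leq 0$ is symmetric. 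Hence $g \in \cS(\bR)$. Compact support when $f$ is compactly supported is immediate: $g$ vanishes outside $\supp(f)$ by the two boundary formulas. Uniqueness follows because the only element of $\cS(\bR)$ annihilated by $D_{x_{1}}$ is zero.

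For Part 2, by Fubini each Fourier coefficient $f_{k}$ inherits the vanishing moment, since $\int_{\bR} f_{k}(x_{1})\, dx_{1}$ is the $k$-th Fourier coefficient of the identically zero function $x' \mapsto \int_{\bR} f(x_{1},x')\, dx_{1}$. By Proposition \ref{propn: Fourier_cylinder} each $f_{k} \in \cS(\bR)$, so Part 1 furnishes a unique $g_{k} \in \cS(\bR)$ with $D_{x_{1}}g_{k} = f_{k}$. My candidate is
\[
g := \sum_{k \in \bZ^{d}} g_{k}(x_{1})e_{k}(x'),
\]
and I will verify this converges in $\cS(T)$ and differentiates termwise to $f$.

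The main obstacle is establishing an analog of Proposition \ref{propn: Fourier_cylinder}(e), namely
\[
\|g_{k}e_{k}\|_{l} \cleq \lan k\ran^{l - 2m}\, C_{l,m}(f),
\]
for arbitrary $m \geq 0$. The factor $\lan k\ran^{l}$ absorbs $x'$-derivatives of $e_{k}$, leaving me to bound $\sup_{x_{1}}\lan x_{1}\ran^{l}|D_{x_{1}}^{\alpha_{1}}g_{k}|$ with decay in $k$. For $\alpha_{1} \geq 1$, $D_{x_{1}}^{\alpha_{1}}g_{k} = D_{x_{1}}^{\alpha_{1}-1}f_{k}$, and the required $\lan k\ran^{-2m}$ decay follows from the integration-by-parts argument behind Proposition \ref{propn: Fourier_cylinder}. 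For $\alpha_{1} = 0$, the integral representation from Part 1 combined with the case split $x_{1} \gtrless 0$ yields $\lan x_{1}\ran^{l}|g_{k}(x_{1})| \cleq \|\lan s\ran^{l}f_{k}\|_{L^{1}(\bR)}$, and applying Proposition \ref{propn: Fourier_cylinder}(a) to the Schwartz function $\lan s\ran^{l}f \in \cS(T)$ (noting that this weight commutes with taking the Fourier coefficient in $x'$) gives the decay $\cleq \lan k\ran^{-2m}\|\lan s\ran^{l}f\|_{W^{2m,1}(T)}$. Choosing $m$ large enough so that $2m - l > d$ makes the series absolutely summable in every seminorm, so $g \in \cS(T)$. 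Finally, termwise differentiation — justified by continuity of $D_{x_{1}}$ on $\cS(T)$ and the analogous convergence $\sum_{|k|\leq N} f_{k}e_{k} \to f$ from Proposition \ref{propn: Fourier_cylinder}(e) — gives $D_{x_{1}}g = f$, and uniqueness of the Fourier decomposition identifies the $g_{k}$ as the Fourier coefficients of $g$.
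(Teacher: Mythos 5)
Your proof is correct. For Part 1 you take the same explicit-primitive approach as the paper: integrate from $-\infty$, then use the vanishing moment to rewrite the primitive as $-$(the integral from $x_1$ to $+\infty$) and absorb the weight $\lan x_{1}\ran^{N}$ into the integrand over $|s|\geq |x_{1}|$. You are in fact slightly more careful than the paper, which writes $g(x_{1}) := \int_{-\infty}^{x_{1}}f(y)\,dy$ and omits the normalization $2\pi i$ needed so that $D_{x_{1}}g = f$ rather than $f/(2\pi i)$; your formula $g = 2\pi i\int_{-\infty}^{x_{1}}f$ is the consistent one given $D_{x_{1}} = \pa_{x_{1}}/(2\pi i)$.

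For Part 2 you take a genuinely different route. The paper constructs $g(x_{1},x')$ directly by integrating $f(\cdot,x')$ in $x_{1}$ for each fixed $x'$, notes the vanishing moment is inherited by all $x'$-derivatives of $f$, and asserts that the $\cS(T)$ estimates then ``follow exactly as before,'' after which the identities $\int_{\bR}f_{k} = 0$ and $D_{x_{1}}g_{k} = f_{k}$ are obtained by Fubini. You instead pass to the Fourier side first: you show each $f_{k}$ has vanishing moment, solve the scalar problem $D_{x_{1}}g_{k} = f_{k}$ via Part 1, and then synthesize $g := \sum_{k}g_{k}e_{k}$, proving convergence in $\cS(T)$ by establishing the seminorm bound $\|g_{k}e_{k}\|_{l} \cleq \lan k\ran^{l - 2m}C_{l,m}(f)$ for arbitrary $m$ in the spirit of Proposition \ref{propn: Fourier_cylinder}(e), and finally differentiating the series termwise by continuity of $D_{x_{1}}$ on $\cS(T)$. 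Your route is more work, but it makes completely explicit the convergence that the paper's ``exactly as before'' leaves to the reader, and it delivers the coefficient identities $D_{x_{1}}g_{k} = f_{k}$ by construction rather than by an extra Fubini computation. Both arguments are valid; the paper's direct construction is shorter, while the Fourier-synthesis argument is more self-contained given the machinery already developed in the chapter.
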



\begin{proof}
Let us first consider the case $f \in \cS(\bR)$. The uniqueness
follows from the Schwartz condition. For the existence we 
define
\[
g(x_{1}) := \int_{-\infty}^{x_{1}}f(y)dy
= -\int_{x_{1}}^{\infty}f(y)dy.
\]

To show that $g \in \cS(\bR)$ we use that if $L \geq 1$, then 
for any positive $m > 0$ we have
\[
\int_{-\infty}^{-L}\frac{1}{\lan y\ran^{m + 1}}dy, \
\int_{L}^{\infty}\frac{1}{\lan y\ran^{m + 1}}dy 
\cleq \frac{1}{L^{m}}.
\]

If $f$ were compactly supported, then the definition above
shows that $g$ shares the same support. For the case of $T$,
the first part of the proof follows exactly as before. Moreover,
by Fubini's theorem we have that
\[
\int_{\bR}f_{k}(x_{1})dx_{1}
= \int_{\bT^{d}}e_{-k}(x')
\biggl(\int_{\bR}f(x_{1},x')dx_{1}\biggr)dx'
= 0,
\]

and
\[
D_{x_{1}}g_{k}(x_{1})
= \int_{\bT^{d}}D_{x_{1}}g(x_{1},x')e_{-k}(x')dx'
= \int_{\bT^{d}}f(x_{1},x')e_{-k}(x')dx' = f_{k}(x_{1}).
\]
\end{proof}


If $f$ and $g$ are as in Proposition \ref{propn: vanishing_moment}, then
we define $D_{x_{1}}^{-1}f := g$.


\begin{theorem}
\label{thm: delta_bar_Fourier_ODE}
Let $\hb > 0$ and let $V = (F,G)$ satisfy \eqref{eqn: conditions}. 
For fixed $(m,\xi,t)\in \bZ^{d}\times \bR \times \bR^{d}$ the equation 
\[
(\xi + i)D_{x_{1}}u_{m} + \hb t\cdot mu_{m} 
= (\xi + i)F_{m} + \hb t\cdot G_{m}
\tag{\ref{eqn: Fourier_ODE}}
\]

has a unique solution $u_{m}(\cdot, \xi, t;\hb) \in C^{1}(\bR)$ with 
the decay condition $u_{m}(x_{1},\xi,t) \ra 0$ as $x_{1} \ra \pm \infty$. 
Moreover, we have that $u_{m}(\cdot,\xi, t) \in \cS(\bR)$, 
$u_{m}(\cdot,\cdot,t) \in C^{\infty}(\bR \times \bR)$,  
and for any $\alpha,\beta \geq 0$ it satisfies that
$D_{x_{1}}^{\alpha}D_{\xi}^{\beta}u_{m}(\cdot,\xi,t) \in \cS(\bR)$.
If $\hb t \cdot m = 0$, then $u_{m}$ is supported on $|x_{1}| \leq R$.
If $\hb t \cdot m \neq 0$, then $u_{m}$ vanishes in one of the components of 
$|x_{1}| > R$, and decays exponentially (depending on the product $\hb t \cdot m$) 
on the other component. In addition, it satisfies the bounds 
\begin{equation}
\label{eqn: delta_bar_derivatives}
|D_{x_{1}}^{\alpha}D_{\xi}^{\beta}u_{m}|
\cleq \lan \xi, \hb t\ran \|D_{x_{1}}^{\alpha}V_{m}\|_{L^{1}},
\end{equation}
\begin{equation}
\label{eqn: delta_bar_dual_toroidal}
|D_{x_{1}}^{\alpha}D_{\xi}^{\beta}
(u_{m}(\cdot,t_{1}) - u_{m}(\cdot,t_{2}))|
\cleq \hb|t_{1} - t_{2}|(\|D_{x_{1}}^{\alpha}V_{m}\|_{L^{1}} +
\lan \xi,\hb t_{1},\hb t_{2}\ran |m| \|D_{x_{1}}^{\alpha - 1}V_{m}\|_{L^{1}}).
\end{equation}

Moreover, for $|x_{1}| \geq 2R$ we have the linear decay bound
\begin{equation}
\label{eqn: delta_bar_decay}
|D_{x_{1}}^{\alpha}D_{\xi}^{\beta}u_{m}(x_{1})|
\cleq \frac{\lan \xi, \hb t\ran}{|x_{1}|}\|D_{x_{1}}^{\alpha - 1}V_{m}\|_{L^{1}}.
\end{equation}

The constants in the inequalities may depend on 
$\alpha$, $\beta$, $d$, but
are independent of $\hb$, $m$, $\xi$, $t$, $R$.
\end{theorem}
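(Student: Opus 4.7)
The plan is to reduce the first-order PDE \eqref{eqn: Fourier_ODE} to the ODE of Lemma \ref{lemma: ODE_vanishing} by exploiting \eqref{eqn: conditions} to write the forcing as an exact $D_{x_1}$-derivative. First, I would apply Proposition \ref{propn: vanishing_moment} to $F_m, G_m \in C^{\infty}_c([-R,R])$ (whose integrals vanish because $V$'s do) to produce compactly supported antiderivatives $\tilde{F}_m, \tilde{G}_m$, so the right-hand side of \eqref{eqn: Fourier_ODE} rewrites as $D_{x_1}H$ with $H := (\xi+i)\tilde{F}_m + \hb t \cdot \tilde{G}_m \in C^{\infty}_c([-R,R])$. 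Lemma \ref{lemma: ODE_vanishing}, applied with $a = \hb t \cdot m$ and this $H$, then delivers a unique $u_m \in \cS(\bR)$ together with the bound $|u_m| \leq \|D_{x_1}u_m\|_{L^{1}} \cleq \|D_{x_1}H\|_{L^{1}} \cleq \lan \xi,\hb t\ran \|V_m\|_{L^{1}}$, which is the $\alpha=\beta=0$ case of \eqref{eqn: delta_bar_derivatives}. The support/decay structure outside $[-R,R]$ is read off the explicit formula used in the lemma's proof: when $a = 0$ the solution is $H/(\xi+i)$, compactly supported; when $a \neq 0$, only the convolution term survives past the support of $H$, giving the exponential tail on one side and vanishing on the other.

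To obtain the higher derivatives in \eqref{eqn: delta_bar_derivatives}, I would iterate this scheme. Differentiating \eqref{eqn: Fourier_ODE} in $x_1$ shows $D_{x_1}^{\alpha}u_m$ solves the same equation with $V_m$ replaced by $D_{x_1}^{\alpha}V_m$ (whose integral vanishes trivially for $\alpha \geq 1$ by compact support), and differentiating in $\xi$ produces
\[
(\xi + i)D_{x_1}(D_{\xi}u_m) + (\hb t \cdot m)\,D_{\xi}u_m = F_m - D_{x_1}u_m,
\]
whose right-hand side is Schwartz and has zero integral (since $\int F_m = 0$ and $u_m \in \cS$), so Lemma \ref{lemma: ODE_vanishing} applies once more. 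Induction on $(\alpha,\beta)$ then yields \eqref{eqn: delta_bar_derivatives} and the smoothness of $u_m$ in $\xi$ with each derivative Schwartz in $x_1$; the uniformity of the constants in $\hb,\xi,t$ is inherited directly from Lemma \ref{lemma: ODE_vanishing}. For the far-field decay \eqref{eqn: delta_bar_decay} I would use the explicit representation
\[
u_m(x_1) = \frac{H(x_1)}{\xi + i} - \frac{\mu}{\xi + i}\int_{-\infty}^{x_1} e^{-\mu(x_1 - s)}H(s)\,ds, \qquad \mu = \frac{2\pi i \hb t \cdot m}{\xi + i}.
\]
For $|x_1| \geq 2R$ the pointwise term vanishes and the integration is effectively over $[-R,R]$, so $|x_1 - s| \geq |x_1|/2$; bounding $\frac{|\mu|}{\lan \xi \ran}e^{-\Re\mu\, |x_1|/2} = \frac{2\pi|a|}{\lan\xi\ran^{2}}e^{-\pi|a|\,|x_1|/\lan\xi\ran^{2}}$ via the elementary inequality $u e^{-uy} \leq (ey)^{-1}$ converts exponential decay into the claimed $1/|x_1|$ decay, with $\|H\|_{L^{1}} \cleq \lan \xi, \hb t\ran \|D_{x_1}^{-1}V_m\|_{L^{1}}$; combining with the $D_{x_1}^{\alpha}$-trick produces the full \eqref{eqn: delta_bar_decay}.

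The main obstacle will be the difference estimate \eqref{eqn: delta_bar_dual_toroidal}. Setting $v := u_m(\cdot,\xi, t_1) - u_m(\cdot,\xi,t_2)$ and subtracting the two instances of \eqref{eqn: Fourier_ODE} gives
\[
(\xi + i)D_{x_1}v + (\hb t_1 \cdot m)\,v = \hb(t_1 - t_2)\cdot G_m - \hb(t_1 - t_2)\cdot m \cdot u_m(\cdot,\xi,t_2).
\]
The first term on the right is already $D_{x_1}[\hb(t_1 - t_2)\cdot \tilde{G}_m]$, whose $L^{1}$-norm is $\cleq \hb|t_1 - t_2|\|V_m\|_{L^{1}}$ and accounts for the first summand in the bound. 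For the second, integrating the equation for $u_m(t_2)$ reveals $(\hb t_2\cdot m)\int u_m(t_2)\,dx_1 = 0$, so whenever $\hb t_2\cdot m \neq 0$ the function $u_m(t_2)$ is itself $D_{x_1}$ of a Schwartz primitive bounded in $L^{1}$ by $\cleq \lan\xi,\hb t_2\ran\|D_{x_1}^{-1}V_m\|_{L^{1}}$ (using \eqref{eqn: delta_bar_decay} to control the tails); the degenerate case $\hb t_2\cdot m = 0$ is absorbed by writing $u_m(t_2)$ explicitly from the integrated equation and noting it is then compactly supported in $[-R,R]$, or, alternatively, by symmetrizing the decomposition around $(t_1+t_2)/2$ so one of the two roles is always non-degenerate. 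Lemma \ref{lemma: ODE_vanishing} applied to $v$ then converts the two pieces of the right-hand side into the $\hb|t_1 - t_2|(\|V_m\|_{L^{1}} + \lan\xi,\hb t_1,\hb t_2\ran|m|\|D_{x_1}^{-1}V_m\|_{L^{1}})$ bound, and the same inductive scheme as before extends this to $D_{x_1}^{\alpha}D_{\xi}^{\beta}v$.
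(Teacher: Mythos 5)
Your proposal follows essentially the same route as the paper: rewrite the forcing as an exact $D_{x_{1}}$-derivative using the vanishing moments, feed the ODE into Lemma \ref{lemma: ODE_vanishing}, and iterate through successive $x_{1}$- and $\xi$-differentiations of \eqref{eqn: Fourier_ODE}. The setup, the iteration scheme, the explicit representation formula for the far-field decay, and the reduction of the difference estimate to a subtracted ODE are all the same as in the paper. Two places deserve closer scrutiny.

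First, in the difference estimate you arrive at
\[
(\xi + i)D_{x_1}v + (\hb t_1 \cdot m)\,v = \hb(t_1 - t_2)\cdot G_m - \hb(t_1 - t_2)\cdot m\, u_m(\cdot,\xi,t_2),
\]
and correctly observe that the second forcing term has vanishing moment whenever $\hb t_2\cdot m \neq 0$. Your first proposed fix for the case $\hb t_2\cdot m = 0$ --- that $u_m(\cdot,t_2)$ is then compactly supported --- does not close the gap: compact support does not imply $\int u_m(\cdot,t_2)\,dx_1 = 0$, so the forcing is still not a $D_{x_1}$-derivative and Lemma \ref{lemma: ODE_vanishing} does not apply. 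Falling back on Lemma \ref{lemma: ODE_original} would give an $L^1$ bound of size $\lan\xi\ran/|\hb t_1\cdot m|$, which blows up as $\hb t_1\cdot m \ra 0$ and ruins the uniformity in $\hb$, $t$. Your second alternative (``symmetrizing'') is the right idea but vaguely worded; what is needed is simply to relabel so that the nonzero dot product sits on the $u_m$ that enters the right-hand side. The paper does exactly this: it keeps $\hb t_2\cdot m$ (possibly zero, but Lemma \ref{lemma: ODE_vanishing} is uniform in $a$) as the ODE coefficient and puts $u^1_m$ with $\hb t_1\cdot m \neq 0$ into the forcing, so $\int u^1_m = 0$ is automatic.

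Second, your elementary bound $u e^{-uy} \leq (ey)^{-1}$ does handle the $\beta = 0$ case of the decay estimate \eqref{eqn: delta_bar_decay}, but once you take $\xi$-derivatives the exponential $e^{-\mu(x_1 - s)}$ and the prefactor $\mu/(\xi + i)$ produce powers of $\mu(x_1 - s)$, and the interplay between these polynomial factors and the decaying exponential requires a more careful bookkeeping. The paper isolates this in Lemma \ref{lemma: exponential} and a small induction producing polynomials $Q_\beta$ of controlled degree; you assert the $\xi$-derivatives ``combine with the $D_{x_1}^{\alpha}$-trick'' but do not indicate how the polynomial growth is absorbed. This is fillable, but it is not a one-line corollary of the $\beta = 0$ computation.
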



\begin{proof}
The uniqueness of such a solution follows from the remark
after Lemma \ref{lemma: ODE_vanishing}. From 
\eqref{eqn: conditions} and Lemma \ref{lemma: ODE_vanishing} 
we obtain the existence 
and that it belongs to $\cS(\bR)$. As in the previous proofs,
let $\mu = 2\pi i(\hb t\cdot m)/(\xi + i)$, so that 
$\Re(\mu) = 2\pi (\hb t\cdot m)/\lan \xi\ran^{2}$. We know that 
the form of the solution depends on the sign of $\Re(\mu)$;
more explicitly, proceeding as in the proof of 
Lemma \ref{lemma: ODE_vanishing}, we have that:
\begin{enumerate}
\item if $\hb t \cdot m = 0$, then 
\[
u_{m}(x_{1},\xi,t) = \frac{1}{\xi + i}[(\xi + i)D_{x_{1}}^{-1}F_{m}(x_{1})
+ \hb t\cdot D_{x_{1}}^{-1}G_{m}(x_{1})],
\]
\vspace{-6mm}

\item if $\hb t \cdot m > 0$, then 
\begin{align*}
u_{m}(x_{1},\xi,t) = \frac{1}{\xi + i}\biggl[&(\xi + i)D_{x_{1}}^{-1}F_{m}(x_{1})
+ \hb t\cdot D_{x_{1}}^{-1}G_{m}(x_{1}) \\
& - \mu\int_{-\infty}^{x_{1}}e^{-\mu(x_{1} - y_{1})}[(\xi + i)D_{y_{1}}^{-1}F_{m}(y_{1})
+ \hb t\cdot D_{y_{1}}^{-1}G_{m}(y_{1})]dy_{1}\biggr],
\end{align*}
\vspace{-6mm}

\item if $\hb t \cdot m < 0$, then 
\begin{align*}
u_{m}(x_{1},\xi,t) = \frac{1}{\xi + i}\biggl[&(\xi + i)D_{x_{1}}^{-1}F_{m}(x_{1})
+ \hb t\cdot D_{x_{1}}^{-1}G_{m}(x_{1}) \\
& + \mu\int_{x_{1}}^{\infty}e^{-\mu(x_{1} - y_{1})}[(\xi + i)D_{y_{1}}^{-1}F_{m}(y_{1})
+ \hb t\cdot D_{y_{1}}^{-1}G_{m}(y_{1})]dy_{1}\biggr].
\end{align*}
\vspace{-5mm}
\end{enumerate}

From Proposition \ref{propn: vanishing_moment} we know that 
$(\xi + i)D_{x_{1}}^{-1}F_{m} + \hb t\cdot D_{x_{1}}^{-1}G_{m}$ 
is a smooth compactly supported function. 
In particular, this implies that in the first case
the solution is compactly supported. In the second and third case,
this implies that the solutions vanish if $x_{1} < -R$ and $x_{1} > R$,  
respectively, and are decaying exponentials if $x_{1} > R$
and $x_{1} < -R$, respectively. Moreover, all the terms involved
($\xi + i$, $\mu$, $e^{-\mu(x_{1} - y_{1})}$, and 
$(\xi + i)D_{x_{1}}^{-1}F_{m} 
+ \hb t\cdot D_{x_{1}}^{-1}G_{m}$) are differentiable 
with respect to $x_{1}$ and $\xi$, and the possible different
cases depend only on $m$ and $t$ (namely on the sign of 
$\hb t \cdot m$), and not on $x_{1}$ or $\xi$. This implies that 
$u_{m}(\cdot,\cdot,t) \in C^{\infty}(\bR \times \bR)$ 
and $D_{x_{1}}^{\alpha}D_{\xi}^{\beta}u_{m}(\cdot,\xi,t) \in \cS(\bR)$.

\bigskip
To prove the estimates \eqref{eqn: delta_bar_derivatives} we 
succesively differentiate the equation \eqref{eqn: Fourier_ODE}
to show that $D_{x_{1}}D_{\xi}u_{m}(\cdot,\xi,t)$ (which we know
is in $\cS(\bR)$) solves certain ODE, and then use the estimates 
for the unique solution in $\cS(\bR)$ from Lemma \ref{lemma: ODE_vanishing}. 
Differentiating the equation, we see that if $\alpha \geq 0$ 
then $D_{x_{1}}^{\alpha}u_{m}$ solves the equation 
\begin{equation}
\label{eqn: equation_D_{x_{1}}^{alpha}}
(\xi + i)D_{x_{1}}[D_{x_{1}}^{\alpha}u_{m}] 
+ \hb t\cdot m [D_{x_{1}}^{\alpha}u_{m}] 
= (\xi + i)D_{x_{1}}^{\alpha}F_{m} 
+ \hb t\cdot D_{x_{1}}^{\alpha}G_{m}.
\end{equation}

By \eqref{eqn: conditions} and Lemma \ref{lemma: ODE_vanishing} 
we can bound
\begin{equation}
\label{eqn: bounds_D_{x_{1}}^{alpha}_L1}
\|D_{x_{1}}^{\alpha}u_{m}\|_{L^{1}} 
\cleq \lan \xi\ran \|D_{x_{1}}^{\alpha - 1}F_{m}\|_{L^{1}}
+ |\hb t| \|D_{x_{1}}^{\alpha - 1}G_{m}\|_{L^{1}} 
\cleq \lan \xi, \hb t \ran \|D_{x_{1}}^{\alpha - 1}V_{m}\|_{L^{1}},
\end{equation}
\begin{equation}
\label{eqn: bounds_D_{x_{1}}^{alpha}}
|D_{x_{1}}^{\alpha}u_{m}| 
\cleq \|D_{x_{1}}^{\alpha + 1} u_{m}\|_{L^{1}} 
\cleq \lan \xi, \hb t\ran\|D_{x_{1}}^{\alpha}V_{m}\|_{L^{1}}.
\end{equation}

Differentiating \eqref{eqn: equation_D_{x_{1}}^{alpha}} 
with respect to $\xi$ gives that 
$D_{x_{1}}^{\alpha}D_{\xi}u_{m}$ solves the equation
\begin{equation}
\label{eqn: equation_D_{x_{1}}^{alpha}D_{xi}}
(\xi + i)D_{x_{1}}[D_{x_{1}}^{\alpha}D_{\xi}u_{m}] 
+ \hb t\cdot m [D_{x_{1}}^{\alpha}D_{\xi}u_{m}] 
= \frac{1}{2\pi i}(D_{x_{1}}^{\alpha}F_{m} 
- D_{x_{1}}^{\alpha + 1}u_{m}).
\end{equation}

By \eqref{eqn: conditions}, Lemma \ref{lemma: ODE_vanishing}, and 
\eqref{eqn: bounds_D_{x_{1}}^{alpha}_L1} we can bound
\begin{equation}
\label{eqn: bounds_D_{x_{1}}^{alpha}D_{xi}_L1}
\|D_{x_{1}}^{\alpha}D_{\xi}u_{m}\|_{L^{1}}
\cleq \|D_{x_{1}}^{\alpha - 1}F_{m}\|_{L^{1}} 
+ \|D_{x_{1}}^{\alpha}u_{m}\|_{L^{1}}
\cleq \lan \xi, \hb t\ran \|D_{x_{1}}^{\alpha - 1}V_{m}\|_{L^{1}},
\end{equation}
\begin{equation}
\label{eqn: bounds_D_{x_{1}}^{alpha}D_{xi}}
|D_{x_{1}}^{\alpha}D_{\xi}u_{m}| 
\cleq \|D_{x_{1}}^{\alpha + 1}D_{\xi}u_{m}\|_{L^{1}}
\cleq \lan \xi, \hb t\ran \|D_{x_{1}}^{\alpha}V_{m}\|_{L^{1}}.
\end{equation}

By induction on $\beta \geq 2$, differentiating 
\eqref{eqn: equation_D_{x_{1}}^{alpha}D_{xi}}
with respect to $\xi$ gives that
$D_{x_{1}}^{\alpha}D_{\xi}^{\beta}u_{m}$ solves
the equation
\begin{equation}
\label{eqn: equation_D_{x_{1}}^{alpha}D_{xi}^{beta}}
(\xi + i)D_{x_{1}}[D_{x_{1}}^{\alpha}D_{\xi}^{\beta}u_{m}] 
+ \hb t\cdot m [D_{x_{1}}^{\alpha}D_{\xi}^{\beta}u_{m}] 
= \frac{-\beta}{2\pi i} D_{x_{1}}^{\alpha + 1}D_{\xi}^{\beta - 1}u_{m}.
\end{equation}

From \eqref{eqn: equation_D_{x_{1}}^{alpha}D_{xi}^{beta}},
Lemma \ref{lemma: ODE_vanishing}, and 
\eqref{eqn: bounds_D_{x_{1}}^{alpha}D_{xi}_L1} we obtain
\begin{equation}
\label{eqn: bounds_D_{x_{1}}^{alpha}D_{xi}^{beta}_L1}
\|D_{x_{1}}^{\alpha}D_{\xi}^{\beta}u_{m}\|_{L^{1}}
\cleq \|D_{x_{1}}^{\alpha}D_{\xi}^{\beta - 1}u_{m}\|_{L^{1}}
\cleq \ldots 
\cleq \|D_{x_{1}}^{\alpha}D_{\xi}u_{m}\|_{L^{1}}
\cleq \lan \xi, \hb t\ran \|D_{x_{1}}^{\alpha - 1}V_{m}\|_{L^{1}},
\end{equation}
\begin{equation}
\label{eqn: bounds_D_{x_{1}}^{alpha}D_{xi}^{beta}}
|D_{x_{1}}^{\alpha}D_{\xi}^{\beta}u_{m}| 
\cleq \|D_{x_{1}}^{\alpha + 1}D_{\xi}^{\beta}u_{m}\|_{L^{1}}
\cleq \lan \xi, \hb t\ran \|D_{x_{1}}^{\alpha}V_{m}\|_{L^{1}}.
\end{equation}

We have shown \eqref{eqn: delta_bar_derivatives} through
\eqref{eqn: bounds_D_{x_{1}}^{alpha}}, 
\eqref{eqn: bounds_D_{x_{1}}^{alpha}D_{xi}}, and
\eqref{eqn: bounds_D_{x_{1}}^{alpha}D_{xi}^{beta}}. 						
Now we prove \eqref{eqn: delta_bar_dual_toroidal}.
Let us denote 
$u_{m}^{j}(x_{1},\xi) := u_{m}(x_{1},\xi,t_{j})$ and
recall that
$D_{x_{1}}^{\alpha}D_{\xi}^{\beta}u_{m}^{j}(\cdot,\xi) \in \cS(\bR)$
for any $\alpha,\beta \geq 0$.
There are two cases to consider: when both products 
$\hb t_{j}\cdot m$ vanish, and when at least one of them 
does not vanish. In the first case we have that 
\[
u_{m}^{j}(x_{1},\xi)
= \frac{1}{\xi + i}[(\xi + i)D_{x_{1}}^{-1}F_{m}(x_{1})
+ \hb t_{j}\cdot D_{x_{1}}^{-1}G_{m}(x_{1})],
\]

and so 
\[
u_{m}^{1}(x_{1},\xi) - u_{m}^{2}(x_{1},\xi)
= \frac{\hb(t_{1} - t_{2})}{\xi + i}
\cdot D_{x_{1}}^{-1}G_{m}(x_{1}).
\]

It follows directly from this and the Fundamental Theorem
of Calculus that 
\begin{equation}
\label{eqn: bounds_difference_easy}
|D_{x_{1}}^{\alpha}D_{\xi}^{\beta}
(u_{m}^{1} - u_{m}^{2})|
\cleq \hb|t_{1} - t_{2}|\|D_{x_{1}}^{\alpha}V_{m}\|_{L^{1}}.
\end{equation}

Suppose now that $\hb t_{1} \cdot m \neq 0$. Substracting the 
equations \eqref{eqn: equation_D_{x_{1}}^{alpha}} for 
$D_{x_{1}}^{\alpha}u_{m}^{j}$ we obtain that
\begin{equation}
\label{eqn: equation_difference_D_{x_{1}}^{alpha}}
(\xi + i)D_{x_{1}}[D_{x_{1}}^{\alpha}(u_{m}^{1} - u_{m}^{2})]
+ \hb t_{2}\cdot m[D_{x_{1}}^{\alpha}(u_{m}^{1} - u_{m}^{2})]
= \hb(t_{1} - t_{2})\cdot D_{x_{1}}^{\alpha}G_{m} 
- \hb(t_{1} - t_{2})\cdot mD_{x_{1}}^{\alpha}u_{m}^{1}.
\end{equation}

By \eqref{eqn: conditions} and Lemma \ref{lemma: ODE_vanishing} 
we have that the condition $\hb t_{1}\cdot m \neq 0$ implies 
that $u_{m}^{1} = D_{x_{1}}w$ for some 
$w \in \cS(\bR)$. Therefore, the difference 
$D_{x_{1}}^{\alpha}(u_{m}^{1} - u_{m}^{2})$
(which we know is in $\cS(\bR)$) is the unique solution
in $\cS(\bR)$ to a differential equation, 
\eqref{eqn: equation_difference_D_{x_{1}}^{alpha}},
as in the setting of Lemma \ref{lemma: ODE_vanishing}. 
By \eqref{eqn: conditions}, 
\eqref{eqn: equation_difference_D_{x_{1}}^{alpha}}, 
Lemma \ref{lemma: ODE_vanishing}, and
\eqref{eqn: bounds_D_{x_{1}}^{alpha}_L1}
we obtain
\begin{align}
\label{eqn: bounds_difference_D_{x_{1}}^{alpha}}
|D_{x_{1}}^{\alpha}(u_{m}^{1} - u_{m}^{2})| 
& \cleq \|D_{x_{1}}^{\alpha + 1}(u_{m}^{1} - u_{m}^{2})\|_{L^{1}} \nonumber \\
& \cleq \hb |t_{1} - t_{2}|(\|D_{x_{1}}^{\alpha}G_{m}\|_{L^{1}} 
+ |m|\|D_{x_{1}}^{\alpha}u_{m}^{1}\|_{L^{1}}) \nonumber \\
& \cleq \hb |t_{1} - t_{2}|(\|D_{x_{1}}^{\alpha}V_{m}\|_{L^{1}} 
+ \lan \xi,\hb t_{1}\ran|m|\|D_{x_{1}}^{\alpha-1}V_{m}\|_{L^{1}}).
\end{align}

\begin{remark}
This step shows why it is useful to have at disposal the 
$L^{1} \ra L^{\infty}$ estimates and not the 
$L^{\infty}\ra L^{\infty}$ estimates alone. 
\end{remark}

Similarly, substracting the 
equations \eqref{eqn: equation_D_{x_{1}}^{alpha}D_{xi}} 
for $D_{x_{1}}^{\alpha}D_{\xi}u_{m}^{j}$ we obtain that
\begin{align*}
(\xi + i)D_{x_{1}}[D_{x_{1}}^{\alpha}D_{\xi}(u_{m}^{1} - u_{m}^{2})]
& + \hb t_{2}\cdot m[D_{x_{1}}^{\alpha}D_{\xi}(u_{m}^{1} - u_{m}^{2})] \\
& = -\frac{1}{2\pi i}D_{x_{1}}^{\alpha + 1}(u_{m}^{1} - u_{m}^{2})
- \hb(t_{1} - t_{2})\cdot mD_{x_{1}}^{\alpha}D_{\xi}u_{m}^{1}.
\end{align*}

For $\beta \geq 2$ the equation takes the same form. Indeed,
substracting the equations \eqref{eqn: equation_D_{x_{1}}^{alpha}D_{xi}^{beta}} 
for $D_{x_{1}}^{\alpha}D_{\xi}^{\beta}u_{m}^{j}$ we obtain that
\begin{align}
\label{eqn: equation_difference_D_{x_{1}}^{alpha}D_{xi}^{beta}}
(\xi + i)D_{x_{1}}[D_{x_{1}}^{\alpha}D_{\xi}^{\beta}(u_{m}^{1} - u_{m}^{2})]
& + \hb t_{2}\cdot m[D_{x_{1}}^{\alpha}D_{\xi}^{\beta}(u_{m}^{1} - u_{m}^{2})] \nonumber \\
& = -\frac{\beta}{2\pi i}D_{x_{1}}^{\alpha + 1}
D_{\xi}^{\beta - 1}(u_{m}^{1} - u_{m}^{2})
- \hb (t_{1} - t_{2})\cdot mD_{x_{1}}^{\alpha}D_{\xi}^{\beta}u_{m}^{1}.
\end{align}

Since $u_{m}^{1} = D_{x_{1}}w$, then we are in the setting of 
Lemma \ref{lemma: ODE_vanishing} as before.
By \eqref{eqn: equation_difference_D_{x_{1}}^{alpha}D_{xi}^{beta}},
Lemma \ref{lemma: ODE_vanishing}, 
\eqref{eqn: bounds_D_{x_{1}}^{alpha}D_{xi}_L1}, and 
\eqref{eqn: bounds_D_{x_{1}}^{alpha}D_{xi}^{beta}_L1}
we can bound 
\begin{align*}
\|D_{x_{1}}^{\alpha + 1}D_{\xi}^{\beta}(u_{m}^{1} - u_{m}^{2})\|_{L^{1}}
& \cleq \|D_{x_{1}}^{\alpha + 1}
D_{\xi}^{\beta - 1}(u_{m}^{1} - u_{m}^{2})\|_{L^{1}}
+ \hb |t_{1} - t_{2}||m|\|D_{x_{1}}^{\alpha}D_{\xi}^{\beta}u_{m}^{1}\|_{L^{1}} \\
& \cleq \|D_{x_{1}}^{\alpha + 1}
D_{\xi}^{\beta - 1}(u_{m}^{1} - u_{m}^{2})\|_{L^{1}}
+ \hb |t_{1} - t_{2}|\lan \xi, \hb t_{1}\ran|m| \|D_{x_{1}}^{\alpha-1}V_{m}\|_{L^{1}}.
\end{align*}

Iterating this and using \eqref{eqn: bounds_difference_D_{x_{1}}^{alpha}}
we obtain that 
\begin{align*}
\|D_{x_{1}}^{\alpha + 1}D_{\xi}^{\beta}(u_{m}^{1} - u_{m}^{2})\|_{L^{1}}
\cleq \ldots 
& \cleq \|D_{x_{1}}^{\alpha + 1}(u_{m}^{1} - u_{m}^{2})\|_{L^{1}}
+ \hb|t_{1} - t_{2}|\lan \xi, \hb t_{1}\ran|m|
\|D_{x_{1}}^{\alpha-1}V_{m}\|_{L^{1}} \\
&\cleq  \hb |t_{1} - t_{2}|(\|D_{x_{1}}^{\alpha}V_{m}\|_{L^{1}} 
+ \lan \xi,\hb t_{1}\ran|m|\|D_{x_{1}}^{\alpha-1}V_{m}\|_{L^{1}}).
\end{align*}

From this and the Fundamental Theorem of Calculus we 
conclude that 
\begin{equation}
\label{eqn: bounds_difference_D_{x_{1}}^{alpha}D_{xi}^{beta}}
|D_{x_{1}}^{\alpha}D_{\xi}^{\beta}(u_{m}^{1} - u_{m}^{2})|
\cleq \|D_{x_{1}}^{\alpha + 1}D_{\xi}^{\beta}(u_{m}^{1} - u_{m}^{2})\|_{L^{1}}
\cleq \hb |t_{1} - t_{2}|(\|D_{x_{1}}^{\alpha}V_{m}\|_{L^{1}} 
+ \lan \xi,\hb t_{1}\ran|m|\|D_{x_{1}}^{\alpha-1}V_{m}\|_{L^{1}}).
\end{equation}

We have shown \eqref{eqn: delta_bar_dual_toroidal} through
\eqref{eqn: bounds_difference_easy},
\eqref{eqn: bounds_difference_D_{x_{1}}^{alpha}}, and
\eqref{eqn: bounds_difference_D_{x_{1}}^{alpha}D_{xi}^{beta}}. 				
Finally, we prove the decay estimates \eqref{eqn: delta_bar_decay}
for the solution. In the case
$\hb t\cdot m = 0$ there is nothing to prove, as the solution is 
compactly supported on $|x_{1}| \leq R$. The other two cases
are analogous, so we only consider the case $\hb t\cdot m > 0$,
so that $\Re(\mu) > 0$.
For this one we know the solution vanishes if $x_{1} < -R$,
so we only have to deal with $x_{1} > R$. We can
rewrite the solution for $x_{1} > R$ as 
\[
u_{m}(x_{1},\xi,t) = \frac{-\mu e^{-\mu(x_{1} - R)}}{\xi + i}
\int_{-R}^{R}e^{-\mu(R - y_{1})}[(\xi + i)D_{y_{1}}^{-1}F_{m}(y_{1})
+ \hb t\cdot D_{y_{1}}^{-1}G_{m}(y_{1})]dy_{1}.
\]

Integrating by parts we obtain that
\begin{align*}
D_{x_{1}}^{\alpha}u_{m} 
& = \biggl(\frac{-\mu}{2\pi i}\biggr)^{\alpha}u_{m} \\
& = \frac{-\mu e^{-\mu(x_{1} - R)}}{\xi + i}
\int_{-R}^{R}[(-D_{y_{1}})^{\alpha}
e^{-\mu(R - y_{1})}][(\xi + i)D_{y_{1}}^{-1}F_{m}(y_{1})
+ \hb t\cdot D_{y_{1}}^{-1}G_{m}(y_{1})]dy_{1} \\
& = \frac{-\mu e^{-\mu(x_{1} - R)}}{\xi + i}
\int_{-R}^{R}e^{-\mu(R - y_{1})}[(\xi + i)D_{y_{1}}^{\alpha - 1}F_{m}(y_{1})
+ \hb t\cdot D_{y_{1}}^{\alpha - 1}G_{m}(y_{1})]dy_{1} =: \varphi \psi.
\end{align*}

We will prove that $\varphi$ and its derivatives (with respect to $\xi$)
have the required decay, while $\psi$ and its derivatives remain 
bounded. Let us observe that $\mu(R - y_{1}) = i\eta/(\xi + i)$ with
$\eta \geq 0$ for $y_{1} \in [-R,R]$, so that we are in the setting of 
Lemma \ref{lemma: exponential}. It follows from this that 
$|D_{\xi}^{\beta}\psi| 
\cleq \lan \xi, \hb t\ran \|D_{x_{1}}^{\alpha - 1}V_{m}\|_{L^{1}}$,
where we allow the constant of the inequality to depend on $\beta$.
By a similar induction as in the proof of Lemma \ref{lemma: exponential},
we obtain that 
\[
D_{\xi}^{\beta}\biggl(\frac{\mu e^{-\mu s}}{\xi + i}\biggr) 
= \frac{\mu e^{-\mu s}Q_{\beta}(\mu s)}{(\xi + i)^{\beta + 1}},
\]

where $Q_{\beta}$ is a polynomial of degree $\beta$ with coefficients 
depending only on $\beta$. Multiplying by $s$ we obtain 
\[
sD_{\xi}^{\beta}\biggl(\frac{\mu e^{-\mu s}}{\xi + i}\biggr) 
= \frac{e^{-\mu s}Q_{\beta}^{*}(\mu s)}{(\xi + i)^{\beta + 1}},
\]

where $Q_{\beta}^{*}(z) = zQ_{\beta}(z)$ is a polynomial of degree
$\beta + 1$. Again, as $|\xi + i| \geq 1$, there exists a 
polynomial $\wilde{Q}_{\beta}^{*}$ of degree $\beta + 1$ such that 
\[
\biggl|\frac{Q_{\beta}^{*}(\mu s)}{(\xi + i)^{\beta + 1}}\biggr| 
\leq \wilde{Q}_{\beta}^{*}\biggl(\frac{|\mu s|}{\lan \xi\ran}\biggr).
\]

We have that $\mu (x_{1} - R) = i\eta/(\xi + i)$ with $\eta \geq 0$ 
for $x_{1} \geq R$, so that we are in the setting of 
Lemma \ref{lemma: exponential}. It follows from the previous inequalities
that
\[
|(x_{1} - R)D_{\xi}^{\beta}\varphi|
\leq \biggl|e^{-i\eta/(\xi + i)}\wilde{Q}_{\beta}^{*}
\bigg(\frac{\eta}{\lan \xi\ran^{2}}\biggr)\biggl| 
\cleq 1,
\]

where we allow the constant in the last inequality to depend on $\beta$.
In particular, for $x_{1} \geq 2R$ we have that $x_{1} - R \geq x_{1}/2$,
so that the previous inequality gives $|D_{\xi}^{\beta}\varphi| \cleq 1/|x_{1}|$.
Combining the bounds for $\psi$ and $\varphi$ gives the decay
estimate that we wanted.
\end{proof}


\begin{remark}
It may not be relevant for this particular problem, but the condition
$m \in \bZ^{d}$ does not seem to intervene in the proof of the result.
\end{remark}


\begin{remark}
It does not seem relevant, but the constants in the inequalities are 
independent of $R$. It only plays a role when we need that 
$|x_{1}| \geq 2R$ to have the decay.
\end{remark}


Let us discuss briefly why the vanishing moment conditions 
were important in the previous proof. First, they appear
in proving the differences estimates. 
If $t_{1}, t_{2} \in \bR^{d}$ are such that 
$\hb t_{1} \cdot m > 0 > \hb t_{2} \cdot m$, then for $x_{1} > R$
we would have $u_{m}^{2}(x_{1}) = 0$ and 
\[
u_{m}^{1}(x_{1}) = \frac{2\pi i e^{-\mu_{1}(x_{1} - R)}}{\xi + i}
\int_{-R}^{R}e^{-\mu_{1}(R - y_{1})}[(\xi + i)F_{m}(y_{1}) 
+ \hb t_{1}\cdot G_{m}(y_{1})]dy_{1}.
\]

It seems that there is no way to estimate $u_{m}^{1} - u_{m}^{2}$ 
in terms of the difference $\hb |t_{1} - t_{2}|$. For instance, letting 
$\mu_{1} \ra 0$, we obtain that the difference would be approximately 
\[
\frac{2\pi i}{\xi + i}\int_{-R}^{R}[(\xi + i)F_{m}(y_{1}) 
+ \hb t_{1}\cdot G_{m}(y_{1})]dy_{1},
\]

which suggests the need of the vanishing moment condition.
They also show up with a crucial improvement for the decay 
estimates. In the case $\hb t \cdot m > 0$ and $x_{1} > R$, without the
vanishing moment condition, we would only have the exponential decay
\[
u_{m}(x_{1}) = \frac{2\pi i e^{-\mu(x_{1} - R)}}{\xi + i}
\int_{-R}^{R}e^{-\mu(R - y_{1})}[(\xi + i)F_{m}(y_{1}) 
+ \hb t\cdot G_{m}(y_{1})]dy_{1}.
\]

It may happen that $\mu$ is small, for instance if $t\cdot m = 1$, making 
the exponential decay very slow. In this case, the best estimates we seem
to obtain are
\[
|u_{m}| \cleq \frac{\lan \xi\ran}{\mu|x_{1} - R|},
\]

with $1/\mu$ potentially being as big as $1/\hb$. In our proof, what allows us 
to get better estimates is the presence of the factor $\mu$ in front of the 
integral. This factor comes from integrating by parts using the vanishing
moment condition. 

\bigskip
We rewrite the previous estimates to depend on $V$ and no longer on 
the Fourier coefficients $V_{m}$.


\begin{corollary}
\label{cor: delta_bar_Fourier_ODE_II}
Let $V = (F,G)$ satisfy \eqref{eqn: conditions}, and let $u_{m}$ be 
the solution from Theorem \ref{thm: delta_bar_Fourier_ODE}.
Then, for any $M \geq 0$ we have 
\begin{equation}
\label{eqn: delta_bar_derivatives_II}
|D_{x_{1}}^{\alpha}D_{\xi}^{\beta}u_{m}| 
\cleq \lan \xi,\hb t\ran\lan m\ran^{-2M}\|V\|_{W^{\alpha + 2M, 1}(T)},
\end{equation}
\begin{equation}
\label{eqn: delta_bar_dual_toroidal_II}
|D_{x_{1}}^{\alpha}D_{\xi}^{\beta}
(u_{m}(\cdot,t_{1}) - u_{m}(\cdot,t_{2}))|
\cleq \hb |t_{1} - t_{2}|\lan \xi, \hb t_{1}, \hb t_{2}\ran\lan m\ran^{-2M + 1}
\|V\|_{W^{\alpha + 2M, 1}(T)}.
\end{equation}

Moreover, for $|x_{1}| \geq 2R$ we have the linear decay bound
\begin{equation}
\label{eqn: delta_bar_decay_II}
|D_{x_{1}}^{\alpha}D_{\xi}^{\beta}u_{m}(x_{1})|
\cleq \frac{\lan \xi, \hb t\ran}{|x_{1}|}\lan m\ran^{-2M}
\|V\|_{W^{\alpha + 2M,1}(T)}.
\end{equation}

The constants in the inequalities may depend on 
$\alpha$, $\beta$, $M$, $d$, $R$, but
are independent of $\hb$, $m$, $\xi$, $t$.
\end{corollary}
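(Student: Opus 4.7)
The plan is simply to combine the bounds of Theorem \ref{thm: delta_bar_Fourier_ODE} with the polynomial decay of Fourier coefficients for smooth functions on $\bT^{d}$ from Proposition \ref{propn: Fourier_cylinder}. The key observation is that differentiation in $x_{1}$ commutes with taking the $m$-th Fourier coefficient in $x'$, so that $D_{x_{1}}^{\alpha} V_{m} = (D_{x_{1}}^{\alpha} V)_{m}$. By Proposition \ref{propn: Fourier_cylinder}, for any $M \geq 0$,
\[
\|D_{x_{1}}^{\alpha}V_{m}\|_{L^{1}(\bR)}
= \|(D_{x_{1}}^{\alpha}V)_{m}\|_{L^{1}(\bR)}
\cleq \lan m\ran^{-2M}\|D_{x_{1}}^{\alpha}V\|_{W^{2M,1}(T)}
\leq \lan m\ran^{-2M}\|V\|_{W^{\alpha + 2M,1}(T)},
\]
and analogously with $\alpha$ replaced by $\alpha - 1$.

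First I would insert this estimate into \eqref{eqn: delta_bar_derivatives}, giving \eqref{eqn: delta_bar_derivatives_II} immediately. For the difference estimate \eqref{eqn: delta_bar_dual_toroidal_II}, I would insert the bound into both terms on the right-hand side of \eqref{eqn: delta_bar_dual_toroidal}: the first term contributes $\lan m\ran^{-2M}\|V\|_{W^{\alpha + 2M, 1}(T)}$, and the second term contributes $|m|\lan m\ran^{-2M}\|V\|_{W^{\alpha - 1 + 2M, 1}(T)}$. Since $|m| \leq \lan m\ran$ and $\|V\|_{W^{\alpha - 1 + 2M, 1}(T)} \leq \|V\|_{W^{\alpha + 2M, 1}(T)}$, and using that $\lan \xi, \hb t_{1}, \hb t_{2}\ran \geq 1$ to absorb the first term into the second, the combined bound is
\[
\cleq \hb |t_{1} - t_{2}|\lan \xi, \hb t_{1}, \hb t_{2}\ran \lan m\ran^{-2M + 1}\|V\|_{W^{\alpha + 2M, 1}(T)},
\]
which is precisely \eqref{eqn: delta_bar_dual_toroidal_II}. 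The decay estimate \eqref{eqn: delta_bar_decay_II} follows from the same substitution applied to \eqref{eqn: delta_bar_decay}, noting once more that $\|V\|_{W^{\alpha - 1 + 2M, 1}(T)} \leq \|V\|_{W^{\alpha + 2M, 1}(T)}$ to match the Sobolev index in the statement.

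There is no real obstacle here; all the substantive work was done in Theorem \ref{thm: delta_bar_Fourier_ODE} and in the standard decay estimate for Fourier coefficients of smooth functions. The only minor bookkeeping is making sure that the exponents of the Sobolev norm and of $\lan m \ran$ line up as stated, which amounts to the elementary inequalities $|m| \leq \lan m\ran$ and monotonicity of $W^{s,1}$ norms in $s$.
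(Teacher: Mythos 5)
Your proposal takes essentially the same route as the paper: apply the polynomial decay of Fourier coefficients from Proposition \ref{propn: Fourier_cylinder} to the $L^{1}$ norms of $D_{x_{1}}^{\alpha}V_{m}$ and $D_{x_{1}}^{\alpha - 1}V_{m}$ appearing in Theorem \ref{thm: delta_bar_Fourier_ODE}, and then do the bookkeeping. For $\alpha \geq 1$ your argument goes through. There is, however, a gap at $\alpha = 0$. Your chain
\[
\|D_{x_{1}}^{\alpha - 1}V_{m}\|_{L^{1}}
\cleq \lan m\ran^{-2M}\|D_{x_{1}}^{\alpha - 1}V\|_{W^{2M,1}(T)}
\leq \lan m\ran^{-2M}\|V\|_{W^{\alpha - 1 + 2M,1}(T)}
\]
relies on the second inequality, which needs $D_{x_{1}}^{\alpha - 1}V$ to be an honest derivative of $V$. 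For $\alpha = 0$ the quantity $D_{x_{1}}^{-1}V$ is the $x_{1}$-antiderivative (well-defined thanks to the vanishing-moment condition), and $\|D_{x_{1}}^{-1}V\|_{W^{2M,1}(T)}$ contains the summand $\|D_{x_{1}}^{-1}V\|_{L^{1}(T)}$, which is not a summand of $\|V\|_{W^{2M - 1,1}(T)}$ --- and the latter is not even defined in the paper's convention when $M = 0$. This case does arise: both \eqref{eqn: delta_bar_dual_toroidal} and \eqref{eqn: delta_bar_decay} have $\|D_{x_{1}}^{\alpha - 1}V_{m}\|_{L^{1}}$ on the right for all $\alpha \geq 0$, so \eqref{eqn: delta_bar_dual_toroidal_II} and \eqref{eqn: delta_bar_decay_II} require $\alpha = 0$ to be covered.

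The paper resolves this by first raising the derivative order at the coefficient level: since $V$ has compact $x_{1}$-support and vanishing moment, the fundamental theorem of calculus gives
\[
\|D_{x_{1}}^{\alpha - 1}V_{m}\|_{L^{1}}
\cleq \|D_{x_{1}}^{\alpha - 1}V_{m}\|_{L^{\infty}}
\cleq \|D_{x_{1}}^{\alpha}V_{m}\|_{L^{1}},
\]
with constant depending on $R$, and only then applies Proposition \ref{propn: Fourier_cylinder} with the index $\alpha$. This handles all $\alpha \geq 0$ uniformly. It is also the step where the $R$-dependence stated in the corollary actually enters; your route for $\alpha \geq 1$ involves no $R$ at all, which is itself a signal that the $\alpha = 0$ case has been overlooked.
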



\begin{proof}
We use that $V$ is compactly supported and the 
Fundamental Theorem of Calculus to get that 
\[
\|D_{x_{1}}^{\alpha - 1}V_{m}\|_{L^{1}}
\cleq \|D_{x_{1}}^{\alpha - 1}V_{m}\|_{L^{\infty}}
\cleq \|D_{x_{1}}^{\alpha}V_{m}\|_{L^{1}},
\]

where we allow the constant of the inequality to depend 
on $R$. For $\alpha \geq 0$ and any $M \geq 0$, we have 
from Proposition \ref{propn: Fourier_cylinder} that
\[
\|D_{x_{1}}^{\alpha}V_{m}\|_{L^{1}(\bR)}
\cleq \lan m\ran^{-2M}\|D_{x_{1}}^{\alpha}V\|_{W^{2M,1}(T)}
\cleq \lan m\ran^{-2M}\|V\|_{W^{\alpha + 2M,1}(T)},
\]

where we allow the constant of the inequality to depend on 
$M$. Then the conclusion follows from 
Theorem \ref{thm: delta_bar_Fourier_ODE}.
\end{proof}


\begin{proof}[Proof of Theorem \ref{thm: delta_bar_equation}]
Let $u(\cdot,\xi,t;\hb) \in C^{\infty}(T)$ solve the equation
\eqref{eqn: delta_bar_equation} and satisfy the decay
condition $\|u(x_{1},\cdot,\xi,t)\|_{L^{\infty}(\bT^{d})} \ra 0$
as $x_{1} \ra \pm \infty$. Then its Fourier coefficients 
must solve the ODEs \eqref{eqn: Fourier_ODE}
and satisfy the decay conditions as 
$x_{1} \ra \pm\infty$. Under \eqref{eqn: conditions}, we know from 
Theorem \ref{thm: delta_bar_Fourier_ODE} the  existence and 
uniqueness of such solutions. Let $\{u_{m}\}$ be such
and define 
\[
u(x_{1},x',\xi,t;\hb) := \sum_{m \in \bZ^{d}}
u_{m}(x_{1},\xi,t;\hb)e_{m}(x').
\]

The control on derivatives of $u_{m}$ from 
Corollary \ref{cor: delta_bar_Fourier_ODE_II} implies that 
$u(\cdot, t) \in C^{\infty}(\bR \times \bT^{d} \times \bR)$ and
it solves \eqref{eqn: delta_bar_equation}. Moreover, we can 
bound
\[
|D_{x_{1}}^{\alpha}D_{x'}^{\beta}D_{\xi}^{\gamma}u|
\leq \sum_{m \in \bZ^{d}}|m|^{\beta}
|D_{x_{1}}^{\alpha}D_{\xi}^{\gamma}u_{m}|
\cleq \sum_{m \in \bZ^{d}}|m|^{\beta}\lan m\ran^{-2M} 
\lan \xi, \hb t\ran
\cleq \lan \xi, \hb t\ran,
\]

by taking $M = M(\beta,d)$ sufficiently large. The constants in the
inequalities may depend on the admissible quantities, but are independent
of $\hb$, $\xi$, $t$. In the same way we prove the difference estimate 
from \eqref{eqn: solution_bounds} and the decay estimate 
\eqref{eqn: solution_decay}; finally, the existence of a solution 
satisfying the decay condition follows from 
\eqref{eqn: solution_decay}.
\end{proof}


\subsection{Explicit definition of the symbol and properties}


The purpose of this section is to prove the conjugation from
Theorem \ref{thm: conjugation}. Recall that we have
\[
(\Delta_{\hb} + 2\hb V_{\hb})A = A\Delta_{\hb} +
2\hb\Op_{\hb}((\xi + i)D_{x_{1}}a + \hb t\cdot D_{x'}a 
+ (\xi + i)Fa + \hb t\cdot Ga)
+ \hb^{2}\Op_{\hb}(D^{2}a + 2V\cdot Da).
\]

Let us define $a = e^{-u\phi}$, where $u(x_{1},x',\xi,t;\hb)$ is
the solution from Theorem \ref{thm: delta_bar_equation} to 
\[
(\xi + i)D_{x_{1}}u + \hb t\cdot D_{x'}u = (\xi + i)F + \hb t\cdot G,
\]

and $\phi(x_{1},\xi,t; \hb)$ is defined as follows. Let $\psi(s)$ 
be a (nonnegative, even, decreasing) smooth function such 
that $\psi(s) \equiv 1$ for $|s| \leq 1$ and $\psi(s) \equiv 0$ 
for $|s| \geq 2$. Define 
\[
\phi_{0}(\xi,t; \hb) := \psi(\xi)\psi(4(|\hb t| - 1)), \ \ 
\phi(x_{1},\xi,t; \hb) := \psi(\hb^{\theta}x_{1})\phi_{0}(\xi,t; \hb),
\]

with $\theta > 0$ to be defined later.


\begin{remark}
Note that $\phi_{0}$ and $\phi$ vanish if $|\hb t| \leq 1/2$, in particular 
they vanish for $t$ near the origin and so these are smooth in all 
their variables. Moreover, it is a special zero order symbol.
\end{remark}


\begin{remark}
The cutoff in $x_{1}$ is unnecessary for the properties of $A$ and $B$,
but it is actually needed for the properties of $R$.
\end{remark}


The estimates from Theorem \ref{thm: delta_bar_equation} give that $u\phi$ 
is a special zero order symbol. From Corollary \ref{cor: invertibility} we obtain that, 
for small $\hb$, the operator $A := \Op_{\hb}(a)$  is invertible and its inverse 
is uniformly bounded (in $\hb$) in $H^{s}_{\pm \delta, \hb}(T)$. We also have
\vspace{-2mm}
\begin{align*}
(\xi + i)D_{x_{1}}a & + \hb t\cdot D_{x'}a + (\xi + i)Fa + \hb t\cdot Ga \\
& = a[-(\xi + i)D_{x_{1}}(u\phi) - \hb t\cdot D_{x'}(u\phi) + (\xi + i)F + \hb t\cdot G] \\
& = a(1 - \phi)[(\xi + i)F + \hb t\cdot G] - (\xi + i)auD_{x_{1}}\phi \\ 
& = a(1 - \phi)[(\xi + i)F + \hb t\cdot G] 
- \frac{1}{2\pi i}\hb^{\theta}(\xi + i)au\psi'(\hb^{\theta}x_{1})\phi_{0}.
\end{align*}

Note that $(1 - \phi)$ vanishes if $|x_{1}| \leq \hb^{-\theta}$, $|\xi| \leq 1$ and 
$||\hb t| - 1| \leq 1/4$. Let us rewrite 
\[
1 - \phi = (1 - \phi_{0}) + \phi_{0}\cdot (1 - \psi(\hb^{\theta}x_{1})),
\]

and observe that $1 - \psi(\hb^{\theta}x_{1})$ vanishes for $|x_{1}| \leq \hb^{-\theta}$. 
Because $F$ and $G$ are supported on $|x_{1}| \leq R$, then, for
small enough $\hb$, say $\hb^{-\theta} \geq R$, we have 
\[
a(1 - \phi)[(\xi + i)F + \hb t\cdot G] = a(1 - \phi_{0})[(\xi + i)F + \hb t\cdot G].
\]

The function $(1 - \phi_{0})$ vanishes if $|\xi| \leq 1$ and $||\hb t| - 1| \leq 1/4$,
and outside of this set the operator $\Delta_{\hb}$ is elliptic, i.e. its symbol 
$(\xi + i)^{2} + |\hb t|^{2}$ does not vanish. 


\begin{proposition}
\label{propn: ellipticity}
Outside of the set $\{|\xi| \leq 1\} \cap \{||\hb t| - 1| \leq 1/4\}$, we can bound 
\[
|(\xi + i)^{2} + |\hb t|^{2}| \cgeq \lan \xi,\hb t\ran^{2}
\]
\end{proposition}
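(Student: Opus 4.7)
The plan is to square the desired inequality, apply an algebraic identity that isolates the vanishing locus of the symbol, and then split into two cases. Separating real and imaginary parts,
\[
(\xi+i)^{2} + |\hb t|^{2} = (\xi^{2} + |\hb t|^{2} - 1) + 2i\xi,
\]
so $|(\xi+i)^{2} + |\hb t|^{2}|^{2} = (\xi^{2} + |\hb t|^{2} - 1)^{2} + 4\xi^{2}$. Using the identity $(A-1)^{2} = (A+1)^{2} - 4A$ with $A = \xi^{2} + |\hb t|^{2}$, this rewrites as $\lan \xi, \hb t\ran^{4} - 4|\hb t|^{2}$. Hence the squared form of the inequality reduces to producing a constant $c \in (0,1)$ such that $4|\hb t|^{2} \leq (1 - c)\lan \xi, \hb t\ran^{4}$ uniformly on the complement of the bad set.

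Next, cover the complement by the two overlapping regions $\{|\xi| \geq 1\}$ and $\{||\hb t| - 1| \geq 1/4\}$. On the first, $\lan \xi, \hb t\ran^{2} \geq 2 + |\hb t|^{2}$, and the inequality $4u \leq \tfrac{1}{2}(2+u)^{2}$ for $u \geq 0$ (which factors as $(u-2)^{2} \geq 0$) gives the bound with $c_{1} = 1/2$. On the second, writing $\sigma = |\hb t|$ and using $\lan \xi, \hb t\ran^{2} \geq 1 + \sigma^{2}$, the task reduces to bounding $(1+\sigma^{2})^{2}/(4\sigma^{2}) = h(\sigma)^{2}$ from below, where $h(\sigma) = (\sigma + \sigma^{-1})/2 \geq 1$ with equality only at $\sigma = 1$. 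Since $h$ is monotone on each side of $1$, the restriction $|\sigma - 1| \geq 1/4$ forces $h(\sigma)^{2} \geq \min\{h(3/4)^{2}, h(5/4)^{2}\} = (41/40)^{2}$, yielding an admissible $c_{2} = 81/1681$. Taking $c = \min(c_{1}, c_{2})$ and a square root gives the claim.

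The main point to watch out for is simply the case analysis itself; no single step is difficult, since $|(\xi+i)^{2} + |\hb t|^{2}|^{2}$ vanishes only at $(\xi, |\hb t|) = (0,1)$, which lies in the interior of the excluded set, and the two regions above correspond to the two independent ways one may leave this critical point.
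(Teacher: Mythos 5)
Your proof is correct, and it takes a genuinely different route from the paper. You square the modulus and invoke the identity $(A-1)^2 = (A+1)^2 - 4A$ to get the exact formula $|(\xi+i)^2 + |\hb t|^2|^2 = \lan \xi,\hb t\ran^4 - 4|\hb t|^2$, reducing the claim to $4|\hb t|^2 \leq (1-c)\lan \xi,\hb t\ran^4$, which you then verify on the two half-spaces $\{|\xi|\geq 1\}$ and $\{||\hb t|-1|\geq 1/4\}$ covering the complement. The paper instead lower bounds $|(\xi+i)^2+|\hb t|^2|$ directly by $\xi^2 + ||\hb t|^2 - 1|$ (expanding the square under the root and discarding a nonnegative cross-term), immediately giving $|s|\geq\xi^2$, and then derives $|s|\cgeq 1$ and $|s|\cgeq|\hb t|^2$ by two further small case analyses, summing the three bounds at the end. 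Your version is tighter in one sense: the algebraic identity shows the inequality is actually a statement about how far $4|\hb t|^2$ can eat into $\lan\xi,\hb t\ran^4$, and you extract an explicit constant (roughly $\sqrt{81/1681}$), while the paper is content with a qualitative $\cgeq$ and a slightly longer chain of reductions. One cosmetic point: in your second case you express $(1+\sigma^2)^2/(4\sigma^2)$ as $h(\sigma)^2$ with $h(\sigma)=(\sigma+\sigma^{-1})/2$, which is undefined at $\sigma=0$; the endpoint $\sigma=0$ does fall in your second region but is trivially fine since the left side $4\sigma^2$ vanishes, and it would be worth a half-sentence to dismiss it.
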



\begin{proof}
Let $s := (\xi + i)^{2} + |\hb t|^{2}$. Let us observe that
\[
|s| = ([\xi^{2} + |\hb t|^{2} - 1]^{2} + 4\xi^{2})^{1/2}
= (\xi^{4} + 2\xi^{2}(|\hb t|^{2} + 1) + (|\hb t|^{2} - 1)^{2})^{1/2} 
\geq \xi^{2} + ||\hb t|^{2} - 1|.
\]

This proves that $|s| \geq \xi^{2}$. If $|\xi| \geq 1$, this
also proves $|s| \geq 1$. If $|\xi| \leq 1$, then we must have
$||\hb t| - 1| \geq 1/4$, and this gives
$|s| \geq ||\hb t| + 1|/4 \geq 1/4$. We have shown
that $|s| \cgeq 1$. Thus, all that
remains to prove is that $|s| \cgeq |\hb t|^{2}$. If $|\hb t| \leq 2$,
then this follows from before. If $|\hb t| \geq 2$, then 
$||\hb t|^{2} - 1| \geq |\hb t|^{2}/2$, and the conclusion follows. 
\end{proof}


Let us consider the function 
\[
r(x_{1},x',\xi,t; \hb) 
:= \frac{1 - \phi_{0}}{(\xi + i)^{2} + |\hb t|^{2}}[(\xi + i)F + \hb t\cdot G]
= \frac{(1 - \phi_{0})\lan \xi, \hb t\ran^{2}}{(\xi + i)^{2} + |\hb t|^{2}}
\cdot \frac{(\xi + i)F + \hb t \cdot G}{\lan \xi, \hb t\ran^{2}}
=: q_{1}\cdot q_{2}.
\]

The functions $\xi/\lan \xi,\hb t\ran^{2}$ and $\hb t/\lan \xi,\hb t\ran^{2}$
are special zero order symbols, and so $q_{2}$ is a special zero order
symbol. We know that 
$q_{1}$ is supported outside of $\{|\xi|\leq 1\}\cap \{||\hb t| - 1| \leq 1/4\}$
and $1 - \phi_{0}$ is a special zero order symbol. By 
induction we can show that 
\[
D_{\xi}^{\alpha}D_{t}^{\beta}\frac{\lan \xi,\hb t\ran^{2}}
{(\xi + i)^{2} + |\hb t|^{2}}
= \frac{\hb^{\beta}P_{\alpha,\beta}(\xi,\hb t)}
{((\xi + i)^{2} + |\hb t|^{2})^{\alpha + \beta + 1}},
\]

for some polynomial $P_{\alpha,\beta}$ of degree at most 
$\alpha + \beta + 2$. Outside of $\{|\xi|\leq 1\}\cap \{||\hb t| - 1| \leq 1/4\}$, 
from Proposition \ref{propn: ellipticity}, we can bound 
\[
\biggl|D_{\xi}^{\alpha}D_{t}^{\beta}\frac{\lan \xi,\hb t\ran^{2}}
{(\xi + i)^{2} + |\hb t|^{2}}\biggr|
= \biggl|\frac{\hb^{\beta}P_{\alpha,\beta}(\xi, \hb t)}
{((\xi + i)^{2} + |\hb t|^{2})^{\alpha + \beta + 1}}\biggr|
\cleq \frac{\hb^{\beta}\lan \xi,\hb t\ran^{\alpha + \beta + 2}}
{\lan \xi,\hb t\ran^{2(\alpha + \beta + 1)}} \leq \hb^{\beta}.
\]

This proves that $q_{1}$ is a special zero order symbol, and by 
Proposition \ref{propn: operations_special_symbols} so is $r$.
Let us define the symbol $b$ by
\[
b := a + 2\hb a\frac{1 - \phi_{0}}{(\xi + i)^{2} + |\hb t|^{2}}
[(\xi + i)F + \hb t\cdot G] 
= a(1 + 2\hb r).
\]

Since $r$ is a special zero order symbol, by 
Proposition \ref{propn: operations_special_symbols} we have that, 
for small enough $\hb$, $v := \log(1 + 2\hb r)$ is also a 
special zero order symbol. Therefore
$b = a(1 + 2\hb r) = e^{-u\phi + v}$ and $-u\phi + v$ a special 
zero order symbol. From Corollary \ref{cor: invertibility}, we conclude that 
for small $\hb$, the operator $B := \Op_{\hb}(b)$ is invertible and its inverse 
is uniformly bounded (in $\hb$) in $H^{s}_{\pm \delta, \hb}(T)$.  

\bigskip
Finally, we are left to consider the expression
\[
\hb^{2}(D^{2}a + 2V\cdot Da) 
- \frac{2}{2\pi i}\hb^{1 + \theta}(\xi + i)au\psi'(\hb^{\theta}x_{1})\phi_{0} 
=: r_{1} + r_{2}.
\]

In order to prove that the pseudodifferential operator $R$,
from Theorem \ref{thm: conjugation}, is bounded from 
$L^{2}_{-\delta}(T)$ to $L^{2}_{\delta}(T)$, we will show 
that $\lan x_{1}\ran^{2\delta}r_{i}$ are zero order 
symbols. Recall that $u\phi$ is supported on 
$|x_{1}| \leq 2\hb^{-\theta}$ and so $a = e^{-u\phi} \equiv 1$
if $|x_{1}| \geq 2\hb^{-\theta}$. Therefore, $r_{1}$ is supported on 
$|x_{1}| \leq 2\hb^{-\theta}$, and we have the bounds 
\[
|D_{x_{1}}^{\alpha}D_{x'}^{\beta}D_{\xi}^{\gamma}r_{1}| 
\cleq \hb^{2}, \ \
|D_{x_{1}}^{\alpha}D_{x'}^{\beta}D_{\xi}^{\gamma}
(\lan x_{1}\ran^{2\delta}r_{1})| 
\cleq \hb^{2 - 2\delta \theta}.
\]

The term $\psi'(\hb^{\theta}x_{1})$ gives that
$r_{2}$ is supported on 
$\hb^{-\theta} \leq |x_{1}| \leq 2\hb^{-\theta}$.
The decay estimate from Theorem \ref{thm: delta_bar_equation} 
gives that
$|D_{x_{1}}^{\alpha}D_{x'}^{\beta}D_{\xi}^{\gamma}u| 
\cleq \hb^{\theta}\lan \xi, \hb t\ran$. Moreover, $a$ is a 
zero order symbol and $(\xi+ i)\psi'(\hb^{\theta}x_{1})\phi_{0}$ 
is a zero order symbol supported on 
$\{|\xi| \leq 2\} \cap \{||\hb t| - 1| \leq 1/2\}$. We obtain that
\[
|D_{x_{1}}^{\alpha}D_{x'}^{\beta}D_{\xi}^{\gamma}r_{2}| 
\cleq \hb^{1 + 2\theta}, \ \
|D_{x_{1}}^{\alpha}D_{x'}^{\beta}D_{\xi}^{\gamma}
(\lan x_{1}\ran^{2\delta}r_{2})| 
\cleq \hb^{1 + 2\theta - 2\delta \theta}.
\]

We can ensure that $2 - 2\delta\theta, 1 + 2\theta - 2\delta\theta > 1$ 
by taking $\theta = 1/2$ and $\delta < 1$. If we let 
$\vep = 1 - \delta > 0$, then we obtain the conjugation 
identity
\[
(\Delta_{\hb} + 2\hb V_{\hb})A = B\Delta_{\hb} + \hb^{1 + \vep}R.
\]


\begin{remark} 
The restriction $\delta < 1$ does not appear in \cite{Sa1}.
This may be due that in our case we look for the operator $R$ to be bounded
from $L^{2}_{-\delta}(T)$ to $L^{2}_{\delta}(T)$, so we need a gain of
a factor $\lan x_{1}\ran^{2\delta}$. In \cite{Sa1}, it is only needed from 
$L^{2}_{\delta}(\bR^{d})$ to $L^{2}_{\delta + 1}(\bR^{d})$, so just a
factor $\lan x\ran$ is required. 
\end{remark}


\subsection{Proof of the Carleman estimate}


Let us rewrite the statements of Theorem \ref{thm: carleman_free} and 
Theorem \ref{thm: carleman_full} in semiclassical notation.


\begin{theorem}
\label{thm: carleman_free_semiclassical}
Let $\delta > 1/2$. There exists
$\hb_{0} \leq 1$ such that if $0 < |\hb| \leq \hb_{0}$ and 
$\hb^{-2} \notin \spec(-\Delta_{g_{0}})$, then for any 
$f \in L^{2}_{\delta}(T)$ there exists a unique 
$u \in H^{1}_{-\delta,\hb}(T)$ which solves 
\[
e^{2\pi x_{1}/\hb}(\hb D)^{2}e^{-2\pi x_{1}/\hb}u = \hb^{2}f.
\]
 
Moreover, this solution is in $H^{2}_{-\delta,\hb}(T)$ and satisfies 
the estimates 
\[
\|u\|_{H^{2}_{-\delta,\hb}(T)} \cleq \hb\|f\|_{L^{2}_{\delta}(T)},
\]

with the constant of the inequality independent of $\hb$. 
\end{theorem}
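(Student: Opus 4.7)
The plan is to derive Theorem \ref{thm: carleman_free_semiclassical} directly from Theorem \ref{thm: carleman_free} by the substitution $\hb = 1/\tau$. The content is unchanged; only the normalization is rephrased in semiclassical language.

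First I would observe the algebraic identity $(\hb D)^{2} = \hb^{2}D^{2}$, so that
\[
e^{2\pi x_{1}/\hb}(\hb D)^{2}e^{-2\pi x_{1}/\hb}u = \hb^{2}\,e^{2\pi \tau x_{1}}D^{2}e^{-2\pi \tau x_{1}}u,
\]
with $\tau = 1/\hb$. Therefore the equation $e^{2\pi x_{1}/\hb}(\hb D)^{2}e^{-2\pi x_{1}/\hb}u = \hb^{2}f$ is, after dividing both sides by $\hb^{2}$, exactly the equation $e^{2\pi\tau x_{1}}D^{2}e^{-2\pi\tau x_{1}}u = f$ appearing in Theorem \ref{thm: carleman_free}. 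The spectral condition $\hb^{-2} \notin \spec(-\Delta_{g_{0}})$ is identical to $\tau^{2} \notin \spec(-\Delta_{g_{0}})$, and $|\tau| \geq \tau_{0}$ corresponds to $0 < \hb \leq \hb_{0} := 1/\tau_{0}$.

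Next I would invoke Theorem \ref{thm: carleman_free} to obtain existence and uniqueness of $u \in H^{1}_{-\delta}(T)$, together with the fact that $u \in H^{2}_{-\delta}(T)$ and the three estimates $\|u\|_{H^{s}_{-\delta}(T)} \cleq |\tau|^{s - 1}\|f\|_{L^{2}_{\delta}(T)} = \hb^{1 - s}\|f\|_{L^{2}_{\delta}(T)}$ for $s = 0,1,2$. This gives uniqueness in $H^{1}_{-\delta,\hb}(T)$ as well, since that space coincides with $H^{1}_{-\delta}(T)$ as a set (the norms differ only by $\hb$-dependent factors).

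Finally, I would recover the semiclassical estimate by unpacking the definition of $\|\cdot\|_{H^{2}_{-\delta,\hb}(T)}$:
\[
\|u\|_{H^{2}_{-\delta,\hb}(T)} \simeq \|u\|_{L^{2}_{-\delta}(T)} + \hb\|Du\|_{L^{2}_{-\delta}(T)} + \hb^{2}\|D^{2}u\|_{L^{2}_{-\delta}(T)}.
\]
Applying the three estimates from Theorem \ref{thm: carleman_free} term by term,
\[
\|u\|_{L^{2}_{-\delta}} \cleq \hb\|f\|_{L^{2}_{\delta}}, \quad
\hb\|Du\|_{L^{2}_{-\delta}} \cleq \hb\|f\|_{L^{2}_{\delta}}, \quad
\hb^{2}\|D^{2}u\|_{L^{2}_{-\delta}} \cleq \hb^{2}\cdot \hb^{-1}\|f\|_{L^{2}_{\delta}} = \hb\|f\|_{L^{2}_{\delta}},
\]
and summing yields $\|u\|_{H^{2}_{-\delta,\hb}(T)} \cleq \hb\|f\|_{L^{2}_{\delta}(T)}$, as desired. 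There is no real obstacle here: once the equivalence of equations and of the semiclassical versus standard weighted Sobolev norms is made explicit, the statement is a direct consequence of the cited result.
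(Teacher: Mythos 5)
Your proposal is correct and is exactly the route the paper takes: the paper presents Theorem~\ref{thm: carleman_free_semiclassical} with no separate proof, describing it simply as a restatement of Theorem~\ref{thm: carleman_free} in semiclassical notation, and your bookkeeping (the identity $(\hb D)^2 = \hb^2 D^2$, the substitution $\tau = 1/\hb$, and the term-by-term translation $\|u\|_{H^s_{-\delta}} \cleq \hb^{1-s}\|f\|_{L^2_\delta}$ for $s=0,1,2$ into the single semiclassical bound) makes precise what the paper leaves implicit. The only cosmetic slip is writing $0 < \hb \leq \hb_0$ rather than $0 < |\hb| \leq \hb_0$ in the correspondence $|\tau| \geq \tau_0 \Leftrightarrow 0 < |\hb| \leq 1/\tau_0$, but the argument applies verbatim to both signs.
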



This theorem allows to define
$G_{\hb}: L^{2}_{\delta}(T) \ra H^{2}_{-\delta,\hb}(T)$ by $G_{\hb}f := u$, 
so that $\Delta_{\hb}G_{\hb} = \hb^{2}I$ on $L^{2}_{\delta}(T)$.


\begin{theorem}
\label{thm: carleman_full_semiclassical}
Let $1/2 < \delta < 1$ and let $V,W$ satisfy \eqref{eqn: conditions}. 
There exists $\hb_{0} \geq 1$ such that if $0 < |\hb| \leq \hb_{0}$ and
$\hb^{-2} \notin \spec(-\Delta_{g_{0}})$, then for any 
$f \in L^{2}_{\delta}(T)$ there exists a unique 
$u \in H^{2}_{-\delta,\hb}(T)$ which solves 
\[
e^{2\pi x_{1}/\hb}(\hb^{2}H_{V,W})e^{-2\pi x_{1}/\hb}u = \hb^{2}f.
\]
 
Moreover, this solution satisfies the estimates 
\[
\|u\|_{H^{2}_{-\delta,\hb}(T)} \cleq \hb\|f\|_{L^{2}_{\delta}(T)},
\]

with the constant of the inequality independent of $\hb$. 
\end{theorem}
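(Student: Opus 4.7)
The plan is a perturbative argument combining the conjugation identity of Theorem \ref{thm: conjugation} with the Carleman estimate for the free Laplacian (Theorem \ref{thm: carleman_free_semiclassical}). Write
\[
P_\hb := e^{2\pi x_1/\hb}(\hb^2 H_{V,W})e^{-2\pi x_1/\hb} = \Delta_\hb + 2\hb V_\hb + \hb^2 Q, \qquad Q := V^2 + D\cdot V + W.
\]
Theorem \ref{thm: conjugation} then gives $P_\hb A = B\Delta_\hb + \hb^{1+\vep} R + \hb^2 Q A$. I will seek $u$ in the factored form $u = Av$ with $v = G_\hb w$, where $G_\hb : L^2_\delta(T) \to H^2_{-\delta,\hb}(T)$ is the solution operator of Theorem \ref{thm: carleman_free_semiclassical} (so the spectral hypothesis $\hb^{-2} \notin \spec(-\Delta_{g_0})$ is used here) and $w \in L^2_\delta(T)$ is to be found. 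Using $\Delta_\hb G_\hb = \hb^2 I$, the equation $P_\hb u = \hb^2 f$ is equivalent, after dividing by $\hb^2$ and applying $B^{-1}$, to the fixed-point problem
\[
(I + K)w = B^{-1}f, \qquad K := \hb^{\vep-1}\,B^{-1} R G_\hb + B^{-1} Q A G_\hb,
\]
posed in $L^2_\delta(T)$.

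The main analytic step is to show $\|K\|_{L^2_\delta \to L^2_\delta} \cleq \hb^\vep + \hb$. For the first summand, chain $\|G_\hb\|_{L^2_\delta \to L^2_{-\delta}} \cleq \|G_\hb\|_{L^2_\delta \to H^2_{-\delta,\hb}} \cleq \hb$ with the uniform bound $R : L^2_{-\delta} \to L^2_\delta$ from Theorem \ref{thm: conjugation} and $B^{-1}$ uniformly bounded on $L^2_\delta$; the prefactor $\hb^{\vep-1}$ then yields size $\hb^\vep$. For the second summand, the hypotheses \eqref{eqn: conditions} force $V^2 + D\cdot V$ to be compactly supported in $x_1$ and $\lan x_1\ran^{2\delta} W \in L^\infty(T)$, so $\lan x_1\ran^{2\delta} Q \in L^\infty(T)$ and multiplication by $Q$ is bounded $L^2_{-\delta}(T) \to L^2_\delta(T)$; combined with $A$ uniformly bounded on $L^2_{-\delta}$ and the $\hb$-gain of $G_\hb$, this term is of size $\hb$.

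Given this estimate, choose $\hb_0$ so small that $\|K\| \leq 1/2$; then $I + K$ is invertible on $L^2_\delta(T)$ by Neumann series, producing the unique $w := (I+K)^{-1} B^{-1}f$ and hence $u := A G_\hb w \in H^2_{-\delta,\hb}(T)$. Unwinding the equivalences gives $P_\hb u = \hb^2 f$, and the chain of uniform bounds on $A$, $G_\hb$, and $(I+K)^{-1}$ yields $\|u\|_{H^2_{-\delta,\hb}(T)} \cleq \hb\|f\|_{L^2_\delta(T)}$. For uniqueness, if $P_\hb u = 0$ with $u \in H^2_{-\delta,\hb}(T)$, invertibility of $A$ gives $v := A^{-1} u \in H^2_{-\delta,\hb}(T)$ with $B\Delta_\hb v = -\hb^{1+\vep}Rv - \hb^2 QAv \in L^2_\delta(T)$; the uniqueness assertion in Theorem \ref{thm: carleman_free_semiclassical} then forces $v = G_\hb w$ for $w := \hb^{-2}\Delta_\hb v \in L^2_\delta(T)$, and this $w$ satisfies $(I + K)w = 0$, whence $w = 0$ and $u = 0$. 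The main obstacle is the $K$-estimate, and in particular the $\hb^\vep$ gain for the $R$-term; this is exactly what demands $\vep > 0$, i.e.\ $\delta < 1$, and explains why the hypothesis $1/2 < \delta < 1$ is stricter than the $\delta > 1/2$ of the free case.
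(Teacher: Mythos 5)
Your proposal is correct and follows essentially the same route as the paper: conjugate by $A$ via Theorem \ref{thm: conjugation}, reduce to a fixed-point problem that is a small perturbation of the invertible operator $B$ (the paper factors $B + C = (I + CB^{-1})B$, you factor $B(I + B^{-1}C)$, which is an immaterial choice of side), and invoke $G_\hb$ from the free estimate both to produce the solution and to identify the residual in the uniqueness step. Your observation that $\lan x_1\ran^{2\delta} Q \in L^{\infty}(T)$ is in fact the cleaner justification for $Q : L^2_{-\delta} \to L^2_\delta$ boundedness, since $W$ itself need not be compactly supported under \eqref{eqn: conditions}, and your uniqueness argument, which recycles the invertibility of $I+K$ rather than re-deriving a contraction estimate for $v$, is a minor streamlining of the paper's version.
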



\begin{proof}
We prove this only for $\hb > 0$, as the other case is analogous.
Let us write 
\begin{equation}
\label{eqn: carleman_equation}
(\Delta_{\hb} + 2\hb V_{\hb} + \hb^{2}\wilde{W})u 
= e^{2\pi x_{1}/\hb}(\hb^{2}H_{V,W})e^{-2\pi x_{1}/\hb}u = \hb^{2}f,
\end{equation}

where $\Delta_{\hb} = e^{2\pi x_{1}/\hb}(\hb D)^{2}e^{-2\pi x_{1}/\hb}$,  
$V_{\hb} = e^{2\pi x_{1}/\hb}[V\cdot (\hb D)]e^{-2\pi x_{1}/\hb}$,
$\wilde{W} := V^{2} + D\cdot V + W$.
To avoid repetition throughout the proof we recall from
Theorem \ref{thm: conjugation} that $A$ and 
$B$ are uniformly bounded invertible operators on 
$H^{s}_{\pm \delta,\hb}(T)$ with uniformly bounded inverses, 
$R : L^{2}_{-\delta}(T) \ra L^{2}_{\delta}(T)$
is bounded. Also, 
$\wilde{W} : L^{2}_{-\delta}(T) \ra L^{2}_{\delta}(T)$ is bounded,
because it is bounded and compactly supported.

\bigskip
We start showing the existence and the estimates for the solution. 
We look for a solution of the form $u = AG_{\hb}g \in L^{2}_{-\delta}(T)$, 
with $g \in L^{2}_{\delta}(T)$, and use Theorem \ref{thm: conjugation}
to rewrite the expression as
\[
(\Delta_{\hb} + 2\hb V_{\hb} + \hb^{2}\wilde{W})u
= (\Delta_{\hb} + 2\hb V_{\hb} + \hb^{2}\wilde{W})AG_{\hb}g
= \hb^{2}(B + \hb^{-1 + \vep}RG_{\hb} + \wilde{W}AG_{\hb})g.
\]

Let $C := \hb^{-1 + \vep}RG_{\hb} + \wilde{W}AG_{\hb}$.
We claim that $C:L^{2}_{\delta}(T) \ra L^{2}_{\delta}(T)$ is a small 
perturbation of the invertible operator $B$, so that $B + C$ is also 
invertible. Using the boundedness properties of $G_{\hb}$, from 
Theorem \ref{thm: carleman_free_semiclassical} we obtain that 
\[
\|\hb^{-1 + \vep}RG_{\hb}\|_{L^{2}_{\delta}(T) \ra L^{2}_{\delta}(T)} 
\cleq \hb^{\vep}, \ \ 
\|\wilde{W}AG_{\hb}\|_{L^{2}_{\delta}(T) \ra L^{2}_{\delta}(T)} \cleq \hb.
\]

We observe that $B + C = (I + CB^{-1})B$,
from where we conclude that $B + C$ is invertible in $L^{2}_{\delta}(T)$
as claimed, and its inverse has uniformly bounded norms. Thus if 
we define $g := (B + C)^{-1}f \in L^{2}_{\delta}(T)$, then we 
obtain that 
\[
u := AG_{\hb}g = AG_{\hb}(B + C)^{-1}f
\]

solves the equation \eqref{eqn: carleman_equation}. The estimates
for $G_{\hb}$ from Theorem \ref{thm: carleman_free_semiclassical} give
\[
\|u\|_{H^{2}_{-\delta,\hb}(T)}
= \|AG_{\hb}(B + C)^{-1}f\|_{H^{2}_{-\delta,\hb}(T)} 
\cleq \hb \|f\|_{L^{2}_{\delta}(T)},
\]

as we wanted. Now we address the uniqueness. 
Assume that $u \in H^{2}_{-\delta, \hb}(T)$ solves 
\[
(\Delta_{\hb} + 2\hb V_{\hb} + \hb^{2}\wilde{W})u = 0.
\]

Let $v := A^{-1}u \in H^{2}_{-\delta,\hb}(T)$, so that $v$ satisfies 
$(B\Delta_{\hb} + \hb^{1 + \vep}R + \hb^{2}\wilde{W}A)v = 0$,
or equivalently
\[
\Delta_{\hb}v = -\hb^{2}B^{-1}(\hb^{-1 + \vep}R+ \wilde{W}A)v.
\]

The right-hand side is in $L^{2}_{\delta}(T)$ and
\[ 
\|\hb^{-1 + \vep}B^{-1}Rv\|_{L^{2}_{\delta}(T)} 
\cleq \hb^{-1 + \vep}\|v\|_{L^{2}_{-\delta}(T)}, \ \
\|B^{-1}\wilde{W}Av\|_{L^{2}_{\delta}(T)}
\cleq \|v\|_{L^{2}_{-\delta}(T)}.
\]

The uniqueness from Theorem \ref{thm: carleman_free_semiclassical} 
implies that $v = -G_{\hb}B^{-1}(\hb^{-1 + \vep}R+ \wilde{W}A)v$.
Using the estimates for $G_{\hb}$, from 
Theorem \ref{thm: carleman_free_semiclassical}, and the bound from
above we obtain that
\[
\|v\|_{L^{2}_{-\delta}(T)} 
\cleq \hb\cdot \hb^{-1 + \vep}\|v\|_{L^{2}_{-\delta}(T)}
= \hb^{\vep}\|v\|_{L^{2}_{-\delta}(T)}.
\]

Taking $\hb$ small enough yields that $v \equiv 0$, from where
we conclude that $u \equiv 0$.
\end{proof}


\begin{remark}
The uniqueness does not follow directly from the equation
and perturbative arguments: if we rewrite the equation as
\[
\Delta_{\hb}u = -\hb^{2}(2\hb^{-1}V_{\hb} + \wilde{W})u,
\]

then the right-hand side is in $L^{2}_{\delta}(T)$, so that 
$u = - G_{\hb}(2\hb^{-1}V_{\hb} + \wilde{W})u$, but we obtain 
no contradiction as we can only say 
$\|G_{\hb}(2\hb^{-1}V_{\hb}u)\|_{L^{2}_{-\delta}(T)} \cleq \|u\|_{L^{2}_{-\delta}(T)}$.
\end{remark}


\begin{remark}
Let $f \in L^{2}_{\delta}(T)$ and let $u \in H^{2}_{-\delta,\hb}(T)$ 
be the unique solution to the equation 
$(\Delta_{\hb} + 2\hb V_{\hb} + \hb^{2}\wilde{W})u = \hb^{2}f$. 
We can rewrite this as 
\[
\Delta_{\hb}u = \hb^{2}f - (2\hb V_{\hb} + \wilde{W})u,
\]

and observe that $(2\hb V_{\hb} + \hb^{2}\wilde{W})u \in L^{2}_{\delta}(T)$. Therefore, 
$u = G_{\hb}\wilde{g}$, for some $\wilde{g} \in L^{2}_{\delta}(T)$. In the proof of existence 
of the solution, we showed that $u$ takes the form $AG_{\hb}g$ for some $g \in L^{2}_{\delta}(T)$. 
This and the last observation yield that the operator $A$ maps the subspace 
$G_{\hb}L^{2}_{\delta}(T) \sse L^{2}_{-\delta}(T)$ to itself.
\end{remark}


\section{Equivalent formulations and boundary characterization}


As mentioned in the introduction, in order to reconstruct the electromagnetic 
parameters we are interested in constructing many solutions to the equation 
$H_{V,W}u = 0$. The result from Theorem \ref{thm: carleman_full} can be used to
construct a unique solution that ``behaves like'' a harmonic function at infinity. 
Indeed, let $h \in H^{2}_{loc}(T)$ be harmonic and let us look for a solution of 
the form $u = h + e^{-2\pi \tau x_{1}}r$; such $u$ solves $H_{V,W}u = 0$ if and 
only if the correction term $r$ solves the equation
\[
e^{2\pi\tau x_{1}}H_{V,W}e^{-2\pi\tau x_{1}}r = -e^{2\pi\tau x_{1}}Xh,
\]

where $X := H_{V,W} - D^{2} = 2V\cdot D + (V^{2} + D\cdot V + W)$ is a 
first order differential operator supported in $M$. The conditions 
\eqref{eqn: conditions_reconstruction} imply that $Xh \in L^{2}_{c}(T)$, 
and so $e^{2\pi\tau x_{1}}Xh \in L^{2}_{\delta}(T)$. From 
Theorem \ref{thm: carleman_full}
we obtain a unique solution $r \in H^{2}_{-\delta}(T)$, and so there is a unique 
solution to $H_{V,W}u = 0$ which ``behaves like'' the harmonic function. As 
has been usual, we call these functions the complex geometrical optics (CGO) 
solutions. 

\bigskip
The purpose of this section is to show that the boundary values of the CGO
can be characterized as the unique solution to a certain boundary integral
equation. The passage from the uniqueness problem at the boundary to a uniqueness 
problem at infinity was first explicitly noticed by Nachman in \cite{N}, and 
has become standard since then; for instance, see \cite{Sa1} or
\cite{KSaU2}. The uniqueness of this corrected solution is crucial 
for our problem; the lack of such is what prevents the local Carleman estimate 
for the magnetic Schr$\ddot{\textrm{o}}$dinger operator in \cite{DKSaU} from 
being useful in the reconstruction procedure. 
 
\bigskip
In this section we follow closely the presentation from \cite{Sa1}, as the 
operators are translation invariant, and \cite{KSaU2}. 
However, we have to proceed slightly different, as $0 \in \spec(-\Delta_{g_{0}})$ and 
the Laplacian in $T$ does not have a bounded inverse, i.e. for $f \in L^{2}(T)$ 
(or $f \in H^{-1}(T)$) there may not be $u \in H^{2}(T)$ (or $u \in H^{1}(T)$) 
such that $D^{2}u = f$.


\subsection{Green functions, operators, and layer potentials}


\subsubsection{$\tau$-dependent Green function and operator}


The differential operator $\Delta_{\tau} = D_{x_{1}}^{2} + 2i\tau D_{x_{1}} 
- \tau^{2} + D_{x'}^{2}$ has constant coefficients, in particular it is 
translation invariant, and so it is its right inverse $G_{\tau}$ from 
Theorem \ref{thm: carleman_free}. Since 
$G_{\tau} : L^{2}_{\delta}(T) \ra L^{2}_{-\delta}(T)$ is bounded, 
there exists a tempered distribution $g_{\tau} \in \cS'(T)$ such that 
$G_{\tau}f = g_{\tau}\ast f$ for Schwartz functions $f \in \cS(T)$,
where the convolution is considered over the whole cylinder.
The purpose of this section is to understand the properties of 
$g_{\tau}$ and other related distributions.
We have that $\Delta_{\tau}g_{\tau} = \delta_{T}(0)$, 
so the Fourier expansion of this distribution is given by
\begin{equation}
\label{eqn: Green_Fourier}
\what{g_{\tau,k}}(\xi) = \frac{1}{\xi^{2} + 2i\tau \xi - \tau^{2} + |k|^{2}} 
= \frac{1}{(\xi + i\tau)^{2} + |k|^{2}},
\end{equation}

and therefore
\begin{equation}
\label{eqn: Green_Fourier_integral}
g_{\tau,k}(x_{1}) 
= \int_{\bR}\frac{e^{2\pi ix_{1}\xi}}{(\xi + i\tau)^{2} + |k|^{2}}d\xi.
\end{equation}

Let us note that this integral converges absolutely, as the denominator is quadratic 
in $\xi$ and never vanishes because $\tau^{2} \notin \spec(-\Delta_{g_{0}})$. 
These integrals can be computed explicitly as follows.


\begin{proposition}
\label{propn: Green_Fourier_explicit}
The Fourier coefficients $g_{\tau,k}(x_{1})$ 
of the distribution $g_{\tau}$ are given by
\[
g_{\tau,k}(x_{1}) 
= \pi e^{2\pi \tau x_{1}}\left\{
\begin{array}{ll}
-2\pi (|x_{1}| - \sgn(\tau) x_{1})
& \textrm{ if } \ k = 0, \\
(e^{-2\pi |k||x_{1}|} - e^{-2\pi|k|\sgn(\tau)x_{1}})/|k|
& \textrm{ if } \ 0 < |k| < |\tau|, \\
e^{-2\pi |k||x_{1}|}/|k|
& \textrm{ if } \ |k| > |\tau|.
\end{array} 
\right.
\]

The distribution $g_{\tau}$ is actually smooth away from 
$(0,0) \in T$. For any $\vep > 0$ and
$|x_{1}| > \vep$, the function $g_{\tau}(x_{1},x')$ and
all its derivatives are uniformly bounded, i.e. we have
$|D_{x_{1}}^{\alpha}D_{x'}^{\beta}g_{\tau}(x_{1},x')| 
\leq C$, for some constant $C = C(\alpha,\beta,\tau,\vep)$.
\end{proposition}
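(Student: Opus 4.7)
The plan is to compute the Fourier integral in \eqref{eqn: Green_Fourier_integral} directly by the residue theorem. The integrand $[(\xi + i\tau)^{2} + |k|^{2}]^{-1}$ has poles at $\xi_{\pm} = i(-\tau \pm |k|)$, which are simple when $k \neq 0$ and collapse to a double pole at $\xi = -i\tau$ when $k = 0$. The hypothesis $\tau^{2} \notin \spec(-\Delta_{g_{0}})$ says $|k| \neq |\tau|$ for every $k \in \bZ^{d}$, so no pole lies on the real axis and the integral converges absolutely since the integrand decays like $|\xi|^{-2}$. I can therefore close the contour in the upper half plane when $x_{1} > 0$ and in the lower half plane when $x_{1} < 0$, and evaluate the result as $\pm 2\pi i$ times the sum of residues enclosed.

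The computation then splits into cases according to which poles lie in the chosen half plane. For $k = 0$, the double pole $\xi = -i\tau$ lies in the lower (resp.\ upper) half plane when $\tau > 0$ (resp.\ $\tau < 0$), has residue $2\pi i x_{1}e^{2\pi \tau x_{1}}$, and contributes only when $\sgn(x_{1}) = -\sgn(\tau)$; assembling the two nonzero subcases yields $-2\pi^{2}e^{2\pi \tau x_{1}}(|x_{1}| - \sgn(\tau)x_{1})$. For $0 < |k| < |\tau|$, both simple poles have imaginary part of sign $-\sgn(\tau)$, so they either both contribute or neither does; summing the two residues $\pm e^{2\pi \tau x_{1}}e^{\mp 2\pi|k|x_{1}}/(2i|k|)$ together with the $\pm 2\pi i$ factor delivers $(\pi/|k|)e^{2\pi \tau x_{1}}(e^{-2\pi|k||x_{1}|} - e^{-2\pi|k|\sgn(\tau)x_{1}})$. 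For $|k| > |\tau|$, one pole sits in each half plane, so exactly one residue is picked up regardless of $\sgn(x_{1})$, and this produces the single decaying exponential $(\pi/|k|)e^{2\pi \tau x_{1}}e^{-2\pi|k||x_{1}|}$.

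For the last assertion, smoothness of $g_{\tau}$ on $T \sm \{(0,0)\}$ follows from ellipticity of $\Delta_{\tau}$ together with $\Delta_{\tau}g_{\tau} = \delta_{T}(0)$, or directly from the explicit Fourier expansion just obtained. To establish the uniform bounds on derivatives for $|x_{1}| \geq \vep$, I will use that the high-frequency coefficients are $g_{\tau,k}(x_{1}) = \pi e^{2\pi \tau x_{1}} e^{-2\pi|k||x_{1}|}/|k|$, so termwise differentiation of the series $\sum_{k} g_{\tau,k}(x_{1})e_{k}(x')$ produces polynomial factors in $|k|$ that are dominated by $e^{-2\pi|k|\vep}$ on this region; combined with the finitely many smooth low-frequency terms this gives uniform convergence of the series with all its derivatives, whence the stated bounds. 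The only mildly delicate step is the bookkeeping in case $k = 0$, where four sign subcases in $(x_{1}, \tau)$ must be collapsed into the single compact expression $(|x_{1}| - \sgn(\tau)x_{1})$; once that organizational point is handled, everything else reduces to mechanical contour integration.
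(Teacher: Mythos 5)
Your argument is correct and recovers the paper's formula. The route differs in a couple of tactical ways from the paper's proof: you close the contour with a large semicircle in the half plane dictated by $\sgn(x_{1})$ and compute residues at the (possibly double) poles $\xi = i(-\tau \pm |k|)$ directly, whereas the paper uses a rectangular contour between the lines $\Im z = 0$ and $\Im z = \tau$ to relate the shifted integral to the classical transform $\int_{\bR}\frac{e^{2\pi i x_{1}\xi}}{\xi^{2} + \lambda^{2}}d\xi = \frac{\pi}{\lambda}e^{-2\pi\lambda|x_{1}|}$, and then disposes of $k = 0$ by letting $\lambda \to 0^{+}$ rather than evaluating the double-pole residue. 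Your computation is more ``from scratch'' and carries more sign bookkeeping (which poles lie in which half plane, for each sign of $x_{1}$ and $\tau$), while the paper's rectangular-contour device collapses most of that bookkeeping into a single comparison with a known integral; both are legitimate and lead to the same three-case answer. For the second claim, both you and the paper invoke Weyl's lemma for the elliptic operator $D^{2}$ applied to $e^{-2\pi\tau x_{1}}g_{\tau}$, and both extract the uniform bounds on $|x_{1}| > \vep$ from the explicit decaying-exponential Fourier series on each side; your decay rate $e^{-2\pi|k|\vep}$ is off by a harmless constant factor $e^{2\pi|\tau|\vep}$ (the true exponent is $-2\pi(|k| - |\tau|)\vep$ for $|k| > |\tau|$), which does not affect the conclusion.
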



\begin{proof}
Instead of \eqref{eqn: Green_Fourier_integral}, let us consider the expression
\begin{equation}
\label{eqn: lambda_integral}
g_{\tau}(x_{1},\lambda) := \int_{\bR}\frac{e^{2\pi ix_{1}\xi}}
{(\xi + i\tau)^{2} + \lambda^{2}}d\xi,
\end{equation}

with $\lambda \geq 0$ and $\lambda \neq |\tau|$, so that the denominator
does not vanish. We start with the following two observations: first,
$g_{-\tau}(-x_{1},\lambda) = g_{\tau}(x_{1},\lambda)$, so it 
suffices to consider the case $\tau > 0$; second, for fixed $\tau$ and $x_{1}$, 
the function $g_{\tau}(x_{1},\cdot)$ is continuous, so the case 
$\lambda = 0$ follows from the case $\lambda > 0$. 
We would like to relate \eqref{eqn: lambda_integral} to the classical integral
\[
\int_{\bR}\frac{e^{2\pi ix_{1}\xi}}{\xi^{2} + \lambda^{2}}d\xi
= \frac{\pi}{\lambda}e^{-2\pi\lambda|x_{1}|},
\]

which can be obtained by direct computation and the inversion formula.
Consider the meromorphic function 
$f(z) = e^{2\pi ix_{1}z}/(z^{2} + \lambda^{2})$,
with simple poles at $z = \pm \lambda i$, and the rectangular contour 
bounded by the lines 
$\Re(z) = \pm L$, for some large $L > 0$, and $\Im(z) = 0$, 
$\Im(z) = \tau$. The pole $-\lambda i$ is outside of this domain since
we are assuming $\tau > 0$. The pole $\lambda i$ is inside this 
domain if $0 < \lambda < \tau$ and outside if $\lambda > \tau$. 
Moreover, the residue of $f$ at $z = \lambda i$ is equal to 
$e^{-2\pi\lambda x_{1}}/(2\lambda i)$.
The vertical segments of the contour have
length $\tau$, and over them the numerator of $f$ is bounded 
(uniformly in $L$), 
while the denominator is of order $L^{2}$. From the residue 
theorem, after letting $L \ra +\infty$, we deduce that 
\begin{align*}
\frac{\pi}{\lambda}e^{-2\pi\lambda|x_{1}|}
& - e^{-2\pi\tau x_{1}}g_{\tau}(x_{1},\lambda) \\
& = \int_{\bR}f(z)dz - \int_{\bR}f(z + i\tau)dz = 2\pi i
\left\{
\begin{array}{ll}
e^{-2\pi\lambda x_{1}}/(2\lambda i)
& \textrm{ if } \ 0 < \lambda < \tau, \\
0 & \textrm{ if } \ \lambda > \tau,
\end{array} 
\right.
\end{align*}

which gives that 
\[
g_{\tau}(x_{1},\lambda) = \frac{\pi e^{2\pi\tau x_{1}}}{\lambda}
\left\{
\begin{array}{ll}
e^{-2\pi\lambda|x_{1}|} - e^{-2\pi\lambda x_{1}}
& \textrm{ if } \ 0 < \lambda < \tau, \\
e^{-2\pi\lambda|x_{1}|} & \textrm{ if } \ \lambda > \tau,
\end{array} 
\right.
\]

For the case $\lambda = 0$, 
we let $\lambda \ra 0^{+}$, to conclude 
\[
g_{\tau}(x_{1},0) = \pi e^{2\pi\tau x_{1}}\lim_{\lambda \ra 0^{+}}
\frac{e^{-2\pi\lambda|x_{1}|} - e^{-2\pi\lambda x_{1}}}{\lambda}
= -2\pi^{2}e^{2\pi\tau x_{1}}(|x_{1}| - x_{1}).
\]

We have proven the formulas for the Fourier coefficients.
To show the regularity, let us observe that 
\[
D^{2}(e^{-2\pi \tau x_{1}}g_{\tau}) 
= e^{-2\pi \tau x_{1}}\Delta_{\tau}g_{\tau} 
= \delta_{T}(0).
\]

From Weyl's regularity lemma for distributions, see Chapter 10 in 
\cite{G}, it follows that $e^{-2\pi \tau x_{1}}g_{\tau}$ is 
a smooth function away from $(0,0) \in T$. Therefore, $g_{\tau}$ 
is also smooth away from $(0,0)$. Assuming that $\tau > 0$, for 
$x_{1} > 0$ we have that 
\[
g_{\tau}(x_{1},x') 
= \pi \sum_{|k| > \tau}\frac{e^{-2\pi(|k| - \tau)|x_{1}|}}{|k|}e_{k}(x'),
\] 
\vspace{-3mm}

while for $x_{1} < 0$ we have that 
\[
g_{\tau}(x_{1},x')
= 4\pi^{2}e^{-2\pi\tau |x_{1}|}x_{1}
+ \pi \sum_{|k| \neq 0}\frac{e^{-2\pi(\tau + |k|)|x_{1}|}}{|k|}e_{k}(x')
- \pi \sum_{0 < |k| < \tau}\frac{e^{-2\pi(\tau - |k|)|x_{1}|}}{|k|}e_{k}(x').
\]

The uniform boundedness of $g_{\tau}$ and its derivatives,
for $|x_{1}| > \vep$, follows from the fact that $g_{\tau}(x_{1},x')$ is a 
sum of negative exponentials on each half of $|x_{1}| > \vep$.
\end{proof}


\begin{remark}
The coefficient $g_{\tau,k}(x_{1})$ can also be computed by solving the equation 
\[ 
(D_{x_{1}}^{2} + 2i\tau D_{x_{1}} - |\tau|^{2} + |k|^{2})g_{\tau,k}(x_{1}) 
= \delta_{\bR}(0).
\]

This can be solved in each half $x_{1} > 0$ and $x_{1} < 0$ as sum of exponentials,
and then using decay conditions $\lim_{|x_{1}|\ra \pm \infty}g_{\tau,k}(x_{1}) = 0$ 
and the jump condition at $x_{1} = 0$.
\end{remark}


As in the previous proof, we consider the distribution 
$\Gamma_{\tau} := e^{-2\pi \tau x_{1}}g_{\tau} \in \cD'(T)$, 
which is no longer a tempered distribution, and satisfies 
$D^{2}\Gamma_{\tau} = \delta_{T}(0)$.
In principle, it may not make sense to talk about the Fourier 
transform of $\Gamma_{\tau}$ as it is not a tempered distribution.
However, from Proposition \ref{propn: Green_Fourier_explicit}, we could
formally say that the Fourier coefficients of $\Gamma_{\tau}$ are 
given by
\[
e^{-2\pi\tau x_{1}}g_{\tau,k}(x_{1}) 
= \left\{
\begin{array}{ll}
-2\pi^{2} (|x_{1}| - \sgn(\tau) x_{1})
& \textrm{ if } \ k = 0, \\
\pi (e^{-2\pi |k||x_{1}|} - e^{-2\pi|k|\sgn(\tau)x_{1}})/|k|
& \textrm{ if } \ 0 < |k| < |\tau|, \\
\pi e^{-2\pi |k||x_{1}|}/|k|
& \textrm{ if } \ |k| > |\tau|.
\end{array} 
\right.
\]

Based on the formal Fourier coefficients above, we consider the
harmonic function
\[
H_{\tau}(x_{1},x') := 2\pi^{2}\sgn(\tau)x_{1}
- \pi \sum_{0 < |k| < |\tau|}\frac{e^{-2\pi|k|\sgn(\tau)x_{1}}}{|k|}e_{k}(x').
\]

We have that $H_{\tau} \in \cD'(T)$ because it is a smooth function, and so
$\Gamma_{0} := \Gamma_{\tau} - H_{\tau} \in \cD'(T)$ as well. Let us define the
distributions $\Gamma_{0}^{0} := -2\pi^{2}|x_{1}|$ and 
$\Gamma_{0}^{*} := \Gamma_{0} - \Gamma_{0}^{0}$. Formally, 
the Fourier coefficients of $\Gamma_{0}$ and $\Gamma_{0}^{*}$ are given by 
\[
\Gamma_{0,k}(x_{1}) 
= \left\{
\begin{array}{ll}
-2\pi^{2}|x_{1}|
& \textrm{ if } \ k = 0, \\
\pi e^{-2\pi |k||x_{1}|}/|k|
& \textrm{ if } \ k \neq 0,
\end{array} 
\right.
\]
\begin{equation}
\label{eqn: Gamma_{0}^{*}}
\Gamma_{0,k}^{*}(x_{1}) 
= \left\{
\begin{array}{ll}
0
& \textrm{ if } \ k = 0, \\
\pi e^{-2\pi |k||x_{1}|}/|k|
& \textrm{ if } \ k \neq 0.
\end{array} 
\right.
\end{equation}

As in the proof of Proposition \ref{propn: Green_Fourier_explicit},
we have that if $k \neq 0$, then 
\begin{equation}
\label{eqn: Fourier_Gamma_{0}^{*}}
\what{\Gamma_{0,k}^{*}}(\xi) = \frac{1}{\xi^{2} + |k|^{2}}.
\end{equation}


\begin{proposition}
\label{propn: tempered}
The distributions $\Gamma_{0}^{0}$ and $\Gamma_{0}^{*}$ are tempered
distributions, thus so is $\Gamma_{0}$.
\end{proposition}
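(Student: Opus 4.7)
The plan is to handle $\Gamma_{0}^{0}$ and $\Gamma_{0}^{*}$ separately. The function $\Gamma_{0}^{0} = -2\pi^{2}|x_{1}|$ is continuous on $T$ with linear growth in $x_{1}$, so it defines a tempered distribution by the standard integral pairing against $\cS(T)$, with the estimate $|\lan \Gamma_{0}^{0},\varphi\ran| \cleq \|\varphi\|_{2}$ following immediately from the Schwartz seminorm. The substance of the proof lies in establishing that $\Gamma_{0}^{*}$ is tempered.

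For this, I would construct a candidate tempered distribution $\Lambda \in \cS'(T)$ directly from the formal Fourier series in \eqref{eqn: Gamma_{0}^{*}} and then identify $\Lambda$ with $\Gamma_{0}^{*} = \Gamma_{0} - \Gamma_{0}^{0} \in \cD'(T)$. Consider the partial sums $\Lambda_{N} := \pi\sum_{0 < |k| \leq N} |k|^{-1}e^{-2\pi|k||x_{1}|}e_{k}(x')$, each a bounded continuous function on $T$ and hence a tempered distribution. Pairing with $\varphi \in \cS(T)$ yields
\[
\lan \Lambda_{N}, \varphi\ran = \pi\sum_{0 < |k| \leq N}\frac{1}{|k|}\int_{\bR} e^{-2\pi|k||x_{1}|}\varphi_{-k}(x_{1})dx_{1},
\]
and the decay bound $\|\varphi_{-k}\|_{L^{1}(\bR)} \cleq \lan k\ran^{-2M}\|\varphi\|_{W^{2M,1}(T)}$ from Proposition \ref{propn: Fourier_cylinder} shows, upon choosing $M$ with $2M+1 > d$, that the series converges absolutely and uniformly in $N$. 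Hence $\{\Lambda_{N}\}$ is Cauchy in $\cS'(T)$, defining a tempered distribution $\Lambda$ with $|\lan \Lambda, \varphi\ran| \cleq \|\varphi\|_{W^{2M,1}(T)}$.

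The main obstacle is the identification $\Lambda = \Gamma_{0}^{*}$ in $\cD'(T)$. The natural strategy is to compare the $\cD'(\bR)$-valued Fourier coefficients of both distributions. For $\Gamma_{0}^{*}$, one uses $\Gamma_{0} = \Gamma_{\tau} - H_{\tau}$ together with the explicit formulas in Proposition \ref{propn: Green_Fourier_explicit}: pairing $\Gamma_{\tau}$ with $\phi(x_{1})e_{-k}(x') \in \cD(T)$ and using smoothness of the exponential multiplier gives $(\Gamma_{\tau})_{k} = e^{-2\pi\tau x_{1}}g_{\tau,k}(x_{1})$, and then subtracting $(H_{\tau})_{k}$ and $(\Gamma_{0}^{0})_{k}$ reproduces exactly the right-hand side of \eqref{eqn: Gamma_{0}^{*}}. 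For $\Lambda$, the coefficients are read off directly from the absolutely convergent series. Both sets of coefficients agree, so the difference $U := \Gamma_{0}^{*} - \Lambda \in \cD'(T)$ has every Fourier coefficient vanishing.

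To close the argument, I would show this forces $U = 0$ in $\cD'(T)$. For any $\varphi \in \cD(T)$ with support in $[-R,R]\times \bT^{d}$, the coefficients $\varphi_{k}$ are each supported in $[-R,R]$, so the partial sums $S_{N}\varphi \in \cD(T)$ share this common compact support and converge to $\varphi$ in $\cS(T)$ by Proposition \ref{propn: Fourier_cylinder}; the common support upgrades this to convergence in $\cD(T)$. By distributional continuity, $U(\varphi) = \lim_{N}U(S_{N}\varphi) = \lim_{N}\sum_{|k|\leq N}\lan (U)_{-k},\varphi_{k}\ran = 0$, yielding $\Gamma_{0}^{*} = \Lambda \in \cS'(T)$ and completing the proof.
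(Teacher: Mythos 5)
Your proof is correct, and it is actually more careful than the paper's, which invokes essentially the same decay of Fourier coefficients but through a Sobolev-space lens. The paper's argument is a one-liner: from the formula $\what{\Gamma_{0,k}^{*}}(\xi) = (\xi^{2}+|k|^{2})^{-1}$ for $k\neq 0$ (and $\Gamma_{0,0}^{*}=0$), a direct computation gives $\Gamma_{0}^{*}\in H^{s}(T)$ for all $s<-(d-3)/2$ (a straightforward scaling in the integral $\int_{\bR}(\xi^{2}+|k|^{2})^{s-2}d\xi\simeq |k|^{2s-3}$ shows the $k$-sum converges precisely for $2s-3<-d$), and any $H^{s}(T)$ is contained in $\cS'(T)$. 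What the paper glosses over is the passage from "the formal Fourier coefficients of $\Gamma_{0}^{*}$ are $\pi e^{-2\pi|k||x_{1}|}/|k|$" to "$\Gamma_{0}^{*}$ is in fact the tempered distribution whose Fourier coefficients are these"; that identification is precisely what your Cauchy-partial-sum construction of $\Lambda$ and your subsequent coefficient-matching argument supply. Your closing step — showing that a distribution in $\cD'(T)$ with vanishing $\cD'(\bR)$-valued Fourier coefficients must vanish, via compact-support partial sums — is sound: convergence of $S_{N}\varphi\to\varphi$ in $\cS(T)$ together with the fixed support $[-R,R]\times\bT^{d}$ does upgrade to convergence in $\cD(T)$. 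In short, the paper's Sobolev phrasing buys a sharper quantitative conclusion (the exact negative regularity of $\Gamma_{0}^{*}$, which is also the content of the remark before Proposition~\ref{propn: Green_Fourier_explicit} about $g_{\tau}$), while your route is more elementary and fills in the identification step that the paper tacitly assumes.
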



\begin{proof}
It is clear that $\Gamma_{0}^{0}$ is a tempered distribution. From
\eqref{eqn: Fourier_Gamma_{0}^{*}} we actually obtain that 
$\Gamma_{0}^{*} \in H^{s}(T)$ for all $s < -(d - 3)/2$, and the
conclusion follows.
\end{proof}


\begin{proposition}
\label{propn: Gamma_{tau}}
Let $\Gamma_{\tau}(x,y) := \Gamma_{\tau}(x - y)$. Then, 
$D^{2}\Gamma_{\tau}(x, \cdot) = \delta_{T}(x)$, 
$\Gamma_{\tau}(\cdot,\cdot)$ is smooth in $T \times T$ away 
from the diagonal.
\end{proposition}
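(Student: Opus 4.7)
The plan is to reduce both assertions to facts already established in the two preceding propositions, together with a routine translation/chain-rule computation.

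For the first assertion, I would begin by unpacking the one-variable version: since $g_\tau$ is the Schwartz kernel of the right inverse $G_\tau$ for $\Delta_\tau = e^{2\pi\tau x_1}D^2 e^{-2\pi\tau x_1}$, we have $\Delta_\tau g_\tau = \delta_T(0)$ in $\mathcal{D}'(T)$. Conjugating,
\[
D^2 \Gamma_\tau = D^2(e^{-2\pi\tau x_1} g_\tau) = e^{-2\pi\tau x_1}\Delta_\tau g_\tau = e^{-2\pi\tau x_1}\delta_T(0) = \delta_T(0),
\]
the last equality because $\delta_T(0)$ is supported at $0$ and $e^{-2\pi\tau\cdot 0} = 1$. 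Then for the two-variable statement, the definition $\Gamma_\tau(x,y) := \Gamma_\tau(x-y)$ together with the chain rule (two sign changes cancel in any second-order operator with constant coefficients) gives, for fixed $x$ and test function $\varphi \in C^\infty_c(T)$,
\[
\langle D^2_y \Gamma_\tau(x,\cdot), \varphi \rangle = \langle \Gamma_\tau(x-\cdot), D^2 \varphi \rangle = \langle \Gamma_\tau, (D^2\varphi)(x-\cdot)\rangle = \langle D^2\Gamma_\tau, \varphi(x-\cdot) \rangle = \varphi(x),
\]
so $D^2_y \Gamma_\tau(x,\cdot) = \delta_T(x)$, interpreting $\delta_T(x)$ as the delta distribution supported at $y = x$.

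For the smoothness assertion, I would appeal to the regularity statement already proven for $g_\tau$: in Proposition \ref{propn: Green_Fourier_explicit} we used Weyl's regularity lemma on $D^2(e^{-2\pi\tau x_1} g_\tau) = \delta_T(0)$ to conclude that $\Gamma_\tau = e^{-2\pi\tau x_1} g_\tau$ is smooth on $T \setminus \{(0,0)\}$. Since $\Gamma_\tau(x,y) = \Gamma_\tau(x-y)$ is the pull-back of a function smooth off the origin by the smooth map $(x,y)\mapsto x-y$, the composition is smooth on the open set $\{(x,y) \in T \times T : x \neq y\}$, i.e.\ the complement of the diagonal.

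There is no genuine obstacle: the only bookkeeping to be careful about is the sign cancellation in the chain rule computation (which is clean because $D^2$ is even-order and has constant coefficients) and verifying that multiplying the delta at the origin by $e^{-2\pi\tau x_1}$ leaves it unchanged. Both facts are immediate, which is why this proposition can be stated as a direct corollary of the work done in the two previous propositions.
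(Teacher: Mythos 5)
Your proposal is correct and follows essentially the same route as the paper: the identity $D^2\Gamma_\tau = \delta_T(0)$ (via the conjugation $e^{-2\pi\tau x_1}\Delta_\tau g_\tau = D^2(e^{-2\pi\tau x_1}g_\tau)$) is exactly the computation already carried out in the proof of Proposition \ref{propn: Green_Fourier_explicit}, where Weyl's lemma then gives smoothness of $\Gamma_\tau$ away from the origin, and both you and the paper pass to the two-variable statement via the translation-invariance of $D^2$. You spell out the chain-rule bookkeeping and the composition $(x,y)\mapsto x-y$ that the paper leaves implicit, but the argument is the same.
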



\begin{proof}
As mentioned in the proof of Proposition \ref{propn: Green_Fourier_explicit},
the fact that $D^{2}\Gamma_{\tau} = \delta_{T}(0)$ and Weyl's regularity
lemma imply that $\Gamma_{\tau}$ is smooth away from 
$(0,0) \in T$. This gives that $D^{2}\Gamma_{\tau}(x,\cdot) = \delta_{T}(x)$ 
and the smoothness of $\Gamma_{\tau}(\cdot,\cdot)$ away from the diagonal.
\end{proof}


Despite the reasons not being apparent at this moment, we consider the 
following definition. We will see later how this operator appears naturally
when we try to reformulate the differential equation the solution 
$H_{V,W}u = 0$ as an integral equation. For the moment, in 
Proposition \ref{propn: K_{tau}} below, we show how this operator
relates to the distribution $\Gamma_{\tau}$.  

									
\begin{definition}
Let $|\tau|\geq \tau_{0}$, $\tau^{2} \notin \spec(-\Delta_{g_{0}})$, so that 
$\Delta_{\tau}$ has a right inverse $G_{\tau}$ from 
Theorem \ref{thm: carleman_free}.
For functions in $L^{2}_{c}(T)$, we define 
the operator $K_{\tau}:= e^{-2\pi \tau x_{1}}G_{\tau}e^{2\pi \tau x_{1}}$.
\end{definition}


\begin{proposition}
\label{propn: K_{tau}}
The operator $K_{\tau}$ maps $L^{2}_{c}(T)$ into $H^{2}_{loc}(T)$,
is translation invariant, commutes
with differentiation, and satisfies $D^{2}K_{\tau} = I$ on 
$L^{2}_{c}(T)$ and $K_{\tau}D^{2} = I$ on $H^{2}_{c}(T)$.
Moreover, its distributional kernel is $\Gamma_{\tau}(\cdot,\cdot)$,
i.e. $K_{\tau}f(x) = \Gamma_{\tau}\ast f(x) 
= \lan \Gamma_{\tau}(x,\cdot),f\ran$ for $f \in C^{\infty}_{c}(T)$.
\end{proposition}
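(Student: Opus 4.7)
The plan is to deduce each property of $K_{\tau}$ from the corresponding property of $G_{\tau}$ by exploiting the two algebraic identities
\[
D^{2} \circ e^{-2\pi\tau x_{1}} = e^{-2\pi\tau x_{1}} \circ \Delta_{\tau}, \qquad e^{2\pi\tau x_{1}} \circ D^{2} = \Delta_{\tau} \circ e^{2\pi\tau x_{1}},
\]
together with the existence and, crucially, the \emph{uniqueness} statement of Theorem \ref{thm: carleman_free}. First I would check mapping properties: if $f \in L^{2}_{c}(T)$, then $e^{2\pi\tau x_{1}}f \in L^{2}_{c}(T) \sse L^{2}_{\delta}(T)$, so $G_{\tau}(e^{2\pi\tau x_{1}}f) \in H^{2}_{-\delta}(T)$, and multiplication by the smooth function $e^{-2\pi\tau x_{1}}$ yields an element of $H^{2}_{loc}(T)$.

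The identity $D^{2}K_{\tau} = I$ is then a one-line computation from the first algebraic identity above together with $\Delta_{\tau}G_{\tau} = I$. For $K_{\tau}D^{2} = I$ on $H^{2}_{c}(T)$ I would argue slightly more carefully: given $f \in H^{2}_{c}(T)$, set $u := e^{2\pi\tau x_{1}}f \in H^{2}_{c}(T) \sse H^{2}_{-\delta}(T)$, so that $\Delta_{\tau}u = e^{2\pi\tau x_{1}}D^{2}f \in L^{2}_{\delta}(T)$; by the uniqueness part of Theorem \ref{thm: carleman_free} applied to the equation $\Delta_{\tau}v = \Delta_{\tau}u$, we must have $G_{\tau}\Delta_{\tau}u = u$, and hence $K_{\tau}D^{2}f = e^{-2\pi\tau x_{1}}u = f$.

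To identify the distributional kernel, for $f \in C^{\infty}_{c}(T) \sse \cS(T)$ I would insert the convolution representation $G_{\tau}g = g_{\tau}\ast g$ (valid for Schwartz $g$) with $g = e^{2\pi\tau y_{1}}f$ and absorb the exponential factors into $g_{\tau}$ to reach
\[
K_{\tau}f(x) \;=\; e^{-2\pi\tau x_{1}}\int g_{\tau}(x - y)\,e^{2\pi\tau y_{1}}f(y)\,dy \;=\; \int \Gamma_{\tau}(x - y)\,f(y)\,dy \;=\; \lan \Gamma_{\tau}(x,\cdot),f\ran.
\]
From here translation invariance and commutation with $D^{\alpha}$ are automatic from the convolution structure on $C^{\infty}_{c}(T)$, and they extend to $L^{2}_{c}(T)$ either by density, using that $K_{\tau}:L^{2}_{c}(T)\ra H^{2}_{loc}(T)$ is continuous (a consequence of Theorem \ref{thm: carleman_free}), or, more cleanly, by invoking the characterization of $K_{\tau}f$ as the unique $u$ with $e^{2\pi\tau x_{1}}u \in H^{2}_{-\delta}(T)$ and $D^{2}u = f$: applying a translation or a derivative to both $f$ and $u$ preserves this characterization, since $H^{2}_{-\delta}(T)$ is invariant under $x_{1}$-translations up to a factor $\lan h\ran^{\delta}$ in the norm.

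I expect the only genuinely delicate point to be the passage from the Schwartz-level identity $G_{\tau}g = g_{\tau}\ast g$ to the pointwise formula above: this requires writing the convolution as a pairing of the tempered distribution $\Gamma_{\tau}$ with the compactly supported smooth function $f(x - \cdot)$, checking that the resulting function of $x$ really coincides with $e^{-2\pi\tau x_{1}}G_{\tau}(e^{2\pi\tau y_{1}}f)$ pointwise and not merely as distributions on $T$. Everything else is a bookkeeping exercise once the two commutation identities above and the uniqueness in Theorem \ref{thm: carleman_free} are in place.
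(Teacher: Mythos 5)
Your proof is correct, but the organizational route differs from the paper's in two places. For $K_{\tau}D^{2}=I$, the paper first proves translation invariance of $K_{\tau}$ by a direct computation with the translation operators $t_{y}$ (using $t_{y}G_{\tau}=G_{\tau}t_{y}$ and $t_{y}(e^{\lambda x}f)=e^{\lambda(x+y)}t_{y}f$), then deduces commutation with $D^{\alpha}$ from translation invariance, and finally obtains $K_{\tau}D^{2}f=D^{2}K_{\tau}f=f$. You instead get $K_{\tau}D^{2}=I$ directly from the \emph{uniqueness} clause of Theorem \ref{thm: carleman_free}: given $f\in H^{2}_{c}(T)$, the function $e^{2\pi\tau x_{1}}f$ lies in $H^{2}_{-\delta}(T)$ and solves $\Delta_{\tau}v=e^{2\pi\tau x_{1}}D^{2}f$, hence equals $G_{\tau}(e^{2\pi\tau x_{1}}D^{2}f)$. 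This is a clean shortcut that bypasses the commutation lemma entirely. Conversely, for translation invariance you rely on the kernel formula (or the uniqueness characterization of $K_{\tau}f$), whereas the paper's direct algebraic computation is arguably a more economical way to get that single fact without first identifying the kernel. Both routes are valid; yours treats the uniqueness from Theorem \ref{thm: carleman_free} as the central structural input, while the paper's treats the conjugation-plus-translation-invariance structure of $K_{\tau}$ as primary.

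One small slip worth correcting: you refer to ``the tempered distribution $\Gamma_{\tau}$''. The paper explicitly observes that $\Gamma_{\tau}=e^{-2\pi\tau x_{1}}g_{\tau}$ is \emph{not} tempered (it grows exponentially in $x_{1}$); it lives in $\cD'(T)$ but not $\cS'(T)$. This does not harm your argument, since you pair $\Gamma_{\tau}(x,\cdot)$ only against $f\in C^{\infty}_{c}(T)$, for which the $\cD'$--$\cD$ pairing suffices, but the phrasing should be changed. Your observation that the genuinely delicate point is justifying the pointwise integral identity from the Schwartz-level convolution $G_{\tau}g = g_{\tau}\ast g$ is accurate; the paper handles this exactly as you anticipate, using that $e^{2\pi\tau x_{1}}f\in C^{\infty}_{c}(T)\subseteq\cS(T)$ so the distributional convolution yields a smooth function.
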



\begin{proof}
The first claim follows because $G_{\tau}$ maps $L^{2}_{\delta}(T)$
into $H^{2}_{-\delta}(T)$. 
The translation invariance of $K_{\tau}$ follows from the conjugation 
structure and the translation invariance of $G_{\tau}$. Indeed, if we 
denote the translation operators by $t_{y}f(x) := f(x + y)$ and note 
that $t_{y}(e^{\lambda x}f) = e^{\lambda (x + y)}t_{y}f$, then
\begin{align*}
t_{y}K_{\tau}
& = e^{-2\pi\tau(x_{1} + y_{1})}t_{y}G_{\tau}e^{2\pi \tau x_{1}} \\
& = e^{-2\pi\tau(x_{1} + y_{1})}G_{\tau}t_{y}e^{2\pi \tau x_{1}}
= e^{-2\pi\tau(x_{1} + y_{1})}G_{\tau}e^{2\pi\tau(x_{1} + y_{1})}t_{y} 
= e^{-2\pi \tau x_{1}}G_{\tau}e^{2\pi\tau x_{1}}t_{y}
= K_{\tau}t_{y},
\end{align*}

as we wanted to prove. The commutativity with differentiation
follows from the translation invariance. In addition, if 
$f \in L^{2}_{c}(T)$, then 
$e^{2\pi\tau x_{1}}f \in L^{2}_{\delta}(T)$, and so 
\[
D^{2}K_{\tau}f  
= e^{-2\pi\tau x_{1}}(e^{2\pi\tau x_{1}}D^{2}e^{-2\pi\tau x_{1}})G_{\tau}(e^{2\pi\tau x_{1}}f) 
= e^{-2\pi\tau x_{1}}(\Delta_{\tau}G_{\tau})(e^{2\pi\tau x_{1}}f) 
= f.
\] 

If $f \in H^{2}_{c}(T)$, then $D^{2}f \in L^{2}_{c}(T)$, 
and the commutativity with differentiation yields that 
$K_{\tau}D^{2}f = D^{2}K_{\tau}f = f$. Finally, for 
$f \in C^{\infty}_{c}(T)$ we have that 
$e^{2\pi\tau x_{1}}f \in C^{\infty}_{c}(T) \sse \cS(T)$, and so
\begin{align}
\label{eqn: kernel_K_{tau}}
K_{\tau}f(x) & = e^{-2\pi\tau x_{1}}G_{\tau}(e^{2\pi\tau x_{1}}f)(x) \nonumber \\
& = e^{-2\pi\tau x_{1}}\int_{T}g_{\tau}(x_{1} - y_{1},x' - y')
e^{2\pi\tau y_{1}}f(y_{1}, y')dy_{1}dy' \nonumber \\
& = \int_{T}e^{-2\pi\tau (x_{1} - y_{1})}g_{\tau}(x_{1} - y_{1}, x' - y')
f(y_{1}, y')dy_{1}dy'
= \int_{T}\Gamma_{\tau}(x,y)f(y)dy,
\end{align}

as we wanted to prove.
\end{proof}


The purpose of what follows is to show that the mapping properties 
of $K_{\tau}$ from $L^{2}_{c}(T)$ into $H^{2}_{loc}(T)$ can be 
extended to $H^{-1}_{c}(T)$ into $H^{1}_{loc}(T)$. To show this, 
let us consider the operators
\[
K_{0}^{0}f := \Gamma_{0}^{0} \ast f, \ \ 
K_{0}^{*}f := \Gamma_{0}^{*} \ast f, \ \
R_{\tau}f := H_{\tau} \ast f,
\] 

with the above definitions for $\Gamma_{0}^{0}$, $\Gamma_{0}^{*}$, 
and $H_{\tau}$. Given that 
$\Gamma_{\tau} = \Gamma_{0}^{0} + \Gamma_{0}^{*} + H_{\tau}$
we have that \\ 
$K_{\tau} = K_{0}^{0} + K_{0}^{*} + R_{\tau}$, 
and so it suffices to show that each of these maps $H^{-1}_{c}(T)$ 
into $H^{1}_{loc}(T)$. 


\begin{proposition}
\label{propn: K_{0}^{0}}
The operator $K_{0}^{0}$ maps $H^{-1}_{c}(T)$ into $H^{1}_{loc}(T)$. 
\end{proposition}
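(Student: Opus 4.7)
The essential feature of $\Gamma_{0}^{0}(x) = -2\pi^{2}|x_{1}|$ is that it depends only on $x_{1}$. For $f \in C_{c}^{\infty}(T)$, Fubini immediately gives
\[
K_{0}^{0} f(x_{1}, x') = -2\pi^{2}\int_{\bR} |x_{1} - y_{1}|\, f_{0}(y_{1})\,dy_{1},
\]
where $f_{0} = \psi_{0}^{*}f$ is the zeroth Fourier coefficient of $f$ in $x'$. In particular $K_{0}^{0}f$ is independent of $x'$ and equals the one-dimensional convolution $\Gamma_{0}^{0} *_{\bR} f_{0}$. Because $\psi_{0}^{*}$ is pairing against the constant $e_{0}$ and preserves support in $x_{1}$, it maps $H^{-1}_{c}(T)$ continuously into $H^{-1}_{c}(\bR)$. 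I would therefore \emph{define} $K_{0}^{0}$ on $H^{-1}_{c}(T)$ by this reduced formula (interpreting the integral as the distributional pairing of $f_{0}$ with $\chi(\cdot)|x_{1} - \cdot|$ for some $\chi \in C_{c}^{\infty}(\bR)$ equal to $1$ on $\supp(f_{0})$) and verify its agreement with the $L^{2}_{c}$ definition by mollification. The problem then reduces to the one-dimensional claim: the map $g \mapsto \Gamma_{0}^{0}*_{\bR} g$ sends $H^{-1}_{c}(\bR)$ into $H^{1}_{loc}(\bR)$.

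\textbf{One-dimensional analysis.} For $g \in H^{-1}_{c}(\bR)$, I invoke the standard representation $H^{-1}(\bR) = L^{2}(\bR) + \pa_{x_{1}}L^{2}(\bR)$ and multiply by a cutoff $\chi \in C_{c}^{\infty}(\bR)$ equal to $1$ on $\supp(g)$ to write $g = h_{0} + \pa_{x_{1}}h_{1}$ with $h_{0},h_{1} \in L^{2}_{c}(\bR)$. Since convolution commutes with differentiation,
\[
\Gamma_{0}^{0}*_{\bR}g = -2\pi^{2}\bigl(|x_{1}|*h_{0} + \sgn(x_{1})*h_{1}\bigr),
\]
using $\pa_{x_{1}}|x_{1}| = \sgn(x_{1})$. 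Each summand is a continuous, locally bounded function (in fact, affine respectively constant outside a compact set), and their distributional derivatives satisfy
\[
\pa_{x_{1}}(|x_{1}|*h_{0}) = \sgn*h_{0} \in L^{\infty}(\bR), \qquad \pa_{x_{1}}(\sgn*h_{1}) = 2h_{1} \in L^{2}_{c}(\bR).
\]
Hence each term lies in $H^{1}_{loc}(\bR)$, and so does their sum.

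\textbf{Conclusion and main obstacle.} Since $K_{0}^{0}f$ is independent of $x'$ and belongs to $H^{1}_{loc}(\bR)$ as a function of $x_{1}$, it lies in $H^{1}_{loc}(T)$ by compactness of $\bT^{d}$. The main technical point is not any single estimate but the bookkeeping required to justify that the extension of $K_{0}^{0}$ to $H^{-1}_{c}(T)$ via the reduced $1$D pairing is consistent with any other natural definition. I would resolve this by mollifying $f$ with a bump $\rho_{\vep} \in C_{c}^{\infty}(T)$, applying the $1$D calculation to each approximant, and using the uniform bound it supplies to pass to the limit in $H^{1}_{loc}(T)$.
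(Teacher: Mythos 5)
Your argument is correct, but it takes a genuinely different route from the paper's at the one-dimensional step. Both proofs begin by reducing to showing $|x_{1}| \ast_{\bR} \varphi_{0} \in H^{1}_{loc}(\bR)$ for $\varphi_{0} \in H^{-1}_{c}(\bR)$, using that $\Gamma_{0}^{0}$ is independent of $x'$. From there the paper proceeds by a Fourier interpolation: it shows $\Phi := |x_{1}| \ast \varphi_{0}$ is locally bounded by pairing $\varphi_{0}$ against the cut-off function $\phi\,|x_{1} - \cdot| \in H^{1}(\bR)$, records $\Phi'' = 2\varphi_{0} \in H^{-1}(\bR)$, and then splits $\what{\rho}$ (with $\rho := \eta\Phi$) into low and high frequencies to conclude $\rho \in H^{1}(\bR)$. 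You instead decompose $\varphi_{0} = h_{0} + \pa_{x_{1}} h_{1}$ with $h_{0}, h_{1} \in L^{2}_{c}(\bR)$ and handle the two convolutions $|x_{1}| \ast h_{0}$ and $\sgn(x_{1}) \ast h_{1}$ explicitly, using $\pa_{x_{1}}|x_{1}| = \sgn(x_{1})$ and $\pa_{x_{1}}\sgn(x_{1}) = 2\delta_{0}$. Your route is more elementary and more transparent: it trades the low/high frequency interpolation for the structure theorem $H^{-1}_{c}(\bR) = L^{2}_{c}(\bR) + \pa_{x_{1}} L^{2}_{c}(\bR)$ and two direct one-line convolution estimates. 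The paper's route is less explicit but fits the same template (bound the function, bound $D^{2}$, interpolate in Fourier) used elsewhere in the chapter. Both are sound. One small remark: the mollification bookkeeping you flag as the main obstacle is not actually needed. Since $\Gamma_{0}^{0}$ is a tempered distribution and $f$ has compact support, $\Gamma_{0}^{0} \ast f$ is unambiguously defined; and the identification $\Gamma_{0}^{0} \ast f = \Gamma_{0}^{0} \ast_{\bR} f_{0}$ is an immediate consequence of $\Gamma_{0}^{0}$ being independent of $x'$, which the paper uses without further justification.
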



\begin{proof}
Let $\varphi \in H^{-1}_{c}(T)$, so that 
$\varphi_{0} \in H^{-1}_{c}(\bR) \sse H^{-1}_{c}(T)$.
Since $|x_{1}|$ does not depend on $x'$, we have that 
$|x_{1}|\ast \varphi = |x_{1}|\ast \varphi_{0}$, and it 
remains to show that $|x_{1}| \ast \varphi_{0} \in H^{1}_{loc}(\bR)$. 
Let $\supp(\varphi_{0}) \sse [-L,L]$ and let 
$\phi \in C^{\infty}_{c}(\bR)$ be such that $\phi \equiv 1$ on 
$[-L,L]$ and $\phi \equiv 0$ outside of $[-2L,2L]$. Let us show 
that for fixed $x_{1} \in \bR$, $\phi |x_{1} - \cdot| \in H^{1}(\bR)$. 
Indeed, by Leibniz' rule we obtain
\[
\|\phi|x_{1} - \cdot|\|_{H^{1}(\bR)}
\cleq \|\phi\|_{C^{1}(\bR)}\||x_{1} - \cdot|\|_{H^{1}(-2L,2L)}
\cleq L + |x_{1}|,
\]

where we allow the constants of the inequality to depend on $L$
and $\phi$. Let $\Phi(x_{1}) := |x_{1}|\ast \varphi_{0}$. Then, 
\begin{align*}
|\Phi(x_{1})| & = |\lan \varphi_{0}, |x_{1} - \cdot|\ran| \\
& = |\lan \phi \varphi_{0}, |x_{1} - \cdot| \ran| \\
& = |\lan \varphi_{0}, \phi|x_{1} - \cdot| \ran| 
\leq \|\varphi_{0}\|_{H^{-1}(\bR)}\|\phi|x_{1} - \cdot|\|_{H^{1}(\bR)}
\cleq \|\varphi_{0}\|_{H^{-1}(\bR)}(L + |x_{1}|).
\end{align*}

This shows that $\Phi \in L^{\infty}_{loc}(\bR) \sse L^{2}_{loc}(\bR)$,
which implies that $\Phi' \in H^{-1}_{loc}(\bR)$. Moreover, because
$|x_{1}|'' = 2\delta_{\bR}(0)$, then we have that $\Phi'' = 2\varphi_{0} \in H^{-1}(\bR)$.
Let $\eta \in C^{\infty}_{c}(\bR)$ and $\rho = \eta \Phi$, so that 
$\rho \in L^{2}_{c}(\bR)$. We have to show that $\rho \in H^{1}(\bR)$,
which is equivalent to showing that $\lan \xi\ran \what{\rho}(\xi) \in L^{2}(\bR)$.
Since $\rho \in L^{2}(\bR)$, we have 
$\lan \xi\ran \what{\rho}(\xi) \in L^{2}(|\xi| \leq 1)$. Moreover, because 
$\Phi \in L^{2}_{loc}(\bR)$ and $\Phi', \Phi'' \in H^{-1}_{loc}(\bR)$,
then $\rho'' = \eta''\Phi + 2\eta'\Phi' + \eta\Phi'' \in H^{-1}(\bR)$,
 Therefore, $\lan \xi\ran^{-1}\xi^{2}\what{\rho}(\xi) \in L^{2}(\bR)$, from
 where we conclude that $\lan \xi\ran \what{\rho}(\xi) \in L^{2}(|\xi| \geq 1)$.
This proves the result.
\end{proof}


\begin{proposition}
\label{propn: K_{0}^{*}}
The operator $K_{0}^{*}: H^{s}(T) \ra H^{s + 2}(T)$ is bounded for 
any $s \in \bR$. 
\end{proposition}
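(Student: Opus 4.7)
The plan is to recognize $K_0^*$ as a Fourier multiplier with a very well-behaved symbol and then read off the mapping property from Plancherel's identity on $T$. Concretely, from \eqref{eqn: Gamma_{0}^{*}} and \eqref{eqn: Fourier_Gamma_{0}^{*}} the Fourier coefficient of $\Gamma_0^*$ with respect to $x'$ is $\Gamma_{0,k}^*(x_1)$, and for $k \neq 0$ the Fourier transform in $x_1$ is $1/(\xi^2+|k|^2)$; for $k=0$ the coefficient vanishes. So for $f \in \mathcal{S}(T)$ the convolution $K_0^*f = \Gamma_0^* * f$ has Fourier coefficients
\[
\widehat{(K_0^*f)_k}(\xi) = m(\xi,k)\,\widehat{f_k}(\xi), \qquad
m(\xi,k) := \begin{cases} 0 & \text{if } k=0, \\ \dfrac{1}{\xi^2 + |k|^2} & \text{if } k\neq 0.\end{cases}
\]
(Here, a small preliminary check converts the convolution $\Gamma_0^* * f$ into the corresponding Fourier-side product; this is safe because the $k$-th Fourier coefficient of $\Gamma_0^*$ is an $L^1_{loc}$ function with Fourier transform $1/(\xi^2+|k|^2) \in L^1(\bR)$ for $k\ne 0$, so Fubini applies termwise and the series converges by the estimate below.)

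The key symbol estimate is the trivial pointwise bound: for every $k \in \bZ^d\setminus\{0\}$ we have $|k|^2 \ge 1$, so
\[
\lan \xi,k\ran^2 \;=\; 1 + \xi^2 + |k|^2 \;\le\; 2(\xi^2 + |k|^2),
\]
which yields
\[
\lan \xi,k\ran^4\, |m(\xi,k)|^2 \;=\; \frac{\lan \xi, k\ran^4}{(\xi^2+|k|^2)^2} \;\le\; 4
\]
uniformly in $(\xi,k)$ with $k\neq 0$ (and the inequality is trivially true when $k=0$, since the left side is zero).

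With the symbol bound in hand, the boundedness follows from the characterization of the $H^s(T)$ norm via Plancherel (as recorded in Section 2.2):
\[
\|u\|_{H^{s}(T)}^2 \;\simeq\; \sum_{k \in \bZ^d}\int_{\bR} \lan \xi,k\ran^{2s}|\what{u_k}(\xi)|^2\,d\xi.
\]
Applying this with $u = K_0^*f$ at level $s+2$ and using the symbol bound gives
\[
\|K_0^*f\|_{H^{s+2}(T)}^2 \;\simeq\; \sum_{k \in \bZ^d}\int_{\bR} \lan \xi,k\ran^{2(s+2)}|m(\xi,k)|^2 |\what{f_k}(\xi)|^2\,d\xi \;\le\; 4\sum_{k \in \bZ^d}\int_{\bR} \lan \xi,k\ran^{2s}|\what{f_k}(\xi)|^2\,d\xi \;\simeq\; \|f\|_{H^s(T)}^2,
\]
first for $f \in \mathcal{S}(T)$, and then for all $f \in H^s(T)$ by density.

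There is no serious obstacle here: the only step that requires a line of care is checking that the distributional convolution $\Gamma_0^* * f$ coincides with the Fourier multiplier operator defined by $m(\xi,k)$. Since $\Gamma_0^* \in H^\sigma(T)$ for all $\sigma < -(d-3)/2$ (as noted in Proposition \ref{propn: tempered}), the convolution makes sense via $\mathcal{S}'$--$\mathcal{S}$ pairing for $f \in \mathcal{S}(T)$ and one can verify the multiplier formula by testing against a finite Fourier series in $x'$ and Schwartz functions in $x_1$, using $\what{\Gamma_{0,k}^*}(\xi) = 1/(\xi^2+|k|^2)$ for $k\neq 0$.
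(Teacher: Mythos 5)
Your proposal is correct and follows essentially the same approach as the paper: identify $K_0^*$ as a Fourier multiplier with symbol $1/(\xi^2+|k|^2)$ (vanishing at $k=0$), use the bound $\lan\xi,k\ran^2 \cleq \xi^2+|k|^2$ valid for $k\neq 0$, and conclude via Plancherel for the $H^s(T)$ norm. The extra paragraph you include justifying that the convolution coincides with the multiplier operator is a reasonable addition the paper leaves implicit, but the core argument is identical.
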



\begin{proof}
For $\varphi \in H^{s}(T)$, let us consider the Fourier series 
$\varphi(x_{1},x') = \sum_{k \in \bZ^{d}}\varphi_{k}(x_{1})e_{k}(x')$.
From \eqref{eqn: Gamma_{0}^{*}} we have that 
$\what{(K_{0}^{*}\varphi)}_{0}(\xi)
= \what{(\Gamma_{0}^{*}\ast \varphi)_{0}}(\xi) = 0$, and
for $k \neq 0$ we have
\[
|\what{(K_{0}^{*}\varphi)}_{k}(\xi)|
= |\what{(\Gamma_{0}^{*}\ast \varphi)_{k}}(\xi)|
= |\what{\Gamma_{0,k}^{*}}(\xi)||\what{\varphi_{k}}(\xi)|
= \frac{1}{\xi^{2} + |k|^{2}}|\what{\varphi_{k}}(\xi)|
\cleq \lan \xi, k\ran^{-2}|\what{\varphi_{k}}(\xi)|,
\]

where we used in the last step that $|k| \geq 1$ for all $k \neq 0$.
Therefore we conclude that 
\[
\|K_{0}^{*}\varphi\|_{H^{s + 2}(T)}^{2}
= \sum_{k \neq 0}\int_{\bR}\lan \xi, k\ran^{2s + 4}
|\what{(K_{0}^{*}\varphi)_{k}}(\xi)|^{2}d\xi
\cleq \sum_{k \neq 0}\int_{\bR}\lan \xi, k\ran^{2s}
|\what{\varphi_{k}}(\xi)|^{2}d\xi
= \|\varphi\|_{H^{s}(T)}^{2}.
\]
\vspace{-8mm}

\end{proof}


\begin{proposition}
\label{propn: K_{0}}
The operator $K_{0} := K_{0}^{0} + K_{0}^{*}$ maps 
$L^{2}_{c}(T)$ into $H^{2}_{loc}(T)$ and
$H^{-1}_{c}(T)$ into $H^{1}_{loc}(T)$, satisfies 
$D^{2}K_{0} = I$ on $L^{2}_{c}(T)$ and 
$K_{0}D^{2} = I$ on $H^{2}_{c}(T)$, and
is symmetric, i.e. $\lan K_{0}f,g\ran = \lan K_{0}g,f\ran$ 
for any $f,g \in H^{-1}_{c}(T)$. 
\end{proposition}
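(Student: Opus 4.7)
The plan is to assemble the claim from the two previous propositions about $K_{0}^{0}$ and $K_{0}^{*}$, using the decomposition $K_{\tau} = K_{0} + R_{\tau}$ together with the harmonicity of $H_{\tau}$ to transfer the fundamental-solution identities from $K_{\tau}$ (established in Proposition \ref{propn: K_{tau}}) to $K_{0}$.

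For the mapping properties, the case $H^{-1}_{c}(T) \to H^{1}_{loc}(T)$ is immediate: Proposition \ref{propn: K_{0}^{0}} handles $K_{0}^{0}$ and Proposition \ref{propn: K_{0}^{*}} gives $K_{0}^{*}: H^{-1}(T) \to H^{1}(T)$ globally, so the sum maps $H^{-1}_{c}(T)$ into $H^{1}_{loc}(T)$. For the case $L^{2}_{c}(T) \to H^{2}_{loc}(T)$, the term $K_{0}^{*}$ is handled by Proposition \ref{propn: K_{0}^{*}} with $s = 0$. For $K_{0}^{0}$, I would refine the argument of Proposition \ref{propn: K_{0}^{0}}: for $f \in L^{2}_{c}(T)$, only the zero-th Fourier coefficient $f_{0} \in L^{2}_{c}(\bR)$ contributes, and $K_{0}^{0}f = -2\pi^{2}|x_{1}|\ast f_{0}$ depends only on $x_{1}$. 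Using $(|x_{1}|)'' = 2\delta_{\bR}(0)$ one gets $D_{x_{1}}^{2}(|x_{1}|\ast f_{0}) = 2f_{0} \in L^{2}_{c}(\bR)$, together with $|x_{1}|\ast f_{0} \in L^{\infty}_{loc}(\bR)$ and $D_{x_{1}}(|x_{1}|\ast f_{0}) = \sgn(x_{1})\ast f_{0} \in L^{\infty}(\bR)$, so cutting off gives $K_{0}^{0}f \in H^{2}_{loc}(T)$.

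For the identities $D^{2}K_{0} = I$ on $L^{2}_{c}(T)$ and $K_{0}D^{2} = I$ on $H^{2}_{c}(T)$, I would leverage Proposition \ref{propn: K_{tau}}. Write $K_{0} = K_{\tau} - R_{\tau}$, where $R_{\tau}f = H_{\tau}\ast f$ with $H_{\tau}$ a smooth harmonic function. Convolution with a smooth distribution commutes with $D^{2}$, so $D^{2}R_{\tau}f = (D^{2}H_{\tau})\ast f = 0$ for $f \in L^{2}_{c}(T)$, and similarly $R_{\tau}D^{2}f = H_{\tau}\ast D^{2}f = (D^{2}H_{\tau})\ast f = 0$ for $f \in H^{2}_{c}(T)$ (after integration by parts, justified because $f$ has compact support and $H_{\tau}$ is smooth). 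Hence $D^{2}K_{0} = D^{2}K_{\tau} = I$ and $K_{0}D^{2} = K_{\tau}D^{2} = I$ on the respective spaces.

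For the symmetry, I would verify that $\Gamma_{0}$ is an even distribution, i.e. $\Gamma_{0}(-x_{1},-x') = \Gamma_{0}(x_{1},x')$, and then use the standard fact that convolution with an even tempered distribution is formally self-adjoint. The piece $\Gamma_{0}^{0} = -2\pi^{2}|x_{1}|$ is manifestly even. For $\Gamma_{0}^{*}$, its Fourier coefficients $\Gamma_{0,k}^{*}(x_{1}) = \pi e^{-2\pi|k||x_{1}|}/|k|$ (for $k \neq 0$) are even in $x_{1}$ and depend only on $|k|$, so $\Gamma_{0,-k}^{*} = \Gamma_{0,k}^{*}$; reindexing the Fourier series then shows evenness. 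With this in hand, for $f,g \in C^{\infty}_{c}(T)$,
\[
\lan K_{0}f, g\ran = \lan \Gamma_{0}\ast f, g\ran = \lan \Gamma_{0}, \check{f}\ast g\ran = \lan \Gamma_{0}, \check{g}\ast f\ran = \lan K_{0}g, f\ran,
\]
where $\check{h}(x) = h(-x)$, using evenness of $\Gamma_{0}$ in the middle equality. The general case $f,g \in H^{-1}_{c}(T)$ follows by density, since $K_{0}: H^{-1}_{c}(T) \to H^{1}_{loc}(T)$ is continuous and the pairing is well-defined.

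The only mildly delicate step is the $L^{2}_{c}(T) \to H^{2}_{loc}(T)$ bound for $K_{0}^{0}$, since Proposition \ref{propn: K_{0}^{0}} as stated only gives $H^{1}_{loc}$; everything else reduces to bookkeeping with the previously established results.
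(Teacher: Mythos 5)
Your proposal is correct, and the overall structure (reduce to $K_{0}^{0}$ and $K_{0}^{*}$ for the mapping properties, invoke $K_{0} = K_{\tau} - R_{\tau}$ and harmonicity of $H_{\tau}$ for the Laplacian identities, and use evenness of $\Gamma_{0}$ for symmetry) matches the paper. The one point where you take a genuinely different route is the claim $K_{0}: L^{2}_{c}(T) \to H^{2}_{loc}(T)$. The paper disposes of this by writing $K_{0} = K_{\tau} - R_{\tau}$: Proposition \ref{propn: K_{tau}} already gives $K_{\tau}: L^{2}_{c}(T) \to H^{2}_{loc}(T)$, and $R_{\tau}$ maps $L^{2}_{c}(T)$ into $C^{\infty}(T)$ because its kernel $H_{\tau}$ is smooth, so $K_{0}$ inherits the mapping property by subtraction. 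You instead prove it directly: $K_{0}^{*}$ is handled by Proposition \ref{propn: K_{0}^{*}} with $s = 0$, and for $K_{0}^{0}$ you sharpen the argument of Proposition \ref{propn: K_{0}^{0}}, noting that $K_{0}^{0}f = -2\pi^{2}(|x_{1}|\ast f_{0})$ depends only on $x_{1}$, with $|x_{1}|\ast f_{0}$ locally bounded, $D_{x_{1}}(|x_{1}|\ast f_{0}) \propto \sgn\ast f_{0}$ bounded, and $D_{x_{1}}^{2}(|x_{1}|\ast f_{0}) \propto f_{0} \in L^{2}_{c}(\bR)$; cutting off then lands in $H^{2}_{loc}(T)$. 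Both arguments are sound. Yours is slightly more self-contained, avoiding the $\tau$-dependent machinery for this one claim, whereas the paper's is shorter because it reuses Proposition \ref{propn: K_{tau}}. On the symmetry, you supply the verification (evenness of both $\Gamma_{0}^{0}$ and $\Gamma_{0}^{*}$, then the standard identity for convolution with an even kernel and a density argument) where the paper merely asserts it follows from the symmetry of $\Gamma_{0}$; this is a welcome completion of a step the paper left terse.
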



\begin{proof}
The operator $R_{\tau}$ maps $L^{2}_{c}(T)$ into 
$C^{\infty}(T)$, because the kernel $H_{\tau}$ is a smooth
function. Therefore, $K_{0} = K_{\tau} - R_{\tau}$ also maps 
$L^{2}_{c}(T)$ into $H^{2}_{loc}(T)$. Moreover, we have 
that $K_{0}^{0}$ and $K_{0}^{*}$ map $H^{-1}_{c}(T)$
into $H^{1}_{loc}(T)$ from Proposition \ref{propn: K_{0}^{0}} and 
Proposition \ref{propn: K_{0}^{*}}, and therefore so does $K_{0}$. 
The identities with the Laplacian 
follow from those of Proposition \ref{propn: K_{tau}}, since 
the kernel of $R_{\tau}$ is a harmonic function. Finally, the 
symmetry of the operator follows from the symmetry of the kernel 
$\Gamma_{0} := \Gamma_{0}^{0} + \Gamma_{0}^{*}$.
\end{proof}


\begin{proposition}
\label{propn: K_{tau}_mapping}
The operator $K_{\tau}$ maps 
$H^{-1}_{c}(T)$ into $H^{1}_{loc}(T)$. 
\end{proposition}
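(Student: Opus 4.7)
The plan is to exploit the decomposition $\Gamma_{\tau} = \Gamma_{0} + H_{\tau}$ that has already been set up, which at the operator level reads $K_{\tau} = K_{0} + R_{\tau}$. Since Proposition \ref{propn: K_{0}} has already established that $K_{0}$ maps $H^{-1}_{c}(T)$ into $H^{1}_{loc}(T)$, the entire task reduces to showing that $R_{\tau}$ has the same (and in fact much better) mapping property on $H^{-1}_{c}(T)$.

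For $R_{\tau}$, I would use that its kernel $H_{\tau}$ is a smooth function on $T$ (a finite linear combination of $x_{1}$ and of exponentials $e^{-2\pi|k|\sgn(\tau)x_{1}}e_{k}(x')$ for $0<|k|<|\tau|$). Given $f \in H^{-1}_{c}(T)$, fix a cutoff $\phi \in C^{\infty}_{c}(T)$ with $\phi \equiv 1$ on a neighborhood of $\supp(f)$. Then for each $x \in T$ the function $y \mapsto \phi(y)H_{\tau}(x-y)$ lies in $C^{\infty}_{c}(T) \subset H^{1}(T)$, and I would define
\[
R_{\tau}f(x) := \lan f, \phi(\cdot)H_{\tau}(x-\cdot)\ran.
\]
This is independent of the choice of $\phi$ (any two such cutoffs differ on a set disjoint from $\supp(f)$), and it agrees with the convolution definition on $L^{2}_{c}(T)$ by inspection of \eqref{eqn: kernel_K_{tau}}, so this really is an extension of the operator $R_{\tau} = K_{\tau} - K_{0}$ from $L^{2}_{c}(T)$ to $H^{-1}_{c}(T)$.

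Next I would show that $R_{\tau}f \in C^{\infty}(T)$ by differentiating under the pairing: since $H_{\tau}$ is smooth and $\phi$ is fixed and compactly supported, the map $x \mapsto \phi(\cdot)H_{\tau}(x-\cdot)$ is smooth from $T$ into $H^{1}(T)$ (its derivatives with respect to $x$ are of the same form, with $H_{\tau}$ replaced by derivatives of $H_{\tau}$, which remain smooth). Composition with the continuous linear functional $f$ then gives $R_{\tau}f \in C^{\infty}(T) \subset H^{1}_{loc}(T)$. Combining with the boundedness of $K_{0}$ on $H^{-1}_{c}(T) \to H^{1}_{loc}(T)$ yields $K_{\tau}f = K_{0}f + R_{\tau}f \in H^{1}_{loc}(T)$.

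The only mild obstacle is the bookkeeping in the previous paragraph — making precise that the extension of $R_{\tau}$ agrees with the original one on the dense subspace $L^{2}_{c}(T)$, so that the identity $K_{\tau} = K_{0} + R_{\tau}$ persists on all of $H^{-1}_{c}(T)$. This follows since both sides act as convolution with $H_{\tau}$ when $f$ is smooth and compactly supported, and $C^{\infty}_{c}(T)$ is dense in $H^{-1}_{c}(\Omega)$ for any fixed compact set $\Omega$. No genuine analytic difficulty appears here, because the smoothness of $H_{\tau}$ absorbs the irregularity of $f$ completely.
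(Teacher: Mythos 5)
Your proposal is correct and takes essentially the same route as the paper: both use the decomposition $K_{\tau} = K_{0} + R_{\tau}$, invoke Proposition \ref{propn: K_{0}} for the $K_{0}$ part, and observe that $R_{\tau}$ (convolution with the smooth kernel $H_{\tau}$) maps $H^{-1}_{c}(T)$ into $C^{\infty}(T)$. You simply spell out the smoothing argument for $R_{\tau}$ in more detail than the paper, which states it as a one-line remark.
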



\begin{proof}
Recall that $K_{\tau} = K_{0} + R_{\tau}$. 
The result follows from Proposition \ref{propn: K_{0}} and the fact that
$R_{\tau}$ maps $H^{-1}_{c}(T)$ into $C^{\infty}(T)$. 
\end{proof}


\subsubsection{$\tau$-dependent single layer potential}


Recall that we have the boundedness of $\tr: H^{s}(T) \ra H^{s - 1/2}(\pa M)$, 
for $s > 1/2$. In particular we have $\tr: H^{1}(T) \ra H^{1/2}(\pa M)$ and its 
adjoint $\tr^{*}: H^{-1/2}(\pa M) \ra H^{-1}_{c}(T)$. The results from 
Proposition \ref{propn: K_{0}} and 
Proposition \ref{propn: K_{tau}_mapping} allow for the following definition.


\begin{definition}
\label{defn: S_{0}_S_{tau}}
Define the single layer operator 
$S_{0} := K_{0}\tr^{*}: H^{-1/2}(\pa M) \ra H^{1}_{loc}(T)$.
Similarly, for $|\tau| \geq \tau_{0}$, $\tau^{2} \notin \spec(-\Delta_{g_{0}})$, 
we define the $\tau$-dependent single layer operator
\[
S_{\tau} := K_{\tau}\tr^{*} : H^{-1/2}(\pa M) \ra H^{1}_{loc}(T).
\]
\end{definition}


\begin{proposition}
\label{propn: S_{0}_S_{tau}}
Let $S$ denote either of the single layer operators $S_{0}$ or $S_{\tau}$ 
from Definition \ref{defn: S_{0}_S_{tau}}, and let $\Gamma$ and $K$ denote 
either of $\Gamma_{0}$ and $K_{0}$ or $\Gamma_{\tau}$ and $K_{\tau}$, 
as it corresponds. For $\varphi \in H^{-1/2}(\pa M)$, the single layer potential 
$S\varphi \in H^{1}_{loc}(T)$ satisfies the following properties:
\begin{enumerate}[label=\textrm{\alph*).}]
\item for $x \notin \pa M$ we have the integral representation
$S\varphi(x) = \lan \varphi, \tr(\Gamma(x,\cdot))\ran$,
\item $S\varphi$ is harmonic in $M_{\pm}$,
\item $S\varphi$ has no jump at the boundary,
$\tr^{+}(S\varphi) = \tr^{-}(S\varphi)$, 
and therefore has a well-defined trace,
\item the normal derivatives of $S\varphi$ satisfy that
$\pa_{\nu}^{-}S\varphi - \pa_{\nu}^{+}S\varphi = 4\pi^{2}\varphi$ 
on $\pa M$,
\item if $\varphi \in H^{1/2}(\pa M)$, then 
$S\varphi|_{M} \in H^{2}(M)$,
$S\varphi|_{M_{+}}$ has an extension in 
$H^{2}_{loc}(T)$, and
$\tr \circ S$ maps $H^{1/2}(\pa M)$ into $H^{3/2}(\pa M)$.
\end{enumerate}
\end{proposition}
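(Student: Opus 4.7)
The plan is to reduce almost everything to the distributional identity $D^{2}S\varphi = \tr^{*}\varphi$ and use density arguments from $C^{\infty}_{c}(\pa M) \sse H^{-1/2}(\pa M)$; only part (e) requires genuine regularity theory beyond this. Since $\Gamma_{\tau} - \Gamma_{0} = H_{\tau}$ is a globally smooth harmonic function, convolution with it is a smoothing operator and both cases $S = S_{0}$ and $S = S_{\tau}$ are handled uniformly below.

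First I would extend the identity $D^{2}K = I$, established in Proposition \ref{propn: K_{0}} and Proposition \ref{propn: K_{tau}_mapping} on $L^{2}_{c}(T)$ (and $H^{2}_{c}(T)$), to $H^{-1}_{c}(T)$ by density of $C^{\infty}_{c}(T)$ and continuity of both $D^{2}: H^{1}_{loc}(T) \ra H^{-1}_{loc}(T)$ and $K: H^{-1}_{c}(T) \ra H^{1}_{loc}(T)$. This gives $D^{2}S\varphi = \tr^{*}\varphi$ in $\cD'(T)$, from which part (b) is immediate: since $\tr^{*}\varphi$ is supported in $\pa M$, the condition \eqref{defn: harmonic} is satisfied in each $M_{\pm}$. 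Part (c) is likewise immediate since $S\varphi \in H^{1}_{loc}(T)$ is a single function, so $\tr^{-}(S\varphi|_{M}) = \tr(S\varphi)|_{\pa M} = \tr^{+}(S\varphi|_{M_{+}})$. For part (d), given $f \in H^{1/2}(\pa M)$ with extension $v \in H^{1}_{c}(T)$, summing the normal-derivative definitions \eqref{defn: normal_derivatives} and using distributional integration by parts yields
\[
\lan \pa_{\nu}^{-}S\varphi - \pa_{\nu}^{+}S\varphi, f\ran
= -4\pi^{2}\int_{T}D(S\varphi)\cdot Dv
= 4\pi^{2}\lan D^{2}S\varphi, v\ran
= 4\pi^{2}\lan \tr^{*}\varphi, v\ran
= 4\pi^{2}\lan \varphi, f\ran.
\]

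For part (a), I would first check the identity on the dense subset $C^{\infty}_{c}(\pa M) \sse H^{-1/2}(\pa M)$. For such $\varphi$, the distribution $\tr^{*}\varphi$ is integration against $\varphi\, d\sigma$, so the convolution formula $K = \Gamma\ast$ and the smoothness of $\Gamma(x,\cdot)$ away from $x$ (Proposition \ref{propn: Gamma_{tau}} and the analogous property of $\Gamma_{0}$) give
\[
S\varphi(x) = \int_{\pa M}\Gamma(x,y)\varphi(y)d\sigma(y) = \lan \varphi, \tr(\Gamma(x,\cdot))\ran \quad \textrm{for} \ \ x \notin \pa M,
\]
where both sides make sense pointwise because (b) together with hypoellipticity of $D^{2}$ ensures $S\varphi$ is smooth in $M_{\pm}$. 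The right-hand side depends continuously on $\varphi$ through the $H^{-1/2} \times H^{1/2}$ pairing (since $\tr(\Gamma(x,\cdot)) \in C^{\infty}(\pa M)$), and the left-hand side depends continuously on $\varphi$ through the $H^{-1/2}(\pa M) \ra H^{1}_{loc}(T)$ boundedness of $S$ combined with interior elliptic regularity, so the identity extends to all of $H^{-1/2}(\pa M)$ by density.

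Part (e) is where the real work lies. The key intermediate step is to show that $\tr\circ S: H^{1/2}(\pa M) \ra H^{3/2}(\pa M)$ is bounded. Decomposing $\Gamma_{0}$ into the $(d+1)$-dimensional Euclidean fundamental solution of $D^{2}$, with its $|x-y|^{-(d-1)}$ singularity at the diagonal, plus a globally smooth remainder, one identifies $\tr\circ S$ as a classical elliptic pseudodifferential operator of order $-1$ on the compact boundary manifold $\pa M$, and hence bounded $H^{s}(\pa M) \ra H^{s+1}(\pa M)$ for all $s$. With $\tr^{-}(S\varphi) \in H^{3/2}(\pa M)$ in hand, Proposition \ref{propn: Laplacian}, applied via an $H^{2}(M)$ extension of the boundary data to turn the homogeneous Dirichlet problem into an inhomogeneous $D^{2}$ equation with $H^{1}_{0}(M)$ data, upgrades $S\varphi|_{M}$ to $H^{2}(M)$. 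For the exterior I would localize in a collar neighborhood $U$ of $\pa M$: the local problem in $M_{+}\cap U$ with $H^{3/2}$ Dirichlet data produces an $H^{2}$ extension of $S\varphi|_{M_{+}}$ across $\pa M$; outside a neighborhood of $\pa M$ the integral formula in (a) shows $S\varphi$ is smooth, and a cutoff-patching argument delivers the $H^{2}_{loc}(T)$ extension. The main obstacle is the boundary mapping property of $\tr\circ S$; identifying its principal symbol and applying the standard pseudodifferential calculus on $\pa M$ is the only non-routine ingredient of the whole proposition.
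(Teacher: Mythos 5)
Your proof is correct, and while it reaches the same conclusions, the route differs from the paper's in a few interesting ways.

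The paper establishes part (a) first, directly via the duality chain
$\lan \varphi, \tr(\Gamma(x,\cdot))\ran = \lan \tr^{*}\varphi, \Gamma(x,\cdot)\ran = \Gamma \ast \tr^{*}\varphi(x) = K\tr^{*}\varphi(x) = S\varphi(x)$
for a fixed $x \notin \pa M$, using only that $\Gamma(x,\cdot)$ is smooth near $\pa M$; no density argument in $\varphi$ is needed. Part (b) then follows from (a) by differentiating under the pairing, since $\Gamma(\cdot,y)$ is harmonic in $M_{\pm}$. You instead put the distributional identity $D^{2}S\varphi = \tr^{*}\varphi$ at the center, derive (b) from the support of $\tr^{*}\varphi$, and obtain (a) by density from $C^{\infty}(\pa M)$ with a continuity argument on both sides (invoking interior elliptic regularity for the left-hand side). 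Both are fine; the paper's (a) is shorter, but your reorganization makes (b), (c), (d) flow from a single identity.

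For part (d) the difference is genuine and you actually gain something. The paper restricts first to $g \in H^{3/2}(\pa M)$ so that the extension $v$ lies in $H^{2}_{c}(T)$, applies the symmetry of $K_{0}$ and the identity $K_{0}D^{2} = I$ on $H^{2}_{c}(T)$, then passes to $H^{1/2}(\pa M)$ by density (after reducing $S_{\tau}$ to $S_{0}$ via the smoothing of $R_{\tau}$). Your derivation pairs $D^{2}S\varphi$ against $v \in H^{1}_{c}(T)$ directly and applies $D^{2}S\varphi = \tr^{*}\varphi$, which avoids both the $H^{3/2}$ restriction and the reduction to $S_{0}$. The only thing you need to be careful about (and do say) is that the identity $D^{2}K = I$ extends from $L^{2}_{c}(T)$ to $H^{-1}_{c}(T)$ by density; that is indeed the correct way to close that gap, though it is worth noting the paper never states this extension explicitly.

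Part (e) is the largest divergence. The paper disposes of it in a few lines by invoking the transmission property from McLean \cite{Mc} (following \cite{KSaU2}): harmonicity on both sides plus the jump relations of (c)--(d) with jump in $H^{1/2}(\pa M)$ give $H^{2}$ regularity near $\pa M$ directly, and the boundary mapping $\tr \circ S : H^{1/2}(\pa M) \to H^{3/2}(\pa M)$ is then a consequence. You reverse the order: you first claim $\tr \circ S$ is a classical pseudodifferential operator of order $-1$ on $\pa M$ (decomposing $\Gamma_{0}$ near the diagonal as the Euclidean fundamental solution $|x-y|^{-(d-1)}$ plus a locally smooth remainder), hence bounded $H^{1/2} \to H^{3/2}$, and then recover the interior and exterior $H^{2}$ regularity via the elliptic Dirichlet problem and a collar patching. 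This works, and has the advantage of being more self-contained for someone without \cite{Mc} at hand, but it is substantially more work: the parametrix decomposition of $\Gamma_{0}$ and the identification of the boundary symbol are nontrivial (and your phrase ``globally smooth remainder'' should really be ``remainder smooth after cutting off the Euclidean parametrix near the diagonal''). It would be simpler, and closer in spirit to the paper, to go through the transmission regularity theorem and deduce the mapping property of $\tr \circ S$ afterward, rather than the reverse.

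One cosmetic remark: the sign in your display for (d), $\lan \pa_{\nu}^{-}S\varphi - \pa_{\nu}^{+}S\varphi, f\ran = -4\pi^{2}\int_{T}D(S\varphi)\cdot Dv$, does not match a naive reading of the sign convention in \eqref{defn: normal_derivatives}, which would produce $+4\pi^{2}\int_{T}$; the paper's own proof has the same apparent discrepancy, so this is an issue with the convention in the source rather than with your argument.
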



\begin{proof}
Let $\varphi \in H^{-1/2}(\pa M)$. For a fixed $x \notin \pa M$ 
there exists an open neighborhood 
$N \sse T$, such that $\pa M \sse N$ and $x \notin N$. From 
Proposition \ref{propn: Gamma_{tau}} and the fact that 
$H_{\tau}$ is smooth we have that 
$\Gamma(x,\cdot)$ is smooth in $N$ and so 
$\tr(\Gamma(x,\cdot)) \in H^{1/2}(\pa M)$. Therefore,
\[
\lan \varphi, \tr(\Gamma(x,\cdot))\ran 
= \lan \tr^{*}\varphi, \Gamma(x,\cdot)\ran
= \Gamma\ast \tr^{*}\varphi(x)
= K\tr^{*}\varphi(x) = S\varphi(x).
\]

The harmonicity of $S\varphi$ in $M_{\pm}$ follows from
the previous result as $\Gamma(\cdot,y)$ is harmonic
in $M_{\pm}$ for any $y \in \pa M$. The existence 
of a well-defined trace follows from the fact that 
$S\varphi \in H^{1}_{loc}(T)$. Given that 
$S\varphi \in H^{1}_{loc}(T)$ is harmonic in $M_{\pm}$, 
there are well-defined normal derivatives as elements of 
$H^{-1/2}(\pa M)$. Moreover, 
$K_{\tau} - K_{0} = R_{\tau}$
maps $H^{-1}_{c}(T)$ into $C^{\infty}(T)$, so it 
suffices to show the jump condition for $S = S_{0}$.
Let $g \in H^{3/2}(\pa M)$ and let $v \in H^{2}_{c}(T)$
be some function extending $g$.
The definition of normal derivatives 
\eqref{defn: normal_derivatives}, integration by parts,
and Proposition \ref{propn: K_{0}} give that
\begin{align*}
\lan(\pa_{\nu}^{-} - \pa_{\nu}^{+})S_{0}\varphi,g\ran
& = 4\pi^{2}\int_{T}-DS_{0}\varphi\cdot Dv \\
& = 4\pi^{2}\int_{T}S_{0}\varphi D^{2}v \\
& = 4\pi^{2}\lan K_{0}\tr^{*}\varphi, D^{2}v\ran
= 4\pi^{2}\lan \tr^{*}\varphi, K_{0}D^{2}v\ran
= 4\pi^{2}\lan \tr^{*}\varphi, v\ran
= 4\pi^{2}\lan \varphi, g\ran.
\end{align*}

The density of $H^{3/2}(\pa M)$ in $H^{1/2}(\pa M)$
implies the jump condition of the normal derivatives. 
Finally, as in \cite{KSaU2}, we invoke the transmission 
property from \cite{Mc} to prove the higher regularity 
properties of the single layer potential. Namely, the harmonicity
of $S\varphi|_{M_{\pm}}$ and the jump conditions at the boundary give
that if $\varphi \in H^{1/2}(\pa M)$, then 
$S\varphi|_{M_{\pm}}$ is in $H^{2}(M_{\pm}\cap N)$, for
some neighborhood $N \sse T$ of $\pa M$. The interior regularity
of harmonic functions gives that $S\varphi|_{M} \in H^{2}(M)$
and $S\varphi|_{M_{+}} \in H^{2}_{loc}(M_{+})$, and the 
boundary regularity allows to construct the extension of
$S\varphi|_{M_{+}}$ to $H^{2}_{loc}(T)$.
\end{proof}


\begin{remark}
We will not need this, but the map 
$\tr \circ S_{\tau} : H^{s}(\pa M) \ra H^{s + 1}(\pa M)$ is 
bounded for $s \geq -1/2$.
\end{remark}


\subsection{Equivalent formulations and boundary characterization}


For the rest of the section we assume that $0$ is not an eigenvalue 
of the magnetic Schr$\ddot{\textrm{o}}$dinger operator $H_{V,W}$ 
on $M$. Let $|\tau| \geq \tau_{0}$, 
$\tau^{2} \notin \spec(-\Delta_{g_{0}})$ as in Theorem \ref{thm: carleman_full},
and let $h \in H^{2}_{loc}(T)$ be a harmonic function. 


\begin{theorem}
\label{thm: equivalent}
All the following problems have a unique solution:
\begin{align*}
\mbox{(DE):} \ & \ u = h + e^{-2\pi \tau x_{1}}r, 
\ \mbox{with} \ r \in H^{2}_{-\delta}(T),
\ \mbox{solves the differential equation} 
\ H_{V,W}u = 0 \  \mbox{in} \ T, \\
\mbox{(IE):} \ & \ u \in H^{2}_{loc}(T) \ \mbox{solves the integral equation} 
\ u + K_{\tau}Xu = h \ \mbox{in} \ T, \\ 
\mbox{(EP):} \ & \ \wilde{u} \in H^{2}_{loc}(M_{+}) \ \mbox{is harmonic, has
an extension in} \ H^{2}_{loc}(T) \ \mbox{of the form} \ h + e^{-2\pi\tau x_{1}}r
\ \mbox{with} \\
& \ r \in H^{2}_{-\delta}(T), \ \mbox{and} \ 
\pa_{\nu}^{+}\wilde{u} = 4\pi^{2}\Lambda_{V,W}(\tr^{+}(\wilde{u})), \\
\mbox{(BE):} \ & \ f \in H^{3/2}(\pa M) \ \mbox{solves the boundary
equation} \ (I + \tr \circ S_{\tau}(\Lambda_{V,W} - \Lambda_{0,0}))f = \tr(h).
\end{align*}

These problems are equivalent in the following sense:
\begin{align*}
\mbox{(DE)} \Leftrightarrow \mbox{(IE):} \ 
& \  u \ \mbox{solves (DE) if and only if $u$ solves (IE),} \\
\mbox{(DE)} \Leftrightarrow \mbox{(EP):} \ 
& \  \mbox{if} \ u \ \mbox{solves (DE), then} \ u|_{M_{+}} \ \mbox{solves (EP), and if} 
\ \wilde{u} \ \mbox{solves (EP), then there exists an} \\
& \ \mbox{extension} \ u \ \mbox{to} \ T
\ \mbox{that solves (DE),} \\
\mbox{(DE)} \Rightarrow \mbox{(BE):} \
& \mbox{if} \ u \ \mbox{solves (DE), then} \ \tr(u) \ \mbox{solves (BE),} \\
\mbox{(BE)} \Rightarrow \mbox{(EP):} \
& \mbox{if} \ f \ \mbox{solves (BE), then there is an extension} \
\wilde{u} \ \mbox{to} \ M_{+} \ \mbox{that solves (EP).}
\end{align*}
\end{theorem}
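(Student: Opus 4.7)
The plan is to establish (DE) via Theorem~\ref{thm: carleman_full} and then transport it through the operators $K_\tau$ and $S_\tau$ to obtain the other three formulations together with their equivalences; uniqueness in each will follow by reducing to (DE). For (DE), write $u = h + e^{-2\pi\tau x_1} r$ with $r \in H^2_{-\delta}(T)$ and rewrite $H_{V,W}u = 0$ as
\[
e^{2\pi\tau x_1} H_{V,W} e^{-2\pi\tau x_1} r = -e^{2\pi\tau x_1} X h, \qquad X := H_{V,W} - D^2.
\]
Because $V,W$ are supported in $M$, $Xh \in L^2_c(T)$ and the right-hand side lies in $L^2_\delta(T)$, so Theorem~\ref{thm: carleman_full} supplies a unique $r$. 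For (DE)~$\Leftrightarrow$~(IE), observe $Xu \in L^2_c(T)$ and $D^2(u-h) = -Xu$; setting $w := -K_\tau X u$, the identity $D^2 K_\tau = I$ on $L^2_c(T)$ from Proposition~\ref{propn: K_{tau}} gives $D^2(w - (u - h)) = 0$, and both $e^{2\pi\tau x_1}(u-h)$ and $e^{2\pi\tau x_1} w = -G_\tau(e^{2\pi\tau x_1} Xu)$ lie in $H^2_{-\delta}(T)$, so the uniqueness in Theorem~\ref{thm: carleman_free} yields $u - h = -K_\tau Xu$, i.e.~(IE). Conversely, applying $D^2$ to (IE) recovers $H_{V,W}u = 0$, and the same explicit formula for $-K_\tau Xu$ places $r := e^{2\pi\tau x_1}(u - h)$ in $H^2_{-\delta}(T)$.

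For (DE)~$\Leftrightarrow$~(EP): since $V,W$ vanish on $M_+$, $u|_{M_+}$ is automatically harmonic for any $u$ solving (DE), and using $u \in H^2_{loc}(T)$ together with Proposition~\ref{propn: DN_map} gives $\pa_\nu^+ u = \pa_\nu^- u = 4\pi^2 \Lambda_{V,W}(\tr u)$. Conversely, given $\wilde u$ solving (EP) with $f := \tr^+ \wilde u$, define $u := D_{V,W}(f)$ on $M$ and $u := \wilde u$ on $M_+$; Proposition~\ref{propn: existence_uniqueness} places $u|_M$ in $H^2(M)$, while traces and normal derivatives match across $\pa M$ by Proposition~\ref{propn: DN_map} and the (EP) condition, so $u \in H^2_{loc}(T)$ and $H_{V,W}u = 0$ in all of $T$ with no delta mass on $\pa M$; the (EP) extension combined with the boundedness of $M$ shows that $r = e^{2\pi\tau x_1}(u - h)$ lies in $H^2_{-\delta}(T)$.

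The key step is (DE)~$\Rightarrow$~(BE). Taking the trace of (IE) it suffices to prove
\[
\tr(K_\tau X u) = \tr \circ S_\tau(\Lambda_{V,W} - \Lambda_{0,0}) f, \qquad f := \tr u.
\]
For $x \in M_+$, $\Gamma_\tau(x,\cdot)$ is smooth and harmonic on $M$ by Proposition~\ref{propn: Gamma_{tau}}, and $Xu = -D^2 u$ on $M$ by $H_{V,W}u = 0$, so Green's identity gives
\[
K_\tau X u(x) = -\int_M \Gamma_\tau(x,\cdot) D^2 u\, dy = \frac{1}{4\pi^2}\int_{\pa M}\Gamma_\tau(x,\cdot) \pa_\nu^- u\, d\sigma - \frac{1}{4\pi^2}\int_{\pa M} f\, \pa_\nu^-\Gamma_\tau(x,\cdot)\, d\sigma.
\]
The first boundary integral equals $S_\tau \Lambda_{V,W} f(x)$ by Proposition~\ref{propn: DN_map}. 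To identify the double layer, introduce $w \in H^2(M)$ with $D^2 w = 0$ and $\tr w = f$; the same Green identity applied to $w$ makes both sides vanish (both $\Gamma_\tau(x,\cdot)$ and $w$ are harmonic in $M$) and yields $\int_{\pa M} f\, \pa_\nu^-\Gamma_\tau(x,\cdot)\, d\sigma = \int_{\pa M}\Gamma_\tau(x,\cdot)\pa_\nu^- w\, d\sigma = 4\pi^2 S_\tau \Lambda_{0,0} f(x)$ via $\pa_\nu^- w = 4\pi^2 \Lambda_{0,0} f$. Combining and taking the trace from $M_+$ (continuous across $\pa M$ by Proposition~\ref{propn: S_{0}_S_{tau}}(c)) produces (BE). For (BE)~$\Rightarrow$~(EP), set $\wilde u := h - S_\tau(\Lambda_{V,W} - \Lambda_{0,0}) f$ in $M_+$: it is harmonic by Proposition~\ref{propn: S_{0}_S_{tau}}(b), and the factorization $\Gamma_\tau(x,y) = e^{-2\pi\tau(x_1 - y_1)} g_\tau(x - y)$ together with the boundedness of $\pa M$ forces $e^{2\pi\tau x_1} S_\tau \varphi \in H^2_{-\delta}(T)$, giving the required extension form. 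Moreover (BE) gives $\tr^+ \wilde u = f$; combining the jump relation $\pa_\nu^- S_\tau \varphi - \pa_\nu^+ S_\tau \varphi = 4\pi^2 \varphi$ from Proposition~\ref{propn: S_{0}_S_{tau}}(d) with $\pa_\nu^- S_\tau(\Lambda_{V,W} - \Lambda_{0,0}) f = 4\pi^2 \Lambda_{0,0}(\tr h - f)$ (using harmonicity of $S_\tau \varphi$ in $M$ together with (BE) to compute its trace) and $\pa_\nu^- h = \pa_\nu^+ h = 4\pi^2 \Lambda_{0,0}(\tr h)$ produces $\pa_\nu^+ \wilde u = 4\pi^2 \Lambda_{V,W} f$ after cancellation.

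Uniqueness in (IE), (EP), and (BE) reduces to uniqueness in (DE): the difference of two solutions corresponds under the equivalences above to a (DE) solution with $h = 0$, forced to vanish by Theorem~\ref{thm: carleman_full} — for (EP) this uses the extension-via-$D_{V,W}$ construction, for (BE) the map $f \mapsto \wilde u := h - S_\tau(\Lambda_{V,W} - \Lambda_{0,0}) f$. The main obstacle I anticipate is the careful bookkeeping in (DE)~$\Rightarrow$~(BE): justifying Green's identity against the distribution $\Gamma_\tau$ (leveraging $x \in M_+$ to keep the singularity off $M$) and arranging the auxiliary function $w$ so that the double layer collapses exactly into $S_\tau \Lambda_{0,0} f$, together with the delicate algebraic cancellation in (BE)~$\Rightarrow$~(EP) that matches $\pa_\nu^+ \wilde u$ precisely with $4\pi^2 \Lambda_{V,W} f$.
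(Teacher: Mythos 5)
Your proof is correct and follows the paper's overall architecture, with uniqueness transferred through the chain of equivalences. The genuine divergence is in (DE)$\Rightarrow$(BE). The paper isolates the identity $\tr K_\tau Xu = \tr S_\tau(\Lambda_{V,W}-\Lambda_{0,0})\tr(u)$ as Proposition \ref{propn: K_{tau}_S_{tau}} and proves it by writing $K_\tau X u$, $S_\tau\Lambda_{V,W}\tr u$, and $S_\tau\Lambda_{0,0}\tr u$ as volume integrals via the weak-form definitions of the DN maps, then killing the leftover after subtraction with the vector-field Green identity of Proposition \ref{propn: Green}. You instead substitute $Xu = -D^2 u$ (valid in $M$ since $H_{V,W}u=0$) and apply classical Green's second identity twice: once pairing $\Gamma_\tau(x,\cdot)$ (smooth and harmonic in a neighborhood of $M$ when $x \in M_+$) against $u|_M \in H^2(M)$ to produce single- and double-layer boundary terms, and once pairing it against the harmonic extension $D_{0,0}f$ of $f = \tr u$ to collapse the double layer into $S_\tau\Lambda_{0,0}f$. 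This is the classical potential-theoretic route in the spirit of Nachman and is arguably more transparent; it uses the strong form $\pa_\nu^- u = 4\pi^2\Lambda_{V,W}\tr u$ of Proposition \ref{propn: DN_map}, which requires $u|_M \in H^2(M)$ (available here) and whose own proof invokes Proposition \ref{propn: Green}, so neither route actually avoids that lemma. The remaining steps --- (IE) via $D^2 K_\tau = I$ and the uniqueness in Theorem \ref{thm: carleman_free}, (EP) via $D_{V,W}$ and matching of traces and normal derivatives, and (BE)$\Rightarrow$(EP) via the jump relation of Proposition \ref{propn: S_{0}_S_{tau}} and the decay of $e^{2\pi\tau x_1}S_\tau\varphi$ at infinity --- match the paper's arguments in substance.
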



\begin{proof}
From Theorem \ref{thm: carleman_full} we know that (DE) has a unique solution.
It remains to show the equivalence between the existence of solutions, as 
the equivalence of the uniqueness follows from this.

\bigskip 
We start proving that the problems (DE) and (IE) are equivalent.
Assume that a solution to the equation $H_{V,W}u = 0$, has the form 
$u = h + e^{-2\pi \tau x_{1}}r$, with $r \in H^{2}_{-\delta}(T)$. Then 
$u \in H^{2}_{loc}(T)$ and we see that $r$ solves
\[
\Delta_{\tau}r = e^{2\pi\tau x_{1}}D^{2}(u - h)
= e^{2\pi\tau x_{1}}D^{2}u
= - e^{2\pi \tau x_{1}}Xu,
\] 

and $e^{2\pi\tau x_{1}}Xu \in L^{2}_{c}(M)$.
Since $r \in H^{2}_{-\delta}(T)$, the uniqueness from 
Theorem \ref{thm: carleman_full} implies 
that $r = -G_{\tau}(e^{2\pi \tau x_{1}}Xu)$,
and so $h = u - e^{-2\pi \tau x_{1}}r = u + K_{\tau}Xu$.
Conversely, if $u \in H^{2}_{loc}(T)$ satisfies $u + K_{\tau}Xu = h$, 
then $u = h + e^{-2\pi \tau x_{1}}r$ with 
$r := - G_{\tau}(e^{2\pi \tau x_{1}}Xu) \in H^{2}_{-\delta}(T)$.
This gives that 
\[
e^{2\pi\tau x_{1}}D^{2}u 
= e^{2\pi \tau x_{1}}D^{2}(u - h)
= \Delta_{\tau}r 
= -e^{2\pi \tau x_{1}}Xu,
\]

and thus $H_{V,W}u = 0$. 						

\bigskip
Now we show that the problems (DE) and (EP) are equivalent.
Assume that a solution to the equation $H_{V,W}u = 0$, has the form 
$u = h + e^{-2\pi \tau x_{1}}r$, with $r \in H^{2}_{-\delta}(T)$, so that 
$u \in H^{2}_{loc}(T)$. 
If we let $\wilde{u} := u|_{M_{+}}$, then $\wilde{u} \in H^{2}_{loc}(M_{+})$, and, 
given that $V,W$ are supported in $M$, we have that $\wilde{u}$ 
is harmonic in $M_{+}$. If $g \in H^{1/2}(\pa M)$ and $v \in H^{1}_{c}(T)$
is some function extending $g$, then from the definitions 
\eqref{defn: normal_derivatives} and \eqref{defn: DN_magnetic} 
we have
\[
\lan \pa_{\nu}^{+}\wilde{u}, g\ran
= -4\pi^{2}\int_{M_{+}}- D\wilde{u}\cdot Dv,
\]
\[
\lan \Lambda_{V,W}(\tr^{-}(u)),g\ran 
= \int_{M}-Du\cdot Dv + V\cdot (v Du  
- uDv) + (V^{2} + W)uv.
\]

Since $u$ is a solution to $H_{V,W}u = 0$ in $T$, and 
$V,W$ are supported in $M$, we obtain that
\[
-\int_{M_{+}}-D\wilde{u} \cdot Dv
= -\int_{M_{+}}- Du\cdot Dv
= \int_{M}-Du\cdot Dv + V\cdot (v Du  
- uDv) + (V^{2} + W)uv
\]

which gives that 
$\pa_{\nu}^{+}\wilde{u} = 4\pi^{2}\Lambda_{V,W}(\tr^{-}(u))$.
Thus we conclude that
\[
\pa_{\nu}^{+}\wilde{u} 
= 4\pi^{2}\Lambda_{V,W}(\tr^{-}(u))
= 4\pi^{2}\Lambda_{V,W}(\tr^{+}(u))
= 4\pi^{2}\Lambda_{V,W}(\tr^{+}(\wilde{u})).
\]

Conversely, suppose that $\wilde{u} \in H^{2}_{loc}(M_{+})$ is harmonic 
in $M_{+}$, satisfies 
$\pa_{\nu}^{+}\wilde{u} = 4\pi^{2}\Lambda_{V,W}(\tr^{+}(\wilde{u}))$, 
and is such that $\wilde{u}$ has an extension in $H^{2}_{loc}(T)$
of the form $h + e^{-2\pi \tau x_{1}}r$ on $M_{+}$ with 
$r \in H^{2}_{-\delta}(T)$. We want to extend $\wilde{u}$ to the interior 
of $M$ in order to solve $H_{V,W}u = 0$ in $T$. Let 
$\ov{u} = D_{V,W}(\tr^{+}(\wilde{u})) \in H^{2}(M)$, i.e. 
the solution of the problem 
\[
\biggl\{
\begin{array}{rll}
H_{V,W}\ov{u} \hspace{-2mm} & = 0 & \ \textrm{in} \ \ M_{-}, \\
\ov{u} \hspace{-2mm} & = \tr^{+}(\wilde{u}) & \ \textrm{on} \ \ \pa M.
\end{array}
\]

and define $u|_{M_{+}} = \wilde{u} \in H^{2}_{loc}(M_{+})$ and 
$u|_{M} = \ov{u} \in H^{2}(M)$. Then we have
\[
\tr^{+}(u) = \tr^{+}(\wilde{u}) = \tr^{-}(\ov{u}) = \tr^{-}(u),
\]
\[
\pa_{\nu}^{+}u = \pa_{\nu}^{+}\wilde{u} 
= 4\pi^{2}\Lambda_{V,W}(\tr^{+}(\wilde{u}))
= 4\pi^{2}\Lambda_{V,W}(\tr^{-}(\ov{u}))
= \pa_{\nu}^{-}\ov{u} 
= \pa_{\nu}^{-}u,
\]

where we used the result from Proposition \ref{propn: DN_map}.
This implies that $u$ is in $H^{2}_{loc}(T)$ and solves $H_{V,W}u = 0$.
Moreover, $u = h + e^{-2\pi\tau x_{1}}\ov{r}$ in $T$, with 
$\ov{r} \in H^{2}_{-\delta}(T)$, where $\ov{r}|_{M_{+}} = r|_{M_{+}}$ 
and $\ov{r}|_{M} = e^{2\pi \tau x_{1}}(u - h)|_{M}$.

\bigskip
Now we prove that (DE) implies (BE).
Assume that a solution to the equation $H_{V,W}u = 0$ has the form 
$u = h + e^{-2\pi \tau x_{1}}r$, with $r \in H^{2}_{-\delta}(T)$, so that 
$u \in H^{2}_{loc}(T)$ and $\tr(u) \in H^{3/2}(\pa M)$. The equivalence 
between (DE) and (IE) yields that $u + K_{\tau}Xu = h$ and so 
$\tr(u) + \tr K_{\tau}Xu = \tr(h)$. Taking (exterior) traces in 
Proposition \ref{propn: K_{tau}_S_{tau}} below, gives that 
$\tr K_{\tau}Xu = \tr \circ S_{\tau}(\Lambda_{V,W} - \Lambda_{0,0})\tr(u)$,
which implies that $\tr(u)$ solves (BE).

\bigskip
Finally, we show that (BE) implies (EP).
Suppose $f \in H^{3/2}(\pa M)$ solves the boundary equation
$(I + \tr\circ S_{\tau}(\Lambda_{V,W} - \Lambda_{0,0}))f = \tr(h)$. 
Motivated by Proposition \ref{propn: K_{tau}_S_{tau}} below, we define 
$\wilde{u} := h - S_{\tau}(\Lambda_{V,W} - \Lambda_{0,0})f$.
The boundary equation gives that $\tr(\wilde{u}) = f$. From Proposition \ref{propn: S_{0}_S_{tau}} 
we know that the restrictions $\wilde{u}|_{M_{\pm}}$ are in $H^{2}_{loc}(M_{\pm})$ 
and are harmonic in $M_{\pm}$, respectively. Since $\wilde{u}|_{M}$ 
is harmonic in $M$ and $\tr(\wilde{u}) = f$, then we have 
$\pa_{\nu}^{-}\wilde{u} = 4\pi^{2}\Lambda_{0,0}f$. Given that 
$h \in H^{2}_{loc}(T)$, the definition of $\wilde{u}$, and the 
jump condition of the normal derivatives from Proposition \ref{propn: S_{0}_S_{tau}} 
we obtain that
\[
\pa_{\nu}^{+}\wilde{u} = \pa_{\nu}^{-}\wilde{u} 
+ 4\pi^{2}(\Lambda_{V,W} - \Lambda_{0,0})f
= 4\pi^{2}\Lambda_{V,W}f.
\]

From Proposition \ref{propn: S_{0}_S_{tau}} we know that $\wilde{u}|_{M_{+}}$ 
has an extension in $H^{2}_{loc}(T)$.
All that remains is to show that $\wilde{u}|_{M_{+}}$ has an extension 
in $H^{2}_{loc}(T)$ of the form $h + e^{-2\pi \tau x_{1}}r$, with 
$r \in H^{2}_{-\delta}(T)$. Given that $\wilde{u} := h - S_{\tau}\phi$
with $\phi \in H^{1/2}(\pa M)$, all we have to show is that 
$e^{2\pi\tau x_{1}}S_{\tau}\phi|_{M_{+}}$ has an extension in 
$H^{2}_{-\delta}(T)$. From Proposition \ref{propn: S_{0}_S_{tau}} we know that 
it has an extension in $H^{2}_{loc}(T)$, so it suffices to show that 
$e^{2\pi\tau x_{1}}S_{\tau}\phi$ is in $H^{2}_{-\delta}(|x_{1}| \geq L)$ 
for some large $L$. From the integral representation in
Proposition \ref{propn: S_{0}_S_{tau}} we see that 
\[
e^{2\pi\tau x_{1}}S_{\tau}\phi(x)
= e^{2\pi \tau x_{1}}\lan \phi, \tr(\Gamma_{\tau}(x,\cdot))\ran
= \lan e^{2\pi\tau y_{1}}\phi, \tr(g_{\tau}(x,\cdot))\ran,
\]
where we have used that
$\Gamma_{\tau}(x,y) = e^{-2\pi\tau (x_{1} - y_{1})}g_{\tau}(x,y)$.
From Proposition \ref{propn: Green_Fourier_explicit} we have that the restrictions
$\{\tr(D_{x}^{\alpha}g_{\tau}(x,\cdot))\}$ are 							
uniformly bounded for $|x_{1}| \geq L$ with $L$ large. This implies that 
$D^{\alpha}(e^{2\pi\tau x_{1}}S_{\tau}\phi)$ is uniformly bounded 
for $|x_{1}| \geq L$, and we conclude that
$e^{2\pi\tau x_{1}}S_{\tau}\phi \in H^{2}_{-\delta}(|x_{1}| \geq L)$,
as desired.
\end{proof}


The following identity, which follows by integration by parts, is at the core 
of the results of this section, and we consider it interesting in
its own.


\begin{proposition}
\label{propn: K_{tau}_S_{tau}}
Let $u \in H^{2}(M)$ satisfy $H_{V,W}u = 0$ in $M$. 
Let $J : H^{2}(M) \twoheadrightarrow H^{1}(M)$ 
be the compact embedding, and let 
$E: L^{2}(M) \ra L^{2}(T)$ denote the 
extension by zero, so that $EXJu \in L^{2}_{c}(T)$.
For $x \in M_{+}$ we have the identity 
\begin{equation}
\label{eqn: K_{tau}_S_{tau}}
K_{\tau}(EXJu)(x) 
= S_{\tau}[(\Lambda_{V,W} - \Lambda_{0,0})\tr^{-}(u)](x).
\end{equation}
\end{proposition}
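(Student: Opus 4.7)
The plan is to expand both sides of \eqref{eqn: K_{tau}_S_{tau}} as integrals over $M$ and match them via a single application of the Green identity from Proposition \ref{propn: Green}. Fix $x \in M_+$. Since $\Gamma_\tau(\cdot,\cdot)$ is smooth off the diagonal (Proposition \ref{propn: Gamma_{tau}}) and $x \notin \ov{M}$, the function $v := \Gamma_\tau(x,\cdot)|_M$ lies in $C^\infty(\ov M)$, is harmonic in $M$, and $g := \tr(v) \in H^{1/2}(\pa M)$. Extending the kernel representation \eqref{eqn: kernel_K_{tau}} from $C^\infty_c(T)$ to $L^2_c(T)$ by density, the left-hand side of \eqref{eqn: K_{tau}_S_{tau}} becomes
\[
K_\tau(EXJu)(x) = \int_M v(y)\, Xu(y)\, dy,
\]
while Proposition \ref{propn: S_{0}_S_{tau}}(a) rewrites the right-hand side as the pairing $\lan (\Lambda_{V,W} - \Lambda_{0,0})\tr^-(u), g\ran$.

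Next, I would compute this pairing. Set $f := \tr^-(u)$. Plugging $u$ (which already solves $H_{V,W}u=0$) and the extension $v$ into formula \eqref{defn: DN_magnetic} gives
\[
\lan \Lambda_{V,W}f, g\ran = \int_M -Du \cdot Dv + V \cdot (vDu - uDv) + (V^2 + W)uv.
\]
For the free Dirichlet-to-Neumann map, I exploit the symmetry $\lan \Lambda_{0,0}f,g\ran = \lan \Lambda_{0,0}g,f\ran$ noted after \eqref{eqn: DN_free}: since $v$ itself is the harmonic extension of $g$ in $M$, and $u$ serves as an allowed extension of $f$, \eqref{eqn: DN_free} yields $\lan \Lambda_{0,0}f,g\ran = \int_M -Dv\cdot Du$. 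Subtracting, the $Du\cdot Dv$ terms cancel and
\[
\lan(\Lambda_{V,W} - \Lambda_{0,0})f, g\ran = \int_M V\cdot(vDu - uDv) + (V^2 + W)uv.
\]

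Finally, I would compare the two expressions. Expanding $Xu = 2V\cdot Du + (V^2 + D\cdot V + W)u$ and forming the difference with the pairing above, the $V^2$ and $W$ contributions cancel and one is left with
\[
\int_M v\cdot Xu \;-\; \lan(\Lambda_{V,W}-\Lambda_{0,0})f, g\ran = \int_M V\cdot D(uv) + (D\cdot V)(uv),
\]
which vanishes by Proposition \ref{propn: Green} applied to $w = uv$. The main (and only delicate) step is verifying the applicability of that Green identity: one needs $\supp V \sse M_-$ together with $V, D\cdot V \in L^\infty(M)$, which are standing hypotheses in the reconstruction setting, and $uv \in W^{1,1}(M)$, which is immediate from $u \in H^2(M)$ and $v \in C^\infty(\ov M)$. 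Everything else is routine bookkeeping.
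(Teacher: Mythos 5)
Your proof is correct and follows essentially the same route as the paper's: identify $v := \Gamma_\tau(x,\cdot)|_M$ as smooth and harmonic, expand both sides as integrals over $M$ via \eqref{eqn: kernel_K_{tau}}, Proposition \ref{propn: S_{0}_S_{tau}}, and the symmetry of $\Lambda_{0,0}$, then close the gap with one application of Proposition \ref{propn: Green} to $w = u\Gamma_\tau(x,\cdot)$. The only cosmetic difference is that you package the Green-identity input as $V\cdot D(uv) + (D\cdot V)uv$ using the product rule, whereas the paper leaves it in the equivalent unexpanded form.
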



\begin{proof}
Let $x \in M_{+}$ be fixed, so that $\Gamma_{\tau}(x,\cdot)$ 
is smooth and harmonic in a neighborhood $M$. From the integral 
representation 
\eqref{eqn: kernel_K_{tau}} and the fact that $EXJu$ is supported 
in $M$ we get that
\begin{equation}
\label{eqn: K_{tau}Xu}
K_{\tau}(EXJu)(x) = \int_{T}\Gamma_{\tau}(x,\cdot)EXJu 
= \int_{M}\Gamma_{\tau}(x,\cdot)Xu
= \int_{M}\Gamma_{\tau}(x,\cdot)(2V\cdot Du + (V^{2} + D\cdot V + W)u).
\end{equation}

From the integral representation in Proposition \ref{propn: S_{0}_S_{tau}}
and the definition of the DN map 
\eqref{defn: DN_magnetic} we have that
\begin{align*}
S_{\tau} (\Lambda_{V,W}\tr^{-}(u))(x)
& = \lan \Lambda_{V,W}\tr^{-}(u), \tr(\Gamma_{\tau}(x,\cdot))\ran \\
& = \int_{M} -Du \cdot D\Gamma_{\tau}(x,\cdot) 
+ V\cdot (\Gamma_{\tau}(x,\cdot)Du - uD\Gamma_{\tau}(x,\cdot)) 
+ (V^{2} + W)u\Gamma_{\tau}(x,\cdot).
\end{align*}

From the integral representation in Proposition \ref{propn: S_{0}_S_{tau}}, the 
harmonicity of $\Gamma_{\tau}(x,\cdot)$ in $M$, the definition \eqref{eqn: DN_free}  
of the DN map $\Lambda_{0,0}$ and its symmetry we have that
\[
S_{\tau}(\Lambda_{0,0}\tr^{-}(u))(x)
= \lan \Lambda_{0,0}\tr^{-}(u), \tr(\Gamma_{\tau}(x,\cdot))\ran 
= \lan \Lambda_{0,0}\tr(\Gamma_{\tau}(x,\cdot)), \tr^{-}(u) \ran
= \int_{M}-D\Gamma_{\tau}(x,\cdot)\cdot Du.
\]

Therefore, we obtain 
\begin{equation}
\label{eqn: S_{tau}_DN}
S_{\tau}[(\Lambda_{V,W} - \Lambda_{0,0})\tr^{-}(u)](x) 
= \int_{M} V\cdot (\Gamma_{\tau}(x,\cdot)Du 
- uD\Gamma_{\tau}(x,\cdot))
+ (V^{2} + W)u\Gamma_{\tau}(x,\cdot).
\end{equation}

From Proposition \ref{propn: Green} we have that 
\[
\int_{M} \Gamma_{\tau}(x,\cdot)(V\cdot Du 
+ (D\cdot V)u) + V\cdot (uD\Gamma_{\tau}(x,\cdot)) 
= 0,
\]

which implies the equality of \eqref{eqn: K_{tau}Xu} 
and \eqref{eqn: S_{tau}_DN} as we wanted.
\end{proof}


\begin{proposition}
\label{propn: compactness}
The operator 
$\tr \circ S_{\tau}(\Lambda_{V,W} - \Lambda_{0,0})$ in 
$H^{3/2}(\pa M)$ is compact.
\end{proposition}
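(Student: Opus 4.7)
The plan is to exploit the identity in Proposition \ref{propn: K_{tau}_S_{tau}} to rewrite $\tr \circ S_{\tau}(\Lambda_{V,W} - \Lambda_{0,0})$ as a composition that factors through the compact Rellich embedding $H^{2}(M) \hookrightarrow H^{1}(M)$.

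More precisely, for $f \in H^{3/2}(\pa M)$, let $u = D_{V,W}f \in H^{2}(M)$ be the unique solution of the Dirichlet problem given by Proposition \ref{propn: existence_uniqueness}. Then $H_{V,W}u = 0$ in $M$ and $\tr^{-}(u) = f$, so Proposition \ref{propn: K_{tau}_S_{tau}} gives
\[
S_{\tau}[(\Lambda_{V,W} - \Lambda_{0,0})f](x) = K_{\tau}(EXJu)(x) \quad \textrm{for} \ x \in M_{+},
\]
where $J: H^{2}(M) \hookrightarrow H^{1}(M)$ is the compact Rellich embedding, $X = H_{V,W} - D^{2}$ is the first order operator, and $E : L^{2}(M) \ra L^{2}_{c}(T)$ is extension by zero. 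Both sides are functions (not just functionals on $M_{+}$) that extend continuously to $\pa M$: the right-hand side lies in $H^{2}_{loc}(T)$ by Proposition \ref{propn: K_{tau}_mapping} applied to $EXJu \in L^{2}_{c}(T)$, while the left-hand side has well-defined boundary trace thanks to the no-jump statement in Proposition \ref{propn: S_{0}_S_{tau}}. Taking traces of both sides yields the operator identity
\[
\tr \circ S_{\tau}(\Lambda_{V,W} - \Lambda_{0,0}) = \tr \circ K_{\tau} \circ E \circ X \circ J \circ D_{V,W},
\]
as bounded operators on $H^{3/2}(\pa M)$.

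Now I read off the mapping properties of each factor: $D_{V,W}: H^{3/2}(\pa M) \ra H^{2}(M)$ is bounded by Proposition \ref{propn: existence_uniqueness}; $J: H^{2}(M) \ra H^{1}(M)$ is compact by Rellich; $X : H^{1}(M) \ra L^{2}(M)$ is bounded since $V \in C^{\infty}_{c}(M_{-})$ and $W \in L^{\infty}(M)$ (so $V^{2} + D\cdot V + W \in L^{\infty}(M)$); $E: L^{2}(M) \ra L^{2}_{c}(T)$ is trivially bounded; $K_{\tau} : L^{2}_{c}(T) \ra H^{2}_{loc}(T)$ is bounded by Proposition \ref{propn: K_{tau}}; and $\tr: H^{2}_{loc}(T) \ra H^{3/2}(\pa M)$ is bounded. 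The composition of bounded operators with a compact one is compact, which concludes the proof.

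The only point requiring attention is justifying the passage from the pointwise identity of Proposition \ref{propn: K_{tau}_S_{tau}} (stated for $x \in M_{+}$) to an identity of boundary traces; this is immediate once we observe that both sides define elements of $H^{2}_{loc}(T)$ that agree on the open exterior $M_{+}$, so their traces on $\pa M$ (taken from $M_{+}$) coincide. No obstacle arises here, so the argument is essentially a bookkeeping of the mapping properties already established, with compactness supplied by Rellich.
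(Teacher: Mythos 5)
Your factorization through $J \circ D_{V,W}$ and the identity of Proposition \ref{propn: K_{tau}_S_{tau}} is exactly the paper's proof, so the strategy is right on. The only place where the paper is more careful and your write-up is slightly loose is in the chain $E: L^{2}(M) \ra L^{2}_{c}(T)$, $K_{\tau}: L^{2}_{c}(T) \ra H^{2}_{loc}(T)$, $\tr: H^{2}_{loc}(T) \ra H^{3/2}(\pa M)$: the middle spaces $L^{2}_{c}(T)$ and $H^{2}_{loc}(T)$ are not Banach spaces, so ``bounded'' has no standard meaning between them. The paper resolves this by unwrapping $K_{\tau} = e^{-2\pi\tau x_{1}}G_{\tau}e^{2\pi\tau x_{1}}$ and factoring through honest Banach spaces: $e^{2\pi\tau x_{1}}E : L^{2}(M) \ra L^{2}_{\delta}(T)$ (bounded because the extension is supported in the fixed compact set $M$), then $G_{\tau}: L^{2}_{\delta}(T) \ra H^{2}_{-\delta}(T)$, and finally $\tr\circ e^{-2\pi\tau x_{1}} : H^{2}_{-\delta}(T) \ra H^{3/2}(\pa M)$. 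Your argument morally does the same thing since both the extension support and the trace hypersurface are fixed compact sets, but you should replace the local/compactly-supported spaces with this precise chain (or at least observe that the fixed compact supports make each step a genuinely bounded operator).
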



\begin{proof}
Recall that for $f \in H^{3/2}(\pa M)$ we have  
$D_{V,W}f := u \in H^{2}(M)$ as the solution to the Dirichlet
problem
\[
\biggl\{
\begin{array}{rll}
H_{V,W}u \hspace{-2mm} & = 0 & \ \textrm{in} \ \ M_{-}, \\
u \hspace{-2mm} & = f & \ \textrm{on} \ \ \pa M.
\end{array}
\]

Let $J : H^{2}(M) \twoheadrightarrow H^{1}(M)$ 
be the compact embedding, and let $E: L^{2}(M) \ra L^{2}(T)$ denote 
the extension by zero. Then we have 
$EXJu \in L^{2}_{c}(T)$, and so $K_{\tau}EXJu \in H^{2}_{loc}(T)$.
For $x \in M_{+}$ we can rewrite
the result from Proposition \ref{propn: K_{tau}_S_{tau}} as 
\begin{equation}
\label{eqn: S_{tau}DN_K_{tau}X}
S_{\tau}(\Lambda_{V,W} - \Lambda_{0,0})f(x) = K_{\tau}EXJu(x).
\end{equation}


The trace of a single layer potential is well-defined, so we can take 
traces on both sides of \eqref{eqn: S_{tau}DN_K_{tau}X} to obtain
\begin{equation}
\label{eqn: factorization}
\tr \circ S_{\tau}(\Lambda_{V,W} - \Lambda_{0,0})
= \tr K_{\tau}EXJD_{V,W}.
\end{equation}

To prove the result it suffices to
express the right-hand side of \eqref{eqn: factorization}
as a composition of bounded operators,
together with the compact operator $J$.
Recall that $K_{\tau} = e^{-2\pi\tau x_{1}}G_{\tau}e^{2\pi\tau x_{1}}$.
All of the following are continuous operators,
\[
D_{V,W}: H^{3/2}(\pa M) \ra H^{2}(M), \ \ 
J : H^{2}(M) \twoheadrightarrow H^{1}(M), \ \
X : H^{1}(M) \ra L^{2}(M), 
\]
\[
e^{2\pi \tau x_{1}}E : L^{2}(M) \ra L^{2}_{\delta}(T), \ \ 
G_{\tau}: L^{2}_{\delta}(T) \ra H^{2}_{-\delta}(T), \ \  
\tr \circ e^{-2\pi \tau x_{1}} : H^{2}_{-\delta}(T) \ra H^{3/2}(\pa M).
\]

and this completes the proof.
\end{proof}


\begin{corollary}
\label{cor: Fredholm}
The operator $I + \tr\circ S_{\tau}(\Lambda_{V,W} - \Lambda_{0,0})$ in 
$H^{3/2}(\pa M)$ is continuous and invertible. In particular, the boundary
values of the CGO, constructed as $u = h + e^{-2\pi\tau x_{1}}r$, can be
determined by boundary measurements as 
\[
\tr(u) = (I + \tr\circ S_{\tau}(\Lambda_{V,W} - \Lambda_{0,0}))^{-1}\tr(h).
\]
\end{corollary}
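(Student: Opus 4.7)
The plan is to combine the compactness result from Proposition \ref{propn: compactness} with the uniqueness portion of Theorem \ref{thm: equivalent} and invoke the Fredholm alternative. First, I would observe that the operator $\tr\circ S_{\tau}(\Lambda_{V,W} - \Lambda_{0,0})$ is bounded on $H^{3/2}(\pa M)$: this is already implicit in the factorization \eqref{eqn: factorization} used in the proof of Proposition \ref{propn: compactness}, where each factor (including $D_{V,W}$, $X$, $E$, multiplication by $e^{\pm 2\pi\tau x_{1}}$, $G_{\tau}$, and $\tr$) is continuous between the appropriate spaces. Hence $I + \tr\circ S_{\tau}(\Lambda_{V,W} - \Lambda_{0,0})$ is a bounded linear map on $H^{3/2}(\pa M)$.

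Next, because the perturbation is compact (Proposition \ref{propn: compactness}), the Fredholm alternative reduces invertibility to injectivity. To prove injectivity, I would argue by the chain of implications in Theorem \ref{thm: equivalent}. Suppose $f \in H^{3/2}(\pa M)$ satisfies $(I + \tr\circ S_{\tau}(\Lambda_{V,W} - \Lambda_{0,0}))f = 0$; then $f$ solves the boundary equation (BE) with right-hand side $\tr(h) = 0$, i.e.\ we may take the harmonic function $h \equiv 0$. By the implication (BE) $\Rightarrow$ (EP), there exists $\wilde{u} \in H^{2}_{loc}(M_{+})$ solving (EP) with $h = 0$, and then by (EP) $\Leftrightarrow$ (DE) this $\wilde{u}$ extends to some $u \in H^{2}_{loc}(T)$ of the form $u = e^{-2\pi\tau x_{1}}r$ with $r \in H^{2}_{-\delta}(T)$ solving $H_{V,W}u = 0$ in $T$. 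Equivalently, $r$ solves $e^{2\pi\tau x_{1}}H_{V,W}e^{-2\pi\tau x_{1}}r = 0$, and the uniqueness clause of Theorem \ref{thm: carleman_full} forces $r \equiv 0$, hence $u \equiv 0$ and $f = \tr(u) = 0$.

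With injectivity in hand, the Fredholm alternative yields that $I + \tr\circ S_{\tau}(\Lambda_{V,W} - \Lambda_{0,0})$ is an invertible bounded operator on $H^{3/2}(\pa M)$, with bounded inverse by the Open Mapping theorem. Finally, if $u = h + e^{-2\pi\tau x_{1}}r$ is the CGO from Theorem \ref{thm: carleman_full} corresponding to the harmonic function $h$, the implication (DE) $\Rightarrow$ (BE) of Theorem \ref{thm: equivalent} gives that $\tr(u)$ solves the boundary equation, and applying the inverse we obtain
\[
\tr(u) = (I + \tr\circ S_{\tau}(\Lambda_{V,W} - \Lambda_{0,0}))^{-1}\tr(h),
\]
which expresses the boundary value of the CGO purely in terms of $h$ and the operators $\Lambda_{V,W}$, $\Lambda_{0,0}$, and $S_{\tau}$, the latter two being determined by the geometry alone. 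I do not foresee a genuine obstacle: the only delicate point is checking that the $h = 0$ instance of (BE) really feeds into the uniqueness statement of Theorem \ref{thm: carleman_full}, and this is precisely what the equivalence (BE) $\Rightarrow$ (EP) $\Leftrightarrow$ (DE) is built to guarantee.
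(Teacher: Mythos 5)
Your proof is correct and takes essentially the same approach as the paper: compactness of the perturbation (Proposition \ref{propn: compactness}) plus the Fredholm alternative and the Open Mapping theorem, with injectivity coming from the uniqueness of solutions to (BE) established in Theorem \ref{thm: equivalent}. The only stylistic difference is that you re-derive that uniqueness by explicitly tracing the null vector through $(\mathrm{BE}) \Rightarrow (\mathrm{EP}) \Leftrightarrow (\mathrm{DE})$ back to the uniqueness clause of Theorem \ref{thm: carleman_full}, whereas the paper simply cites the uniqueness of (BE); this is the same underlying argument since that is precisely how the uniqueness of (BE) is proven.
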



\begin{proof}
The uniqueness of the solution to (BE) in Theorem \ref{thm: equivalent} 
implies that the operator 
$I + \tr\circ S_{\tau}(\Lambda_{V,W} - \Lambda_{0,0})$ is injective.
From Proposition \ref{propn: compactness} and Fredholm's alternative it follows
that it is bijective, and therefore invertible by the Open Mapping Theorem.
The fact that the boundary values of the CGO are given by the 
expression above follows from Theorem \ref{thm: equivalent}.
\end{proof}


\section{Reconstruction of the magnetic field}


As mentioned in the introduction and the previous chapter, the purpose of 
proving the Carleman estimate Theorem \ref{thm: carleman_full} is using it to 
construct many special solutions to the equation $H_{V,W}u = 0$, in order
to recover the magnetic field $\curl V$. In contrast to the previous chapter,
we restrict our attention to a particular kind of harmonic functions and show 
that we can find an amplitude, i.e. a correction factor, that gives appropriate
estimates for the remainder term. The choice of the special harmonic functions 
$h = e^{\pm 2\pi |m|x_{1}}e_{m}(x')$, and not any arbitrary harmonic
function, comes from the fact that $(Dh)^{2} = 0$. We elaborate more
on this in a remark after the construction in Proposition \ref{propn: harmonic_modify}.
These ideas follow the so-called WKB method, and are presented systematically 
for more general settings in Sections 2 and 5 in \cite{DKSaU}; see also Section 
4 in \cite{KSjU} or Sections 2 and 3 in \cite{DKSjU}. We proceed analogously
to the proof Lemma 6.1. in \cite{Sa1} in the Euclidean setting.

\bigskip
After the amplitude has been constructed, we define an analog of the 
scattering transform from \cite{N} and \cite{Sa1}, and show that
the estimates for the remainder term allow to disregard them, so that from
the boundary measurements we are able to recover integrals involving the
magnetic potential. After some work, we will show that this allows for the
reconstruction of the magnetic field.

\bigskip
The exposition here follows closely the method from \cite{Sa1}, until the
part involving the analog of the scattering transform. The difference of the 
methods at this point is due to the fact that the integrals contain terms that 
are real exponentials (like in the Laplace transform), rather than complex 
exponentials (like in the Fourier transform). This difference seems difficult, 
if not impossible, to reconcile.

\bigskip
As in the previous chapter, we denote by 
$X := 2V\cdot D + (V^{2} + D\cdot V + W)$ the compactly supported 
first order differential operator, so that 
$H_{V,W} = D^{2} + X$. For the rest of the chapter, the potentials $V,W$ 
and the constants $R, \delta$ are fixed. Any quantities involving 
them, for instance the constants in the inequalities from the previous 
chapters, will be regarded as constants.


\subsection{Construction of CGOs}


A special family of harmonic solutions in $T$ is given by the products 
$e^{\pm 2\pi|m|x_{1}}e_{m}(x')$ for any $m \in \bZ^{d}$. These
solutions are analogous to the Calder\'on complex exponential solutions 
$e^{2\pi i\zeta \cdot x}$, where $\zeta \in \bC^{d}$ and $\zeta \cdot \zeta = 0$. 
In our case, $\zeta \in \bC^{d}$ is replaced by $(\pm i|m|,m) \in i\bR \times \bZ^{d}$. 
We construct the correction terms for these harmonic functions in order
to solve the equation $H_{V,W}u = 0$, and make more explicit the 
corresponding estimates for the correction terms. 


\begin{proposition}
\label{propn: CGO_first}
Let $1/2 < \delta < 1$ and assume that $0$ is not an eigenvalue of 
$H_{V,W}$ in $M$. Let $m \in \bZ^{d}$ and let $\tau > 0$ be such that 
$\tau^{2}\notin \spec(-\Delta_{g_{0}})$.
Then there exists a unique $r_{m,\tau} \in H^{2}_{-\delta}(T)$ such that 
\[
u_{m,\tau} := e^{-2\pi|m|x_{1}}e_{m}(x') + e^{-2\pi \tau x_{1}}r_{m,\tau}
\]

satisfies $H_{V,W}u_{m,\tau} = 0$. Moreover, the correction term satisfies  the estimates
\[
\|r_{m,\tau}\|_{L^{2}_{-\delta}(T)} \cleq \frac{e^{2\pi|\tau - |m||R}\lan m \ran}{\tau}, \ \ 
\|r_{m,\tau}\|_{H^{1}_{-\delta}(T)} \cleq e^{2\pi|\tau - |m||R}\lan m \ran.
\]

In particular, we obtain $\|r_{m,\tau}\|_{L^{2}_{-\delta}(T)} \cleq 1$ if $|\tau - |m|| \cleq 1$.
\end{proposition}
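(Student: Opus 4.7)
The plan is to apply the Carleman estimate of Theorem \ref{thm: carleman_full} directly to the standard CGO ansatz. First, I would check that $h_m(x) := e^{-2\pi|m|x_1}e_m(x')$ is harmonic: a direct computation gives $D_{x_1}^2 h_m = -|m|^2 h_m$ and $D_{x'}^2 h_m = |m|^2 h_m$, so $D^2 h_m = 0$. Substituting $u_{m,\tau} = h_m + e^{-2\pi\tau x_1}r_{m,\tau}$ into $H_{V,W}u_{m,\tau} = 0$ and using $D^2 h_m = 0$ reduces the equation to
\[
e^{2\pi\tau x_1}H_{V,W}e^{-2\pi\tau x_1}r_{m,\tau} = -e^{2\pi\tau x_1}Xh_m,
\]
where $X = 2V\cdot D + (V^2 + D\cdot V + W)$ is a first order differential operator compactly supported in $M$. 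The conditions \eqref{eqn: conditions_reconstruction} on $V,W$ imply the hypotheses \eqref{eqn: conditions} of Theorem \ref{thm: carleman_full} (with $[-R,R]\times \bT^d$ chosen so that $M \sse [-R,R]\times\bT^d$).

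Next, I would verify the right-hand side belongs to $L^2_\delta(T)$ with the announced bound. Since $Dh_m = (i|m|,m)h_m$, the term $e^{2\pi\tau x_1}Xh_m$ factors as
\[
e^{2\pi\tau x_1}Xh_m = \bigl[2V\cdot (i|m|,m) + V^2 + D\cdot V + W\bigr]e^{2\pi(\tau - |m|)x_1}e_m(x'),
\]
where the bracketed expression is compactly supported in $M$ and bounded by $C\lan m\ran$ with $C$ depending only on $V,W$. Using that $|e^{2\pi(\tau - |m|)x_1}| \leq e^{2\pi|\tau - |m||R}$ on $[-R,R]$, and that $\lan x_1\ran^\delta$ is bounded on $M$, I obtain
\[
\|e^{2\pi\tau x_1}Xh_m\|_{L^2_\delta(T)} \cleq e^{2\pi|\tau - |m||R}\lan m\ran.
\]

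Then Theorem \ref{thm: carleman_full} provides the unique solution $r_{m,\tau} \in H^2_{-\delta}(T)$, together with the estimates $\|r_{m,\tau}\|_{H^s_{-\delta}(T)} \cleq |\tau|^{s-1}\|e^{2\pi\tau x_1}Xh_m\|_{L^2_\delta(T)}$ for $s = 0,1,2$. Taking $s = 0$ and $s = 1$ yields exactly
\[
\|r_{m,\tau}\|_{L^2_{-\delta}(T)} \cleq \frac{e^{2\pi|\tau - |m||R}\lan m\ran}{\tau}, \qquad \|r_{m,\tau}\|_{H^1_{-\delta}(T)} \cleq e^{2\pi|\tau - |m||R}\lan m\ran,
\]
and the final remark follows because $|\tau - |m||\cleq 1$ keeps the exponential bounded and absorbs a factor of $\lan m\ran \leq \lan \tau\ran + O(1) \cleq \tau$ into $\tau^{-1}\lan m\ran$. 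There is no substantial obstacle here beyond bookkeeping the exponential weight: the whole point of having set up Theorem \ref{thm: carleman_full} earlier was to make this a one-line application, and the only real calculation is checking that the compact support of $X$ localizes the exponential factor $e^{2\pi(\tau - |m|)x_1}$ to $[-R,R]$.
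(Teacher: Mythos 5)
Your proof is correct and follows the paper's argument essentially verbatim: reduce to $e^{2\pi\tau x_1}H_{V,W}e^{-2\pi\tau x_1}r_{m,\tau}=-e^{2\pi\tau x_1}Xh_m$ using $D^2h_m=0$, bound the compactly supported right-hand side by $e^{2\pi|\tau-|m||R}\lan m\ran$ in $L^2_\delta(T)$, and invoke Theorem \ref{thm: carleman_full} for existence, uniqueness, and the $s=0,1$ estimates. The only thing you add beyond the paper is the explicit check that $|\tau-|m||\lesssim 1$ forces $\lan m\ran/\tau\lesssim 1$, which the paper leaves implicit; that observation is correct.
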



\begin{proof}
Given that $D^{2}(e^{-2\pi|m|x_{1}}e_{m}(x')) = 0$, we have that 
$u_{m,\tau}$ solves $H_{V,W}u_{m,\tau} = 0$ if and only if $r_{m,\tau}$
solves the equation
\[
e^{2\pi \tau x_{1}}H_{V,W}e^{-2\pi \tau x_{1}}r_{m,\tau}
= -e^{2\pi \tau x_{1}}H_{V,W}(e^{-2\pi|m|x_{1}}e_{m}(x'))
= -e^{2\pi \tau x_{1}}X(e^{-2\pi|m|x_{1}}e_{m}(x')).
\]

The right-hand side is compactly supported and thus in $L^{2}_{\delta}(T)$.
Therefore, Theorem \ref{thm: carleman_full} gives the existence and uniqueness 
of a solution in $H^{2}_{-\delta}(T)$. Finally, we observe that the right-hand 
side equals
\[
f := -e^{2\pi \tau x_{1}}X(e^{-2\pi|m|x_{1}}e_{m}(x'))
= -[2V\cdot (i|m|,m) + (V^{2} + D\cdot V + W)]e^{2\pi(\tau - |m|)x_{1}}e_{m}(x'),
\]

so we can bound it by
$\|f\|_{L^{2}_{\delta}(T)} \cleq e^{2\pi|\tau - |m|||R|}\lan m\ran$.
The estimate for the correction term $r_{m,\tau}$ follows from 
Theorem \ref{thm: carleman_full}.
\end{proof}


The estimates of Proposition \ref{propn: CGO_first} for the correction term are not sharp 
enough to allow us to neglect them in a later ``asymptotic expansion''. In order to 
improve the estimates for the correction term, we need to modify the harmonic 
function $e^{-2\pi|m|x_{1}}e_{m}(x')$ appropriately as we show next.


\begin{proposition}
\label{propn: harmonic_modify}
Let $1/2 < \delta < 1$. There exist $\vep, \sigma > 0$ such that for
any $m \in \bZ^{d}$, with $|m|$ sufficiently large, 
there is a smooth function $a_{m}(x_{1},x')$, such that 
$a_{m} - 1$ is supported on $|x_{1}|\leq 2|m|^{\sigma}$, and 
\[
b_{m}(x_{1},x') := e^{2\pi|m|x_{1}}H_{V,W}
e^{-2\pi |m|x_{1}}e_{m}(x')a_{m},
\]

is supported on $|x_{1}|\leq 2|m|^{\sigma}$ with 
$\|b_{m}\|_{L^{2}_{\delta}(T)} \cleq |m|^{1 - \vep}$.
\end{proposition}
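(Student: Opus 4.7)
The construction mirrors Lemma~6.1 of \cite{Sa1}, adapted to the cylindrical setting via Fourier expansion in $x'$.

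\textbf{Ansatz and transport equation.} The plan is to seek $a_m = e^{-\chi_\sigma \Psi_m}$, where $\chi_\sigma(x_1)=\chi(x_1/|m|^\sigma)$ is a smooth cutoff with $\chi\equiv 1$ on $[-1,1]$ and $\chi\equiv 0$ outside $[-2,2]$, with $\sigma>1$ to be chosen, and $\Psi_m$ is a smooth profile to be constructed. Writing $h_m = e^{-2\pi|m|x_1}e_m(x')$ and using $D^2 h_m=0$, a direct computation gives
\begin{align*}
b_m = e_m(x')\Bigl\{&-2a_m\bigl[(i|m|,m)\cdot D(\chi_\sigma\Psi_m) - V\cdot(i|m|,m)\bigr] \\
&+ D^2 a_m + 2V\cdot Da_m + (V^2+D\cdot V+W)a_m\Bigr\}.
\end{align*}
The plan is to demand that $\Psi_m$ solve the transport equation $(i|m|,m)\cdot D\Psi_m = V\cdot(i|m|,m)$ on $T$; then, since $\chi_\sigma\equiv 1$ on $\supp V$ once $|m|^\sigma>R$, the bracket collapses to the cutoff term $-i|m|\Psi_m D_{x_1}\chi_\sigma$, supported on the annulus $|m|^\sigma\leq|x_1|\leq 2|m|^\sigma$.

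\textbf{Solving via Fourier.} Expanding $\Psi_m = \sum_{k\in\bZ^d}\Psi_{m,k}(x_1)e_k(x')$ and similarly $V$, each coefficient will satisfy the ODE
\[
i|m| D_{x_1}\Psi_{m,k}+(m\cdot k)\Psi_{m,k} = i|m|F_k + m\cdot G_k,
\]
whose right-hand side has zero integral over $\bR$ by the vanishing-moment hypothesis of \eqref{eqn: conditions_reconstruction}. Lemma~\ref{lemma: ODE_vanishing} produces a unique Schwartz solution: compactly supported in $[-R,R]$ when $m\cdot k=0$, and of the explicit form $-\lambda_k e^{-\lambda_k x_1}\int_{-R}^{R}e^{\lambda_k s}H_k(s)\,ds$ for $x_1>R$ when $m\cdot k\neq 0$, where $\lambda_k=2\pi(m\cdot k)/|m|$ and $H_k$ is the compactly supported antiderivative arising from the integration-by-parts step of Lemma~\ref{lemma: ODE_vanishing}. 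Since $m\cdot k\in\bZ\setminus\{0\}$ forces $|\lambda_k|\geq 2\pi/|m|$, this yields the key refined bound $|\Psi_{m,k}(x_1)|\cleq|\lambda_k|\lan k\ran^{-N}e^{-|\lambda_k|(|x_1|-R)}$ for $|x_1|>R$ and any $N$ (using the Schwartz decay of $F_k,G_k$). The prefactor $|\lambda_k|\cleq 1/|m|$ is the essential gain supplied by the vanishing-moment hypothesis.

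\textbf{Estimates and main obstacle.} The terms in $b_m$ supported on $\supp V\subset[-R,R]$ (namely $D^2a_m$ restricted to $[-R,R]$, $V\cdot Da_m$, and the zero-order term) are uniformly bounded in $m$ by the refined bound on $\Psi_m$ and its derivatives summed via the Schwartz decay of $F_k,G_k$, and so contribute $O(1)$ in $L^2_\delta$. The cutoff error, supported on $|m|^\sigma\leq|x_1|\leq 2|m|^\sigma$, is bounded pointwise by $|m|^{1-\sigma}\|\Psi_m(x_1,\cdot)\|_\infty\cleq |m|^{1-\sigma}e^{-c|m|^{\sigma-1}}$ for $\sigma>1$, giving a super-polynomially small contribution. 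The main obstacle is $D^2a_m$ on the intermediate region $R<|x_1|<|m|^\sigma$: a naive pointwise estimate $\|\Psi_m\|_\infty\cleq 1$ would yield an unacceptable $\|D^2a_m\|_{L^2_\delta}\cleq|m|^{\sigma(\delta+1/2)}$. The refined bound together with Plancherel circumvents this, since for $m\cdot k\neq 0$,
\[
\|\Psi_m\|_{L^2_\delta(|x_1|>R)}^2\cleq\sum_k\lambda_k^2\lan k\ran^{-2N}\!\!\int_R^\infty\!\!e^{-2|\lambda_k|(x_1-R)}\lan x_1\ran^{2\delta}dx_1\cleq\sum_k|\lambda_k|^{1-2\delta}\lan k\ran^{-2N}\cleq|m|^{2\delta-1},
\]
and parallel estimates for $D\Psi_m,D^2\Psi_m$ (picking up at most $|k|^4\lan k\ran^{-2N}$, still summable) give $\|D^2a_m\|_{L^2_\delta}\cleq|m|^{\delta-1/2}$. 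This yields $\|b_m\|_{L^2_\delta}\cleq|m|^{\delta-1/2}=|m|^{1-\vep}$ with $\vep:=3/2-\delta>0$ since $\delta<1$, so any $\sigma>1$ (e.g.\ $\sigma=2$) together with this $\vep$ furnishes the claim.
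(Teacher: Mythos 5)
Your proposal is correct and follows the same overall skeleton as the paper's proof: the same exponential ansatz for $a_m$ built from a solution to the transport equation $(i|m|,m)\cdot D\Psi_m = V\cdot(i|m|,m)$, the same Fourier decomposition in $x'$, the same cutoff in $x_1$ at scale $|m|^\sigma$, and the same use of the vanishing-moment hypothesis through Lemma~\ref{lemma: ODE_vanishing}. The genuine difference lies in how you estimate the tail contribution on $R < |x_1| \lesssim |m|^\sigma$. The paper invokes the already-established uniform pointwise bound $|Dv_m|,|D^2v_m| \lesssim \lan x_1\ran^{-1}$ from Theorem~\ref{thm: delta_bar_equation} (which does not distinguish Fourier modes) and integrates over $|x_1|\leq 2|m|^\sigma$, getting $\|D^2 a_m\|_{L^2_\delta} \lesssim |m|^{\sigma(2\delta-1)/2}$, forcing $\sigma$ to be chosen with some care. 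You instead return to the explicit solution formula of Lemma~\ref{lemma: ODE_vanishing}, extract a mode-dependent exponential decay $|\Psi_{m,k}(x_1)| \lesssim |\lambda_k|\lan k\ran^{-N}e^{-|\lambda_k|(|x_1|-R)}$, and sum via Plancherel; since $|\lambda_k| = 2\pi|m\cdot k|/|m| \geq 2\pi/|m|$, the worst mode gives $|\lambda_k|^{1-2\delta} \lesssim |m|^{2\delta-1}$, yielding $|m|^{\delta-1/2}$. This re-derivation is more work than the paper's shortcut of quoting Theorem~\ref{thm: delta_bar_equation}, but it buys a sharper estimate ($\vep = 3/2-\delta$ rather than $2-2\delta$) and makes the exponent essentially independent of $\sigma$; it also exhibits very directly why the vanishing-moment hypothesis is the crucial input (the prefactor $|\lambda_k|$ in front of the boundary-layer integral).

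A few imprecisions worth flagging, none fatal. Your remark ``the prefactor $|\lambda_k| \cleq 1/|m|$'' is not literally true — $\lambda_k$ can be of order $|k|$ — but your Plancherel step correctly uses only $|\lambda_k|\geq 2\pi/|m|$ together with absorption of large $|k|$ into $\lan k\ran^{-2N}$. Similarly, the intermediate bound $\lambda_k^2\int_R^\infty e^{-2|\lambda_k|(x_1-R)}\lan x_1\ran^{2\delta}\,dx_1 \lesssim |\lambda_k|^{1-2\delta}$ is only valid for $|\lambda_k|\lesssim 1$; for $|\lambda_k|\gtrsim 1$ the correct bound is $\lesssim |\lambda_k|$, but again $\lan k\ran^{-2N}$ saves the day and the final bound $|m|^{2\delta-1}$ stands. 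Finally, $D^2 a_m$ contains the quadratic term $(D(\chi_\sigma\Psi_m))^2$, which you do not address explicitly; it is harmless because $\|D\Psi_m\|_\infty \lesssim 1$ allows you to reduce to the linear estimate, but it should be recorded. You should also handle the terms $\Psi_m D^2\chi_\sigma$ and $D\chi_\sigma\cdot D\Psi_m$ coming from the Leibniz expansion — both are supported on the annulus and so inherit the same super-exponential smallness as your main cutoff term.
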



\begin{remark}
For the rest of the chapter, the notation $a_{m}$, $b_{m}$ does
not represent the Fourier coefficients of some functions as in previous
chapters.
\end{remark}


\begin{proof}
We compute the conjugated operators
\[
e^{2\pi|m|x_{1}}De^{-2\pi |m|x_{1}}e_{m}(x') 
= e_{m}(x')[(i|m|,m) + D],
\]
\begin{align*}
e^{2\pi|m|x_{1}}D^{2}e^{-2\pi|m|x_{1}}e_{m}(x')
= e_{m}(x')[(i|m|,m) + D]^{2}
= e_{m}(x')[2(i|m|,m)\cdot D + D^{2}].
\end{align*}

Therefore, we have the conjugation identity for operators
\begin{equation}
\label{eqn: conjugation_harmonic}
e^{2\pi|m|x_{1}}H_{V,W}
e^{-2\pi|m|x_{1}}e_{m}(x')
= e_{m}(x')[2(i|m|,m)\cdot (D + V) + H_{V,W}].
\end{equation}

We could define $a_{m} := \exp(v_{m})$, where $v_{m}$ is the 
solution of the equation $(i|m|,m)\cdot (Dv_{m} + V) = 0$. This 
equation can be rewritten as 
\[
iD_{x_{1}}v_{m} + \frac{m}{|m|}D_{x'}v_{m} 
= -\biggl(iF + \frac{m}{|m|}G\biggr).
\]

From Theorem \ref{thm: delta_bar_equation} we know that this equation
has a unique solution which decays, and is bounded with 
bounded derivatives of all orders. The only inconvenient with this
is that the term $D^{2}v_{m}$ may not be in $L^{2}_{\delta}(T)$. 
Therefore, we are left to redefine $a_{m} := \exp(w_{m})$, where 
$w_{m} := v_{m}\psi(x_{1}/|m|^{\sigma})$, with $\sigma > 0$ to be 
determined and $\psi$ a cutoff function such that $\psi(t) \equiv 1$
if $|t| \leq 1$ and $\psi(t) \equiv 0$ if $|t| \geq 2$.
With this we have $a_{m} - 1$ is supported on $|x_{1}| \leq 2|m|^{\sigma}$
and
\[
(D + V)a_{m} 
= a_{m}\biggl[(Dv_{m} + V)\psi\biggl(\frac{x_{1}}{|m|^{\sigma}}\biggr)
+ \biggl(1 - \psi\biggl(\frac{x_{1}}{|m|^{\sigma}}\biggr)\biggr)V
+ \frac{v_{m}}{2\pi i|m|^{\sigma}}\psi'
\biggl(\frac{x_{1}}{|m|^{\sigma}}\biggr)(1,0,\ldots, 0)\biggr].
\]

Because $V$ is compactly supported, we see 
that the second term vanishes if $|m|$ is sufficiently large. 
Moreover, the dot product of $(i|m|,m)$ with first term vanishes 
(by construction). From this and \eqref{eqn: conjugation_harmonic} 
we are left with
\[
b_{m} = e_{m}(x')[2(i|m|,m)\cdot (D + V)a_{m} + H_{V,W}a_{m}]
= e_{m}(x')\biggr[a_{m}\frac{2i|m|v_{m}}{2\pi i|m|^{\sigma}}
\psi'\biggl(\frac{x_{1}}{|m|^{\sigma}}\biggr) + H_{V,W}a_{m}\biggr].
\]

The first term is supported on $|m|^{\sigma} \leq |x_{1}| \leq 2|m|^{\sigma}$. 
Using the boundedness of $a_{m}$ and the decay estimates for $v_{m}$
from Theorem \ref{thm: delta_bar_equation}, we can bound the $L^{2}_{\delta}(T)$ 
norm of the first term by
\[
\frac{|m|}{|m|^{\sigma}}
\biggl(\int_{|m|^{\sigma}}^{2|m|^{\sigma}}
\frac{1}{|x_{1}|^{2}}\lan x_{1}\ran^{2\delta}dx_{1}\biggr)^{1/2}
\cleq \frac{|m|}{|m|^{\sigma}}
\cdot |m|^{\sigma(2\delta - 1)/2}
= |m|^{1 + \sigma(2\delta - 3)/2}.
\]

For the second term, we use that $H_{V,W}a_{m} = (D^{2} + X)a_{m}$. 
The term $Xa_{m}$ represents no problem, as $X$ is compactly supported
(in $|x_{1}| \leq R$) and $a_{m}$ has bounded derivatives of all orders
by Theorem \ref{thm: delta_bar_equation}. 
We are left with $D^{2}a_{m} = a_{m}(D^{2}w_{m} + (Dw_{m})^{2})$, which 
is supported on $|x_{1}| \leq 2|m|^{\sigma}$.
In addition to the boundedness of $a_{m}$, from Theorem 
\ref{thm: delta_bar_equation} we also know that 
$|Dw_{m}|, |D^{2}w_{m}| \cleq \lan x_{1}\ran^{-1}$.
Therefore we can bound the $L^{2}_{\delta}(T)$ norms of these 
terms by						
\begin{align*}
\|D^{2}a_{m}\|_{L^{2}_{\delta}(T)} 
& \leq \|a_{m}D^{2}w_{m}\|_{L^{2}_{\delta}(T)} 
+ \|a_{m}(Dw_{m})^{2}\|_{L^{2}_{\delta}(T)} \\
& \cleq \biggl(\int_{0}^{2|m|^{\sigma}}
\frac{1}{\lan x_{1}\ran^{2}}\lan x_{1}\ran^{2\delta}dx_{1}\biggr)^{1/2}
+ \biggl(\int_{0}^{2|m|^{\sigma}}
\frac{1}{\lan x_{1}\ran^{4}}\lan x_{1}\ran^{2\delta}dx_{1}\biggr)^{1/2} \\
& \cleq \biggl(\int_{0}^{2|m|^{\sigma}}
\frac{1}{\lan x_{1}\ran^{2}}\lan x_{1}\ran^{2\delta}dx_{1}\biggr)^{1/2}
\cleq |m|^{\sigma(2\delta - 1)/2}. 
\end{align*}

Taking any $\sigma > 1$ we obtain that 
$\sigma(2\delta - 1)/2 > 1 + \sigma(2\delta - 3)/2$. For instance, if 
$\sigma = 2$, then we ensure that all these exponents are less than $1$,
as we wanted to prove.
\end{proof}


\begin{remark}
In the setting of Proposition \ref{propn: CGO_first}, the choice $a_{m} \equiv 1$ gives 
compact support for $b_{m}$, but we only obtain 
$\|b_{m}\|_{L^{2}_{\delta}(T)} \cleq |m|$.
\end{remark}


\begin{remark}
Observe that if $h = e^{-2\pi|m|x_{1}}e_{m}(x')$, then the condition 
$(Dh)^{2} = 0$ makes the higher order terms in 
\eqref{eqn: conjugation_harmonic} disappear, leaving 
only to appropriately disregard the next order terms (in this case 
of order $|m|$). 
\end{remark}


We use Proposition \ref{propn: harmonic_modify} to construct another solution to the equation 
$H_{V,W}u = 0$, whose correction term has small norm. We observe that the 
``main terms'' ($e^{-2\pi|m|x_{1}}e_{m}(x')$ and $e^{-2\pi|m|x_{1}}e_{m}(x')a_{m}$)
of the two solutions coincide for $|x_{1}| \geq 2|m|^{\sigma}$, and we later prove
that the corrected solutions must coincide.


\begin{proposition}
\label{propn: CGO_second}
Let $1/2 < \delta < 1$, and let $\vep, \sigma > 0$ be as in Proposition \ref{propn: harmonic_modify}. 
Let $m \in \bZ^{d}$, with $|m|$ sufficiently large, and let 
$\tau > 0$ such that $\tau^{2} \notin \spec(-\Delta_{g_{0}})$.
There exists a unique function $\wilde{r}_{m,\tau} \in H^{2}_{-\delta}(T)$,
such that 
\[
\wilde{u}_{m,\tau} := e^{-2\pi|m|x_{1}}e_{m}(x')a_{m} 
+ e^{-2\pi\tau x_{1}}\wilde{r}_{m,\tau},
\]

satisfies $H_{V,W}\wilde{u}_{m,\tau} = 0$. Moreover, the correction term satisfies
the estimates
\[
\|\wilde{r}_{m,\tau}\|_{L^{2}_{-\delta}(T)} 
\cleq \frac{e^{4\pi|\tau - |m|||m|^{\sigma}}|m|^{1 - \vep}}{\tau}, \ \ 
\|\wilde{r}_{m,\tau}\|_{H^{1}_{-\delta}(T)} 
\cleq e^{4\pi|\tau - |m|||m|^{\sigma}}|m|^{1 - \vep}.
\]

In particular, if $|\tau - |m|||m|^{\sigma} \cleq 1$, then 
\[
\|\wilde{r}_{m,\tau}\|_{L^{2}_{-\delta}(T)} 
\cleq |m|^{-\vep}, \ \ 
\|\wilde{r}_{m,\tau}\|_{H^{1}_{-\delta}(T)} \cleq |m|^{1 - \vep}.
\]
 
In addition, if $K \sse T$ is a compact set, then
\[
\|e^{2\pi(|m| - \tau)x_{1}}\wilde{r}_{m,\tau}\|_{L^{2}(K)} \cleq |m|^{-\vep}, \ \
\|e^{2\pi|m| x_{1}}D(e^{-2\pi \tau x_{1}}\wilde{r}_{m,\tau})\|_{L^{2}(K)} 
\cleq |m|^{1 -\vep},
\]

where the constant of the inequality may depend on $K$.
\end{proposition}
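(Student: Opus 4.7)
The strategy mirrors Proposition \ref{propn: CGO_first}: substitute the ansatz into $H_{V,W}u=0$ to obtain a single conjugated equation for $\wilde{r}_{m,\tau}$, verify that its right-hand side is compactly supported with controlled $L^{2}_{\delta}$ norm, and then apply Theorem~\ref{thm: carleman_full} to deduce existence, uniqueness, and the weighted Sobolev bounds. The last batch of estimates on compact sets follows by direct computation using the hypothesis $|\tau-|m||\cleq |m|^{-\sigma}$.

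First, I would observe that $\wilde{u}_{m,\tau}$ solves $H_{V,W}\wilde{u}_{m,\tau}=0$ if and only if $\wilde{r}_{m,\tau}\in H^{2}_{-\delta}(T)$ solves
\[
e^{2\pi\tau x_{1}}H_{V,W}e^{-2\pi\tau x_{1}}\wilde{r}_{m,\tau}
= -e^{2\pi\tau x_{1}}H_{V,W}\bigl(e^{-2\pi|m|x_{1}}e_{m}(x')a_{m}\bigr)
= -e^{2\pi(\tau-|m|)x_{1}}b_{m},
\]
where the last equality is just the definition of $b_{m}$ from Proposition~\ref{propn: harmonic_modify}. By that same proposition, $b_{m}$ is supported on $|x_{1}|\leq 2|m|^{\sigma}$, so $e^{2\pi(\tau-|m|)x_{1}}b_{m}$ is as well; moreover on that set $|e^{2\pi(\tau-|m|)x_{1}}|\leq e^{4\pi|\tau-|m|||m|^{\sigma}}$, yielding
\[
\|e^{2\pi(\tau-|m|)x_{1}}b_{m}\|_{L^{2}_{\delta}(T)}
\cleq e^{4\pi|\tau-|m|||m|^{\sigma}}\|b_{m}\|_{L^{2}_{\delta}(T)}
\cleq e^{4\pi|\tau-|m|||m|^{\sigma}}|m|^{1-\vep}.
\]
Taking $|m|$ large enough that $\tau\geq \tau_{0}$ (where $\tau_{0}$ is as in Theorem~\ref{thm: carleman_full}), the hypothesis $\tau^{2}\notin \spec(-\Delta_{g_{0}})$ together with that theorem produces a unique $\wilde{r}_{m,\tau}\in H^{2}_{-\delta}(T)$ solving the equation above, and the bounds
\[
\|\wilde{r}_{m,\tau}\|_{H^{s}_{-\delta}(T)}\cleq \tau^{s-1}\,e^{4\pi|\tau-|m|||m|^{\sigma}}|m|^{1-\vep}
\]
for $s=0,1$ follow immediately. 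The simplified estimates for $|\tau-|m|||m|^{\sigma}\cleq 1$ are obtained by noting that $\tau\sim|m|$ in that regime.

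For the compact-set estimates, I would use the identity
\[
e^{2\pi|m|x_{1}}D\bigl(e^{-2\pi\tau x_{1}}\wilde{r}_{m,\tau}\bigr)
= e^{2\pi(|m|-\tau)x_{1}}\bigl(i\tau\wilde{r}_{m,\tau}\,\mathbf{e}_{1} + D\wilde{r}_{m,\tau}\bigr),
\]
where $\mathbf{e}_{1}$ is the unit vector in the $x_{1}$-direction, and similarly $e^{2\pi(|m|-\tau)x_{1}}\wilde{r}_{m,\tau}$ for the first estimate. On any fixed compact $K\sse T$, the factor $e^{2\pi(|m|-\tau)x_{1}}$ is bounded by a constant depending only on $K$ (since $|\tau-|m||\cleq |m|^{-\sigma}\leq 1$), so the triangle inequality together with the bounds $\|\wilde{r}_{m,\tau}\|_{L^{2}_{-\delta}(T)}\cleq |m|^{-\vep}$ and $\|\wilde{r}_{m,\tau}\|_{H^{1}_{-\delta}(T)}\cleq |m|^{1-\vep}$ give both estimates, using $\tau\sim |m|$ to absorb the factor $\tau|m|^{-\vep}$ into $|m|^{1-\vep}$ in the derivative bound.

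The only genuinely technical point is matching the exponential constants precisely; all other steps are essentially a direct invocation of Proposition~\ref{propn: harmonic_modify} and Theorem~\ref{thm: carleman_full}, so I do not anticipate an obstacle beyond careful bookkeeping of the factors $e^{2\pi(\tau-|m|)x_{1}}$ on the support of $b_{m}$.
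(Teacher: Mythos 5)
Your proof is correct and follows essentially the same route as the paper's: reduce to the conjugated equation $e^{2\pi\tau x_1}H_{V,W}e^{-2\pi\tau x_1}\wilde r_{m,\tau}=-e^{2\pi(\tau-|m|)x_1}b_m$, bound the right-hand side in $L^2_\delta(T)$ using the support and norm information from Proposition~\ref{propn: harmonic_modify}, and invoke Theorem~\ref{thm: carleman_full}. The paper's own proof is a one-line appeal to those two results; you have simply spelled out the exponential bookkeeping and the compact-set estimates explicitly, with no gaps.
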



\begin{proof}
We have that $H_{V,W}\wilde{u}_{m,\tau} = 0$ if and only if there exists 
$\wilde{r}_{m,\tau}$ which solves 
\[
e^{2\pi \tau x_{1}}H_{V,W}e^{-2\pi \tau x_{1}}\wilde{r}_{m,\tau} 
= -e^{2\pi(\tau - |m|)x_{1}}b_{m}.
\]

The conclusion follows from Theorem \ref{thm: carleman_full} and 
Proposition \ref{propn: harmonic_modify}.
\end{proof}


\begin{proposition}
\label{propn: CGO_same}
The solutions to the equation $H_{V,W}u = 0$ constructed in 
Proposition \ref{propn: CGO_first} and 
Proposition \ref{propn: CGO_second} are equal.
\end{proposition}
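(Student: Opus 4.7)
The plan is to form the difference $w := u_{m,\tau} - \tilde{u}_{m,\tau}$, rewrite it in the form $e^{-2\pi\tau x_1}\rho$ with $\rho \in H^2_{-\delta}(T)$ solving the conjugated equation with right-hand side zero, and then invoke the uniqueness part of Theorem \ref{thm: carleman_full}.

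First I would compute the difference directly:
\[
w = u_{m,\tau} - \tilde{u}_{m,\tau}
= e^{-2\pi|m|x_1}e_m(x')(1 - a_m) + e^{-2\pi\tau x_1}(r_{m,\tau} - \tilde{r}_{m,\tau}).
\]
By linearity, $H_{V,W}w = 0$ in $T$. The key structural observation is that $1 - a_m$ is supported in $|x_1| \leq 2|m|^{\sigma}$ by Proposition \ref{propn: harmonic_modify}, so the first term is compactly supported. Hence I can factor out the Carleman weight and write
\[
w = e^{-2\pi\tau x_1}\rho, \qquad \rho := e^{2\pi(\tau - |m|)x_1}e_m(x')(1 - a_m) + (r_{m,\tau} - \tilde{r}_{m,\tau}).
\]
The first summand of $\rho$ is smooth and compactly supported, hence lies in $H^2_{-\delta}(T)$; the second lies in $H^2_{-\delta}(T)$ by construction. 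Therefore $\rho \in H^2_{-\delta}(T)$.

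Next, from $H_{V,W}w = 0$ and the definition of $\rho$, I get
\[
e^{2\pi\tau x_1}H_{V,W}e^{-2\pi\tau x_1}\rho = e^{2\pi\tau x_1}H_{V,W}w = 0.
\]
This places $\rho$ in the setting of Theorem \ref{thm: carleman_full} with data $f = 0 \in L^2_{\delta}(T)$. The uniqueness statement there (valid under our standing hypothesis that $\tau$ satisfies $|\tau| \geq \tau_0$ and $\tau^2 \notin \spec(-\Delta_{g_0})$, together with the conditions \eqref{eqn: conditions} on $V,W$) forces $\rho = 0$, whence $w = 0$, i.e. $u_{m,\tau} = \tilde{u}_{m,\tau}$.

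There is no real obstacle here: the only subtlety worth double-checking is that $V, W$ as in \eqref{eqn: conditions_reconstruction} do satisfy the broader hypotheses \eqref{eqn: conditions} required to apply Theorem \ref{thm: carleman_full} (which they do, since compactly supported potentials in $M$ are automatically compactly supported in $T$, and $W \in L^\infty(M)$ with compact support trivially satisfies $\langle x_1\rangle^{2\delta}W \in L^\infty(T)$), and that the vanishing moment assumption on $V$ is the same. Once that is checked, the argument is just: compact support of $1-a_m$ puts the difference in the uniqueness class of the weighted equation, and uniqueness closes the case.
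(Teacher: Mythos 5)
Your proof is correct and essentially the same as the paper's: both recognize that the discrepancy $e^{2\pi(\tau - |m|)x_1}e_m(x')(a_m - 1)$ (equivalently, its negative $e^{2\pi(\tau - |m|)x_1}e_m(x')(1 - a_m)$) is smooth and compactly supported, hence lies in $H^2_{-\delta}(T)$, which places the difference of the two CGOs in the uniqueness class. The only cosmetic difference is that you invoke the uniqueness of Theorem \ref{thm: carleman_full} directly on the difference, while the paper absorbs the discrepancy into the correction term and then invokes the uniqueness stated in Proposition \ref{propn: CGO_first} — which is itself a corollary of the same theorem.
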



\begin{proof}
We write 
\begin{align*}
\wilde{u}_{m,\tau} 
& = e^{-2\pi|m|x_{1}}e_{m}(x')a_{m} + e^{-2\pi \tau x_{1}}\wilde{r}_{m,\tau} \\
& = e^{-2\pi|m|x_{1}}e_{m}(x') + e^{-2\pi \tau x_{1}}(\wilde{r}_{m,\tau} 
+ e^{2\pi(\tau - |m|)x_{1}}e_{m}(x')(a_{m} - 1)).
\end{align*}
We have that $a_{m} - 1$ is a smooth bounded function 
supported on $|x_{1}| \leq 2|m|^{\sigma}$; in particular,
$e^{2\pi(\tau - |m|)x_{1}}e_{m}(x')(a_{m} - 1) \in H^{2}_{-\delta}(T)$.
The fact that $H_{V,W}\wilde{u}_{m,\tau} = 0$ and the uniqueness
from Proposition \ref{propn: CGO_first} give that we must have 
$\wilde{u}_{m,\tau} = u_{m,\tau}$. 
\end{proof}


\subsection{Transforms and integrals}


Recall that for the Laplacian $H_{0,0} := D^{2}$ in $M$ 
there is a well-defined Dirichlet-to-Neumann map $\Lambda_{0,0}$. Moreover,
this map is symmetric. If $u$ and $\phi$ are solutions to $H_{V,W}u = 0$ and 
$H_{0,0}\phi = 0$, respectively, then we have the integral identities
\[
\lan \Lambda_{V,W} \tr(u),\tr(\phi) \ran
= \int_{M}-D u\cdot D \phi + V\cdot (\phi Du - u D \phi)
+ (V^{2} + W)u\phi,
\]
\[
\lan \Lambda_{0,0}\tr(u),\tr(\phi)\ran
= \lan \Lambda_{0,0}\tr(\phi),\tr(u)\ran
= \int_{M}-Du\cdot D\phi,
\]

and so we obtain
\begin{equation}
\label{eqn: difference_DN}
\lan (\Lambda_{V,W} - \Lambda_{0,0})\tr(u),\tr(\phi) \ran
= \int_{M} V\cdot (\phi Du - u D\phi)
+ (V^{2} + W)u\phi.
\end{equation}

Let $m, n \in \bZ^{d}$, $m, n \neq 0$, be fixed. Let $m_{N} := Nm \in \bZ^{d}$, 
where $N > 0$ is a large integer parameter. Observe first, that 
$m_{N}/|m_{N}| = m/|m|$. With the notation from the last section,
we see from Theorem \ref{thm: delta_bar_equation} that $v_{m_{N}} = v_{m}$, as 
$m_{N}/|m_{N}| = m/|m|$ and both functions are the decaying solutions to the 
equation
\[
iD_{x_{1}}v + \frac{m}{|m|}\cdot D_{x'}v = -\biggl(iF + \frac{m}{|m|}\cdot G\biggr).
\]

According to the construction in Proposition \ref{propn: harmonic_modify}, 
if $N$ is large enough (depending only on $R$ and $\sigma$) and 
$|x_{1}| \leq R$, then 
\begin{equation}
\label{eqn: harmonic_modify_2}
a_{m_{N}}(x_{1},x') 
:= \exp\biggl(v_{m_{N}}\psi\biggl(\frac{x_{1}}{|m_{N}|^{\sigma}}\biggr)\biggr)
= \exp\biggl(v_{m}\psi\biggl(\frac{x_{1}}{|m_{N}|^{\sigma}}\biggr)\biggr)
= \exp(v_{m}) =: \wilde{a}_{m}(x_{1},x').
\end{equation}

Let $u_{m_{N},\tau}$ be the solution to $H_{V,W}u= 0$ constructed in 
the previous section as the correction of the harmonic function 
$e^{-2\pi|m_{N}|x_{1}}e_{m_{N}}(x')$. We choose 
$\tau = \tau(m,N,\sigma)$ to satisfy 
$|\tau - |m_{N}|||m_{N}|^{\sigma} \cleq 1$, so we have the 
last estimates in Proposition \ref{propn: CGO_second} for the correction
$\wilde{r}_{m_{N},\tau}$ on the compact set $M$. For the choice of 
test function we consider the harmonic function 
$\phi_{m_{N},n} = e^{2\pi|m_{N} + n|x_{1}}e_{-(m_{N} + n)x'}$.
Using \eqref{eqn: difference_DN} we define the transform	 
\begin{align*}
T(m,n,N) 
& := \lan (\Lambda_{V,W} - \Lambda_{0,0})\tr(u_{m_{N},\tau}),
\tr(\phi_{m_{N},n})\ran \\
& = \int_{M} V\cdot (\phi_{m_{N},n} Du_{m_{N},\tau} 
- u_{m_{N},\tau} D\phi_{m_{N},n}) 
+ (V^{2} + W)u_{m_{N},\tau}\phi_{m_{N},n},
\end{align*}

From Corollary \ref{cor: Fredholm} we obtain that the transform $T(m,n,N)$
is determined by the knowledge of $M$ and $\Lambda_{V,W}$.
In \cite{N} and \cite{Sa1} this is referred as the \textit{scattering 
transform}; that name does not seem appropriate in our setting.
Let us look at each term of the previous expression on 
$M \sse [-R,R]\times \bT^{d}$. From Proposition \ref{propn: CGO_second} 
and \eqref{eqn: harmonic_modify_2} we have that
\[
u_{m_{N},\tau} = e^{-2\pi|m_{N}|x_{1}}e_{m_{N}}(x')\wilde{a}_{m}
+ e^{-2\pi\tau x_{1}}\wilde{r}_{m_{N},\tau},
\]
\[
Du_{m_{N},\tau} = e^{-2\pi|m_{N}|x_{1}}e_{m_{N}}(x')
\wilde{a}_{m}[(i|m_{N}|,m_{N}) + Dv_{m}] 
+ D(e^{-2\pi\tau x_{1}}\wilde{r}_{m_{N},\tau}),
\]

where we have used that $\wilde{a}_{m} := \exp(v_{m})$ for the second 
expression. From Theorem \ref{thm: delta_bar_equation} and 
Proposition \ref{propn: CGO_second} we obtain that 
\[
u_{m_{N},\tau} = e^{-2\pi|m_{N}|x_{1}}e_{m_{N}}(x')\wilde{a}_{m}
+ e^{-2\pi|m_{N}|x_{1}}R_{1},
\]
\[
Du_{m_{N},\tau} 
= e^{-2\pi|m_{N}|x_{1}}e_{m_{N}}(x')\wilde{a}_{m}(i|m_{N}|,m_{N})
+ e^{-2\pi|m_{N}|x_{1}}R_{2},
\]

with $\|R_{i}\|_{L^{2}(M)} = o(N)$. We also have 
$D\phi_{m_{N},n} = (-i|m_{N} + n|, -(m_{N} + n))\phi_{m_{N},n}$.
Therefore, 
\[
\phi_{m_{N},n} Du_{m_{N},\tau}
= e^{2\pi (|m_{N} + n| - |m_{N}|)x_{1}}e_{-n}(x') 
\wilde{a}_{m}(i|m_{N}|,m_{N})
+ e^{2\pi (|m_{N} + n| - |m_{N}|)x_{1}}\wilde{R}_{1},
\]
\[
u_{m_{N},\tau} D\phi_{m_{N},n}
= e^{2\pi (|m_{N} + n| - |m_{N}|)x_{1}}e_{-n}(x')
\wilde{a}_{m}(-i|m_{N} + n|, -(m_{N} + n))
+ e^{2\pi(|m_{N}+ n| - |m_{N}|) x_{1}}\wilde{R}_{2},
\]
\[
u_{m_{N},\tau}\phi_{m_{N},n}
= e^{2\pi (|m_{N} + n| - |m_{N}|)x_{1}}e_{-n}(x')\wilde{a}_{m}
+ e^{2\pi(|m_{N}+ n| - |m_{N}|) x_{1}}\wilde{R}_{3},
\]

with $\|\wilde{R}_{i}\|_{L^{2}(M)} = o(N)$.
Finally, observe that 
\[
|m_{N} + n| - |m_{N}| 
= \frac{(|m_{N}|^{2} + 2m_{N}\cdot n + |n|^{2}) - |m_{N}|^{2}}
{|m_{N} + n| + |m_{N}|}
= \frac{N}{N}\cdot \frac{2m\cdot n + \frac{|n|^{2}}{N}}
{|m + \frac{n}{N}| + |m|} 
\ra \frac{m\cdot n}{|m|} =: \mu_{m,n},
\]

as $N \ra +\infty$.
These computations and the estimates from Theorem \ref{thm: delta_bar_equation} 
and Proposition \ref{propn: CGO_second} give that
\[
\|\phi_{m_{N},n} Du_{m_{N},\tau} 
- e^{2\pi \mu_{m,n}x_{1}}e_{-n}(x')\wilde{a}_{m}(i|m_{N}|,m_{N})\|_{L^{2}(M)}
= o(N),
\]
\[
\|u_{m_{N},\tau} D\phi_{m_{N},n} 
+ e^{2\pi \mu_{m,n}x_{1}}e_{-n}(x')\wilde{a}_{m}(i|m_{N}|,m_{N})\|_{L^{2}(M)}
= o(N),
\]
\[
\|(V^{2} + W)u_{m_{N},\tau}\phi_{m,n}\|_{L^{2}(M)} = o(N).
\]

Thus, from the knowledge of the transform we are able to obtain the integrals
\[
I(m,n) := \lim_{N \ra +\infty}\frac{T(m,n,N)}{2N} 
= \int_{M}e^{2\pi \mu_{m,n} x_{1}}e_{-n}(x')(i|m|,m)\cdot V\wilde{a}_{m}.
\]

We regard these integrals as a ``mixed non-linear transform'', in the sense 
that we have Laplace and Fourier transforms in the real and toroidal variables, 
respectively, and an additional term $\wilde{a}_{m}(x_{1},x')$.


\subsection{Determination of the Fourier coefficients of the magnetic field}


In order to reconstruct the curl of $V$, we could try to remove the
``non-linear'' term $\wilde{a}_{m}$ from the mixed transform, i.e. 
to determine the integrals 
\[
J(m,n) := \int_{M}e^{2\pi \mu_{m,n} x_{1}}e_{-n}(x')(i|m|,m)\cdot V,
\]

and relate them to the integrals $I(m,n)$.
These integrals contain real exponentials, instead of only complex 
exponentials as in \cite{Sa1}. This will turn out in a significantly different 
result. In the appendix, we introduce the necessary notation and prove the 
following result. 


\begin{theorem}
\label{thm: relation_J_I}
We have the following cases depending on the sign of the dot product 
$m \cdot n$:
\begin{enumerate}
\item if $m \cdot n = 0$, then $J(m,n) = 0$,

\item if $m \cdot n > 0$, then
\[
J(m,n) = \sum_{j = 1}^{\infty}\frac{1}{j}\biggl(\frac{-2\pi}{|m|}\biggr)^{j - 1}
I_{j}^{-}(m,n)
\]

\item if $m \cdot n < 0$, then
\[
J(m,n) = \sum_{j = 1}^{\infty}\frac{1}{j}\biggl(\frac{2\pi}{|m|}\biggr)^{j - 1}
I_{j}^{+}(m,n)
\]
\end{enumerate}

Moreover, if $m \cdot n = \pm 1$, then $J(m,n) = I(m,n)$.
\end{theorem}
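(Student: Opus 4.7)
The amplitude $\wilde{a}_{m} = \exp(v_m)$ was engineered so that $Lv_m = -(iF + (m/|m|)\cdot G)$, where $L := iD_{x_{1}} + (m/|m|)\cdot D_{x'}$; equivalently $(i|m|,m)\cdot V = -|m|Lv_m$, and by Leibniz $v_m^{j}\cdot(i|m|,m)\cdot V = -\tfrac{|m|}{j+1}L(v_m^{j+1})$ for every $j\geq 0$. A direct computation shows that the weight $\Phi := e^{2\pi\mu_{m,n}x_{1}}e_{-n}(x')$ is annihilated by $L$: the contributions $iD_{x_{1}}\Phi = \mu_{m,n}\Phi$ and $(m/|m|)\cdot D_{x'}\Phi = -\mu_{m,n}\Phi$ cancel. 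Integrating by parts in $x_{1}$ (there is no boundary in the $\bT^{d}$-direction) therefore yields, for any $f$ whose $n$-th Fourier coefficient $f_n$ in $x'$ decays suitably,
\begin{equation*}
\int_{T}\Phi\,Lf \;=\; \frac{1}{2\pi}\lim_{N\to\infty}\bigl[e^{2\pi\mu_{m,n}N}f_{n}(N) - e^{-2\pi\mu_{m,n}N}f_{n}(-N)\bigr].
\end{equation*}

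Case 1 ($m\cdot n = 0$) is immediate from the vanishing-moment hypothesis: $J(m,n) = |m|\int_{\bR}h_{n}(x_{1})dx_{1}$ with $h := iF + (m/|m|)\cdot G$, while $\int_{\bR}V(x_{1},x')dx_{1} = 0$ for every $x'$ translates, mode by mode, into $\int_{\bR}F_{n} = \int_{\bR}G_{n} = 0$. For Cases 2 and 3 I would Taylor expand $\wilde{a}_m = \sum_{j\geq 0}v_m^{j}/j!$, apply the Leibniz--IBP reduction above to each summand, and evaluate the boundary limit via the explicit ODE solution in Theorem~\ref{thm: delta_bar_Fourier_ODE}: for $\mu_{m,k} > 0$ the Fourier coefficient $v_{m,k}$ vanishes for $x_{1}\ll 0$ and $\lim_{x_{1}\to\infty}e^{2\pi\mu_{m,k}x_{1}}v_{m,k}(x_{1}) = -(2\pi/|m|)J(m,k)$, with symmetric behavior for $\mu_{m,k} < 0$ and compact support for $\mu_{m,k} = 0$. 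Expanding $(v_m^{j+1})_{n} = \sum_{k_{1}+\cdots+k_{j+1}=n}\prod_{i}v_{m,k_{i}}$ and using $\sum_{i}\mu_{m,k_{i}} = \mu_{m,n}$, the constraint $m\cdot n > 0$ in Case 2 rules out any admissible partition at $-\infty$ (no sum of $j+1$ strictly negative integers equals $m\cdot n$), so only the $+\infty$ boundary contributes; the resulting product is a restricted $j$-fold convolution of $J(m,\cdot)$ over $\{k : m\cdot k > 0\}$. Case 3 is symmetric, with the $-\infty$ boundary contributing over $\{k : m\cdot k < 0\}$.

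Collecting the resulting series in $j$ assembles a generating-function identity of the form $\alpha I = e^{\alpha\hat{J}}-1$ with $\alpha = \mp 2\pi/|m|$, whose formal inversion $\alpha\hat{J} = \log(1+\alpha I)$ delivers the stated series for $J$ in terms of $I_{j}^{\pm}$ (the $j$-fold restricted convolutions of $I(m,\cdot)$), modulo the sign conventions the appendix uses for these symbols. The special case $m\cdot n = \pm 1$ is then immediate: for $j \geq 2$ the convolution $I_{j}^{\pm}(m,n)$ vanishes since $\pm 1$ cannot be written as a sum of $j\geq 2$ integers of strictly the same sign, so only $I_{1}^{\pm}(m,n) = I(m,n)$ survives and $J(m,n) = I(m,n)$. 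The principal obstacle is the careful boundary analysis: although $\wilde{a}_m$ itself does not decay on $T$, each Fourier mode $v_{m,k_{i}}$ does decay, in a direction determined by $\mathrm{sgn}(m\cdot k_{i})$, and the sign of $m\cdot n$ selects which end of $\bR$ is reached by the admissible partitions---this is precisely what produces the dichotomy between Cases 2 and 3. Absolute convergence of the inversion series requires $|m|$ large (so $|\alpha|<1$) together with rapid decay of $|J(m,k)|$ in $|k|$, itself a consequence of the smoothness of $V$ via repeated integration by parts in $x'$.
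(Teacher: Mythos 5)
Your approach is correct and uses the same essential ingredients as the paper's appendix proof, but runs the argument in the opposite direction. You propose to Taylor-expand $\wilde a_m = e^{v_m} = \sum_j v_m^j/j!$, combine the identity $(i|m|,m)\cdot V = -|m|Lv_m$ with the integration-by-parts boundary formula to express $I(m,n)$ as a $j$-sum of restricted convolutions of $J$ (schematically $1 + \alpha I = e^{\alpha J}$ in the graded convolution algebra, $\alpha = \mp 2\pi/|m|$), and then invert formally via the logarithm series. The paper's appendix instead starts directly from $v_m = \log\wilde a_m = \sum_j\tfrac{(-1)^{j-1}}{j}(\wilde a_m - 1)^j$, where $\wilde a_m - 1$ restricted to $|x_1|\geq R$ is already written in \eqref{eqn: exponential_wilde{a}_{m}_explicit} as a Fourier series whose coefficients are (up to sign) the $I(m,k)$'s; expanding $(\wilde a_m - 1)^j$ and reading off the $n$-th Fourier mode then produces the $I_j^\pm(m,n)$ without any inversion step. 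Both routes rest on the same three facts: the boundary formulas \eqref{eqn: integrals_I} and \eqref{eqn: integrals_J} relating $I(m,n)$ and $J(m,n)$ to $\wilde a_{m,n}$ and $v_{m,n}$ at $x_1 = \pm\infty$; the one-sided support and exponential decay of each mode $v_{m,k}$ from Theorem \ref{thm: delta_bar_Fourier_ODE}; and the sign-of-$m\cdot n$ dichotomy that kills the contribution from the wrong end of $\bR$. Your Case 1 argument, appealing directly to the vanishing-moment hypothesis on $V$, is slightly more elementary than the paper's, which infers the vanishing from the compact support of $v_{m,n}$ when $m\cdot n = 0$; both are fine.

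One inaccuracy in your closing remark: requiring $|m|$ large (so that $|\alpha| < 1$) is not needed. For each fixed $n$ the $j$-sum in the identity for $J(m,n)$ terminates after finitely many terms, because $T_j^\pm(m,n) = \emptyset$ as soon as $j > |m\cdot n|/\gcd(m)$; this is precisely the observation the paper uses to conclude $J(m,n) = I(m,n)$ when $m\cdot n = \pm 1$. What does require the rapid decay of $I(m,k)$ in $|k|$ (hence the smoothness of $V$) is the absolute convergence of the infinite sums over $\kappa\in T_j^\pm(m,n)$ defining each individual $I_j^\pm(m,n)$---these index sets are infinite because the constraints only restrict the components of $\kappa_i$ along $m$---a point you correctly raise but attach to the wrong series.
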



\subsubsection{Relation between the families $\{I(m,n)\}$ and $\{J(m,n)\}$}


In this subsection will not be concerned with the explicit relations between 
these two families of integrals, but rather on the existence of such relation. 
Let $[p,q] \sse \bR$ be any interval containing $[-R,R]$ so that 
$M \sse [p,q] \times \bT^{d}$. The condition $\supp(V) \sse M$ implies that  
\[
I(m,n) := \int_{M}e^{2\pi \mu_{m,n} x_{1}}e_{-n}(x')(i|m|,m)
\cdot V\wilde{a}_{m}
= \int_{[p,q] \times \bT^{d}}e^{2\pi \mu_{m,n} x_{1}}e_{-n}(x')(i|m|,m)
\cdot V\wilde{a}_{m}.
\]

Recall from \eqref{eqn: harmonic_modify_2} 
that $\wilde{a}_{m} := \exp(v_{m})$ and $(i|m|,m)\cdot(Dv_{m} + V) = 0$, so
that $(i|m|,m)\cdot (D\wilde{a}_{m} + V\wilde{a}_{m}) = 0$. This and the fact
that $(i|m|,m)\cdot D(e^{2\pi \mu_{m,n} x_{1}}e_{-n}(x')) = 0$ allow us to 
rewrite 
\begin{align}
\label{eqn: integrals_I}
I(m,n) & = -\int_{[p,q] \times \bT^{d}}e^{2\pi \mu_{m,n} x_{1}}e_{-n}(x')
(i|m|,m)\cdot D\wilde{a}_{m} \nonumber \\
& = -\int_{[p,q] \times \bT^{d}}(i|m|,m)
\cdot D(e^{2\pi \mu_{m,n} x_{1}}e_{-n}(x') \wilde{a}_{m}) \nonumber \\
& = \frac{-|m|}{2\pi}\biggl(e^{2\pi \mu_{m,n} x_{1}}
\int_{\bT^{d}}e_{-n}(x')\wilde{a}_{m}(x_{1},x')dx'\biggr)
\biggl|_{x_{1} = p}^{x_{1} = q},
\end{align}

where the last equality follows from the Fundamental Theorem of Calculus
and the fact that the torus $\bT^{d}$ has no boundary. 
Recall that we are interested in determining the integrals 
\[
J(m,n) := \int_{M}e^{2\pi \mu_{m,n} x_{1}}e_{-n}(x')(i|m|,m)\cdot V
= \int_{[p,q] \times \bT^{d}}e^{2\pi \mu_{m,n} x_{1}}e_{-n}(x')(i|m|,m)\cdot V.
\]

Using that $(i|m|,m)\cdot (Dv_{m} + V) = 0$, we can proceed as before 
to obtain 
\begin{align}
\label{eqn: integrals_J}
J(m,n) & = -\int_{[p,q] \times \bT^{d}}e^{2\pi \mu_{m,n} x_{1}}e_{-n}(x')
(i|m|,m)\cdot Dv_{m} \nonumber \\
& = -\int_{[p,q] \times \bT^{d}}(i|m|,m)
\cdot D(e^{2\pi \mu_{m,n} x_{1}}e_{-n}(x')v_{m}) \nonumber \\
& = \frac{-|m|}{2\pi}\biggl(e^{2\pi \mu_{m,n} x_{1}}
\int_{\bT^{d}}e_{-n}(x')v_{m}(x_{1},x')dx'\biggr)
\biggl|_{x_{1} = p}^{x_{1} = q}.
\end{align}

Now we show that we can determine the integrals in \eqref{eqn: integrals_J}
from the knowledge of the integrals in \eqref{eqn: integrals_I}. First, let us observe
that these equalities hold for any $p,q$ such that $M \sse [p,q] \times \bT^{d}$.
Therefore, if necessary we may only consider the case when $p,q$ are large.
In addition, observe that for determining the integrals in \eqref{eqn: integrals_J} it 
suffices to determine $v_{m}(x_{1},x')$ for $|x_{1}|$ large. Moreover, by 
Theorem \ref{thm: delta_bar_equation} we have $|v_{m}(x_{1},x')| \ra 0$ as 
$|x_{1}| \ra +\infty$ (uniformly in $x'$), thus the knowledge of 
$\wilde{a}_{m}(x_{1},x') = \exp(v_{m}(x_{1},x')) \ra 1$ for $|x_{1}|$ large 
and the invertibility of $\exp(z)$ near $z = 0$ are sufficent to determine 
$v_{m}(x_{1},x')$. More concretely, we can recover $v_{m}(x_{1},x')$ 
by the power series 
\[
v_{m}(x_{1},x') = \log(\wilde{a}_{m}(x_{1},x'))
= \sum_{j = 1}^{\infty}\frac{(-1)^{j - 1}}{j}(\wilde{a}_{m}(x_{1},x')-1)^{j}.
\]

Then, the problem reduces to recover $\wilde{a}_{m}(x_{1},x')$ for $|x_{1}|$ 
large from the knowledge of the integrals in \eqref{eqn: integrals_I}. Let us consider 
the Fourier series
\[
v_{m}(x_{1},x') = \sum_{k \in \bZ^{d}} v_{m,k}(x_{1})e_{k}(x'),
\]

so that the Fourier coefficient $v_{m,k}(x_{1})$ solves the equation 
\[
iD_{x_{1}}v_{m,k} + \frac{m \cdot k}{|m|}v_{m,k} = -\biggl(iF_{k} 
+ \frac{m}{|m|}\cdot G_{k}\biggr).
\] 

By Theorem \ref{thm: delta_bar_Fourier_ODE}, the solution $v_{m,k}(x_{1})$ 
vanishes in $(-\infty,-R]$ or $[R,+\infty)$ depending whether 
$m\cdot k  \geq 0$ or $m \cdot k \leq 0$, respectively. Thus, for 
$|x_{1}|\geq R$ we have 
\begin{equation}
\label{eqn: solution_v_{m}}
v_{m}(x_{1},x') = \left\{
\begin{array}{ll}
v_{m}^{+}(x_{1},x') := \sum_{m\cdot k > 0}v_{m,k}(x_{1})e_{k}(x') 
& \mbox{if} \ \ x_{1}\geq R, \\
v_{m}^{-}(x_{1},x') := \sum_{m\cdot k < 0}v_{m,k}(x_{1})e_{k}(x') 
& \mbox{if} \ \ x_{1}\leq -R. \\
\end{array} 
\right.
\end{equation}

From this and \eqref{eqn: integrals_J} we obtain
\begin{equation}
\label{eqn: integrals_J_v_{m}}
J(m,n) = \frac{-|m|}{2\pi} \cdot \left\{
\begin{array}{ll}
e^{2\pi \mu_{m,n}q}v_{m,n}(q) 
& \mbox{if} \ \ m\cdot n > 0, \\
-e^{2\pi \mu_{m,n}p}v_{m,n}(p) 
& \mbox{if} \ \ m\cdot n < 0, \\
0 & \mbox{if} \ \ m\cdot n = 0.
\end{array} 
\right.
\end{equation}

Moreover, we also have
\[
\wilde{a}_{m}(x_{1},x') = \exp(v_{m}(x_{1},x')) 
= \left\{
\begin{array}{ll}
\wilde{a}_{m}^{+}(x_{1},x') := \exp(v_{m}^{+}(x_{1},x'))
& \mbox{if} \ \ x_{1}\geq R, \\
\wilde{a}_{m}^{-}(x_{1},x') := \exp(v_{m}^{-}(x_{1},x'))
& \mbox{if} \ \ x_{1}\leq -R. \\
\end{array} 
\right.
\]

Let us consider the Fourier series
\[
\wilde{a}_{m}(x_{1},x') = \sum_{k \in \bZ^{d}}\wilde{a}_{m,k}(x_{1})e_{k}(x').
\]

Given the form of $v_{m}^{\pm}$ from \eqref{eqn: solution_v_{m}}
and the fact that the exponential is a power series, we 
conclude that
\begin{equation}
\label{eqn: exponential_wilde{a}_{m}}
\wilde{a}_{m}(x_{1},x') = \left\{
\begin{array}{ll}
\wilde{a}_{m}^{+}(x_{1},x') = 1 + \sum_{m \cdot k > 0}
\wilde{a}_{m,k}(x_{1})e_{k}(x')
& \mbox{if} \ \ x_{1}\geq R, \\
\wilde{a}_{m}^{-}(x_{1},x') = 1 + \sum_{m \cdot k < 0}
\wilde{a}_{m,k}(x_{1})e_{k}(x')
& \mbox{if} \ \ x_{1}\leq -R, \\
\end{array} 
\right.
\end{equation}

This and \eqref{eqn: integrals_I} give that
\begin{align*}
I(m,n) & = \frac{-|m|}{2\pi}\biggl(e^{2\pi \mu_{m,n} x_{1}}
\int_{\bT^{d}}e_{-n}(x')\wilde{a}_{m}(x_{1},x')dx'\biggr)
\biggl|_{x_{1} = p}^{x_{1} = q} \\
& = \frac{-|m|}{2\pi} \cdot \left\{
\begin{array}{ll}
e^{2\pi \mu_{m,n}q}\wilde{a}_{m,n}(q)
& \mbox{if} \ \ m\cdot n >  0, \\
-e^{2\pi \mu_{m,n}p}\wilde{a}_{m,n}(p)
& \mbox{if} \ \ m\cdot n <  0, \\
0 & \mbox{if} \ \ m\cdot n = 0.
\end{array} 
\right.
\end{align*}

Recall that this holds for any $p, q$ such that $[-R,R] \sse [p,q]$. 
From this and \eqref{eqn: exponential_wilde{a}_{m}} we conclude that
\begin{equation}
\label{eqn: exponential_wilde{a}_{m}_explicit}
\wilde{a}_{m}(x_{1},x') = 1 + \frac{2\pi}{|m|}\cdot\left\{
\begin{array}{rl}
-\sum_{m \cdot n > 0}I(m,n)e^{-2\pi \mu_{m,n}x_{1}}e_{n}(x')
& \mbox{if} \ \ x_{1}\geq R, \\
\sum_{m \cdot n < 0}I(m,n)e^{-2\pi \mu_{m,n}x_{1}}e_{n}(x') 
& \mbox{if} \ \ x_{1}\leq -R, \\
\end{array} 
\right.
\end{equation}

Therefore, we have shown that from the integrals $I(m,n)$ we are able
to determine $\wilde{a}_{m}(x_{1},x')$ for $|x_{1}| \geq R$, which in
turn determines $v_{m}(x_{1},x')$ for $|x_{1}| \geq R$, and so the
integrals $J(m,n)$. The explicit dependence of $J(m,n)$ on the family of 
integrals $\{I(m,k)\}$ is shown in the appendix.										


\subsubsection{Curl vectors and Laplace transform}


Let us show how we can use the integrals $J(m,n)$ to recover the 
Fourier coefficients of $\curl V$. Using that $\supp(V) \sse M$, we integrate by 
parts to compute the mixed transform of the terms involved in the magnetic field 
$\curl V$,
\begin{align*}
\int_{[p,q] \times \bT^{d}}& e^{2\pi \mu_{m,n} x_{1}}e_{-n}(x')D_{x_{1}}G_{j} \\
& = \int_{[p,q] \times \bT^{d}}e^{2\pi \mu_{m,n} x_{1}}e_{-n}(x')i\mu_{m,n} G_{j}
= \int_{[p,q] \times \bT^{d}}e^{2\pi \mu_{m,n} x_{1}}e_{-n}(x')
\biggl(0, \frac{i m\cdot n}{|m|}\delta_{j}\biggr)\cdot V,
\end{align*}
\begin{align*}
\int_{[p,q] \times \bT^{d}} & e^{2\pi \mu_{m,n} x_{1}}e_{-n}(x')D_{x'_{j}}F \\
& = \int_{[p,q] \times \bT^{d}}e^{2\pi \mu_{m,n} x_{1}}e_{-n}(x')n_{j}F 
= \int_{[p,q] \times \bT^{d}}e^{2\pi \mu_{m,n} x_{1}}e_{-n}(x')
(n_{j},0)\cdot V,
\end{align*}
\begin{align*}
\int_{[p,q] \times \bT^{d}}& e^{2\pi \mu_{m,n} x_{1}}e_{-n}(x')D_{x'_{j}}G_{k} \\
& = \int_{[p,q] \times \bT^{d}}e^{2\pi \mu_{m,n} x_{1}}e_{-n}(x')n_{j}G_{k} 
= \int_{[p,q] \times \bT^{d}}e^{2\pi \mu_{m,n} x_{1}}e_{-n}(x')
(0,n_{j}\delta_{k})\cdot V,
\end{align*}

where $\delta_{1},\ldots, \delta_{d}$ are the standard basis vectors in $\bR^{d}$.
This means that we are interested in determining the integrals 
\[
\int_{[p,q] \times \bT^{d}}e^{2\pi \mu_{m,n} x_{1}}e_{-n}(x')(D_{x_{1}}G_{j} - D_{x'_{j}}F)
= \int_{[p,q]\times \bT^{d}}e^{2\pi \mu_{m,n} x_{1}}e_{-n}(x')
\biggl(-n_{j},\frac{i m\cdot n}{|m|}\delta_{j}\biggr)\cdot V,
\]
\[
\int_{[p,q] \times \bT^{d}}e^{2\pi \mu_{m,n} x_{1}}e_{-n}(x')
(D_{x'_{j}}G_{k} - D_{x'_{k}}G_{j})
= \int_{[p,q] \times \bT^{d}}e^{2\pi \mu_{m,n} x_{1}}e_{-n}(x')(0, n_{j}
\delta_{k} - n_{k}\delta_{j})\cdot V.
\]

Therefore, the problem reduces to obtain the ``curl vectors''
\[
\biggl\{\biggl(-n_{j},\frac{i m\cdot n}{|m|}\delta_{j}\biggr), \ \
(0, n_{j}\delta_{k} - n_{k}\delta_{j})\biggr\}
\]

as linear combinations of vectors $\{(i|m|,m)\}$ while keeping $n$ and $\mu_{m,n}$ 
fixed. Moreover, we would like to have this result for many values of $\mu$. We show
in Lemma \ref{lemma: linear_algebra_full}, from the following section, that this is				
indeed the case, so that for fixed $n \neq 0$, the knowledge of the integrals $J(m,n)$ 
allows to determine the integrals
\[
\int_{[p,q] \times \bT^{d}}e^{2\pi \mu_{m,n} x_{1}}e_{-n}(x')(D_{x_{1}}G_{j} - D_{x'_{j}}F)
= \int_{p}^{q}e^{2\pi \mu_{m,n} x_{1}}
\biggl(\int_{\bT^{d}}e_{-n}(x')(D_{x_{1}}G_{j} - D_{x'_{j}}F)dx'\biggr)dx_{1},
\]
\[
\int_{[p,q] \times \bT^{d}}e^{2\pi \mu_{m,n} x_{1}}e_{-n}(x')(D_{x'_{j}}G_{k} - D_{x'_{k}}G_{j})
= \int_{p}^{q}e^{2\pi \mu_{m,n} x_{1}}
\biggl(\int_{\bT^{d}}e_{-n}(x')(D_{x'_{j}}G_{k} - D_{x'_{k}}G_{j})dx'\biggr)dx_{1}.
\]

for a sequence of values of $\mu_{m,n} = \gcd(n)/K$ converging to $0$. For 
$f \in C^{\infty}_{c}([p,q])$, 
its Laplace transform  
\[
F(\mu) := \int_{p}^{q}e^{2\pi \mu x_{1}}f(x_{1})dx_{1}
\]

is an entire function, and therefore its knowledge along a convergent sequence
is enough to recover the entire function $F$ over all $\bC$. We describe 
this reconstruction in Theorem \ref{thm: reconstruction_entire} in the following section. 				
The values of $F$ over the imaginary axis correspond to the Fourier transform of $f$,
and therefore it is possible to reconstruct $f$ from the knowledge of $F$ along
a convergent sequence. This completes the reconstruction of the Fourier coefficients 
of $\curl V$.


\subsection{Appendices} 

 
\subsubsection{Explicit relation between the families $\{I(m,n)\}$ and $\{J(m,n)\}$}
 

Let us prove Theorem \ref{thm: relation_J_I}.
We mentioned before that we were interested in computing 
$v_{m} = \log \wilde{a}_{m}$ by a power series; in particular, we are concerned with
expressions of the form $(\wilde{a}_{m} - 1)^{k}$. In 
\eqref{eqn: exponential_wilde{a}_{m}_explicit} we were able to express the Fourier
series of $\wilde{a}_{m}$ in terms of the integrals $I(m,n)$. 
In particular, for $x_{1} \geq R$ we have 
\[
\wilde{a}_{m}(x_{1},x') - 1 
= \frac{-2\pi}{|m|}\sum_{m\cdot k > 0}I(m,k)e^{-2\pi \mu_{m,k}x_{1}}e_{k}(x').
\] 

Consider the set
$T_{j}^{+}(m,k) = \{\kappa 
= (\kappa_{1},\ldots, \kappa_{j}) \in (\bZ^{d})^{j} : m \cdot \kappa_{i} > 0, 
\ \ \kappa_{1} + \ldots + \kappa_{j} = k\}$. 
Observe that if $\kappa \in T_{j}^{+}(m,k)$, then 
\[
m \cdot k = m \cdot (\kappa_{1} + \ldots + \kappa_{j})
\geq \gcd(m) + \ldots + \gcd(m) = j\gcd(m).
\]

This implies that $T_{j}^{+}(m,k)$ is empty when $j > m\cdot k/\gcd(m)$;
in particular it is empty when $m \cdot k < 0$. Let us define 
\[
I_{j}^{+}(m,k) = \sum_{\kappa \in T_{j}^{+}(m,k)}I(m,\kappa_{1})\cdot I(m,\kappa_{2})
\cdot \ldots \cdot I(m,\kappa_{j})
\]

By our previous observation, we also see that $I_{j}^{+}(m,k) = 0$ if $j > m\cdot k/\gcd(m)$.
Finally, using that $\mu_{m,k}$ is a linear function of $k$ we obtain 
\[
(\wilde{a}_{m}(x_{1},x') - 1)^{j} = \biggl(\frac{-2\pi}{|m|}\biggr)^{j}
\sum_{m\cdot k > 0}I_{j}^{+}(m,k)e^{-2\pi \mu_{m,k}x_{1}}e_{k}(x')
\]

This implies that if $x_{1} \geq R$, then
\[
v_{m}(x_{1},x') = \sum_{j = 1}^{\infty}\frac{(-1)^{j - 1}}{j}
(\wilde{a}_{m}(x_{1},x') - 1)^{j}
= -\sum_{m \cdot k > 0}\biggl(\sum_{j = 1}^{\infty}\frac{1}{j}\biggl(\frac{2\pi}{|m|}\biggr)^{j}
I_{j}^{+}(m,k)\biggr)e^{-2\pi\mu_{m,k}x_{1}}e_{k}(x').
\]

From this and \eqref{eqn: integrals_J_v_{m}} we conclude that if 
$m \cdot n > 0$, then
\begin{equation}
\label{eqn: relation_J_I_positive}
J(m,n) = \frac{-|m|}{2\pi}e^{2\pi \mu_{m,n}q}v_{m,n}(q)
= \sum_{j = 1}^{\infty}\frac{1}{j}\biggl(\frac{2\pi}{|m|}\biggr)^{j - 1}
I_{j}^{+}(m,n)
\end{equation}

Similarly, if $x_{1} \leq -R$ we have 
\[
\wilde{a}_{m}(x_{1},x') - 1 
= \frac{2\pi}{|m|}\sum_{m\cdot k < 0}I(m,k)e^{-2\pi \mu_{m,k}x_{1}}e_{k}(x').
\]

We consider $T_{j}^{-}(m,k) = \{\kappa 
= (\kappa_{1},\ldots, \kappa_{j}) \in (\bZ^{d})^{j} : m \cdot \kappa_{i} < 0, 
\ \ \kappa_{1} + \ldots + \kappa_{j} = k\}$. 
As before, we have that
that $T_{j}^{-}(m,k)$ is empty when $j > -m\cdot k/\gcd(m)$;
in particular it is empty when $m \cdot k > 0$. Let us define 
\[
I_{j}^{-}(m,k) = \sum_{\kappa \in T_{j}^{-}(m,k)}I(m,\kappa_{1})\cdot I(m,\kappa_{2})
\cdot \ldots \cdot I(m,\kappa_{j})
\]

By our previous observation, we also see that $I_{j}^{-}(m,k) = 0$ if 
$j > -m\cdot k/\gcd(m)$. As before, we obtain
\[
(\wilde{a}_{m}(x_{1},x') - 1)^{j} = \biggl(\frac{2\pi}{|m|}\biggr)^{j}
\sum_{m\cdot k < 0}I_{j}^{-}(m,k)e^{-2\pi \mu_{m,k}x_{1}}e_{k}(x')
\]

This implies that if $x_{1} \leq -R$, then
\[
v_{m}(x_{1},x') = \sum_{j = 1}^{\infty}\frac{(-1)^{j - 1}}{j}
(\wilde{a}_{m}(x_{1},x') - 1)^{j}
= \sum_{m \cdot k < 0}\biggl(\sum_{j = 1}^{\infty}
\frac{(-1)^{j - 1}}{j}\biggl(\frac{2\pi}{|m|}\biggr)^{j}
I_{j}^{-}(m,k)\biggr)e^{-2\pi\mu_{m,k}x_{1}}e_{k}(x').
\]

From this and \eqref{eqn: integrals_J_v_{m}} we conclude that if 
$m \cdot n < 0$, then
\begin{equation}
\label{eqn: relation_J_I_negative}
J(m,n) = \frac{|m|}{2\pi}e^{2\pi \mu_{m,n}p}v_{m,n}(p)
= \sum_{j = 1}^{\infty}\frac{1}{j}\biggl(\frac{-2\pi}{|m|}\biggr)^{j - 1}
I_{j}^{-}(m,n)
\end{equation}

It was shown above that $I_{j}^{\pm}(m,n)$ vanish when $j > |m\cdot n|/\gcd(m)$,
so the sum above is actually a finite sum. In particular, if $m,n$ are such that 
$m \cdot n = \pm 1$, which implies $\gcd(m) = 1$, then we obtain that 
$I_{j}^{\pm} = 0$ for $j \geq 2$. Therefore, $J(m,n) = I(m,n)$ if 
$m \cdot n = \pm 1$. 

\begin{remark}
The relation between these two families of integrals in other problems had 
already been noted by Eskin--Ralston in \cite{ER}, and was also used in 
\cite{Sa1}. In their setting the two families ended up being entirely equal, 
not as in our problem where this only seems to be true in certain cases.
\end{remark}


\subsubsection{A linear algebra lemma}


In the previous subsection we were concerned with determining the 
curl vectors 
\[
\biggl\{\biggl(-n_{j},\frac{i m\cdot n}{|m|}\delta_{j}\biggr), \ \ 
(0, n_{j}\delta_{k} - n_{k}\delta_{j})\biggr\}
\]

as linear combinations of vectors $\{(i|m|,m)\}$ while keeping $n$ and $\mu_{m,n}$ fixed. 
We observe that 
\[
\biggl(-n_{j},\frac{i m\cdot n}{|m|}\delta_{j}\biggr) 
= \frac{i}{|m|}(i|m|n_{j},(m\cdot n)\delta_{j}),
\]

so we can regard the family of ``curl vectors'' as 
\[
\{(i|m|n_{j}, (m\cdot n)\delta_{j}), \ \ 
(0, n_{j}\delta_{k} - n_{k}\delta_{j})\}.
\]

Let $n \in \bZ^{d} \sm \{0\}$. Consider the set of points 
\[
U(K) := \{(i|m|,m) : m \in \bZ^{d}, \ \ m\cdot n = \gcd(n), \ \ |m| = K\},
\]

where $\gcd(n)$ denotes the greatest common divisor of all the entries of $n$. 
We will show that if $d \geq 3$, then we can construct infinitely many $K$ such that linear 
combinations of elements in $U(K)$ generate all the curl vectors
\[
\{(iKn_{j},\gcd(n)\delta_{j}),
\ \ (0, n_{i}\delta_{j} - n_{j}\delta_{i})\}.
\]


\begin{remark} 
It may suffice to generate each curl vector for infinitely many $K$, but we will show
that we can do all of them simultaneously.
\end{remark}


The curl vectors and the conditions defining $U(K)$ are homogeneous functions of the
entries of $n$, so we can assume without loss of generality that $\gcd(n) = 1$. Moreover, 
it suffices to generate the first family of curl vectors, as we can express
\[
(0,n_{i}\delta_{j} - n_{j}\delta_{i})
= n_{i}(iKn_{j},\delta_{j}) - n_{j}(iKn_{i},\delta_{i}).
\]

In addition, note that if $\delta_{j} = \alpha_{1}k_{1} + \ldots + \alpha_{N}k_{N}$, with 
$(i|k_{i}|,k_{i}) \in U(K)$, then 
\[
n_{j} = \delta_{j}\cdot n= (\alpha_{1}k_{1} + \ldots + \alpha_{N}k_{N})\cdot n
= \alpha_{1} + \ldots + \alpha_{N},
\] 

and so 
\[
(iKn_{j},\delta_{j}) = \alpha_{1}(iK,k_{1}) + \ldots + \alpha_{N}(iK,k_{N}).
\]

Therefore it suffices to construct infinitely many $K$ such that the set
\[
V(K) := \{ k \in \bZ^{d} : k \cdot n = 1, \ \ |k| = K\}
\]

has $d$ linearly independent vectors. In what follows, we refer to the rank of a 
finite set of vectors as the dimension of the subspace generated by them. We prove 
this in several steps. 


\begin{proposition}
\label{propn: linear_algebra_two}
Let $d \geq 3$ and let $n \in \bZ^{d}$ be such that $\gcd(n) = 1$. 
Then there exist $m_{1}, m_{2} \in \bZ^{d}$ such that 
$\{m_{1},m_{2},n\}$ are linearly independent and 
\[
m_{1}\cdot n = m_{2}\cdot n = 1, \ \ |m_{1}| = |m_{2}|.
\]
\end{proposition}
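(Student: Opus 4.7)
The plan is to construct $m_1, m_2$ of the symmetric form $m_1 = m_0 + v$ and $m_2 = m_0 - v$, where $m_0 \in \bZ^d$ satisfies $m_0 \cdot n = 1$ and $v \in \bZ^d$ is a nonzero vector chosen to be simultaneously orthogonal to $n$ and to $m_0$. This forces $m_1 \cdot n = m_2 \cdot n = 1$ automatically, and gives
\[
|m_1|^2 = |m_0|^2 + 2(m_0 \cdot v) + |v|^2 = |m_0|^2 + |v|^2 = |m_2|^2,
\]
because $m_0 \cdot v = 0$. The whole argument reduces to producing such $m_0$ and $v$ in $\bZ^d$, plus verifying that $\{m_1, m_2, n\}$ are linearly independent.

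First I would use $\gcd(n) = 1$ and Bezout's identity to get some $m_0 \in \bZ^d$ with $m_0 \cdot n = 1$. I want $\{m_0, n\}$ to be linearly independent; the only way this can fail is if $m_0 = \lambda n$ for some scalar, which forces $\lambda |n|^2 = 1$ and $\lambda \in \bQ$, and upon requiring $\lambda n \in \bZ^d$ with $\gcd(n) = 1$, can only happen when $|n|^2 = 1$, i.e., $n = \pm e_i$. In that degenerate case I simply replace $m_0$ by $m_0 + w$ for any nonzero $w$ in the lattice $L := n^{\perp} \cap \bZ^d$; since $L$ has rank $d - 1 \geq 2$, such $w$ exists, and the replacement preserves $m_0 \cdot n = 1$ while breaking the proportionality with $n$.

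Next, for the vector $v$, I look at the subspace $W := \{v \in \bR^d : v \cdot n = 0, \ v \cdot m_0 = 0\}$. Since $\{m_0, n\}$ are linearly independent, $W$ has real dimension $d - 2$. Because $W$ is defined by linear equations with integer coefficients, $W \cap \bZ^d$ is a lattice of the same rank $d - 2 \geq 1$, so a nonzero $v \in \bZ^d \cap W$ exists. Setting $m_1 = m_0 + v$, $m_2 = m_0 - v$ gives both the required dot product condition and the equal norms.

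The remaining step is linear independence of $\{m_1, m_2, n\}$, which by the identities $m_1 + m_2 = 2m_0$, $m_1 - m_2 = 2v$ is equivalent to the linear independence of $\{m_0, v, n\}$. If instead $v = \alpha m_0 + \beta n$, then the two orthogonality conditions become the $2 \times 2$ system
\[
\alpha + |n|^2 \beta = 0, \qquad |m_0|^2 \alpha + \beta = 0,
\]
whose determinant is $1 - |m_0|^2 |n|^2$. This vanishes only when $|m_0| = |n| = 1$, i.e., $m_0, n \in \{\pm e_i\}$, which combined with $\{m_0, n\}$ being independent would force $m_0 \cdot n = 0$, contradicting $m_0 \cdot n = 1$. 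Hence $v$ is not in the span of $\{m_0, n\}$, and the triple is independent. The only mild subtlety is the degenerate case $n = \pm e_i$ that has to be handled in the choice of $m_0$; once that is dealt with, everything else is elementary linear algebra over $\bZ$.
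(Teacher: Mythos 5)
Your proof is correct and is essentially the same construction as the paper's: pick a Bezout vector $m_0$ with $m_0\cdot n=1$, pick a nonzero lattice vector $v$ orthogonal to both $m_0$ and $n$ (here $d\geq 3$ is used), and set $m_{1,2}=m_0\pm v$. You are more careful than the paper in justifying that $m_0$ can be taken linearly independent of $n$ (handling the degenerate case $n=\pm e_i$), which the paper silently assumes; on the other hand, your final independence step via the $2\times2$ determinant is more roundabout than needed, since dotting $v=\alpha m_0+\beta n$ with $v$ immediately gives $|v|^2=0$, a contradiction.
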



\begin{proof}
Let $p \in \bZ^{d}$ be such that $p \cdot n = 1$ and is linearly independent with $n$. 
Since $d \geq 3$, there exists $q \in \bZ^{d}\sm \{0\}$ orthogonal to both $n$ and 
$p$. We can define $m_{1} := p - q$ and $m_{2} := p + q$. With this we have 
\[
m_{i}\cdot n = (p \pm q)\cdot n = 1 \pm  0 = 1, \ \ 
|m_{i}|^{2} = |p \pm q|^{2} = |p|^{2} \pm 2p\cdot q + |q|^{2} = |p|^{2} + |q|^{2}.
\]

Moreover, the span of $\{m_{1},m_{2},n\}$ is the same as the span of $\{p,q,n\}$,
from where we conclude that these vectors are linearly independent.
\end{proof}


\begin{remark}
A curious observation is that the only vectors $n \in \bZ^{2}$ for which 
there exist $m_{1},m_{2} \in \bZ^{2}$ such that
\[
m_{1}\cdot n = m_{2}\cdot n = 1, \ \ |m_{1}| = |m_{2}|,
\] 

are the eight vectors $\pm\{(1, 0), (0,1), (1, 1), (1, -1)\}$. This problem appeared at 
the Olimpiada Iberoamericana de Matem\'atica Universitaria 2018.
\end{remark}


\begin{proposition}
\label{propn: linear_algebra_four}
Let $d \geq 3$ and let $\{m_{1},m_{2},n\}$ be as in 
Proposition \ref{propn: linear_algebra_two}. 
Consider the integers $M := |m_{1}|^{2} = |m_{2}|^{2}$,
$N := |n|^{2}$, $P = m_{1}\cdot m_{2}$. Then the vectors
\[
p_{1} = (NP - 1)m_{1} + (1 - MN)m_{2} + (M - P)n, \ \
p_{2} = (1 - MN)m_{1} + (NP - 1)m_{2} + (M - P)n,
\]

satisfy $p_{i}\cdot m_{i} = p_{i}\cdot n = 0$ and $|p_{1}| = |p_{2}|$. Moreover, 
the rank of the set $\{m_{1},m_{2},p_{1},p_{2}\}$ is $3$.
\end{proposition}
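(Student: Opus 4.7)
The proof is essentially a direct verification, broken into three independent checks, each of which reduces to using only the defining relations $m_{1}\cdot n = m_{2}\cdot n = 1$, $|m_{1}|^{2} = |m_{2}|^{2} = M$, $m_{1}\cdot m_{2} = P$, and $|n|^{2} = N$.

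The plan is to first verify the four orthogonality relations. For $p_{1}\cdot n$, substituting the dot products yields $(NP-1)(1) + (1-MN)(1) + (M-P)N$, and every term cancels; the computation for $p_{2}\cdot n$ is identical by inspection. For $p_{1}\cdot m_{1}$ I would compute $(NP-1)M + (1-MN)P + (M-P)(1) = MNP - M + P - MNP + M - P = 0$, and $p_{2}\cdot m_{2}$ is the same expression with the roles of $m_{1}, m_{2}$ swapped. For the norm equality $|p_{1}| = |p_{2}|$, the key observation is that the expression for $p_{2}$ is obtained from that of $p_{1}$ by interchanging $m_{1}$ and $m_{2}$; since the coefficients $M$, $N$, $P$ are symmetric under this swap, expanding $|p_{i}|^{2}$ in the basis $\{m_{1}, m_{2}, n\}$ produces the same scalar.

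For the rank assertion, note that by construction $p_{1}, p_{2} \in \operatorname{span}\{m_{1}, m_{2}, n\}$, so the rank is at most $3$. To show it is exactly $3$, I would show that $\{m_{1}, m_{2}, p_{1}\}$ is linearly independent. Suppose for contradiction that $p_{1} = \alpha m_{1} + \beta m_{2}$ for some $\alpha, \beta \in \bR$. Taking the dot product with $n$ and using $p_{1}\cdot n = 0$ gives $\alpha + \beta = 0$; taking the dot product with $m_{1}$ and using $p_{1}\cdot m_{1} = 0$ gives $\alpha M + \beta P = 0$, hence $\alpha(M - P) = 0$. The main obstacle, and the only place where the hypotheses of Proposition~\ref{propn: linear_algebra_two} enter substantively, is ruling out $M = P$: if $M = P$, then $m_{1}\cdot m_{2} = |m_{1}||m_{2}|$, so by the equality case of Cauchy--Schwarz $m_{1}$ and $m_{2}$ would be parallel, contradicting the linear independence of $\{m_{1}, m_{2}, n\}$. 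Therefore $M \neq P$, forcing $\alpha = \beta = 0$ and hence $p_{1} = 0$.

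It remains to exclude $p_{1} = 0$. Since $\{m_{1}, m_{2}, n\}$ are linearly independent, $p_{1} = 0$ would force $NP - 1 = 1 - MN = M - P = 0$, in particular $M = P$, which again contradicts linear independence by the Cauchy--Schwarz argument above. This contradiction completes the proof that $\operatorname{rank}\{m_{1}, m_{2}, p_{1}, p_{2}\} = 3$.
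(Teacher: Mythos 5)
Your proof is correct and follows essentially the same route as the paper: direct verification of the dot-product identities, the symmetry under swapping $m_{1}\leftrightarrow m_{2}$ for the norm equality, and the observation that $M-P\neq 0$ (which you justify explicitly via Cauchy--Schwarz, whereas the paper merely notes it follows from linear independence) as the crux of the rank argument. The only stylistic difference is in the final step: once you know $M-P\neq 0$, you can read off directly from the formula $p_{1}=(NP-1)m_{1}+(1-MN)m_{2}+(M-P)n$ that $n\in\operatorname{span}\{m_{1},m_{2},p_{1}\}$, which is the paper's one-line argument; your route through the orthogonality relations and a contradiction reaches the same conclusion but is slightly more roundabout.
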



\begin{proof}
The computations are direct:
\begin{align*}
p_{i}\cdot m_{i}
= & \ [(NP - 1)m_{i} + (1 - MN)m_{j} + (M - P)n] \cdot m_{i} \\
= & \ (NP - 1)M + (1 - MN)P + (M - P) = 0, \\
p_{i}\cdot n
= & \ [(NP - 1)m_{i} + (1 - MN)m_{j} + (M - P)n]\cdot n \\
= & \ (NP - 1) + (1 - MN) + (M - P)N = 0, \\
|p_{i}|^{2} = & \ |(NP - 1)m_{i} + (1 - MN)m_{j} + (M - P)n|^{2} \\
= & \ (NP - 1)^{2}M + (1 - MN)^{2}M + (M - P)^{2}N \\
& + 2[(NP - 1)(1 - MN)P + (NP - 1)(M - P) + (1 - MN)(M - P)].
\end{align*}

To see that the rank is $3$, we first observe that
$(M - P) > 0$, as $m_{1}$ and $m_{2}$ are linearly independent.
This implies that $n$ is contained in the span of 
$\{m_{1},m_{2},p_{1},p_{2}\}$. As $\{m_{1}, m_{2}, n\}$  
are linearly independent the conclusion follows.
\end{proof}


\begin{lemma}
\label{lemma: linear_algebra_full}
Let $d \geq 3$ and let $n \in \bZ^{d} \sm \{0\}$. 
We can construct infinitely many $K$ for which there are $d$ linearly independent 
vectors in the set $V(K) := \{k \in \bZ^{d}: k\cdot n = \gcd(n), \ |k| = K\}$.
\end{lemma}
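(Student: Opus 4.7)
The plan is to build $d$ linearly independent vectors in $V(K)$ for infinitely many $K$ by combining Propositions \ref{propn: linear_algebra_two} and \ref{propn: linear_algebra_four} with additional vectors drawn from the orthogonal complement lattice $U := \mathrm{span}_{\bR}\{n, m_1, m_2\}^\perp \cap \bZ^d$, which has rank $d-3 \geq 0$. First reduce to $\gcd(n)=1$ by homogeneity (the conditions defining $V(K)$ scale with $\gcd(n)$), and apply Proposition \ref{propn: linear_algebra_two} to produce $m_1, m_2$, then Proposition \ref{propn: linear_algebra_four} to produce $p_1, p_2$ with the stated properties.

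For $d=3$: for each positive integer $t$, consider the triple
\[
v_1 := m_1 + tp_1,\qquad v_2 := m_1 - tp_1,\qquad v_3 := m_2 + tp_2.
\]
Each lies in $V(K_t)$ with $K_t^2 = M + t^2|p_1|^2$, using $p_j \perp m_j$, $p_j \perp n$, and $|p_1|=|p_2|$. Their span equals $\mathrm{span}\{m_1, p_1, m_2 + tp_2\}$; an argument paralleling the nonvanishing of $p_1$ in the proof of Proposition \ref{propn: linear_algebra_four} gives that $\{m_1, m_2, p_1\}$ is linearly independent (since $p_1 \notin \mathrm{span}\{m_1, m_2\}$), and one checks the added $tp_2$ does not reduce the rank. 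As $t$ ranges over $\bZ_{>0}$, the values $K_t$ are distinct, handling $d=3$.

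For $d \geq 4$, augment with vectors from $U$. Choose linearly independent $u_1, \ldots, u_{d-3} \in U$; since each $u_i \perp n, m_1, m_2$, every vector $m_1 + \beta_i u_i$ satisfies $(m_1 + \beta_i u_i)\cdot n = 1$ with squared length $M + \beta_i^2 |u_i|^2$. Linear independence of the augmented set $\{v_1, v_2, v_3\} \cup \{m_1 + \beta_i u_i\}_{i=1}^{d-3}$ is automatic once assembled, because $u_1, \ldots, u_{d-3}$ span a subspace complementary to $\mathrm{span}\{m_1, m_2, p_1\}$ by construction of $U$. Thus the crux is to choose positive integers $t, \beta_1, \ldots, \beta_{d-3}$ realizing the common length condition
\[
t^2 |p_1|^2 \;=\; \beta_i^2 |u_i|^2 \qquad (i = 1, \ldots, d-3).
\]

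The main obstacle is this Diophantine matching. Writing $P := |p_1|^2$ and $Q_i := |u_i|^2$, a positive integer solution $(t, \beta_i)$ exists for a single $i$ if and only if $P \cdot Q_i$ is a perfect square, i.e.\ $P$ and $Q_i$ share the same squarefree part. To arrange this simultaneously for all $i$, the strategy is to select the $u_i \in U$ so that each $Q_i$ has the same squarefree part as $P$. Concretely, starting from any initial basis $w_1, \ldots, w_{d-3}$ of $U$, one searches for $u_i \in \mathrm{span}_{\bZ}\{w_1, \ldots, w_{d-3}\}$ whose norm-squared has squarefree part equal to that of $P$; using that the quadratic form $|\cdot|^2$ restricted to $U$ takes infinitely many values, and that scaling $u_i \mapsto k u_i$ preserves the squarefree part of $Q_i$, one builds $d-3$ linearly independent such $u_i$. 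With matching achieved for some $(t_0, \beta_1^{(0)}, \ldots, \beta_{d-3}^{(0)})$, the scaled family $(st_0, s\beta_1^{(0)}, \ldots, s\beta_{d-3}^{(0)})$ solves the same system for any $s \in \bZ_{>0}$, yielding infinitely many radii $K$ each of which admits $d$ linearly independent elements of $V(K)$.
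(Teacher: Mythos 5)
Your $d=3$ construction is sound: $v_1, v_2, v_3$ do lie in a common $V(K_t)$, and the rank is $3$ for all but at most one $t$ (the degenerate case $t\,p_2\cdot m_1 = M - P$, which can occur for at most one $t$ since $p_2\cdot m_1 \ne 0$), so infinitely many $K$ arise. However, your $d\geq 4$ argument contains a genuine gap. By putting exactly one orthogonal-complement vector $u_i$ into each augmenting vector $m_1 + \beta_i u_i$, you force the Diophantine matching conditions $t^2|p_1|^2 = \beta_i^2|u_i|^2$, which requires the squarefree part of $|u_i|^2$ to equal that of $|p_1|^2$ for every $i$. This is not something you can arrange by ``searching'' $U$. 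Already for $d=4$ the sublattice $U = \{n,m_1,m_2\}^\perp\cap\bZ^d$ has rank $1$; if $w$ generates it over $\bQ$, then every $u \in U$ has $|u|^2 = k^2|w|^2$ with squarefree part fixed equal to that of $|w|^2$, and there is no reason for this to match the squarefree part of $|p_1|^2$, which is a complicated fixed integer determined by $n$. The restricted quadratic form on $U$ simply may not represent any integer in the required squarefree class, and your appeal to it ``taking infinitely many values'' does not close this.

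The paper avoids this obstruction entirely by never separating the $u_i$'s: it puts \emph{all} the orthogonal vectors $q_4,\dots,q_d$ (together with $p_1$ or $p_2$) into each candidate, considering
\[
m_1 \pm \alpha p_1 \pm \alpha_4 q_4 \pm \cdots \pm \alpha_d q_d, \qquad
m_2 \pm \alpha p_2 \pm \alpha_4 q_4 \pm \cdots \pm \alpha_d q_d,
\]
over all $2^{d-1}$ sign choices. Since all the summands are pairwise orthogonal and $|m_1|=|m_2|$, $|p_1|=|p_2|$, every one of these vectors has the same squared norm
$M + \alpha^2|p_1|^2 + \sum_j \alpha_j^2|q_j|^2$, so they all lie in $V(K)$ with no Diophantine condition at all; sums and differences over the sign patterns recover $m_1,m_2,p_1,p_2,q_4,\dots,q_d$, giving rank $3 + (d-3) = d$, and varying $\alpha,\alpha_4,\dots,\alpha_d$ produces infinitely many $K$. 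In short: the fix to your argument is to not isolate each $u_i$ in its own vector but to build them all into every vector, which is precisely what makes the common-radius constraint trivial.
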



\begin{proof}
We can assume without loss of generality that $\gcd(n) = 1$.
Let $\{m_{1},m_{2},p_{1},p_{2}\}$ be as in Proposition \ref{propn: linear_algebra_four}, 
and let $\{q_{4},\ldots, q_{d}\}$ be an orthogonal set of vectors in $\bZ^{d}$ 
perpendicular in addition to $\{m_{1},m_{2},n\}$, so that it is also perpendicular
to the set $\{m_{1},m_{2},p_{1},p_{2}\}$. For fixed nonzero $\alpha$, 
$\alpha_{4}, \ldots, \alpha_{d}$ consider the vectors 
\[
m_{1} \pm \alpha p_{1} \pm \alpha_{4}q_{4} \pm \ldots \pm \alpha_{d}q_{d}, \ \ 
m_{2} \pm \alpha p_{2} \pm \alpha_{4}q_{4} \pm \ldots \pm \alpha_{d}q_{d}.
\]

Note that the dot product of any of these vectors with $n$ equals $1$, as 
$m_{i} \cdot n = 1$ and $p_{i} \cdot n = q_{j} \cdot n = 0$. Moreover, they
have the same norm for any choice of signs, since all the terms are orthogonal to each other
and $|m_{1}| = |m_{2}|$ and $|p_{1}| = |p_{2}|$. Therefore, all these vectors belong to 
the same $V(K)$ for any choice of signs. Linear combinations of these vectors allow to 
obtain the set $\{m_{1},m_{2},p_{1},p_{2},q_{4},\ldots, q_{d}\}$.
The sets $\{m_{1},m_{2},p_{1},p_{2}\}$ and $\{q_{4},\ldots, q_{d}\}$ are perpendicular
and its combined rank is $3 + (d - 3) = d$. Different choices of the integers 
$\alpha$, $\alpha_{4}, \ldots, \alpha_{d}$ give the infinitely many values of $K$.
\end{proof}
 

\subsubsection{Reconstruction of an entire function from values along a 
convergent sequence}


Suppose $F : \bC \ra \bC$ is an entire function and $\{z_{n}\} \sse \bC$ is 
a known sequence such that $z_{n} \ra 0$ and the sequence $\{F(z_{n})\}$ is 
also known. The Taylor coefficients of $F$ can be recovered recursively as follows,
\[
F^{(0)}(0) = \lim_{n \ra +\infty}F(z_{n}), \ \ 
\frac{F^{(m)}(0)}{m!} = \lim_{n \ra +\infty}\frac{1}{z_{n}^{m}}
\biggl(F(z_{n}) - \sum_{k = 0}^{m - 1}\frac{F^{(k)}(0)}{k!}z_{n}^{k}\biggr),
\]
and so $F$ can be reconstructed like this. However, we would like to 
propose a different approach based on Newton's method of 
divided differences for interpolation polynomials.


\begin{definition}
\label{defn: differences}
Let $F^{(0)}(z) := F(z)$ and define the divided differences
\[
F^{(n)}(z_{1},\ldots,z_{n + 1}) := \frac{1}{z_{n} - z_{n + 1}}
(F^{(n - 1)}(z_{1},\ldots, z_{n - 1},z_{n}) - F^{(n - 1)}(z_{1},\ldots, z_{n - 1},z_{n + 1})).
\]
\end{definition}


\begin{remark}
It is clear that divided differences are symmetric with respect to the last two 
elements, i.e.
\[
F^{(n)}(z_{1},\ldots,z_{n}, z_{n + 1}) = F^{(n)}(z_{1},\ldots,z_{n + 1}, z_{n}).
\]

We will not use this, but it is possible to show by the induction that the divided 
differences are indeed symmetric with respect to all its entries. We can see this 
in the particular following result. 
\end{remark}


\begin{proposition}
\label{propn: differences_powers}
Consider the power function $p_{k}(x) := x^{k}$. Then, 
\[
p_{k}^{(n)}(z_{1},\ldots, z_{n + 1}) 
= \sum_{|\alpha| = k - n}z_{1}^{\alpha_{1}}\ldots z_{n + 1}^{\alpha_{n + 1}}.
\] 

In particular, $p_{k}^{(k)} = 1$ and $p_{k}^{(n)} = 0$ if $n > k$. The 
number of monomials in the expression equals the binomial coefficient 
$\binom{k}{n}$.
\end{proposition}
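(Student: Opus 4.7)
The plan is to prove the identity by induction on $n$, using the recursive definition of divided differences from Definition \ref{defn: differences}. For the base case $n = 0$, the zeroth divided difference is $p_{k}^{(0)}(z_{1}) = z_{1}^{k}$, which matches the right-hand side since the only multi-index $\alpha \in \bN$ with $|\alpha| = k$ is $\alpha = (k)$.

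For the inductive step, assume the formula holds for $n - 1$, i.e.
\[
p_{k}^{(n - 1)}(w_{1},\ldots,w_{n}) = \sum_{|\beta| = k - n + 1} w_{1}^{\beta_{1}}\ldots w_{n}^{\beta_{n}},
\]
where $\beta$ ranges over $\bN^{n}$. Applying this with $w_{n} = z_{n}$ and then with $w_{n} = z_{n + 1}$, substracting, and factoring out the common terms gives
\[
p_{k}^{(n - 1)}(z_{1},\ldots,z_{n}) - p_{k}^{(n - 1)}(z_{1},\ldots,z_{n - 1},z_{n + 1})
= \sum_{|\beta| = k - n + 1} z_{1}^{\beta_{1}}\ldots z_{n - 1}^{\beta_{n - 1}}
(z_{n}^{\beta_{n}} - z_{n + 1}^{\beta_{n}}).
\]
The terms with $\beta_{n} = 0$ vanish, and for $\beta_{n} \geq 1$ we use the standard factorization
\[
\frac{z_{n}^{\beta_{n}} - z_{n + 1}^{\beta_{n}}}{z_{n} - z_{n + 1}}
= \sum_{j = 0}^{\beta_{n} - 1}z_{n}^{j}z_{n + 1}^{\beta_{n} - 1 - j}.
\]
Dividing by $z_{n} - z_{n + 1}$ according to Definition \ref{defn: differences} yields $p_{k}^{(n)}$ as a sum of monomials in $z_{1},\ldots,z_{n + 1}$. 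The change of variables $\alpha_{i} := \beta_{i}$ for $i \leq n - 1$, $\alpha_{n} := j$, and $\alpha_{n + 1} := \beta_{n} - 1 - j$ is a bijection between the indexing set $\{(\beta, j) : |\beta| = k - n + 1, \beta_{n} \geq 1, 0 \leq j \leq \beta_{n} - 1\}$ and the set $\{\alpha \in \bN^{n + 1} : |\alpha| = k - n\}$, which gives the desired formula.

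The special cases are immediate corollaries. For $n = k$, the only multi-index $\alpha \in \bN^{k + 1}$ with $|\alpha| = 0$ is the zero multi-index, whose associated monomial is $1$, so $p_{k}^{(k)} = 1$. For $n > k$, the summation is empty, so $p_{k}^{(n)} = 0$. Finally, the number of monomials equals the number of multi-indices $\alpha \in \bN^{n + 1}$ with $|\alpha| = k - n$, a standard stars-and-bars count giving $\binom{(k - n) + (n + 1) - 1}{(n + 1) - 1} = \binom{k}{n}$.

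No step presents a serious obstacle; the only care needed is the bookkeeping of the change of variables in the inductive step, ensuring that the map $(\beta, j) \mapsto \alpha$ is indeed a bijection onto $\{|\alpha| = k - n\}$ in $\bN^{n + 1}$. Once this is verified, the rest is direct algebraic manipulation.
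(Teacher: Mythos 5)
Your proof is correct and follows essentially the same route as the paper's: induction on $n$ using the recursive definition, a difference of powers factored by the geometric sum, and reindexing the resulting monomials. You additionally spell out the stars-and-bars count for the number of monomials, which the paper asserts without comment.
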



\begin{proof}
We prove this by induction. For $n = 0$ this is true. Then 
\begin{align*}
p_{k}^{(n + 1)}(z_{1},\ldots,z_{n + 1},z_{n + 2})
& = \sum_{|\alpha| = k - n}z_{1}^{\alpha_{1}} \ldots z_{n}^{\alpha_{n}}
\biggl(\frac{z_{n + 1}^{\alpha_{n + 1}} - z_{n + 2}^{\alpha_{n + 1}}}{z_{n + 1} - z_{n + 2}}\biggr) \\
& = \sum_{\substack{|\alpha| = k - n \\ |\beta| = \alpha_{n + 1} - 1}}z_{1}^{\alpha_{1}}
\ldots z_{n}^{\alpha_{n}}z_{n + 1}^{\beta_{1}}z_{n + 2}^{\beta_{2}}
= \sum_{|\gamma| = k - n - 1}z_{1}^{\gamma_{1}}\ldots z_{n + 2}^{\gamma_{n + 2}}.
\end{align*}
\end{proof}


\begin{definition}
\label{defn: absolute}
For an entire function $F$, we define the $n$-th derivative majorant by the 
convergent series
\[
|F|^{(n)}(R) := \frac{1}{n!}\sum_{k = 0}^{\infty}\frac{|F^{(n + k)}(0)|}{k!}R^{k} .
\]
\end{definition}


\begin{proposition}
\label{propn: majorization}
Let $F$ be an entire function $F$ and $z_{i} \in \bC$, $|z_{i}| \leq R$. Then the 
$n$-th derivative majorant dominates the divided differences, i.e.
\[
|F^{(n)}(z_{1},\ldots,z_{n + 1})| \leq |F|^{(n)}(R).
\]  
\end{proposition}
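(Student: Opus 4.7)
The plan is to reduce the estimate to the case of monomials using the power series expansion of $F$ and then apply Proposition \ref{propn: differences_powers} to obtain an explicit bound. Since $F$ is entire, we can write $F(z) = \sum_{k=0}^{\infty} \frac{F^{(k)}(0)}{k!}z^{k}$ with uniform convergence on any compact disk. The divided difference operator $F \mapsto F^{(n)}(z_{1},\ldots,z_{n+1})$ is defined in Definition \ref{defn: differences} as an iterated finite linear combination of point evaluations, so it is clearly linear and continuous with respect to uniform convergence on $\{|z| \leq R\}$. Consequently, for any $z_{1},\ldots,z_{n+1}$ with $|z_{i}| \leq R$,
\[
F^{(n)}(z_{1},\ldots,z_{n+1}) = \sum_{k=0}^{\infty}\frac{F^{(k)}(0)}{k!}p_{k}^{(n)}(z_{1},\ldots,z_{n+1}).
\]

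Next I would apply Proposition \ref{propn: differences_powers}. It gives $p_{k}^{(n)}(z_{1},\ldots,z_{n+1}) = 0$ for $k < n$, and for $k \geq n$ it is a sum of $\binom{k}{n}$ monomials, each of the form $z_{1}^{\alpha_{1}}\cdots z_{n+1}^{\alpha_{n+1}}$ with $|\alpha| = k - n$. The triangle inequality together with $|z_{i}| \leq R$ then yields
\[
|p_{k}^{(n)}(z_{1},\ldots,z_{n+1})| \leq \binom{k}{n}R^{k-n}.
\]

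Plugging this into the series for $F^{(n)}(z_{1},\ldots,z_{n+1})$ and using $\binom{k}{n}/k! = 1/(n!(k-n)!)$, I get
\[
|F^{(n)}(z_{1},\ldots,z_{n+1})| \leq \sum_{k=n}^{\infty}\frac{|F^{(k)}(0)|}{k!}\binom{k}{n}R^{k-n} = \frac{1}{n!}\sum_{k=n}^{\infty}\frac{|F^{(k)}(0)|}{(k-n)!}R^{k-n}.
\]
Reindexing $j = k - n$ yields exactly $|F|^{(n)}(R)$, which proves the claim.

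There is no real obstacle here; the only point requiring care is the justification of interchanging the infinite sum with the divided difference operator, which follows immediately from the fact that, by its inductive definition, $F \mapsto F^{(n)}(z_{1},\ldots,z_{n+1})$ is a fixed finite linear combination of the values $F(z_{i_{1}}),\ldots,F(z_{i_{\ell}})$ (with coefficients depending only on the nodes $z_{1},\ldots,z_{n+1}$, not on $F$), so uniform convergence of partial sums of the Taylor series on any compact set is more than enough.
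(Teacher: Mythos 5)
Your argument is essentially identical to the paper's: expand $F$ in its Taylor series, use linearity of divided differences, invoke Proposition \ref{propn: differences_powers} to kill the terms with $k < n$ and to bound $|p_{k}^{(n)}|$ by $\binom{k}{n}R^{k-n}$, and reindex to recognize $|F|^{(n)}(R)$. Your extra remark justifying the interchange of the infinite sum with the divided difference operator (since the latter is a fixed finite linear combination of point evaluations) is a harmless tightening of a step the paper passes over silently.
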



\begin{proof}
Let $F(z) = \sum_{k = 0}^{\infty}a_{k}z^{k}$. The divided differences are linear 
operators, so we obtain
\[
|F^{(n)}(z_{1},\ldots,z_{n + 1})|
\leq \sum_{k = 0}^{\infty}|a_{k}||p_{k}^{(n)}(z_{1},\ldots,z_{n + 1})|
= \sum_{k = 0}^{\infty}|a_{n + k}||p_{n + k}^{(n)}(z_{1},\ldots,z_{n + 1})|,
\]

where we used in the last equality that $p_{k}^{(n)} = 0$ if $k < n$ from 
Proposition \ref{propn: differences_powers}. Also from Proposition \ref{propn: differences_powers}
we obtain the bound 
\[
|p_{n + k}^{(n)}(z_{1},\ldots,z_{n + 1})| \leq \binom{n + k}{n}R^{k}.
\] 

Therefore we conclude that 
\[
|F^{(n)}(z_{1},\ldots,z_{n + 1})| \leq \sum_{k = 0}^{\infty}
\biggl|\frac{F^{(n + k)}(0)}{(n + k)!}\biggr|
\binom{n + k}{n}R^{k}
= |F|^{(n)}(R).
\]
\end{proof}


\begin{theorem}
\label{thm: differences_limit}
Let $F$ be an entire function and let $z_{i} \ra 0$. Then, the Taylor coefficients of 
$F$ can be recovered by the divided differences:
\[
\lim_{m \ra +\infty}F^{(n)}(z_{m + 1},\ldots,z_{m + n + 1}) = \frac{F^{(n)}(0)}{n!}.
\] 
\end{theorem}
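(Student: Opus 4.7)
The plan is to expand $F^{(n)}(z_{m+1},\ldots,z_{m+n+1})$ in the Taylor series of $F$ and to exploit the fact that divided differences act cleanly on monomials. Write $F(z) = \sum_{k=0}^{\infty} a_k z^k$ with $a_k = F^{(k)}(0)/k!$. Since the $n$-th divided difference is a linear operator in $F$ (by a straightforward induction on Definition \ref{defn: differences}), applying it termwise gives
\[
F^{(n)}(z_{m+1},\ldots,z_{m+n+1}) = \sum_{k=0}^{\infty} a_k \, p_k^{(n)}(z_{m+1},\ldots,z_{m+n+1}),
\]
provided the series converges and interchange is justified. By Proposition \ref{propn: differences_powers}, $p_k^{(n)} \equiv 0$ for $k < n$, while $p_n^{(n)} \equiv 1$ (the unique multiindex $|\alpha|=0$), so the contribution from $k = n$ is exactly $a_n = F^{(n)}(0)/n!$, which is the desired limit.

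For $k > n$, Proposition \ref{propn: differences_powers} expresses $p_k^{(n)}(z_{m+1},\ldots,z_{m+n+1})$ as a sum of $\binom{k}{n}$ monomials, each of total degree $k-n \geq 1$ in the $z_i$'s. Since $z_i \to 0$, each such monomial tends to $0$ as $m \to \infty$; hence each individual summand $a_k p_k^{(n)}(z_{m+1},\ldots,z_{m+n+1})$ with $k>n$ converges to $0$.

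What remains is to justify the termwise passage to the limit. Fix $R = 1$ (or any $R$ with $|z_i| \leq R$ eventually; since $z_i \to 0$ this holds for $m$ large enough). By Proposition \ref{propn: majorization}, for all such $m$,
\[
|a_k p_k^{(n)}(z_{m+1},\ldots,z_{m+n+1})| \leq |a_k| \binom{k}{n} R^{k-n},
\]
and the series $\sum_k |a_k|\binom{k}{n} R^{k-n}$ equals $|F|^{(n)}(R)$ up to the factor $n!$, which is finite because $F$ is entire. Dominated convergence (over the counting measure on $k$) therefore allows the limit to be taken inside the sum, leaving only the $k = n$ contribution. This yields
\[
\lim_{m \to +\infty} F^{(n)}(z_{m+1},\ldots,z_{m+n+1}) = a_n = \frac{F^{(n)}(0)}{n!},
\]
as required. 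The only subtlety is confirming the linearity of the divided-difference operator on convergent power series, which follows by induction from Definition \ref{defn: differences} together with the uniform bound from Proposition \ref{propn: majorization}; there is no serious obstacle here.
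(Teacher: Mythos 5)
Your proof is correct and follows essentially the same route as the paper: both expand $F$ in its Taylor series, use linearity of the divided differences together with Proposition~\ref{propn: differences_powers} to isolate the contribution $a_n$ from $k=n$, and control the tail $k>n$ via Proposition~\ref{propn: majorization}. The only cosmetic difference is that the paper concludes by observing that the majorant $|F|^{(n)}(\max_i |z_i|)$ converges to $|F|^{(n)}(0)=|a_n|$ as $m\to\infty$ (continuity of the majorant in $R$), whereas you invoke dominated convergence over $k$ with the fixed dominating series $\sum_k |a_k|\binom{k}{n}R^{k-n}$; these are two phrasings of the same underlying estimate.
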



\begin{proof}
Proceeding as in the previous proof, we can bound 
\begin{align*}
|F^{(n)}(z_{m + 1},\ldots,z_{m + n + 1}) - a_{n}|
& \leq \sum_{k = 1}^{\infty}|a_{n + k}||p_{n + k}^{(n)}(z_{m + 1},\ldots,z_{m + n + 1})| \\
& \leq |F|^{(n)}(\max\{|z_{m + 1}|,\ldots,|z_{m + n + 1}|\}) - |F|^{(n)}(0).
\end{align*}

Taking limits as $m \ra +\infty$ gives the result.
\end{proof}


The previous result already allows for the reconstruction of the entire function $F$
from the values along the convergent sequence. However, we provide a slightly 
more explicit reconstruction for $F$ using Newton's divided differences interpolation
polynomials.


\begin{definition}
\label{defn: interpolating_polynomials}
Given a function $f$ and $z_{1},\ldots, z_{N}$, we define the $N$-th interpolation
polynomial by
\[
f_{N}(z; z_{1},\ldots,z_{N}) = \sum_{n = 0}^{N - 1}
f^{(n)}(z_{1},\ldots,z_{n + 1})\prod_{m = 1}^{n}(z - z_{m}).
\]
\end{definition}


\begin{proposition}
\label{propn: interpolation_polynomials}
The interpolation polynomials satisfy $f_{N}(z_{k}; z_{1},\ldots, z_{N}) = f(z_{k})$ 
for $k = 1, \ldots, N$.
\end{proposition}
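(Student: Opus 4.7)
The plan is to prove the identity by induction on $N$, after first reducing the problem to showing the ``diagonal'' identity $f_k(z_k; z_1,\ldots, z_k) = f(z_k)$ for each $k$. The key observation is that for $k \leq N$ and any $n \geq k$, the product $\prod_{m=1}^n (z_k - z_m)$ contains the factor $(z_k - z_k) = 0$, so the terms with $n \geq k$ in $f_N(z_k; z_1, \ldots, z_N)$ vanish. Consequently,
\[
f_N(z_k; z_1,\ldots, z_N) = \sum_{n=0}^{k-1} f^{(n)}(z_1,\ldots,z_{n+1})\prod_{m=1}^{n}(z_k - z_m) = f_k(z_k; z_1,\ldots,z_k),
\]
and the original claim reduces to establishing that $f_k(z_k; z_1,\ldots,z_k) = f(z_k)$ for all $k \geq 1$.

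I will prove the reduced statement by induction on $k$. The base case $k=1$ is immediate since $f_1(z_1;z_1) = f^{(0)}(z_1) = f(z_1)$. For the inductive step, I would invoke the induction hypothesis applied to the modified point sequence $z_1,\ldots,z_{k-1},z_{k+1}$ (i.e., swapping $z_k$ for $z_{k+1}$), obtaining an expression for $f(z_{k+1})$ involving $f^{(k-1)}(z_1,\ldots,z_{k-1},z_{k+1})$ rather than $f^{(k-1)}(z_1,\ldots,z_k)$. The goal is to show this equals the expected expression $f_{k+1}(z_{k+1}; z_1, \ldots, z_{k+1})$.

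The main (and essentially only) analytical step will be to reconcile these two formulas using the divided difference recursion from Definition~\ref{defn: differences}, which gives
\[
f^{(k-1)}(z_1,\ldots,z_k) - f^{(k-1)}(z_1,\ldots,z_{k-1},z_{k+1}) = (z_k - z_{k+1})\, f^{(k)}(z_1,\ldots,z_{k+1}).
\]
Subtracting the induction hypothesis from the desired identity collapses everything down to a single term of the form $f^{(k)}(z_1,\ldots,z_{k+1})\prod_{m=1}^{k-1}(z_{k+1} - z_m)\bigl[(z_k - z_{k+1}) + (z_{k+1} - z_k)\bigr]$, which vanishes identically. I do not expect any genuine obstacle beyond this bookkeeping; the result is the standard correctness of Newton's divided differences form, and the only subtlety is being careful with indices when applying the induction hypothesis with a permuted sequence of nodes.
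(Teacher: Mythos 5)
Your proof is correct and uses essentially the same two ideas as the paper's: the vanishing of $\prod_{m=1}^{n}(z_{k} - z_{m})$ for $n \geq k$, and the exchange of the last two interpolation nodes reconciled via the divided-difference recursion. The paper packages the latter step as the symmetry assertion $f_{N+1}(z; z_{1},\ldots,z_{N},z_{N+1}) = f_{N+1}(z; z_{1},\ldots,z_{N+1},z_{N})$ ``directly from the definitions,'' whereas you unroll it into the explicit cancellation $(z_{k} - z_{k+1}) + (z_{k+1} - z_{k}) = 0$; this is the same computation made more explicit, so the two arguments coincide in substance.
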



\begin{proof}
We prove the result by induction. For the base case we have 
$f_{1}(z) = f(z_{1})$. Assume the result is true for $N$.
We have that 
\[
f_{N + 1}(z; w_{1},\ldots, w_{N}, w_{N + 1})
= f_{N}(z; w_{1},\ldots, w_{N}) 
+ f^{(N)}(w_{1},\ldots, w_{N + 1})\prod_{m = 1}^{N}(z - w_{m}).
\]

This and the inductive hypothesis give that for $k = 1, \ldots, N$ we have 
\[
f_{N + 1}(w_{k}; w_{1},\ldots, w_{N}, w_{N + 1}) 
= f_{N}(w_{k}; w_{1},\ldots, w_{N})
= f(w_{k}).
\]

In addition, directly from the definitions it follows that
\[
f_{N + 1}(z; z_{1},\ldots, z_{N}, z_{N + 1})
= f_{N + 1}(z; z_{1}, \ldots, z_{N + 1}, z_{N}).
\]

These two observations imply the result.
\end{proof}


\begin{theorem}
\label{thm: reconstruction_entire}
Let $F$ be an entire function and suppose that $z_{i} \in \bC$, $z_{i} \ra 0$. 
Then, $F$ can be recovered as a limit of the interpolation polynomials: 
\[
F(z) = \lim_{N \ra +\infty}F_{N}(z; z_{1},\ldots, z_{N})
= \sum_{n = 0}^{\infty}f^{(n)}(z_{1},\ldots,z_{n + 1})
\prod_{m = 1}^{n}(z - z_{m}).
\] 

The series converges absolutely and uniformly over compact sets.
\end{theorem}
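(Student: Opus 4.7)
The plan is to reduce everything to one closed-form identity for the error of the Newton interpolation polynomial, and then estimate that error uniformly on compact sets using Proposition \ref{propn: majorization} together with Cauchy's estimates on the Taylor coefficients of the entire function $F$. The starting observation is that if we include the test point $z$ as an additional interpolation node, then Proposition \ref{propn: interpolation_polynomials} gives $F_{N+1}(z; z_{1},\ldots,z_{N},z)=F(z)$, while the recursive definition of $F_{N+1}$ yields
\[
F(z) = F_{N}(z;z_{1},\ldots,z_{N}) + F^{(N)}(z_{1},\ldots,z_{N},z)\prod_{m=1}^{N}(z-z_{m}).
\]
Thus both assertions in the theorem reduce to controlling the single quantity $E_{N}(z):=F^{(N)}(z_{1},\ldots,z_{N},z)\prod_{m=1}^{N}(z-z_{m})$: showing $E_{N}(z)\to 0$ uniformly on compacta gives the limit formula, and showing $\sum_{n\geq 0}|E_{n}(z)-E_{n-1}(z)|<\infty$ together with the telescoping structure gives the absolute uniform convergence of the series representation (the $n$-th term of the series is exactly $E_{n}-E_{n-1}$, so summability reduces to summability of $|E_{n}|$).

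To estimate $E_{N}$, fix a compact disc $\{|z|\leq \rho\}$. Since $z_{i}\to 0$ the sequence is bounded by some $R_{0}$, so all of $z_{1},\ldots,z_{N},z$ lie in $\{|w|\leq R\}$ with $R:=\max(\rho,R_{0})$. By Proposition \ref{propn: majorization}, $|F^{(N)}(z_{1},\ldots,z_{N},z)|\leq |F|^{(N)}(R)$, and the trivial bound $|z-z_{m}|\leq 2R$ gives $\bigl|\prod_{m=1}^{N}(z-z_{m})\bigr|\leq(2R)^{N}$. So the task is to produce a rapidly decaying upper bound for $(2R)^{N}|F|^{(N)}(R)$. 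Using Cauchy's estimate $|F^{(k)}(0)|/k!\leq M(\rho_{1})/\rho_{1}^{k}$ for any $\rho_{1}>0$ (where $M(\rho_{1})=\max_{|w|=\rho_{1}}|F(w)|<\infty$ since $F$ is entire), and summing a geometric-binomial series, I expect to get
\[
|F|^{(N)}(R) \leq \frac{M(\rho_{1})}{\rho_{1}^{N}(1-R/\rho_{1})^{N+1}}\qquad(\rho_{1}>R).
\]
Choosing $\rho_{1}=4R$ then yields $(2R)^{N}|F|^{(N)}(R)\leq C(2/3)^{N}$, with $C=C(R,F)$ independent of $N$ and of $z$ in the disc.

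The main obstacle, and the only nontrivial balancing act, is precisely this estimate: the product $\prod(z-z_{m})$ grows like $(2R)^{N}$, so one needs the divided difference (equivalently the derivative majorant) to decay strictly faster. Because $R$ is fixed by the compact set but $\rho_{1}$ is a free parameter in Cauchy's estimate, one buys the needed decay by sending $\rho_{1}$ past $4R$; the factor $(1-R/\rho_{1})^{-N-1}$ in the denominator is the source of the binomial growth that would otherwise kill the argument, and choosing $\rho_{1}$ large makes it benign. The bound is uniform in $z$ with $|z|\leq\rho$, proving uniform convergence on compact sets, and the geometric decay in $N$ gives the absolute convergence of the Newton series, completing the proof.
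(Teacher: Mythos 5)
Your proposal is correct, and it takes a genuinely different route from the paper. The paper bounds the absolute series $\sum_{n}|F^{(n)}(z_{1},\ldots,z_{n+1})|\prod_{m}|z-z_{m}|$ term by term via Proposition~\ref{propn: majorization} and then rearranges the resulting double sum to obtain $\sum_{k}|F^{(k)}(0)|(|z|+2R)^{k}/k!$, which converges because $F$ is entire; this gives absolute uniform convergence by Weierstrass' test, and the identification of the sum with $F$ then comes from the identity theorem (the partial sums are polynomials, hence the limit $\widetilde{F}$ is entire, and $\widetilde{F}=F$ on the convergent sequence $\{z_{k}\}$ by Proposition~\ref{propn: interpolation_polynomials}). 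You instead isolate the Newton interpolation error $E_{N}(z)=F^{(N)}(z_{1},\ldots,z_{N},z)\prod_{m=1}^{N}(z-z_{m})$, which satisfies $F=F_{N}+E_{N}$ by appending $z$ as an extra node, and then show $E_{N}\to 0$ uniformly by combining Proposition~\ref{propn: majorization} with Cauchy's estimates; the bound $|F|^{(N)}(R)\leq M(\rho_{1})\rho_{1}^{-N}(1-R/\rho_{1})^{-N-1}$ for $\rho_{1}>R$ is correct, and the choice $\rho_{1}=4R$ does give $(2R)^{N}|F|^{(N)}(R)\leq C(2/3)^{N}$. What you buy with this extra work is an explicit geometric rate of decay for both the error term and the $n$-th Newton term, which the paper's argument does not supply; what the paper buys by avoiding Cauchy estimates is a shorter, more elementary computation, and by arguing through the identity theorem it never needs to evaluate a divided difference at the point $z$ itself (so it never has to worry about $z$ coinciding with a node, a degeneracy you silently pass over -- it is harmless, since the divided difference extends continuously, but it deserves a word).

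Two small slips worth flagging. First, the $n$-th term of the Newton series is $E_{n}-E_{n+1}$, not $E_{n}-E_{n-1}$; the conclusion is unaffected since either way summability of $|E_{n}|$ dominates. Second, the telescoping detour is unnecessary: the same majorization-plus-Cauchy bound applies directly to the $n$-th term $F^{(n)}(z_{1},\ldots,z_{n+1})\prod_{m=1}^{n}(z-z_{m})$, whose arguments all lie in $\{|w|\leq R\}$, giving $\leq C(2/3)^{n}$ without passing through $E_{n}$.
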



\begin{proof}
Let $|z_{i}| \leq R$. From Proposition \ref{propn: majorization} we can bound
absolutely the series by
\begin{align*}
\sum_{n = 0}^{\infty}&\biggr|F^{(n)}(z_{1},\ldots,z_{n + 1}) 
\prod_{m = 1}^{n}(z - z_{m})\biggr| \\
&\leq \sum_{n = 0}^{\infty}|F|^{(n)}(R)(|z| + R)^{n} 
= \sum_{n = 0}^{\infty}\biggl(\frac{1}{n!}\sum_{m = 0}^{\infty}
\frac{|F^{(m + n)}(0)|}{m!}R^{m}\biggr)(|z| + R)^{n}
= \sum_{k = 0}^{\infty}\frac{|F^{(k)}(0)|}{k!}(|z| + 2R)^{k},
\end{align*}

where we used the binomial theorem in the last equality.
The right-hand side is a uniformly convergent series over
compact sets. It follows from Weierstrass' test that the 
convergence of the series 
\[
\sum_{n = 0}^{\infty}f^{(n)}(z_{1},\ldots,z_{n + 1})
\prod_{m = 1}^{n}(z - z_{m})
\]

is absolute and uniform over compact sets. Since the partial
sums are polynomials, in particular entire, then the limit $\wilde{F}(z)$
must be entire as well. However, from Proposition \ref{propn: interpolation_polynomials}
we know that $\wilde{F}(z_{k}) = F(z_{k})$. Thus, $F$ and $\wilde{F}$
are entire and coincide over a convergent sequence, and so $F \equiv \wilde{F}$.
\end{proof}



\end{document}